\documentclass{article}
\pdfoutput=1

\usepackage{amsmath}
\usepackage{amsfonts}
\usepackage{amssymb}
\usepackage{amsthm}
\usepackage{tikz-cd}
\usepackage{tikz}
\usepackage{amsxtra}
\usepackage{wrapfig}
\usepackage{color}
\usepackage{geometry}
\usepackage{url}
\usepackage[hidelinks]{hyperref}
\usepackage[percent]{overpic}
\usepackage{mathrsfs}
\usepackage{stmaryrd}

\usepackage{setspace}


\newtheorem{thm}{Theorem}[subsection]
\newtheorem{prop}[thm]{Proposition}

\newtheorem{cor}[thm]{Corollary}

\newtheorem{lem}[thm]{Lemma}
\theoremstyle{definition}
\newtheorem{defn}[thm]{Definition}
\newtheorem{setup}[thm]{Setup}
\newtheorem*{thma}{Theorem A}
\newtheorem*{thmb}{Theorem B}
\newtheorem*{conji}{Conjecture}
\newtheorem{conj}[thm]{Conjecture}
\theoremstyle{remark}
\newtheorem{ex}[thm]{Example}
\newtheorem{rmk}[thm]{Remark}


\newcommand{\cat}[1]{{\mathbf{#1}}}
\newcommand{\p}{\paragraph{}}
\newcommand{\spec}{\operatorname{Spec}}
\newcommand{\onto}{\twoheadrightarrow}
\newcommand{\into}{\hookrightarrow}
\newcommand{\from}{\leftarrow}
\newcommand{\lot}{\otimes^{\mathbb{L}}}
\newcommand{\per}{\ensuremath{\cat{per}}}
\newcommand{\stab}{\underline{\mathrm{CM}}}


\DeclareMathOperator{\id}{id}
\let\im\relax\DeclareMathOperator{\im}{im}
\let\ker\relax\DeclareMathOperator{\ker}{ker}
\DeclareMathOperator{\coker}{coker}

\let\hom\relax\newcommand{\hom}{\mathrm{Hom}}
\newcommand{\enn}{\mathrm{End}}
\DeclareMathOperator{\tor}{Tor}
\DeclareMathOperator{\ext}{Ext}


\newcommand{\Z}{\mathbb{Z}}
\newcommand{\N}{\mathbb{N}}
\newcommand{\A}{\mathbb{A}}



\usepackage[utf8]{inputenc}
\usepackage[T1]{fontenc}
\usepackage{lmodern}


\title{Noncommutative deformation theory, the derived quotient, and DG singularity categories}
\author{Matt Booth}

\newcommand{\R}{{\mathrm{\normalfont\mathbb{R}}}}
\numberwithin{equation}{section}

\newcommand{\dq}{\ensuremath{A/^{\mathbb{L}}\kern -2pt AeA} }

\newcommand{\thick}{\ensuremath{\cat{thick} \kern 0.5pt}}
\newcommand{\dloc}{\mathbb{L}_S(A)}

\newcommand{\grdef}{\text{\raisebox{.15ex}{$\mathscr{D}$}}\mathrm{ef}}

\newcommand{\dgh}{\mathrm{HOM}}
\newcommand{\dge}{\mathrm{END}}

\newcommand{\dgart}{\cat{dgArt}_k^{\leq 0}}
\newcommand{\dga}{\cat{dga}_{k}^{\leq 0}}
\newcommand{\proart}{{\cat{pro}(\cat{dgArt}_k^{\leq 0})}}

\makeatletter
\newcommand{\holim@}[2]{%
	\vtop{\m@th\ialign{##\cr
			\hfil$#1\operator@font holim$\hfil\cr
			\noalign{\nointerlineskip\kern1.5\ex@}#2\cr
			\noalign{\nointerlineskip\kern-\ex@}\cr}}%
}
\newcommand{\holim}{%
	\mathop{\mathpalette\holim@{\leftarrowfill@\textstyle}}\nmlimits@
}
\makeatother

\usepackage[nottoc]{tocbibind}
\allowdisplaybreaks
\begin{document}
	\date{}
\maketitle{}
\begin{center}
\textsc{Abstract}
\begin{figure}[ht]\hspace{0.1\linewidth}
			\begin{minipage}[c]{0.8\linewidth}
			We show that Braun-Chuang-Lazarev’s derived quotient prorepresents a naturally defined noncommutative derived deformation functor. Given a noncommutative partial resolution of a Gorenstein algebra, we show that the associated derived quotient controls part of its dg singularity category. We use a recent result of Hua and Keller to prove a recovery theorem, which can be thought of as providing a solution to a derived enhancement of a conjecture made by Donovan and Wemyss about the birational geometry of threefold flops.
			\end{minipage}
\end{figure}
\end{center}

\section{Introduction}
This is the first in a series of papers generalising Donovan-Wemyss's contraction algebra \cite{DWncdf} to a new invariant, the derived contraction algebra, which will be constructed as a derived quotient in the sense of Braun-Chuang-Lazarev \cite{bcl}. If $A$ is a differential graded algebra (dga) and $e$ is an idempotent in the cohomology algebra $H(A)$, the derived quotient $\dq$ is a dga that is universal with respect to homotopy annihilating $e$. Our first main theorem shows that the derived quotient admits an interpretation in terms of noncommutative derived deformation theory. This generalises prorepresentability results of Efimov-Lunts-Orlov \cite{ELO2} to the singular setting.
\p Following Kalck and Yang \cite{kalckyang} \cite{kalckyang2} we consider derived quotients associated to noncommutative partial resolutions of hypersurface singularities, and prove that the quasi-isomorphism class of the derived quotient recovers a thick subcategory of the dg singularity category of the base. When the partial resolution is a resolution, we recover the whole of the dg singularity category. A recent result of Hua and Keller \cite{huakeller}, analogous to Dyckerhoff's computation of the Hochschild cohomology of the dg category of matrix factorisations \cite{dyck}, then allows us to conclude that the associated derived quotient completely recovers the geometry of the hypersurface singularity. We work in a general setup and only introduce geometric hypotheses when necessary in order to obtain a unified theory that works for both full and partial resolutions. This unifying approach will be used in an upcoming preprint \cite{dcalg} to generalise results of Hua and Keller \cite{huakeller} obtained in the context of smooth Calabi-Yau threefolds to the singular setting. Furthermore, it will allow us to study derived contraction algebras of partial resolutions of Kleinian singularities, where it is imperative to work with singular varieties. Throughout we will work over an algebraically closed field $k$ of characteristic zero.

\subsection{Deformation theory}
The noncommutative deformation theory of modules over a ring was originally developed by Laudal \cite{laudalpt}, and has recently found many applications within algebraic geometry \cite{todatwists} \cite{DWncdf} \cite{kawamatapointed}. Typically, one is interested in deforming objects of derived or homotopy categories, which has been studied in detail by Efimov-Lunts-Orlov \cite{ELO} \cite{ELO2} \cite{ELO3}. It is important to have good control over prorepresenting objects of deformation functors, and our first main theorem identifies the derived quotient as controlling a noncommutative derived deformation problem:
\begin{thma}[\ref{maindefmthm}]Let $A$ be a $k$-algebra and $e \in A$ an idempotent. Suppose that $A/AeA$ is a local algebra and that $\dq$ is cohomologically locally finite. Let $S$ be $A/AeA$ modulo its radical, regarded as a right $A$-module. Then $\dq$ is naturally a pro-Artinian dga, and moreover prorepresents the functor of framed noncommutative derived deformations of $S$.
\end{thma}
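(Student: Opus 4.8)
The plan is to reduce the theorem, by way of the homological-epimorphism property of the derived quotient, to the Efimov--Lunts--Orlov description of framed derived deformations in terms of derived endomorphism algebras and Koszul duality. Write $\Lambda := \dq$. First I would check that $\Lambda$ is genuinely a pro-Artinian dga: since $A$ is an ordinary algebra, $\Lambda$ is concentrated in non-positive cohomological degrees with $H^0(\Lambda) = A/AeA$, and the local hypothesis together with cohomological local finiteness forces $H^0(\Lambda)$ to be a finite-dimensional local, hence Artinian, algebra with each $H^i(\Lambda)$ finite-dimensional. Presenting $\Lambda$ as a homotopy limit of a tower of dg Artinian algebras assembled from its Postnikov truncations and finite-dimensional cohomology then gives $\Lambda \simeq \holim \Lambda_n$ with $\Lambda_n \in \dgart$, which is the asserted natural pro-Artinian structure; its residue field is $H^0(\Lambda)$ modulo its radical, namely $S$.

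Next I would pass the deformation problem from $A$ to $\Lambda$. Since $Ae$ is projective over $A$ and $S$ is annihilated by $AeA$, one has $S \otimes^{\mathbb{L}}_A Ae = Se = 0$, so $S$ lies in the essential image of the fully faithful restriction functor $D(\Lambda) \to D(A)$ coming from the fact that $A \to \Lambda$ is a homological epimorphism \cite{bcl}. Hence the derived endomorphism dga may be computed over $\Lambda$: $E := \R\enn_\Lambda(S) \simeq \R\enn_A(S)$, an augmented dga with $H^0(E) = \enn_{A/AeA}(S) = k$. This is the crucial point of the argument --- the correct base for the deformation problem is the derived quotient $\Lambda$ rather than the naive quotient $A/AeA$, precisely because $\Lambda$ detects all self-extensions of $S$ inside $D(A)$.

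It then remains to apply Koszul duality and the structure theory of framed deformations. Because $\Lambda$ is pro-Artinian with residue field $S$ and cohomologically locally finite, derived Koszul duality makes the assignments $\Lambda \mapsto \R\enn_\Lambda(S)$ and $E \mapsto E^{!} := (BE)^{*}$ (the completed bar-dual, a pro-Artinian dga) mutually quasi-inverse on the relevant subcategories, so that the canonical comparison map $\Lambda \to E^{!}$ is a quasi-isomorphism. On the other side, the functor of framed noncommutative derived deformations of the $A$-module $S$ is prorepresented by the Koszul dual $(B\,\R\enn_A(S))^{*}$ of its derived endomorphism algebra, by the Efimov--Lunts--Orlov theory \cite{ELO2} (or, in the framed case, directly from the Maurer--Cartan/bar-construction description of framed deformations). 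Chaining $\Lambda \simeq E^{!} \simeq (B\,\R\enn_A(S))^{*}$ yields that $\Lambda$ prorepresents the functor of framed noncommutative derived deformations of $S$.

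The main obstacle is expected to be bookkeeping at the interface of these formalisms rather than any single computation. Two points need care: making ``pro-Artinian'' precise enough that the double Koszul dual is literally the identity --- this is exactly where the cohomological local finiteness hypothesis is spent --- and matching the chosen definition of the framed deformation functor (a deformation of $S$ over $R \otimes A$ together with a framing) with the Maurer--Cartan/Koszul-dual side, which requires the standard fact that a framing rigidifies away the gauge groupoid and upgrades pro-corepresentability by an $A_\infty$-algebra to honest prorepresentability by a pro-Artinian dga. The other ingredients are contained in Braun--Chuang--Lazarev or are by now standard in derived deformation theory.
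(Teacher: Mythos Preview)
Your proposal is correct and follows essentially the same route as the paper: identify $\R\enn_A(S)\simeq\R\enn_{\dq}(S)$ via the homological epimorphism $A\to\dq$, use Koszul duality and the Postnikov tower to show $\dq\simeq(\R\enn_A(S))^!$ as a pro-Artinian dga, and invoke prorepresentability of framed deformations by the Koszul dual of the endomorphism dga.

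One caution on attribution: the appeal to \cite{ELO2} for the prorepresentability step is exactly what this theorem is designed to avoid. The Efimov--Lunts--Orlov result requires $S$ to be perfect over $A$ (equivalently, finite projective dimension), which fails in the singular situations the paper is targeting. Your parenthetical ``or, in the framed case, directly from the Maurer--Cartan/bar-construction description'' is therefore not an alternative but the essential content --- the paper devotes all of Section~3 to establishing precisely this, culminating in \ref{prorepfrm}, by showing that framed deformations of a one-dimensional simple are controlled by the augmentation ideal $\bar E$ of $E=\R\enn_A(S)$ and that $\mathrm{Def}^{\leq 0}(\bar E)$ is prorepresented by $B^\sharp E$. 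Similarly, your ``derived Koszul duality makes the assignments mutually quasi-inverse on the relevant subcategories'' is the content of Section~2 (Theorem~\ref{kdfin}), and the Postnikov-tower argument you sketch is exactly how the paper proves it (\ref{goodchar}). So the architecture is right; just be aware that the two points you flag as ``bookkeeping'' in your final paragraph are where the real work lies, and citing \cite{ELO2} does not discharge them.
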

If $A$ has finite global dimension, then Theorem A is an easy consequence of the prorepresentability results of \cite{ELO2}, and hence the full theorem can be viewed as a generalisation to the singular setting. We call a dga $B$ cohomologically locally finite if each $H^j(B)$ is a finite-dimensional $k$-vector space; we note that cohomological local finiteness of $\dq$ can be checked explicitly when $A$ is presented as the path algebra of a quiver with relations (\ref{derquotcohom}, \ref{h1finite}) and is in general a quite weak finiteness condition. A framed deformation of $S$ is a deformation of $S$ that respects a fixed choice of isomorphism $S \cong k$ (\ref{frmdefs}). We remark that the condition that $A/AeA$ is local can probably be dropped if one uses pointed deformations, as in Laudal \cite{laudalpt} or Kawamata \cite{kawamatapointed}.

\p In characteristic zero, Deligne's philosophy that differential graded Lie algebras control commutative deformation problems is closely related to the Koszul duality between the Lie and commutative operads \cite{unifying}. In order to prove Theorem A, we must prove a Koszul duality result for noncommutative algebras (\ref{kdfin}), which can be interpreted as a strictification result for homotopy pro-Artinian dgas. We will then prove (\ref{prorepfrm}) that the functor of framed deformations of a simple one-dimensional $A$-module $S$ is prorepresented by the Koszul dual of the derived endomorphism algebra $\R\enn_A(S)$, and Theorem A will follow by combining the two. In particular, the pro-Artinian structure on $\dq$ comes from viewing $\dq$ as the dual of a coalgebra.
\subsection{Singularity categories}
If $R$ is a ring, its singularity category is the triangulated category $D_{\mathrm{sg}}(R):=D^b(R)/\per(R)$, which can be seen as quantifying the type of singularities of $R$. Singularity categories were introduced by Buchweitz \cite{buchweitz} who proved that when $R$ is Gorenstein, $D_{\mathrm{sg}}(R)$ is equivalent to the stable category $\stab R$ of maximal Cohen-Macaulay $R$-modules. Singularity categories were later studied for schemes by Orlov \cite{orlovtri}, who gave applications to homological mirror symmetry, and differential graded enhancements of singularity categories have been studied recently by Blanc-Robalo-To\"{e}n-Vezzosi \cite{motivicsingcat} where they are constructed using the dg quotient \cite{kellerdgquot} \cite{drinfeldquotient}. 

\p Motivated by the Bondal-Orlov conjecture \cite{bondalorlov}, noncommutative (crepant) resolutions of singularities were introduced by Van den Bergh \cite{vdb} \cite{vdbnccr} and have been studied extensively since. A noncommutative resolution of a ring $R$ is in particular a ring of the form $A=\enn_R(R\oplus M)$ for some $R$-module $M$, and hence comes with a natural idempotent $e=\id_R$. This led Kalck and Yang to study relative singularity categories of algebras with idempotents in detail \cite{kalckyang} \cite{kalckyang2}, and the derived quotient $\dq$ already appears in their papers (although not by that name). Following their work, we investigate the relationship between $\dq$ and the dg singularity category ${D^\mathrm{dg}_\text{sg}(R)}$ in detail, and when the module $M$ defining $A$ satisfies some certain Cohen-Macaulay type conditions we show (\ref{qisolem}) that $\dq$ is the truncation to nonpositive degrees of the endomorphism dga of $M$ considered as an object of the dg singularity category ${D^\mathrm{dg}_\text{sg}(R)}$.

\p When $R=k\llbracket x_1,\ldots, x_n\rrbracket/\sigma$ is a complete local isolated hypersurface singularity, the two triangulated categories $D_\mathrm{sg}(R)$ and $\stab R$ are triangle equivalent to a third category, the category of \textbf{matrix factorisations} $\mathrm{MF}(\sigma)$. This has a natural enhancement over $\Z/2$-graded complexes -- and hence becomes a dg category by extending the $\Z/2$-graded morphism complexes periodically -- and the triangle equivalence between $\mathrm{MF}(\sigma)$ and $D_\mathrm{sg}(R)$ lifts to a quasi-equivalence of $\Z$-graded dg categories \cite{motivicsingcat}.

\p Dyckerhoff \cite{dyck} proved that the Hochschild cohomology of the 2-periodic dg category of matrix factorisations is the Milnor algebra of $R$, which recovers $R$ by the formal Mather-Yau theorem \cite{gpmather}. Recently, Dyckerhoff's theorem was improved upon by Hua and Keller \cite{huakeller}, who showed that the Milnor algebra is the zeroth Hochschild cohomology of the underlying $\Z$-graded dg category of matrix factorisations (which is a different object to the $\Z/2$-graded Hochschild cohomology). We use their result to prove a recovery theorem:
\begin{thmb}[\ref{recov}] Fix $n \in \N$ and let $R:=k\llbracket x_1,\ldots, x_n \rrbracket/\sigma$ be an isolated hypersurface singularity. Let $M$ be a maximal Cohen-Macaulay $R$-module that generates $D_\mathrm{sg}(R)$, and let $A:=\enn_{R}(R\oplus M)$ be the associated partial resolution of $R$ with $e=\id_{R}$. Assume that $A/^\mathbb{L}AeA$ is cohomologically locally finite and that $A/AeA$ is a local algebra. Then the quasi-isomorphism type of $\dq$ recovers the isomorphism type of $R$ amongst all rings satisfying the above conditions.
\end{thmb}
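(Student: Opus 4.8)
The plan is to recover $R$ from $\dq$ by the following recipe: reconstruct the dg endomorphism algebra $\dge(M):=\R\enn_{D^\mathrm{dg}_\mathrm{sg}(R)}(M)$ of $M$ inside a dg enhancement of $D_\mathrm{sg}(R)$, pass to its zeroth Hochschild cohomology and identify that with an invariant intrinsic to $R$, and then invoke the formal Mather--Yau theorem.

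\emph{Reconstructing $\dge(M)$.} Since $M$ is maximal Cohen--Macaulay over the Gorenstein ring $R$ and $A/AeA$ is local, Lemma~\ref{qisolem} applies and gives a quasi-isomorphism $\dq\simeq\tau^{\leq 0}\dge(M)$. Because $R$ is a hypersurface, $D_\mathrm{sg}(R)\simeq\stab R$ \cite{buchweitz} satisfies $[2]\cong\id$, so $H^*\dge(M)$ is a $2$-periodic graded ring and the class $\beta\in H^{-2}\dge(M)=\hom_{D_\mathrm{sg}(R)}(M,M[-2])$ corresponding to $\id_M$ is a central unit of $H^*\dge(M)$. The crucial point is that $\beta$ is visible inside $\dq$: as $R$ is an isolated singularity, $H^0\dq\cong\enn_{\stab R}(M)$ is a finite-dimensional $k$-algebra, and the central classes $\beta'\in H^{-2}\dq$ for which multiplication by $\beta'$ is a bijection $H^0\dq\to H^{-2}\dq$ are precisely those of the form $\beta u$ for a central unit $u$ of $H^0\dq$ --- a nonempty set pinned down by the quasi-isomorphism type of $\dq$. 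For any such $\beta'$, inverting $\beta'$ is equivalent to inverting $\beta$ (the discrepancy $u$ being already invertible in $H^0\dq$), and the localisation $\dq[\beta'^{-1}]$ is quasi-isomorphic to $\dge(M)$: the truncation map $\dq\into\dge(M)$ induces on cohomology the localisation map $H^*\dq[\beta^{-1}]\to H^*\dge(M)$, which is an isomorphism by $2$-periodicity. Thus the quasi-isomorphism type of $\dq$ determines that of $\dge(M)$.

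\emph{From $\dge(M)$ to $R$.} As $M$ generates $D_\mathrm{sg}(R)$, i.e.\ $\thick(M)=D_\mathrm{sg}(R)$, there is a quasi-equivalence $D^\mathrm{dg}_\mathrm{sg}(R)\simeq\per(\dge(M))$, whence $\mathrm{HH}^*(\dge(M))\cong\mathrm{HH}^*(D^\mathrm{dg}_\mathrm{sg}(R))$ by Morita invariance of Hochschild cohomology; in particular this is intrinsic to $R$ and independent of the auxiliary module $M$. By \cite{motivicsingcat} the equivalence $D_\mathrm{sg}(R)\simeq\mathrm{MF}(\sigma)$ lifts to a quasi-equivalence of $\Z$-graded dg categories identifying $D^\mathrm{dg}_\mathrm{sg}(R)$ with the underlying $\Z$-graded dg category of matrix factorisations of $\sigma$, whose zeroth Hochschild cohomology is computed by Hua and Keller \cite{huakeller} to be $T_R:=k\llbracket x_1,\dots,x_n\rrbracket/(\sigma,\partial_1\sigma,\dots,\partial_n\sigma)$. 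Hence $\mathrm{HH}^0(\dge(M))\cong T_R$, and by the formal Mather--Yau theorem \cite{gpmather} the isomorphism type of $T_R$ determines that of $R$ among isolated hypersurface singularities. Combining: if $R'$ is another ring satisfying the hypotheses of the theorem, with data $M'$ and $e'=\id_{R'}$ in $A'=\enn_{R'}(R'\oplus M')$, and there is a quasi-isomorphism $\dq\simeq A'/^{\mathbb{L}}A'e'A'$, then running the recipe on each side yields $T_R\cong T_{R'}$, and therefore $R\cong R'$.

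I expect the reconstruction step to be the main obstacle: a priori $\dq$ only records the connective truncation of $\dge(M)$, so one must genuinely use the $2$-periodicity of $D_\mathrm{sg}(R)$, together with the intrinsic description of the periodicity class inside $\dq$, in order to recover $\dge(M)$ up to quasi-isomorphism. The remaining ingredients need care rather than ingenuity: one must check that the $\Z$-graded dg enhancement of $D_\mathrm{sg}(R)$ featuring in Lemma~\ref{qisolem} coincides, up to quasi-equivalence, with the one to which the Hua--Keller computation \cite{huakeller} applies, and that Hochschild cohomology transports correctly along the quasi-equivalence $D^\mathrm{dg}_\mathrm{sg}(R)\simeq\per(\dge(M))$.
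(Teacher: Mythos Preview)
Your proposal is correct and follows essentially the same route as the paper: identify $\dq$ with $\tau_{\leq 0}\R\underline{\enn}_R(M)$ via \ref{qisolem}, intrinsically locate the periodicity class, derived-localise to recover $\R\underline{\enn}_R(M)$ and hence $\thick_{D^\mathrm{dg}_\mathrm{sg}(R)}(M)\simeq D^\mathrm{dg}_\mathrm{sg}(R)$, then apply Hua--Keller (which already packages the Milnor/Tjurina computation and Mather--Yau). The only cosmetic difference is your intrinsic characterisation of the periodicity class (centrality plus bijectivity $H^0\to H^{-2}$) versus the paper's (non-nilpotence in $H^{-2}$, \ref{uniqueeta}); both pin down $\eta$ up to a unit of $A/AeA$, which is all that is needed for the localisation, though your claim that the unit $u$ must be \emph{central} is not quite right and also not needed.
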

\p When $A$ is a resolution, $M$ automatically generates the singularity category (\ref{genrmk}). A sketch proof of Theorem B is as follows: because $R$ is a complete local hypersurface, the shift functor of ${D_\text{sg}(R)}$ is 2-periodic \cite{eisenbudper}. Using this we obtain a degree $-2$ element $\eta \in \dq$, homotopy unique up to multiplication by units in $H(\dq)$, with the property that $\eta:H^j(\dq) \to H^{j-2}(\dq)$ is an isomorphism for all $j\leq 0$. We prove that the derived localisation of $\dq$ at $\eta$ is the endomorphism dga of $M \in {D^\mathrm{dg}_\text{sg}(R)}$ (\ref{etaex}, \ref{uniqueeta}). Under some mild finiteness assumptions, $\dq$ determines the dg subcategory of ${D^\mathrm{dg}_\text{sg}(R)}$ generated by $M$ (\ref{recovwk}). By assumption this is all of ${D^\mathrm{dg}_\text{sg}(R)}$, and the result of Hua and Keller allows us to determine $R$ from this dg category. We conjecture (\ref{perconj}, \ref{perconjrmk}) that in fact $\dq$ determines the $\Z/2$-graded dg category of matrix factorisations, and hence that one can prove Theorem B using only Dyckerhoff's results plus the formal Mather-Yau theorem.
\subsection{Application: The derived contraction algebra}
Our eventual aim is to prove derived analogues of Donovan-Wemyss's results about contraction algebras \cite{DWncdf} \cite{contsdefs} \cite{enhancements}, and in the process recover some results from the recent preprint of Hua-Keller \cite{huakeller}. We briefly describe the construction and properties of the contraction algebra, and indicate how these will be generalised in future work \cite{dcalg}. Given a threefold flopping contraction of an irreducible rational curve $X \to \spec R$, Van den Bergh \cite{vdb} constructs a noncommutative partial resolution $A=\enn_R(R\oplus M)$ of $R$ (in the sense of \ref{partrsln}) equipped with a derived equivalence to $X$. The contraction algebra, which is an Artinian local algebra, is defined as the quotient $A/AeA$ of $A$ by $e=\id_R$. We will define the derived contraction algebra to be the derived quotient $\dq$. Let $S$ be the quotient of $A/AeA$ by its radical, regarded as an $A$-module (note that $S$ is a 1-dimensional $k$-vector space). Across the derived equivalence to $X$, the $A$-module $S$ corresponds to a twist of the structure sheaf of the flopping curve. Donovan-Wemyss prove that $A/AeA$ represents the functor of noncommutative deformations of $S$ \cite[3.1]{DWncdf}, which our Theorem A generalises to the derived setting. They conjecture that, when $A$ is smooth, $A/AeA$ is a complete invariant of $R$:
 \begin{conji}[\cite{DWncdf}, 1.4]
 	Let $X \to \spec R$ and $X' \to \spec R'$ be flopping contractions of an irreducible rational curve in a smooth projective threefold, where $R$ and $R'$ are complete local rings. If the associated contraction algebras are isomorphic, then $R \cong R'$.
 \end{conji}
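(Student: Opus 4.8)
The plan is to derive the conjecture from Theorem B, by first putting the geometry into the form treated there and then supplying the one missing ingredient, namely that the ordinary contraction algebra determines the derived contraction algebra up to quasi-isomorphism. The base of a flopping contraction of an irreducible rational curve in a smooth threefold is a complete local Gorenstein terminal threefold singularity, hence by Reid's theorem an isolated compound Du Val singularity; in particular $R \cong k\llbracket x_1,\ldots,x_4\rrbracket/\sigma$ is an isolated hypersurface singularity. By Van den Bergh \cite{vdb} there is a maximal Cohen--Macaulay module $M$ with $A := \enn_R(R\oplus M)$ an NCCR of $R$ (a resolution in the sense of \ref{partrsln}); set $e := \id_R$. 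Since $A$ is a resolution, $M$ generates $D_\mathrm{sg}(R)$ by \ref{genrmk}. For an irreducible flop $A/AeA = \underline{\enn}_R(M)$ is a local Artinian algebra, being the stable endomorphism ring of an indecomposable MCM module over a complete local ring; and since $R$ has an isolated singularity $\stab R$ is Hom-finite, so by \ref{qisolem} the cohomology of $\dq$ in degree $-j$ is $\ext^{-j}_{\stab R}(M,M)$ and $\dq$ is cohomologically locally finite. Thus all hypotheses of Theorem B are satisfied, and its conclusion is that the quasi-isomorphism class of $\dq$ determines $R$ among rings arising in this way.

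It remains to bridge the gap between the \emph{ordinary} contraction algebra $\Lambda := A/AeA$, which is all the conjecture provides, and the \emph{derived} contraction algebra $\dq$, which Theorem B consumes. Concretely, the conjecture follows from the statement $(\star)$: \emph{for an irreducible threefold flop the quasi-isomorphism class of $\dq$ depends only on the isomorphism class of $\Lambda = H^0(\dq)$ as a $k$-algebra.} Indeed, given $(\star)$, an algebra isomorphism $\Lambda\cong\Lambda'$ forces $\dq \simeq A'/^{\mathbb{L}}A'e'A'$, whence $R\cong R'$ by Theorem B. In the language of Theorem A, $(\star)$ asserts that for the flopping-curve object $S$ the framed \emph{derived} deformation problem carries no information beyond its classical truncation.

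To prove $(\star)$ I would argue as follows. By \ref{qisolem} and Eisenbud's $2$-periodicity, $\dq$ is the nonpositive truncation of the derived endomorphism dga of $M$ in $D^{\mathrm{dg}}_{\mathrm{sg}}(R)$, whose cohomology is $2$-periodic; equivalently one may work with the $\Z/2$-graded derived endomorphism dga of $M$ in $\mathrm{MF}(\sigma)$. As $\sigma$ has four variables, this $\Z/2$-graded dga carries a $0$-Calabi--Yau (cyclically invariant) pairing, so its cohomology is a $\Z/2$-graded symmetric Frobenius algebra with even part $\Lambda$ and odd part $V := \ext^1_{\stab R}(M,M)$. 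The first step is to recover this graded algebra intrinsically from $\Lambda$: one expects $V$, the self-duality $V\cong V^\vee$, and the multiplication $V\otimes_\Lambda V \to \Lambda$ to be read off from a minimal bimodule resolution of the symmetric algebra $\Lambda$, the products being forced by the $2$-Calabi--Yau structure of $\stab R$ via Auslander--Reiten duality. The second step is an intrinsic formality statement: one must show that the cyclic $A_\infty$-structure on this graded algebra is the unique one compatible with $\Lambda$, i.e. that the Hochschild-type obstruction groups governing the $A_\infty$-deformations of $H^\bullet(\dq)$ vanish in the relevant internal degrees.

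The second step is where I expect the real difficulty: intrinsic formality fails for general finite-dimensional self-injective algebras, so a proof of $(\star)$ must genuinely exploit the geometry of flopping contractions rather than just the symmetry of $\Lambda$. A promising alternative that sidesteps the formality problem is to argue through singular (Tate) Hochschild cohomology: via the comparison of $\widehat{HH}^\bullet(\Lambda)$ with the Hochschild cohomology of the dg singularity category attached to $\Lambda$, one would try to identify the relevant graded piece of $\widehat{HH}^\bullet(\Lambda)$ with $HH^0$ of the $\Z/2$-graded category $\mathrm{MF}(\sigma)$ --- which, by Dyckerhoff \cite{dyck} and its $\Z$-graded refinement by Hua and Keller \cite{huakeller}, is the Milnor algebra of $\sigma$; the formal Mather--Yau theorem \cite{gpmather} would then recover $R$ from $\Lambda$ directly, bypassing the derived contraction algebra. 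In either approach the crux is the same, and it is the main obstacle: showing that \emph{all} of the derived information in $\dq$ --- its higher $A_\infty$-products, or equivalently the singular-Hochschild classes of $\Lambda$ --- is already determined by the ordinary contraction algebra of an irreducible threefold flop.
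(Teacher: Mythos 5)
The statement you were asked to prove is the Donovan--Wemyss Conjecture, labelled with the unnumbered environment \texttt{conji} precisely because the paper does \emph{not} prove it. The paragraph immediately following the conjecture makes the status explicit: the author writes that a \emph{derived} version follows from Theorem~B (in the forthcoming work \cite{dcalg}), and that ``the original conjecture would follow if one could prove that the usual contraction algebra determines the quasi-isomorphism type of the derived contraction algebra.'' Your proposal reproduces exactly this reduction: your $(\star)$ is word-for-word the same bridging statement the paper identifies, and your setup (Reid's classification giving $R$ an isolated cDV hypersurface, Van den Bergh's NCCR $A=\enn_R(R\oplus M)$, $M$ generating $D_{\mathrm{sg}}(R)$ by \ref{genrmk}, $\Lambda=A/AeA$ Artinian local, cohomological local finiteness of $\dq$ via \ref{qisolem}, then Theorem~B) correctly verifies the hypotheses under which Theorem~\ref{recov} applies.

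The genuine gap is therefore $(\star)$ itself, which you candidly acknowledge you do not close. You are right that this is the real obstacle, and your diagnosis --- that intrinsic formality fails for general self-injective algebras, so any proof must exploit the geometry of the flop rather than mere symmetry of $\Lambda$, or else go through singular Hochschild cohomology to reach the Milnor algebra more directly --- is a sensible assessment of the difficulty. But this does not constitute a proof, and the paper never claims one. In short: your reduction to Theorem~B plus $(\star)$ is correct and is precisely what the paper proposes, but $(\star)$ remains an unproven hypothesis in both your write-up and in the paper, so the Conjecture is not established here by either of you. That is the expected outcome, since this item is a \texttt{conji} rather than a \texttt{thm}.
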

In \cite{dcalg}, we will show that a derived version of the Donovan-Wemyss conjecture follows from Theorem B of this paper. The original conjecture would follow if one could prove that the usual contraction algebra determines the quasi-isomorphism type of the derived contraction algebra.

\p The deformation-theoretic description of $\dq$ as a Koszul dual will allow us to explicitly compute derived contraction algebras; we give an example in \ref{atiyahflop}. The deformation theory of \cite{huakeller} requires a smoothness assumption in order to apply the prorepresentability result of \cite{ELO2}, which in not required for our Theorem A. Our unifying approach will allow us in \cite{dcalg} to study derived contraction algebras associated to partial resolutions of Kleinian singularities, where the simple module $S$ frequently has infinite projective dimension. Donovan-Wemyss also prove that the contraction algebra sheafifies \cite{enhancements}; it would be interesting to check whether the derived contraction algebra sheafifies as well.

\subsection{Notation and conventions}
Throughout this paper, $k$ will denote an algebraically closed field of characteristic zero. We will need this assumption, although many assertions we make will work in much greater generality. Modules are right modules, unless stated otherwise. Consequently, noetherian means right noetherian, global dimension means right global dimension, et cetera. Unadorned tensor products are by default over $k$. All complexes, unless stated otherwise, are $\Z$-graded cochain complexes, i.e. the differential has degree 1. If $X$ is a complex, let $X[i]$ denote `$X$ shifted left $i$ times': the complex with $X[i]^j=X^{i+j}$ and obvious differential.  Recall that the \textbf{mapping cone} $\mathrm{cone}(f)$ of a degree zero map $f:X \to Y$ of complexes is (a representative of) the homotopy cokernel of $f$; concretely it is given by $X[1]\oplus Y$ with differential that combines $f$ with the differentials on $X$ and $Y$. The \textbf{mapping cocone} of $f$ is $\mathrm{cone}(f)[-1]$; it is a representative of the homotopy kernel. If $X$ is a complex of modules we will denote its cohomology complex by $H(X)$ or just $HX$. If $x$ is a homogeneous element of a complex of modules, we denote its degree by $|x|$.

\p A $k$-algebra is a $k$-vector space with an associative unital $k$-bilinear multiplication. A \textbf{differential graded algebra} (\textbf{dga} for short) over $k$ is a complex of $k$-vector spaces $A$ with an associative unital chain map $\mu:A\otimes A \to A$, which we refer to as the multiplication. Note that the condition that $\mu$ be a chain map forces the differential to be a derivation for $\mu$.  A $k$-algebra is equivalently a dga concentrated in degree zero, and a graded $k$-algebra is equivalently a dga with zero differential. We will sometimes refer to $k$-algebras as \textbf{ungraded algebras} to emphasise that they should be considered as dgas concentrated in degree zero. A dga is \textbf{graded-commutative} or just \textbf{commutative} if all graded commutator brackets $[x,y]=xy-(-1)^{|x||y|}yx$ vanish. Commutative polynomial algebras are denoted with square brackets $k[x_1,\ldots, x_n]$ whereas noncommutative polynomial algebras are denoted with angle brackets $k\langle x_1,\ldots, x_n\rangle$. 

\p If $A$ is an algebra, write $\cat{Mod-}A$ for its category of right modules, $\cat{mod-}A\subseteq\cat{Mod-}A$ for its category of finitely generated modules, $D(A):=D(\cat{Mod-}A)$ for its unbounded derived category, $D^b(A):=D^b(\cat{mod-}A)$ for its bounded derived category, and $\cat{per}(A) \subseteq D^b(A)$ for the subcategory on perfect complexes (i.e. those complexes quasi-isomorphic to bounded complexes of finitely-generated projective modules). Recall that an object $X$ of a category $\mathcal C$ is \textbf{compact} if $\hom_\mathcal{C}(X,-)$ commutes with filtered colimits. We then have $\cat{per}(A)\cong\{ \text{compact objects in } D(A)\}$.A \textbf{dg-module} (or just a \textbf{module}) over a dga $A$ is a complex of vector spaces $M$ together with an action map $A \otimes M \to M$ satisfying the obvious identities (equivalently, a dga map $A \to \enn_k(M)$). Note that a dg-module over an ungraded ring is exactly a complex of modules. If $A$ is a dga, write $D(A)$ for its unbounded derived category: this is the category of all dg-modules over $A$ localised along the quasi-isomorphisms. If $A$ is a dga we define $\cat{per}(A):=\{ \text{compact objects in } D(A)\}$.

\p We will freely use the formalism of Quillen model categories; references include \cite{dwyerspalinski} and \cite{hovey}. We will use the projective model structure on dg-modules (the `$q$-model structure' of \cite{sixmodels}), where every object is fibrant. Over an ungraded ring, every cofibrant complex is levelwise projective, and bounded above complexes are cofibrant precisely when they are levelwise projective. The homotopy category of the model category of dg-modules is exactly the unbounded derived category.

\p Let $V$ be a dg-$k$-vector space. The \textbf{total dimension} or just \textbf{dimension} of $V$ is $\sum_{n\in \Z}\mathrm{dim}_k V^n$. Say that $V$ is \textbf{finite-dimensional} or just \textbf{finite} if its total dimension is finite. Say that $V$ is \textbf{locally finite} if each $\mathrm{dim}_kV^n$ is finite. Say that $V$ is \textbf{cohomologically locally finite} if the cohomology dg-vector space $HV$ is locally finite. There are obvious implications $$\text{finite }\implies\text{ locally finite }\implies\text{cohomologically locally finite}.$$We use the same terminology in the case that $V$ admits extra structure (e.g. that of a dga).

\subsection{Structure of the paper}
The structure of this paper is as follows: in Section 2 we introduce the category of noncommutative pro-Artinian dgas, and prove a Koszul duality result for suitably finite dgas (\ref{kdfin}), which can be viewed as a strictification result. In Section 3 we recall some noncommutative derived deformation theory, and prove that the functor of framed deformations of a simple module is prorepresentable by the Koszul dual of the controlling dga (\ref{prorepfrm}). In Section 4 we introduce the derived quotient, prove Theorem A (\ref{maindefmthm}), and introduce the dg singularity category (\ref{dgsing}). In Section 5 we consider partial resolutions of Gorenstein rings (\ref{partrsln}), and we conclude with Theorem B (\ref{recov}).

\subsection{Acknowledgements} The author is a PhD student at the University of Edinburgh, supported by the Engineering and Physical Sciences Research Council, and this work will form part of his thesis. He would like to thank his supervisor Jon Pridham for his guidance, as well as Michael Wemyss for his continued support. He would like to thank Zheng Hua and Bernhard Keller for their helpful comments, including pointing out a mistake in an early proof of Theorem B. He would also like to thank Jenny August, Martin Kalck, and Dong Yang for useful discussions and comments.

\section{Koszul duality}\label{kd}
\p In this section we will introduce the Koszul dual of a dga, and prove a duality result (\ref{kdfin}) for a large class of reasonably finite dgas. Section \ref{defmthy} will give a deformation-theoretic interpretation of these results. In this section every dga we consider will be \textbf{augmented}, meaning the canonical map $k \to A$ admits a retraction. The \textbf{augmentation ideal} of an augmented dga is $\bar A:=\ker(A \to k)$. Sending $A$ to $\bar A$ sets up an equivalence between augmented dgas and nonunital dgas. The inverse functor freely appends a unit, and indeed $A$ is isomorphic to $\bar{A}\oplus k$ as augmented dgas. Say that a augmented dga $A$ is \textbf{Artinian local} if $A$ has finite total dimension and $\bar A$ is nilpotent. In other words, Artinian local means `finite-dimensional over $k$, and local with residue field $k$'. Most dgas of interest to us in this section will be concentrated in nonpositive cohomological degrees.
\subsection{Bar and cobar constructions}
In this part we will follow Chapters 1 and 2 of Loday-Vallette \cite{lodayvallette}. Just like a dga is a monoid in the monoidal category of dg-vector spaces over $k$, a \textbf{differential graded coalgebra} or \textbf{dgc} for short is a comonoid in this category. More concretely, a dgc is a dg-$k$-vector space $(C,d)$ equipped with a comultiplication $\Delta:C \to C \otimes C$ and a counit $\epsilon:C \to k$, satisfying the appropriate coassociative and counital identities, and such that $d$ is a coderivation for $\Delta$.  A \textbf{coaugmentation} on a dgc is a section of $\epsilon$; if $C$ is coaugmented then $\bar C:= \ker \epsilon$ is the \textbf{coaugmentation coideal}. It is a dgc under the reduced coproduct $\bar{\Delta}x = \Delta x -x\otimes 1 -1\otimes x$, and $C$ is isomorphic as a nonunital dgc to $\bar C \oplus k$.. A coaugmented dgc $C$ is \textbf{conilpotent} if every $x \in \bar C$ is annihilated by some suitably high power of $\Delta$.
\begin{ex}
	If $V$ is a dg-vector space, then the tensor algebra ${T}^c(V)$ is a dg-coalgebra when equipped with the \textbf{deconcatenation coproduct} ${T}^c(V) \to {T}^c(V) \otimes {T}^c(V)$ which sends $v_1\cdots v_n$ to $\sum_i v_1\cdots v_i \otimes v_{i+1}\cdots v_n$. The differential is induced from the differential on $V^{\otimes n}$. It is easy to see that ${T}^c(V)$ is conilpotent, since $\Delta^{n+1}(v_1\cdots v_n)=0$. In fact, ${T}^c$ is the cofree conilpotent coalgebra functor: if $C$ is conilpotent then $C \to{T}^c(V)$ is determined completely by the composition $l:C \to {T}^c(V) \to V$.
\end{ex}
\begin{defn}
	Let $A$ be an augmented dga. Put $V:=\bar A [1]$, the shifted augmentation ideal. Let $d_V$ be the usual differential on $TV$. Let $d_B$ be the \textbf{bar differential}: send $a_1\otimes\cdots \otimes a_n$ to the signed sum over $i$ of the $a_1\otimes\cdots\otimes a_ia_{i+1}\otimes\cdots \otimes a_n$ and extend linearly. The signs come from the Koszul sign rule; see \cite[2.2]{lodayvallette} for a concrete formula. One can check that $d_B$ is a degree 1 map from $V^{\otimes n+1} \to V^{\otimes n}$, and that it intertwines with $d_V$. Hence, one obtains a third and fourth quadrant bicomplex $C$ with rows $V^{\otimes{n}}[-n]$. By construction, the direct sum total complex of $C$ is $TV$, with a new differential $\partial=d_V+d_B$. The \textbf{bar construction} of $A$ is the complex $BA:=(TV,\partial)$. One can check that the deconcatenation coproduct makes $BA$ into a dgc. Note that the degree 0 elements of $A$ become degree $-1$ elements of $BA$.
\end{defn}
\begin{ex}Let $A$ be the graded algebra $k[\epsilon]/\epsilon^2$, with $\epsilon$ in degree 0. Then $\bar A [1]$ is $k\epsilon$ placed in degree $-1$. Since $\epsilon$ is square zero, the bar differential is identically zero. So $BA$ is the tensor coalgebra $k[\epsilon]$, with $\epsilon$ in degree -1.
\end{ex}

\begin{defn}
	Let $C$ be a coaugmented dgc. One can analogously define a \textbf{cobar differential} $d_\Omega$ on the tensor algebra $T(\bar{C}[-1])$ by sending $c_1\otimes\cdots \otimes c_n$ to the signed sum over $i$ of the ${c_1\otimes\cdots \otimes  \bar{\Delta}c_i\otimes\cdots c_n}$, and the \textbf{cobar construction} on $C$ is the dga $\Omega C:=(T(\bar{C}[-1]),d_C+d_\Omega)$.
\end{defn}

Bar and cobar are adjoints:
\begin{thm}[\cite{lodayvallette}, 2.2.6]
	If $A$ is an augmented dga and $C$ is a conilpotent dgc, then there is a natural isomorphism $$\hom_{\cat{dga}}(\Omega C, A)\cong \hom_{\cat{dgc}}(C, BA)$$
\end{thm}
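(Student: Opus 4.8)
The plan is to identify both sides with the set of \emph{twisting morphisms} (Maurer--Cartan elements) from $C$ to $A$, and then conclude by transitivity. Recall that for any dgc $C$ and dga $A$ the graded vector space $\hom_k(C,A)$ is itself a dga, the \emph{convolution dga}, with product $f\star g=\mu_A\circ(f\otimes g)\circ\Delta_C$ and differential $\partial f=d_A\circ f-(-1)^{|f|}f\circ d_C$; the Koszul sign rule guarantees associativity and the Leibniz identity. Since $A$ is augmented and $C$ coaugmented, the subspace $\hom_k(\bar C,\bar A)$ is closed under $\star$ and $\partial$. Call $\tau\in\hom_k(\bar C,\bar A)$ of degree $1$ (with the shift conventions of $BA$ and $\Omega C$ this is the relevant degree) a \textbf{twisting morphism} if it satisfies the Maurer--Cartan equation $\partial\tau+\tau\star\tau=0$, and write $\mathrm{Tw}(C,A)$ for the set of these. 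The content of the theorem is that $\mathrm{Tw}(C,A)$ is represented on one side by $\Omega C$ and on the other by $BA$.

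First I would show $\hom_{\cat{dga}}(\Omega C,A)\cong\mathrm{Tw}(C,A)$. The underlying graded algebra of $\Omega C=(T(\bar C[-1]),d_C+d_\Omega)$ is the free graded algebra on $\bar C[-1]$, so a morphism of graded algebras $\Omega C\to A$ is precisely a degree-$0$ graded linear map $\bar C[-1]\to A$; as all our dga maps preserve the augmentation, this factors through $\bar A$, and unshifting gives a degree-$1$ map $\tau\colon\bar C\to\bar A$. Because both differentials are derivations and the extension of $\tau$ is an algebra map, such an extension commutes with the differentials if and only if it does so on the generators $\bar C[-1]$. Evaluating there: the internal part contributes $d_A\circ\tau-\tau\circ d_C=\partial\tau$, while the cobar part $d_\Omega$, which applies $\bar\Delta$ and then runs the two outputs through $\mu_A$, contributes exactly $\tau\star\tau$. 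Hence the chain-map condition on generators is the Maurer--Cartan equation, and we obtain the desired bijection.

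Dually, I would show $\hom_{\cat{dgc}}(C,BA)\cong\mathrm{Tw}(C,A)$. The underlying graded coalgebra of $BA=(T^c(\bar A[1]),\partial)$ is the cofree \emph{conilpotent} coalgebra on $\bar A[1]$, and $C$ is conilpotent by hypothesis; so by the universal property recalled in the Example on $T^c$, a morphism of graded coalgebras $C\to BA$ is the same datum as a degree-$0$ graded linear map $C\to\bar A[1]$ killing the coaugmentation, i.e.\ a degree-$1$ map $\tau\colon\bar C\to\bar A$. Dualising the previous computation — now the internal part of $\partial$ on $BA$ yields $\partial\tau$ and the bar part yields $\tau\star\tau$ after corestriction along $BA\twoheadrightarrow\bar A[1]$ — the condition that the coalgebra map be a chain map unwinds to the same Maurer--Cartan equation, and conilpotence of $C$ is what lets one promote the identity from the corestriction to the whole map (and is exactly what makes the cofreeness statement available in the first place). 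Composing the two bijections gives $\hom_{\cat{dga}}(\Omega C,A)\cong\hom_{\cat{dgc}}(C,BA)$; naturality in $A$ (covariant) and in $C$ (contravariant) is immediate since every map in sight is built from composition and the structure maps of $A$ and $C$.

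The main obstacle is the bookkeeping of the degree shifts $\bar A[1]$, $\bar C[-1]$ together with the Koszul signs: one must check that the ``internal part $+$ bar part'' splitting of $\partial$ on $BA$, and the ``internal part $+$ cobar part'' splitting of $d_C+d_\Omega$ on $\Omega C$, really do reproduce the two terms $\partial\tau$ and $\tau\star\tau$ of the convolution dga's Maurer--Cartan equation with matching signs, so that both restriction/corestriction procedures land on the \emph{same} set $\mathrm{Tw}(C,A)$. Once the sign conventions are fixed (following \cite[2.2]{lodayvallette}) this is a direct verification on generators; the only genuinely non-formal input is the conilpotence of $C$, used on the coalgebra side.
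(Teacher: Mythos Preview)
The paper does not give its own proof of this statement; it merely cites \cite[2.2.6]{lodayvallette}. Your argument via twisting morphisms is exactly the standard proof from that reference, and indeed the paper later (in \S\ref{defmthy}) restates precisely this factorisation through $\mathrm{Tw}(C,A)$ in the nonunital setting, so your approach is both correct and the intended one.
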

The bar construction preserves quasi-isomorphisms; the idea is to filter $BA$ by setting $F_p BA$ to be the elements of the form $a_1\otimes\cdots\otimes a_n$ with $n\leq p$, and look at the associated spectral sequence \cite[2.2.3]{lodayvallette}. However, the cobar construction does not preserve quasi-isomorphisms in general. Bar and cobar give canonical resolutions:
\begin{thm}[\cite{lodayvallette}, 2.3.2]
	Let $A$ be an augmented dga. Then the counit $\Omega B A \to A$ is a quasi-isomorphism. Similarly, the unit is a quasi-isomorphism for coaugmented conilpotent dgcs.
\end{thm}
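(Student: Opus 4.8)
The plan is to recognise both maps as the adjuncts of the \emph{universal twisting morphisms} and to reduce the statement to the acyclicity of two twisted tensor products. Recall that a twisting morphism $\alpha\colon C \to A$ (from a coaugmented conilpotent dgc to an augmented dga) is a degree $-1$ map killing the (co)augmentations and satisfying the Maurer--Cartan equation for the convolution bracket, and that the bar--cobar adjunction sets up natural bijections between twisting morphisms $C \to A$, dga maps $\Omega C \to A$, and dgc maps $C \to BA$. Under these bijections the counit $\Omega BA \to A$ corresponds to $\mathrm{id}_{BA}$ and to the universal twisting morphism $\pi\colon BA = T^c(\bar A[1]) \onto \bar A[1] \xrightarrow{s^{-1}} \bar A \into A$, while the unit $C \to B\Omega C$ corresponds to $\mathrm{id}_{\Omega C}$ and to the universal twisting morphism $\iota\colon C \to \Omega C$, the canonical degree $-1$ map into cobar-weight one. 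So it is enough to show these two universal twisting morphisms are \emph{Koszul}, i.e. that the associated twisted complexes $A\otimes_\pi BA$ and $C\otimes_\iota\Omega C$ (equivalently their two-sided versions) are acyclic: by the standard Comparison Lemma for twisting morphisms, Koszulity of $\alpha$ forces both adjunct maps $\Omega C \to A$ and $C \to BA$ to be quasi-isomorphisms, the proof being a spectral-sequence comparison obtained by filtering according to (co)bar word-length.

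For $\pi$ the acyclicity of $A\otimes_\pi BA$ is classical and completely explicit: this twisted complex is the reduced bar resolution $\cdots \to A\otimes\bar A^{\otimes 2}\to A\otimes\bar A\to A$ of $k$ over $A$, and one writes down the contracting homotopy $h(a_0\otimes a_1\otimes\cdots\otimes a_n) = 1\otimes\bar a_0\otimes a_1\otimes\cdots\otimes a_n$ with $\bar a_0 := a_0 - \varepsilon(a_0)1$, checking $dh+hd = \mathrm{id}-p$ for $p$ the projection onto $H^0 = k$; this step is exactly where the augmentation of $A$ is used, and it needs no finiteness or connectivity hypotheses. Dually, for $\iota$ one must show $C\otimes_\iota\Omega C$ is acyclic; here there is in general no global contracting homotopy, so one argues by a filtration — the coradical filtration of $C$, tensored with $\Omega C$ — and computes that the resulting spectral sequence degenerates onto an acyclic page. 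The crucial point is that this filtration is exhaustive precisely because $C$ is conilpotent, and convergence of the spectral sequence then delivers acyclicity of the total complex. Combining the two acyclicity statements with the Comparison Lemma yields that $\Omega BA \to A$ and $C\to B\Omega C$ are quasi-isomorphisms.

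The main obstacle, and the only genuinely delicate point, is the coalgebra side: the cobar construction does not preserve quasi-isomorphisms in general, as already noted above, and the proof must localise this pathology in exactly one place — the exhaustivity and convergence of the coradical-filtration spectral sequence for $C\otimes_\iota\Omega C$, which is the technical incarnation of the conilpotence hypothesis. The remaining ingredients (the explicit bar homotopy, the bookkeeping of Koszul signs and suspensions, and the Comparison Lemma's spectral-sequence comparison) are routine by contrast.
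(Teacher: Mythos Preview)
The paper does not give its own proof of this statement; it is simply cited from Loday--Vallette \cite[2.3.2]{lodayvallette}. Your proposal is correct and is precisely the argument given there: reduce to the Koszulity of the universal twisting morphisms $\pi$ and $\iota$ via the Comparison Lemma (their Theorem 2.3.1), then prove acyclicity of the twisted tensor products. One minor point: for the coalgebra side, Loday--Vallette also produce an explicit contracting homotopy on $\Omega C\otimes_\iota C$ rather than running a spectral sequence; the homotopy involves iterated reduced coproducts and is well-defined exactly because conilpotence makes those iterations eventually vanish. Your spectral-sequence alternative via the coradical filtration is equally valid and isolates the role of conilpotence in the same place (exhaustivity of the filtration), so the two arguments are really the same idea in different clothing.
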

\subsection{Koszul duality for Artinian local algebras}
\begin{prop}\label{barcofiblem}
	Let $A$ be an augmented dga. Let $S$ be $k$, considered as an $A$-module via the augmentation. Then $BA$ is a model for the derived tensor product coalgebra $S^* \lot_A S$.
\end{prop}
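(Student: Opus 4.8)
The plan is to build an explicit bimodule resolution of $S$ and feed it into the two-sided bar construction. First I would take the standard (reduced) bar resolution $\mathrm{Bar}(S,A,S)$, whose underlying graded object is $\bigoplus_{n\geq 0} S \otimes \bar A[1]^{\otimes n} \otimes S$ with the usual bar differential combining the internal differential of $A$ with the alternating sum of multiplications, the outer two being the module actions on the copies of $S$. Since $S=k$ as a vector space and the $A$-action on $S$ factors through the augmentation $A\to k$, the two outer tensor factors contribute nothing: $S\otimes\bar A[1]^{\otimes n}\otimes S \cong \bar A[1]^{\otimes n}$, and the outermost differential components (the multiplications $a_1\mapsto \epsilon(a_1)$ into the $S$ factor and symmetrically on the right) vanish because $\bar A=\ker\epsilon$. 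So the resulting complex is exactly $(T(\bar A[1]),\partial)$ with $\partial = d_V + d_B$, i.e. the underlying complex of $BA$.

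Next I would justify that this bar resolution genuinely computes the derived tensor product, i.e. that $\mathrm{Bar}(S,A,S)\to S$ (or rather its role as $S\lot_A S$) is correct: the standard argument is that $\mathrm{Bar}(A,A,S)\to S$ is a quasi-isomorphism (it is the usual bar resolution, with an explicit contracting homotopy built from the augmentation, and it is a complex of relatively projective — in fact semifree after choosing the filtration by the bar degree — $A$-modules), so it is a cofibrant replacement of $S$ as an $A$-module in the appropriate model structure; then $S^* \lot_A S \simeq S^* \otimes_A \mathrm{Bar}(A,A,S)=\mathrm{Bar}(S^*\otimes_A A, A, S)$, and $S^*\otimes_A A \cong S$ since $S$ is one-dimensional (so $S^*\cong S$ as the trivial module), giving $\mathrm{Bar}(S,A,S)$. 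Here I am using that tensoring a semifree module with anything computes the derived tensor product, which is the standard input from the projective/$q$-model structure referenced in the conventions.

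Then the remaining point is the coalgebra structure: I need the deconcatenation coproduct on $T(\bar A[1])$ to agree with the coalgebra structure that $S^* \lot_A S$ carries, namely the one induced by the coassociative ``insertion of $S$ in the middle'' map $S^*\lot_A S \to S^*\lot_A S \lot_A S \simeq (S^*\lot_A S)\otimes_k(S^*\lot_A S)$ coming from a choice of coevaluation $k\to S\otimes S^*$. Unwinding this on the two-sided bar complex, the middle insertion splits a word $a_1\otimes\cdots\otimes a_n$ into $(a_1\otimes\cdots\otimes a_i)\otimes(a_{i+1}\otimes\cdots\otimes a_n)$ summed over all split points, with the coevaluation contracting the newly created adjacent $S^*$, $S$ pair to a scalar $1$; this is visibly the deconcatenation coproduct, and compatibility of signs is the Koszul sign bookkeeping already used to define $d_B$. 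I would phrase this cleanly by noting $\mathrm{Bar}(S,A,S)$ is a comonoid in $A$-bimodules-over-$S$ in the obvious way and that $-\otimes_A S$ and $S^*\otimes_A-$ are (lax) monoidal here, so the coalgebra structure transports automatically.

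The main obstacle is not any single hard computation but getting the model-categorical bookkeeping exactly right: one must be careful that the two-sided bar complex really is cofibrant (semifree) so that $S^*\otimes_A(-)$ computes the derived functor without any further replacement, and one must check that all the identifications $S^*\cong S$, $S^*\otimes_A A\cong S$, and the vanishing of the outer differential terms are compatible with the coalgebra structure maps and signs simultaneously. In other words, the content is bundling the classical fact ``$\mathrm{Bar}(S,A,S)=BA$ as a complex'' together with the observation that the natural coalgebra structure on a two-sided bar construction is deconcatenation; once both are in place the proposition is immediate.
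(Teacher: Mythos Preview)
Your proposal is correct and follows essentially the same route as the paper: both resolve one copy of $S$ by the bar resolution (the paper writes $\tilde S=\cdots\to S\otimes A\otimes A\to S\otimes A$ as a resolution of $S^*$, you use $\mathrm{Bar}(A,A,S)$ for $S$), check it is cofibrant via the semifree filtration by bar degree, and then tensor over $A$ with the other copy to recover $BA$. The paper simply cites \cite[2.2.2]{lodayvallette} for the identification and says nothing further about the coalgebra structure, whereas you spell out why deconcatenation matches the coproduct on $S^*\lot_A S$; your extra care there is warranted but not a different idea.
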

\begin{proof}
	Let $\tilde S$ be the totalisation of the bar double complex $\cdots\to S\otimes_k A\otimes_k A \to S\otimes_k A$, which resolves $S^*$. It is standard \cite[2.2.2]{lodayvallette} that $BA$ is quasi-isomorphic to $\tilde S \otimes_A S$. So the statement follows if we can show that $\tilde S$ is $A$-cofibrant. If $A$ is an ungraded algebra, then this is clear since $\tilde S$ is levelwise projective. Since $A$ might be an unbounded dga, we have to do a little more work. We need to show that $\tilde S$ is semifree: i.e. it has an increasing exhaustive filtration whose quotients are free over $A$ (in other words, this is a cell decomposition of $\tilde S$). But this is easy to show: just filter $\tilde S$ in the direction of the bar differential.
\end{proof}

Let $(C,\Delta,\epsilon)$ be a dgc. Then $\Delta$ dualises to a map $(C\otimes C)^* \to C^*$, and combining this with the natural inclusion $C^* \otimes C^* \to (C \otimes C)^*$ yields a semigroup structure on $C^*$. In fact, $(C^*,\Delta^*,\epsilon^*)$ is a dga. The dual statement is not in general true -- if $A$ is a dga then the multiplication need not dualise to a map $A^* \otimes A^* \to A^*$. However, if $A$ is an Artinian local dga, then it does (because the natural map $A^* \otimes A^* \to (A \otimes A)^*$ is an isomorphism), and indeed $A^*$ is a dgc. If $C$ is coaugmented, then $C^*$ is augmented, and if $C$ is conilpotent, then $\bar{C^*}$ is nilpotent. Similarly, if $A$ is Artinian local then $A^*$ is coaugmented and conilpotent.

\begin{defn}Let $A$ be an augmented dga. The \textbf{Koszul dual} of $A$ is the dga $A^!:=(BA)^*$.
\end{defn}
Note that $BA$ is coaugmented, so $A^!$ is again augmented. Because both $B$ and the linear dual preserve quasi-isomorphisms, so does $A \mapsto A^!$. Let $A$ be an augmented dga, and let $S$ be the $A$-module $k$. Then Lemma \ref{barcofiblem} combined with the (derived) hom-tensor adjunction gives a quasi-isomorphism of $A^!$ with $\R\enn_A(S)$.  The key statement about the Koszul dual is the following:
\begin{prop}
Let $A$ be a nonpositive Artinian local dga. Then $A^!$ is naturally isomorphic as a dga to $\Omega(A^*)$.\end{prop}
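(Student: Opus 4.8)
The plan is to exhibit both $A^{!}=(BA)^{*}$ and $\Omega(A^{*})$ as the free tensor algebra $T(V^{*})$ on the degreewise linear dual of $V:=\bar A[1]$, with identical differentials, so that the comparison map is simply the canonical duality isomorphism.

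First I would pin down the relevant finiteness. Since $A$ is Artinian local, $\bar A$ is finite-dimensional, and since $A$ is concentrated in nonpositive degrees, $V=\bar A[1]$ is concentrated in degrees $\le -1$; hence $V^{\otimes n}$ lives in degrees $\le -n$ and $BA=(TV,\partial)$ is locally finite. Consequently the degreewise linear dual is exact here, commutes with the finite tensor powers $V^{\otimes n}$, carries the direct sum $TV=\bigoplus_{n}V^{\otimes n}$ to the honest direct sum $\bigoplus_{n}(V^{*})^{\otimes n}=T(V^{*})$ (not a completed tensor algebra), and turns the deconcatenation coproduct of $BA$ into the concatenation product. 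Thus $A^{!}$ has underlying graded algebra $T(V^{*})$. On the other side, $A^{*}$ is a coaugmented conilpotent dgc (Artinian locality of $A$ is precisely what makes $\overline{A^{*}}$ nilpotent, as recalled above), with coaugmentation coideal $\overline{A^{*}}=(\bar A)^{*}$; so $\overline{A^{*}}[-1]=(\bar A)^{*}[-1]=(\bar A[1])^{*}=V^{*}$, and $\Omega(A^{*})=\bigl(T(V^{*}),\,d_{A^{*}}+d_{\Omega}\bigr)$ has the same underlying concatenation algebra.

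Next I would check that the differentials agree under this identification. Write $\partial=d_{V}+d_{B}$ as in the definition of the bar construction. The transpose of the internal part $d_{V}$, namely the differential of $TV$ extending that of $V$, is the internal differential of $T(V^{*})$ extending that of $V^{*}$; under $V^{*}=\overline{A^{*}}[-1]$ this is exactly the internal part $d_{A^{*}}$ of the cobar differential. The bar part $d_{B}\colon V^{\otimes n+1}\to V^{\otimes n}$ inserts the multiplication $\bar A\otimes\bar A\to\bar A$ into each adjacent pair of tensor slots; dualizing, its transpose $(V^{*})^{\otimes n}\to(V^{*})^{\otimes n+1}$ inserts the dual map $(\bar A)^{*}\to(\bar A)^{*}\otimes(\bar A)^{*}$ --- which is precisely the reduced coproduct $\bar\Delta$ of the dgc $A^{*}$ --- into each slot, and this is by definition the cobar differential $d_{\Omega}$. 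Hence $\partial^{*}=d_{A^{*}}+d_{\Omega}$, so the canonical duality isomorphism is an isomorphism of dgas from $A^{!}$ to $\Omega(A^{*})$. Naturality in $A$ is immediate: bar, cobar, the shift and degreewise duality are all functorial, and the comparison map is assembled from the canonical isomorphisms $(V^{\otimes n})^{*}\cong(V^{*})^{\otimes n}$.

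I expect the main obstacle to be the sign bookkeeping in the last step: one must verify that the Koszul signs appearing in $d_{B}$, in the canonical identification $(V^{\otimes n})^{*}\cong(V^{*})^{\otimes n}$, in the shifts $[\pm 1]$, and in the definition of $\bar\Delta$ all cancel, so that the transpose of $d_{B}$ equals $d_{\Omega}$ exactly in the conventions of \cite{lodayvallette}, not merely up to an overall sign. A secondary point worth stating explicitly is that $(BA)^{*}$ must be interpreted as the degreewise (graded) dual; the local finiteness established in the first step is exactly what makes this interpretation harmless and, in particular, what forces $(TV)^{*}\cong T(V^{*})$ rather than a completion.
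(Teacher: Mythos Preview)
Your proposal is correct and follows essentially the same route as the paper. Both arguments hinge on the observation that nonpositivity of $A$ forces $V^{\otimes n}$ into degrees $\le -n$, so that in each fixed degree only finitely many tensor powers contribute and the dual of the direct sum $TV$ is the uncompleted tensor algebra $T(V^*)$; one then identifies this with $\Omega(A^*)$ and checks that the bar differential dualises to the cobar differential. Your write-up is in fact more explicit than the paper's on the differential comparison (the paper simply says ``after checking that the bar differential dualises to the cobar differential''), and your caveat about sign conventions is appropriate and matches what the paper leaves implicit.
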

\begin{proof}For brevity, we will replace $A$ by its augmentation ideal $\bar A$. The dgc $BA$ is the direct sum total complex of the double complex whose rows are $A^{\otimes n}$. Hence, $A^!$ is the direct product total complex of the double complex with rows $(A^{\otimes n})^*$. However, because $A$ was nonpositive $A^!$ is also the direct sum total complex of this double complex. Because $A$ is nonpositive and locally finite, the natural map $(A^*)^{\otimes n} \to (A^{\otimes n})^*$ is an isomorphism. Hence $A^!$ is the direct sum total complex of the double complex with rows $(A^*)^{\otimes n}$, which -- after checking that the bar differential dualises to the cobar differential -- is precisely the definition of $\Omega (A^*)$.
\end{proof}
\begin{cor}\label{kdforart}
Let $A$ be a nonpositive Artinian local dga. Then $A$ is naturally quasi-isomorphic to $A^{!!}$.
\end{cor}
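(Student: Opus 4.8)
The plan is to chain together the preceding proposition, which identifies $A^!$ with $\Omega(A^*)$ for a nonpositive Artinian local $A$, with the bar--cobar resolution theorem \cite[2.3.2]{lodayvallette}. The one point to be careful about is that one should \emph{not} try to apply the description $A^!\cong\Omega(A^*)$ to the dga $A^!$ itself: in general $A^!$ is infinite-dimensional and unbounded (already $(k[\epsilon]/\epsilon^2)^!\cong k[t]$ with $|t|=1$), so it is not Artinian local and the preceding proposition does not apply to it. Instead I would unwind the outer Koszul dual to its definition and feed in the abstract bar--cobar adjunction at the level of the finite-dimensional dgc $A^*$.

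First, by the preceding proposition there is a natural isomorphism of dgas $A^!\cong\Omega(A^*)$. Applying the functor $(-)^!$, which is defined on all augmented dgas (and $\Omega(A^*)$ is augmented), gives a natural isomorphism $A^{!!}=(A^!)^!\cong(\Omega(A^*))^!$, and by definition of the Koszul dual $(\Omega(A^*))^!=(B\Omega(A^*))^*$.

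Next, since $A$ is Artinian local, $A^*$ is a coaugmented conilpotent dgc (as recorded just above the definition of the Koszul dual), so the bar--cobar theorem \cite[2.3.2]{lodayvallette} provides a natural quasi-isomorphism of dgcs $\eta\colon A^*\xrightarrow{\ \sim\ }B\Omega(A^*)$, namely the unit of the $(\Omega,B)$-adjunction. Linear duality over $k$ is exact, sends a dgc to a dga and a dgc morphism to a dga morphism, and in particular preserves quasi-isomorphisms; hence $\eta^*\colon(B\Omega(A^*))^*\xrightarrow{\ \sim\ }(A^*)^*=A^{**}$ is a quasi-isomorphism of dgas. Finally, $A$ has finite total dimension, so the evaluation map $\mathrm{ev}\colon A\to A^{**}$ is an isomorphism of dgas. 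Composing, $A^{!!}\cong(B\Omega(A^*))^*\xrightarrow{\ \eta^*\ }A^{**}\xleftarrow[\ \cong\ ]{\ \mathrm{ev}\ }A$ assembles into a single natural quasi-isomorphism $A^{!!}\to A$, all three constituent maps being natural in $A$; this is the claim.

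There is no serious obstacle here; the only genuinely load-bearing observation is the one flagged in the first paragraph, that one must invoke bar--cobar at the level of $A^*$ --- where conilpotence forces the unit $A^*\to B\Omega(A^*)$ to be a quasi-isomorphism --- rather than attempting to iterate the $\Omega((-)^*)$ formula. The remaining checks (that linear duality interchanges dgcs and dgas compatibly with (co)multiplication, that $\mathrm{ev}$ is multiplicative, and that every map above is natural) are routine.
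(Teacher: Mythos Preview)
Your proof is correct and follows essentially the same path as the paper's: both identify $A^!\cong\Omega(A^*)$ via the preceding proposition, unfold $A^{!!}=(B\Omega(A^*))^*$, invoke the bar--cobar unit quasi-isomorphism $A^*\to B\Omega(A^*)$, dualise, and finish with $A\cong A^{**}$. Your explicit warning not to reapply the $\Omega((-)^*)$ description to $A^!$ is a nice clarification the paper leaves implicit.
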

\begin{proof}
$A^{!!}$ is by definition $(B(A^!))^*$. By the previous Proposition, $A^!$ is isomorphic to $\Omega(A^*)$. Because $C \to B \Omega C$ is a quasi-isomorphism for conilpotent dgcs, $A^* \to B(A^!)$ is a dgc quasi-isomorphism. Dualising and using exactness of the linear dual gets us a dga quasi-isomorphism $A^{!!}\to A^{**}$. But $A$ is Artinian local, and hence isomorphic to $A^{**}$.
\end{proof}
\begin{rmk}Note that we did not use the local hypothesis; we just used that $A$ was nonpositive and finite.
\end{rmk}

\subsection{The model structure on pro-Artinian algebras}
\begin{defn}[\cite{kashschap}, \S6]\label{procats}
Let $\mathcal{C}$ be a category. A \textbf{pro-object} in $\mathcal{C}$ is a formal cofiltered limit, i.e. a diagram $J \to \mathcal{C}$ where $J$ is a small cofiltered category. We denote such a pro-object by $\{C_j\}_{j \in J}$. The category of pro-objects $\cat{pro}\mathcal{C}$ has morphisms $$\hom_{\cat{pro}\mathcal{C}}(\{C_i\}_{i \in I}, \{D_j\}_{j \in J}) := \varprojlim_j \varinjlim_i \hom_{\mathcal{C}}(C_i,D_j).$$
\end{defn}
If $\mathcal{C}$ has cofiltered limits, then there is a `realisation' functor $\varprojlim: \cat{pro}\mathcal{C} \to \mathcal{C}$. If $C$ is a constant pro-object, then it is easy to see that one has $\hom_{\cat{pro}\mathcal{C}}(C, \{D_j\}_{j \in J}) \cong  \hom_{\mathcal{C}}(C,\varprojlim_jD_j)$.
\begin{defn}
Let $\mathcal{C}$ be a category. The \textbf{ind-category} of $\mathcal{C}$ is $\cat{ind}\mathcal{C}:=\cat{pro}(\mathcal{C}^\text{op})^\text{op}$ (c.f. \ref{procats} for the definition of pro-categories). Less abstractly, an object of $\cat{ind}\mathcal{C}$ is a formal filtered colimit $J \to \mathcal{C}$, and the morphisms are $\hom_{\cat{pro}\mathcal{C}}(\{C_i\}_{i \in I}, \{D_j\}_{j \in J}) := \varprojlim_i \varinjlim_j \hom_{\mathcal{C}}(C_i,D_j)$.
\end{defn}
If $\mathcal{C}$ has filtered colimits, then there is a `realisation' functor $\varinjlim: \cat{ind}\mathcal{C} \to \mathcal{C}$. In this situation, if $D \in \mathcal{C}$ is a constant ind-object then one has $\hom_{\cat{ind}\mathcal{C}}(\{C_i\}_{i \in I}, D)\cong \hom_\mathcal{C}( \varinjlim_iC_i,D)$. 
\begin{defn}
Let $\dga$ be the the category of nonpositively cohomologically graded augmented dgas over $k$, and let $\dgart$ be the subcategory on Artinian local dgas. We refer to an object of $\proart$ as a \textbf{pro-Artinian dga}.
\end{defn}
\begin{rmk}
We caution that in this paper, ``pro-Artinian'' means ``pro-(Artinian local)''. We will not consider non-local profinite dgas.
\end{rmk}
There is a limit functor $\varprojlim: \proart \to \dga$ which sends a cofiltered system to its limit. Moreover, this is right adjoint to the functor $\dga \to \proart$ which sends a dga $A$ to the cofiltered system $\hat A$ of its Artinian local quotients. We list some standard results on the structure of $\proart$.
\begin{prop}[\cite{kashschap}, 6.1.14]
Let $f:A \to B$ be a morphism in $\proart$. Then $f$ is isomorphic to a level map: a collection of maps $\{f_\alpha: A_\alpha \to B_\alpha\}_{\alpha \in I}$ between Artinian local algebras, where $I$ is cofiltered.
\end{prop}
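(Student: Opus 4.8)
The plan is to use the standard reindexing trick for pro-categories, which applies here because the statement concerns only the formal structure of $\proart$ as a pro-category; indeed it is precisely \cite[6.1.14]{kashschap} applied to $\mathcal{C} = \dgart$. Write $A = \{A_i\}_{i \in I}$ and $B = \{B_j\}_{j \in J}$ with $I, J$ cofiltered. By the definition of morphisms in a pro-category, $f$ is a compatible family $(f_j)_{j \in J}$, where each $f_j \in \varinjlim_i \hom(A_i, B_j)$ is the class of an honest morphism $A_i \to B_j$ of Artinian local dgas for some $i = i(j)$. The first step is to build a common index category $K$ whose objects are triples $(i, j, \phi)$ with $i \in I$, $j \in J$, and $\phi: A_i \to B_j$ a morphism representing $f_j$ in $\varinjlim_i \hom(A_i, B_j)$, and whose morphisms $(i, j, \phi) \to (i', j', \phi')$ are pairs of structure maps $i \to i'$, $j \to j'$ in $I$, $J$ fitting into a commuting square between the $\phi$'s. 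There are evident projections $p: K \to I$ and $q: K \to J$, and by construction $k \mapsto \phi_k$ assembles into a natural transformation $A \circ p \Rightarrow B \circ q$, i.e.\ a level map $\{ \phi_k: A_{p(k)} \to B_{q(k)} \}_{k \in K}$.

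The second step is to check that $K$ is cofiltered. Nonemptiness is clear. Given two objects of $K$, one first uses cofilteredness of $J$ to pass to a common index $j_0$, lifts $f_{j_0}$ to a morphism $\psi: A_{i_0} \to B_{j_0}$, and then uses that two representatives of the same element of a filtered colimit of sets agree after pulling back along a sufficiently far structure map in $I$, together with cofilteredness of $I$, to produce a triple dominating both of the given ones; coequalizing a parallel pair in $K$ is the same kind of diagram chase. The third step is to verify that $p$ and $q$ are cofinal, so that the canonical maps $\{A_{p(k)}\}_{k \in K} \to \{A_i\}_{i \in I}$ and $\{B_{q(k)}\}_{k \in K} \to \{B_j\}_{j \in J}$ are isomorphisms in $\proart$; concretely one shows that for every $i \in I$ the comma category $(i \downarrow p)$ is cofiltered and nonempty (for nonemptiness, lift $f_j$ for any $j$ to a morphism out of some $A_{i'}$ and then use cofilteredness of $I$ to force a map $i' \to i$), and similarly for $q$. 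Unwinding the definitions, these identifications carry the level map $\{\phi_k\}$ to $f$, proving the Proposition.

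The only real subtlety is bookkeeping: verifying cofilteredness of $K$ and cofinality of $p, q$ means juggling the cofiltering conditions on $I$ and $J$ against the ``connectedness'' of filtered colimits of sets while tracking the direction of every structure map, and this is where all the (routine) work sits. Since this is exactly \cite[6.1.14]{kashschap}, in the body of the paper one can simply invoke that reference; the only point worth noting is that $\proart$ is an ordinary category, so the cited result applies without modification.
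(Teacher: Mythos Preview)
The paper gives no proof of this proposition; it simply records the statement with the citation \cite[6.1.14]{kashschap} and moves on. Your sketch is a correct outline of the standard reindexing argument that underlies that reference, and your closing remark---that one may just invoke the citation---is exactly what the paper does.
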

\begin{prop}[\cite{grothendieckpro}, Corollary to 3.1]\label{strictpro}
Every object of $\proart$ is isomorphic to a \textbf{strict} pro-object, i.e. one for which the transition maps are surjections.
\end{prop}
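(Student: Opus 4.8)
The plan is to adapt the classical argument of Grothendieck/Artin-Mazur for pro-objects in a category with suitable factorizations. First I would invoke the previous proposition to reduce to the case of a level map, or rather to a genuine cofiltered diagram $X\colon J \to \dgart$, $j \mapsto A_j$; we want to replace it by an isomorphic diagram (in $\proart$) whose transition maps $A_j \to A_{j'}$ are all surjective. The key structural input is that in $\dgart$ every morphism $f\colon A \to B$ factors as a surjection followed by an injection: set $A \onto \im(f) \into B$, where $\im(f)$ is the image dga (a subobject of $B$, hence still Artinian local since $\bar{\im(f)}\subseteq \bar B$ is nilpotent and $\im(f)$ is finite-dimensional; and it is a sub-dga because $f$ is a dga map, so its image is closed under multiplication and stable under the differential). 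This (surjection, injection) factorization is functorial enough for the purpose.

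Next I would build the strict replacement by the standard colimit-of-images construction. For a fixed $j \in J$, consider the poset (a cofiltered subcategory) of objects mapping to $j$; for each such $j' \to j$ form the image $\im(A_{j'} \to A_j) \subseteq A_j$. As $j'$ ranges over this system these images form a decreasing chain of sub-dgas of the finite-dimensional dga $A_j$, so the chain stabilizes: define $A'_j := \bigcap_{j' \to j}\im(A_{j'}\to A_j)$, which equals $\im(A_{j''}\to A_j)$ for some sufficiently far $j''$. One checks that $A'_j$ is again Artinian local (finite-dimensional sub-dga of $A_j$ with nilpotent augmentation ideal) and that the transition maps of $X$ restrict to maps $A'_{j} \to A'_{j'}$ for $j \to j'$ in $J$; moreover these restricted maps are \emph{surjective}, essentially because $A'_{j'}$ is by construction the image of something that factors through $A'_j$. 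This yields a strict pro-object $\{A'_j\}_{j\in J}$ together with a level map $\{A'_j\}\to\{A_j\}$.

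Finally I would verify that $\{A'_j\}_{j\in J} \to \{A_j\}_{j\in J}$ is an isomorphism in $\proart$. It is a monomorphism levelwise, so it remains to produce an inverse, i.e. to show that for each $j$ there is some $j' \to j$ with $A_{j'}\to A_j$ factoring through $A'_j \into A_j$; but this is exactly the stabilization statement above, since $A_{j''} \to A_j$ has image $A'_j$ for $j''$ far enough along. Packaging these factorizations compatibly over the cofiltered index category gives a morphism $\{A_j\} \to \{A'_j\}$ in $\proart$ inverse to the inclusion. The main obstacle is the bookkeeping around cofilteredness: one must ensure the images stabilize and that the restricted transition maps assemble into a genuine pro-diagram (i.e. check compatibility over composable arrows in $J$), rather than just a collection of individually well-behaved maps. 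Since $\dgart$ consists of finite-dimensional objects, the stabilization is automatic, and the only real care needed is the functoriality of the image construction; everything else is the formal pro-category manipulation that underlies the cited corollary of \cite{grothendieckpro}.
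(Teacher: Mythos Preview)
The paper does not give its own proof of this proposition; it is simply quoted as a corollary from \cite{grothendieckpro}. Your sketch is precisely the classical Grothendieck/Artin--Mazur argument that underlies that citation: factor each transition map through its image, use finite-dimensionality of objects in $\dgart$ to stabilize the nested images, and observe that the resulting strict system is pro-isomorphic to the original via the cofinality of the stabilizing indices. The only point worth flagging is that you should verify the image of a map of Artinian local augmented dgas is again Artinian local --- this is immediate since the image contains $k$, is closed under the differential, and has augmentation ideal $f(\bar A)$, which is nilpotent --- but you have essentially noted this. Your proposal is correct and is exactly what the cited reference does.
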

\begin{cor}\label{limisexact}
The functor $\varprojlim: \proart \to \dga$ is exact.
\end{cor}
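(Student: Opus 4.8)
The plan is to read ``exact'' in the only sensible sense here: that $\varprojlim$ preserves short exact sequences of augmented dgas, where a short exact sequence $0\to A'\to A\to A''\to 0$ means one that is exact on underlying cochain complexes (the multiplications are then inherited automatically, and $\varprojlim$ commutes with the forgetful functor to complexes, so it suffices to work with the underlying complexes). One half is free: we observed above that $\varprojlim$ is right adjoint to the completion functor $A\mapsto\hat A$, so it preserves all limits, in particular kernels, and is therefore left exact. It remains to prove that $\varprojlim$ carries epimorphisms to epimorphisms.

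First I would fix a good presentation of a given short exact sequence $0\to A'\to A\xrightarrow{p} A''\to 0$ in $\proart$. Using \ref{strictpro} to replace all three terms by strict pro-objects and \cite[6.1.14]{kashschap} to realise $p$ as a level map $\{p_\alpha\colon A_\alpha\to A''_\alpha\}_{\alpha\in I}$ over a cofiltered $I$, and then replacing each $A''_\alpha$ by the image of $p_\alpha$ and stabilising the images of the transition maps (legitimate since each $A''_\alpha$ is finite-dimensional), we may assume every $p_\alpha$ is surjective. The kernels $A'_\alpha:=\ker p_\alpha$ assemble into a pro-object isomorphic to $A'$; the one genuinely technical point of the proof is that, by a further reindexing of the Grothendieck type used in \ref{strictpro} (see \cite{grothendieckpro}), the tower $\{A'_\alpha\}$ can also be taken strict. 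Thus the given short exact sequence is represented level by level by short exact sequences $0\to A'_\alpha\to A_\alpha\to A''_\alpha\to 0$ of finite-dimensional dgas in which all transition maps of all three towers are surjective.

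For such a presentation exactness of $\varprojlim$ is a Mittag-Leffler argument. Fix $x=(x_\alpha)\in\varprojlim A''_\alpha$. Each fibre $p_\alpha^{-1}(x_\alpha)\subseteq A_\alpha$ is nonempty, and because $\{A'_\alpha\}$ is strict the transition maps of $\{A_\alpha\}$ restrict to \emph{surjections} $p_\beta^{-1}(x_\beta)\to p_\alpha^{-1}(x_\alpha)$ for $\beta\ge\alpha$ --- this is the snake lemma applied to the rows indexed by $\alpha$ and $\beta$, the surjectivity of $A'_\beta\to A'_\alpha$ being exactly what makes the relevant connecting cokernel vanish. A cofiltered inverse limit of nonempty sets along surjective transition maps is nonempty (equivalently $\varprojlim^1$ of a strict tower of $k$-vector spaces vanishes; see \cite[\S6]{kashschap}), so the $p_\alpha^{-1}(x_\alpha)$ admit a compatible family, i.e.\ an element of $\varprojlim A_\alpha$ lifting $x$. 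Hence $\varprojlim A_\alpha\to\varprojlim A''_\alpha$ is surjective, and together with left exactness this gives the short exact sequence $0\to\varprojlim A'_\alpha\to\varprojlim A_\alpha\to\varprojlim A''_\alpha\to 0$. Since limits, kernels and the forgetful functor to complexes all commute, this is precisely the assertion that $\varprojlim\colon\proart\to\dga$ is exact. The main obstacle is the double reindexing needed to make all three towers strict simultaneously while keeping them level-exact; everything else is formal or standard.
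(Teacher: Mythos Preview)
Your proof is correct and follows essentially the same approach as the paper, whose entire proof is the single sentence ``A strict pro-object satisfies the Mittag-Leffler condition.'' You have unpacked this into the explicit statement that $\varprojlim$ preserves surjections, reducing via \ref{strictpro} to strict towers and then running the standard Mittag-Leffler lifting argument --- precisely the content the paper leaves implicit. The extra reindexing you worry about (making all three towers strict and level-exact simultaneously) is a genuine technicality of your short-exact-sequence framing, but the paper sidesteps it by working with a single pro-object at a time: once every object is isomorphic to a strict one, $\varprojlim^1$ vanishes termwise, and that is already exactness.
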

\begin{proof}
A strict pro-object satisfies the Mittag-Leffler condition.
\end{proof}
We aim to enhance the limit functor $\varprojlim: \proart \to \dga$ to a right Quillen functor. We start by describing the model structures involved.
\begin{thm}[\cite{hinichhom}]
The category $\dga$ is a model category, with weak equivalences the quasi-isomorphisms and fibrations the levelwise surjections.
\end{thm}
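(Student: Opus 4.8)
The plan is to obtain this model structure by \emph{transfer} along the free--forgetful adjunction between $\dga$ and the category $\cat{Ch}_k^{\leq 0}$ of nonpositively graded cochain complexes of $k$-vector spaces, equipped with its projective ($q$-)model structure \cite{sixmodels}: weak equivalences the quasi-isomorphisms, fibrations the levelwise surjections, cofibrantly generated by the sphere-to-disk inclusions $I=\{S^{n-1}\into D^n\}$, with generating trivial cofibrations $J=\{0\into D^n\}$, where $D^n$ denotes the contractible two-term complex concentrated in two adjacent degrees. Identifying an augmented dga with its augmentation ideal presents $\dga$ as the category of nonunital nonpositive dgas, which is monadic over $\cat{Ch}_k^{\leq 0}$: the forgetful functor $U=\bar{(-)}$ has left adjoint the (unitalised, reduced) tensor algebra $F\colon V\mapsto T(V)=k\oplus\bigoplus_{n\geq 1}V^{\otimes n}$. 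I would then invoke the standard recognition theorem for cofibrantly generated model categories (see \cite{hovey}): the transferred structure --- weak equivalences and fibrations created by $U$, generating (trivial) cofibrations $F(I)$ (resp.\ $F(J)$) --- exists as soon as (i) $\dga$ is bicomplete, (ii) $U$ preserves filtered colimits, so that the small object argument applies to $F(I)$ and $F(J)$, and (iii) every relative $F(J)$-cell complex is a quasi-isomorphism.

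Conditions (i) and (ii) are routine. The category $\dga$ has all limits, computed on underlying complexes with the evident multiplication, and all colimits: coproducts are free products of augmented dgas, and coequalisers are quotients by dg-ideals. Filtered colimits in $\dga$ are again computed on underlying complexes, which gives (ii), and smallness of the domains of $F(I)\cup F(J)$ is then automatic, since $\hom_{\dga}(T(W),-)\cong\hom_{\cat{Ch}_k^{\leq 0}}(W,\bar{(-)})$ and each relevant $W$ (namely $0$, $S^{n-1}$, or $D^n$) is a finite, hence compact, complex.

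The heart of the matter is (iii), and this is where I expect the real work to lie. Since quasi-isomorphisms of complexes over a field are closed under filtered colimits, and a relative $F(J)$-cell complex is a transfinite composite of pushouts of coproducts of maps in $F(J)$, a short transfinite induction --- using that $H^\ast$ commutes with the filtered colimits occurring at limit stages --- reduces the claim to the following: for any acyclic complex $D$ (a coproduct of disks) and any map $k\to A$, the pushout $A'$ of $k\to T(D)$ along $k\to A$ is a quasi-isomorphism. The pushout $A'$ is the coproduct $A\amalg_k T(D)$, so on augmentation ideals $\overline{A'}$ is the free product $\bar A\ast\overline{T(D)}$, whose underlying graded vector space is
\[
\overline{A'}\;\cong\;\bigoplus_{\ell\geq 1}\ \bigoplus_{\substack{\epsilon\colon\{1,\dots,\ell\}\to\{0,1\}\\ \epsilon_i\neq\epsilon_{i+1}}} X_{\epsilon_1}\otimes_k\cdots\otimes_k X_{\epsilon_\ell},\qquad X_0=\bar A,\quad X_1=\overline{T(D)}=\bigoplus_{m\geq 1}D^{\otimes m}.
\]
As the differential on $\overline{A'}$ is a derivation preserving the subspaces $\bar A$ and $\overline{T(D)}$, each summand in this decomposition is a subcomplex, so the decomposition is one of complexes. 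The summand indexed by the length-one word $\epsilon=(0)$ is precisely $\bar A$; every longer index $\epsilon$ alternates between $0$ and $1$, hence contains at least one factor $X_1=\overline{T(D)}$. Since $D$ is acyclic, the Künneth theorem over the field $k$ shows that each $D^{\otimes m}$, hence $\overline{T(D)}$, is acyclic, and then (Künneth once more) so is any tensor product having $\overline{T(D)}$ as a factor. Therefore $\overline{A'}\cong\bar A\oplus(\text{acyclic complex})$, so $\bar A\into\overline{A'}$ --- equivalently $A\to A'$ --- is a quasi-isomorphism.

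The main obstacle is thus entirely contained in step (iii): identifying the pushout as a free product, observing that it splits summand-wise into subcomplexes, and invoking Künneth to see that all but the distinguished summand are acyclic. This is the only place where the ground field is genuinely used (exactness of $\otimes_k$ and the Künneth formula), and it is the technical core of Hinich's theorem \cite{hinichhom}; everything else is formal bookkeeping with the transfer machinery. One could alternatively deduce the statement by first equipping the category of all nonpositive dgas with a transferred model structure and then passing to the slice over $k$, but the computation above remains the substance in either approach.
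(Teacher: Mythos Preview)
The paper does not prove this theorem at all: it is stated with a bare citation to Hinich \cite{hinichhom} and no argument is given. So there is nothing in the paper to compare your proof against.

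Your proof is correct and is essentially Hinich's own argument: one transfers the projective model structure from chain complexes along the free--forgetful adjunction, and the only nontrivial verification is that pushing out along a map $k\to T(D)$ with $D$ acyclic yields a quasi-isomorphism, which you handle by decomposing the free product into alternating tensor words and applying K\"unneth over the field $k$. This is exactly the mechanism behind \cite[\S2]{hinichhom} (there stated for all dgas, not just nonpositive augmented ones, but the proof is identical). Your identification of augmented dgas with nonunital dgas via the augmentation ideal is a clean way to organise the computation. One small comment: in the nonpositive setting the generating sets $I$ and $J$ need a little care at the boundary degree $0$ (e.g.\ $D^0$ is just $k$ in degree $0$, not a two-term complex), but this does not affect the substance of the argument, and the transferred model structure still has exactly the claimed weak equivalences and fibrations.
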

Note that in $\dga$, every object is fibrant, and the cofibrant objects are precisely the semifree dgas (i.e. those that become free graded algebras after forgetting the differential). For example, $\Omega B A \to A$ is a functorial cofibrant resolution of $A$. 
\begin{thm}\label{proartmodel}
The category $\proart$ is a model category, with weak equivalences those maps $f$ for which each $H^n f: H^nA \to H^nB$ is an isomorphism of profinite $k$-vector spaces, and fibrations those maps $f$ for which $\varprojlim f$ is a levelwise surjection.
\end{thm}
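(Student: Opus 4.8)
The plan is to construct the model structure on $\proart$ by transferring it along the adjunction relating $\proart$ to an appropriate diagram category, and more concretely to exploit the structural results \ref{strictpro} and \ref{limisexact} together with the known model structure on $\dga$ (Hinich). First I would recall the general machinery for putting model structures on pro-categories: given a model category $\mathcal{M}$ which is, say, a proper and suitably finite model category, Edwards--Hastings and Isaksen give a \emph{strict} model structure on $\cat{pro}\mathcal{M}$ where weak equivalences and cofibrations are the level maps that are levelwise such, and fibrations are characterised by a lifting property. Here $\mathcal{M}$ is not quite a model category in the needed sense (Artinian local dgas do not have all limits and colimits), so instead I would observe that $\dgart$ sits inside $\dga$ and use the fact (Pro\-position \ref{strictpro}, Corollary \ref{limisexact}) that every pro-Artinian dga is strict and that $\varprojlim$ is exact. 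The cleanest route: define the three classes as in the statement, and verify the model category axioms directly, using level representatives (Proposition 6.1.14 quoted above) to reduce statements about morphisms in $\proart$ to levelwise statements about Artinian local dgas.

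The key steps, in order, would be: (1) Check that the three classes are closed under retracts and satisfy two-out-of-three --- this is immediate for weak equivalences since cohomology commutes with cofiltered limits of profinite vector spaces (Mittag-Leffler), and for fibrations it follows because $\varprojlim$ is exact so preserves and reflects the relevant surjectivity after passing to strict models. (2) Establish the two lifting axioms. Here I would define cofibrations as maps with the left lifting property against trivial fibrations, and acyclic cofibrations as those with LLP against fibrations, and then prove the two factorisation axioms. For factorisations, the idea is to first factor a level representative $f_\alpha : A_\alpha \to B_\alpha$ levelwise in $\dga$ using Hinich's model structure (path-object and mapping-cylinder style factorisations built from $\Omega B$), and then check that the factorisation can be made functorial and compatible with the cofiltered transition maps, so that it assembles to a factorisation in $\proart$. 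The exactness of $\varprojlim$ is what guarantees that ``levelwise fibration after strictification'' is exactly the fibration condition ``$\varprojlim f$ is a levelwise surjection''. (3) Identify the generating (acyclic) cofibrations if one wants cofibrant generation --- or, alternatively, avoid this by citing the Edwards--Hastings / Isaksen framework directly once $\dgart$ is recognised as (a full subcategory closed under the relevant operations of) a finite model category.

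The main obstacle I expect is reconciling the two possible descriptions of fibrations --- the lifting-property description one needs for the axioms versus the concrete description ``$\varprojlim f$ is a levelwise surjection'' in the statement --- and in particular showing these agree. This requires genuinely using strictness: an arbitrary pro-map need not be a level map with surjective components, but Proposition \ref{strictpro} lets us replace the target by a strict pro-object, and then Corollary \ref{limisexact} shows $\varprojlim$ detects the surjectivity levelwise. A secondary subtlety is that Artinian local dgas are closed under finite limits but not arbitrary ones, so all constructions must be performed at the level of the pro-system rather than after taking $\varprojlim$; one must check that the path objects and factorisation functors built from Hinich's structure on $\dga$ actually land back in $\dgart$ when applied to Artinian local inputs (they do, since finite-dimensionality and nilpotence of the augmentation ideal are preserved by the relevant small constructions), and that they are functorial enough to pass to pro-objects. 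Once these compatibilities are in hand, the remaining verifications are routine diagram-chasing, so I would state them briefly and refer to \cite{kashschap} and the standard pro-homotopy-theory literature for the general pattern.
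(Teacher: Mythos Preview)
Your proposal contains a genuine gap in the factorisation step. You claim that ``path objects and factorisation functors built from Hinich's structure on $\dga$ actually land back in $\dgart$ when applied to Artinian local inputs (they do, since finite-dimensionality and nilpotence of the augmentation ideal are preserved by the relevant small constructions)''. This is false. The standard cofibrant replacement in Hinich's model structure is $\Omega B A$, which is a free (tensor) algebra on an infinite-dimensional vector space even when $A$ is Artinian local; it is neither finite-dimensional nor has nilpotent augmentation ideal. Likewise the usual path object $A^I$ in $\dga$ involves tensoring with polynomial de Rham forms and is again infinite-dimensional. So your proposed levelwise factorisation $A_\alpha \to C_\alpha \to B_\alpha$ produces pro-objects $\{C_\alpha\}$ that do not lie in $\proart$, and the argument collapses at exactly the point you flagged as ``a secondary subtlety''.

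The paper's own proof is simply a pointer to \cite[4.3]{unifying}, so the content is in Pridham's argument. The commutative version there does not attempt to factor levelwise inside Artinian objects; rather, the model structure is obtained by exploiting the anti-equivalence with (ind-)conilpotent coalgebras, where factorisations are available via bar--cobar constructions that are naturally ind-finite on the coalgebra side and hence dualise to pro-Artinian objects. In the present paper this is precisely what is set up immediately afterwards in \S\ref{coalgmodel}: the equivalence $\proart \simeq (\cat{cndgc}_k^{\geq 0})^{\mathrm{op}}$ of \ref{sharpprop} transports the Lef\`evre-Hasegawa model structure (where $C \to B\Omega C$ is the fibrant replacement, and $B\Omega C$ is automatically a filtered colimit of finite-dimensional subcoalgebras by Sweedler's theorem) across to $\proart$. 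If you want to salvage your direct approach, you would need to replace the Hinich factorisations by constructions that are manifestly pro-Artinian --- for instance, by dualising the coalgebra factorisations, or by running a small object argument with generating cofibrations whose codomains are Artinian local --- rather than appealing to levelwise constructions in $\dga$.
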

\begin{proof}
The proof of \cite[4.3]{unifying} adapts to the noncommutative case.
\end{proof}
\begin{prop}\label{limreflects}
	The functor $\varprojlim: \proart \to \dga$ both preserves and reflects weak equivalences.
\end{prop}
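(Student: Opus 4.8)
The plan is to deduce both claims from the explicit description of weak equivalences in $\proart$ given by Theorem \ref{proartmodel}, together with the exactness of $\varprojlim$ from Corollary \ref{limisexact}. First I would observe that, by Proposition \ref{strictpro}, we may replace any given map by a level map of strict pro-objects $\{f_\alpha : A_\alpha \to B_\alpha\}_{\alpha \in I}$ with surjective transition maps; this is legitimate since the properties ``is a weak equivalence'' in either category are invariant under isomorphism. Under this reduction, both the cohomology pro-vector spaces $\{H^n A_\alpha\}$ and $\{H^n B_\alpha\}$ are strict pro-objects (cohomology is a functor, and passing to cohomology sends surjections of nonpositive dgas to surjections of the nonpositive parts, hence the Mittag-Leffler condition persists), so in particular they satisfy Mittag-Leffler and $\varprojlim^1$ of each vanishes.

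Next I would make the key computation: since $\varprojlim$ is exact on $\proart$ (Corollary \ref{limisexact}), and since for a strict (Mittag-Leffler) system cohomology commutes with the limit, we get a natural isomorphism $H^n(\varprojlim_\alpha A_\alpha) \cong \varprojlim_\alpha H^n(A_\alpha)$, and similarly for $B$. Therefore $H^n(\varprojlim f)$ is identified with $\varprojlim_\alpha H^n(f_\alpha)$ as a map of profinite $k$-vector spaces. The statement that $f$ is a weak equivalence in $\proart$ is, by definition, precisely the statement that each $H^n f \colon \{H^n A_\alpha\} \to \{H^n B_\alpha\}$ is an isomorphism of profinite (i.e. pro-finite-dimensional) $k$-vector spaces. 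So the content reduces to the elementary fact that, for a morphism of strict pro-objects of finite-dimensional vector spaces, the map is a pro-isomorphism if and only if the induced map on limits is an isomorphism of profinite vector spaces — which holds because on strict systems the limit functor $\cat{pro}(\cat{fdVect}_k) \to \cat{ProfVect}_k$ is fully faithful (indeed an equivalence onto its image). That gives both directions simultaneously: $\varprojlim f$ is a quasi-isomorphism $\iff$ each $H^n(\varprojlim f)$ is an isomorphism of profinite vector spaces $\iff$ each $H^n f$ is a pro-isomorphism $\iff$ $f$ is a weak equivalence in $\proart$.

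The main obstacle, and the one point I would be careful to spell out, is the interchange $H^n \varprojlim \cong \varprojlim H^n$: this requires knowing that $\varprojlim$ is exact (which we have) \emph{and} that the relevant cohomology pro-systems are Mittag-Leffler so that no $\varprojlim^1$ term intervenes, which is exactly why the reduction to strict pro-objects at the start is essential. The only mild subtlety there is checking that applying $H^n$ to a level map with surjective transitions again yields surjective transitions; for nonpositively graded dgas a surjective transition map $A_\beta \onto A_\alpha$ of complexes is degreewise surjective, so it is surjective on cocycles only after controlling the coboundaries, but in fact one does not need surjectivity on the nose — Mittag-Leffler of $\{H^n A_\alpha\}$ suffices, and that is automatic for the image system of any pro-object, or can be arranged directly. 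Everything else — functoriality, naturality, and the identification of ``weak equivalence'' with the cohomological condition — is bookkeeping that follows immediately from Theorem \ref{proartmodel}.
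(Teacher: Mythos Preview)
Your outline is correct and matches the paper's skeleton: reduce to showing that $H^n$ commutes with $\varprojlim$ and that $\varprojlim$ detects isomorphisms of pro-finite-dimensional vector spaces. However, the paper's execution is cleaner than yours at exactly the point you flag as the main obstacle. Rather than invoking Mittag-Leffler for the system $\{H^n A_\alpha\}$, the paper uses the single observation that the linear dual gives a contravariant equivalence $\cat{pro}(\cat{fd}\text{-}\cat{vect}_k)\simeq \cat{vect}_k$ (every vector space is canonically ind-finite). From this it follows immediately that $\varprojlim:\cat{pro}(\cat{fd}\text{-}\cat{vect}_k)\to\cat{vect}_k$ is exact and reflects isomorphisms, so $H^n\varprojlim\cong\varprojlim H^n$ holds without any separate Mittag-Leffler check on cohomology, and the detection of isomorphisms is automatic.

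By contrast, your justification for the vanishing of the obstruction term --- ``Mittag-Leffler of $\{H^n A_\alpha\}$ \ldots is automatic for the image system of any pro-object'' --- is not right as stated: surjectivity of transition maps on the complexes does not formally force Mittag-Leffler on cohomology, and ``image system'' is not a standard argument here. What actually makes this work is precisely the exactness of $\varprojlim$ on $\cat{pro}(\cat{fd}\text{-}\cat{vect}_k)$, which kills $\varprojlim^1$ of \emph{any} pro-finite-dimensional system over a field. So your argument can be completed, but only by importing the same duality fact the paper uses; once you do that, the reduction to strict level maps and the separate appeal to Corollary~\ref{limisexact} become unnecessary.
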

\begin{proof}
	Every vector space is canonically ind-finite \cite[1.3]{unifying}, so that the linear dual provides a contravariant equivalence from $\cat{pro}(\cat{fd-vect}_k)$ to $\cat{vect}_k$. In other words, let $g:U \to V$ be a map of profinite vector spaces. We may take $g$ to be a level map $\{g_\alpha:U_\alpha \to V_\alpha\}_\alpha$. Then $g$ is an isomorphism if and only if $\varinjlim_\alpha g_\alpha^*: \varinjlim_\alpha V_\alpha^* \to \varinjlim_\alpha U_\alpha^*$ is an isomorphism. Dualising again, we obtain a map $\varprojlim_\alpha U_\alpha^{**} \to \varprojlim_\alpha V_\alpha^{**}$, which canonically agrees with $\varprojlim g = \varprojlim_\alpha g_\alpha$ since the $U_\alpha$ and $V_\alpha$ are finite-dimensional. Hence, $g$ is an isomorphism if and only if $\varprojlim g$ is. Similarly, $g$ is a injection or a surjection if and only if $\varprojlim g$ is. Hence, $\varprojlim:\cat{pro}(\cat{fd-vect}_k) \to \cat{vect}_k$ is exact. Let $f$ be a morphism in $\proart$. By definition, $f$ is a weak equivalence if and only if each $H^nf \in \cat{pro}(\cat{fd-vect}_k)$ is an isomorphism. But this is the case if and only if each $\varprojlim H^n f$ is an isomorphism. Because $\varprojlim:\cat{pro}(\cat{fd-vect}_k) \to \cat{vect}_k$ is exact, this is the case if and only if each $H^n\varprojlim f$ is an isomorphism, which is exactly the condition for $\varprojlim f$ to be a weak equivalence.
\end{proof}
\begin{prop}
The functor $\varprojlim: \proart \to \dga$ is right Quillen.
\end{prop}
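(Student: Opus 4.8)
The plan is to invoke the standard criterion that a functor between model categories is right Quillen precisely when it is a right adjoint that preserves fibrations and trivial (acyclic) fibrations. We have already observed, just before Theorem~\ref{proartmodel}, that $\varprojlim\colon\proart\to\dga$ is right adjoint to the completion functor $\dga\to\proart$ sending a dga $A$ to the cofiltered system $\hat A$ of its Artinian local quotients. So it remains only to check that $\varprojlim$ preserves fibrations and trivial fibrations.

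For fibrations there is essentially nothing to prove: by the very definition of the model structure on $\proart$ in Theorem~\ref{proartmodel}, a morphism $f$ is a fibration exactly when $\varprojlim f$ is a levelwise surjection, which is precisely the condition for $\varprojlim f$ to be a fibration in $\dga$. Hence $\varprojlim$ carries fibrations to fibrations. For trivial fibrations, recall that by Proposition~\ref{limreflects} the functor $\varprojlim$ preserves (indeed reflects) weak equivalences. A trivial fibration in $\proart$ is a morphism that is simultaneously a fibration and a weak equivalence; applying $\varprojlim$ then yields a morphism in $\dga$ that is simultaneously a levelwise surjection and a quasi-isomorphism, i.e.\ a trivial fibration. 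Therefore $\varprojlim$ preserves trivial fibrations, and so is right Quillen.

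The genuine content of the statement lies entirely in the two results we are permitted to assume, namely the existence of the model structure on $\proart$ (Theorem~\ref{proartmodel}) and the fact that $\varprojlim$ preserves and reflects weak equivalences (Proposition~\ref{limreflects}); once these are in hand, the argument above is purely formal. The only points requiring any care are checking that the pair $(\,\hat{(-)}\,,\,\varprojlim\,)$ really is an adjunction on the nose, so that ``right Quillen'' is meaningful, and that the ``levelwise surjection'' condition defining fibrations in $\proart$ after applying $\varprojlim$ matches the one defining fibrations in $\dga$ — both immediate from the definitions. An alternative route would be to verify instead that the left adjoint $A\mapsto\hat A$ preserves cofibrations and trivial cofibrations; but since cofibrations in $\proart$ are not described as transparently as the fibrations, the argument on the limit side is the cleaner one, and I would present it that way.
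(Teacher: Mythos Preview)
Your proof is correct and takes essentially the same approach as the paper. The paper's argument is even terser: it simply notes that $\varprojlim$ preserves fibrations by definition and preserves \emph{all} weak equivalences by Proposition~\ref{limreflects}, which of course implies preservation of trivial fibrations; your version unpacks this via the standard right-Quillen criterion but relies on exactly the same two ingredients.
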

\begin{proof}
It is clear that $\varprojlim$ preserves fibrations. It preserves all weak equivalences by \ref{limreflects}.
\end{proof}
\begin{rmk}
	Morally, one would like to say that the model structure on $\proart$ is transferred from that of $\dga$ along the right adjoint $\varprojlim$, since one has that a map $f$ of pro-Artinian algebras is a fibration or a weak equivalence precisely when $\varprojlim f$ is. However, this is not strictly the case: the standard argument (as in e.g. \cite{cranstransfer}) requires that the left adjoint $A \mapsto \hat A$ preserves small objects. If this is the case, then since the ungraded algebra $k[x]$ is small in $\dga$, the object $\widehat{k[x]}$ is small in $\proart$. However, let $V$ be any infinite-dimensional vector space, and let $V_i$ be its filtered system of finite-dimensional subspaces. Let $k\oplus V_i$ be the square-zero extension (see \ref{dersctn} for a definition). In $\proart$, the colimit of the $k\oplus V_i$ is the square-zero extension $k\oplus V^{**}$. One can check that $\varinjlim_i\hom(\widehat{k[x]},k\oplus V_i)\cong V$, but that $\varinjlim_i\hom({\widehat{k[x]},k\oplus V^{**}})\cong V^{**}$. Hence, $\widehat{k[x]}$ is not small.
\end{rmk}

\subsection{The model structure on conilpotent coalgebras}\label{coalgmodel}
\begin{thm}[\cite{lefevre}, 1.3.1.2a]
The category $\cat{cndgc}_k$ of coaugmented conilpotent dgcs admits a model structure where the weak equivalences $f$ are those maps for which $\Omega f$ is an algebra quasi-isomorphism, and the cofibrations are the levelwise monomorphisms.
\end{thm}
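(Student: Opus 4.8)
The plan is to realise this model structure as the \emph{left transfer} (left induction) of Hinich's model structure \cite{hinichhom} on $\dga$ along the cobar functor $\Omega\colon\cat{cndgc}_k\to\dga$, which is left adjoint to the bar construction $B$. Concretely one declares a map $f$ of conilpotent dgcs to be a weak equivalence exactly when $\Omega f$ is a quasi-isomorphism, a cofibration exactly when $\Omega f$ is a cofibration of dgas, and a fibration exactly when it has the right lifting property against every trivial cofibration; the content is then threefold: that $\cat{cndgc}_k$ is bicomplete, that these classes satisfy the model axioms, and that the cofibrations so defined are precisely the levelwise monomorphisms. The bicompleteness and formal properties are routine: $\cat{cndgc}_k$ is locally presentable, with colimits computed on underlying dg-vector spaces (conilpotency and coaugmentation pass to direct sums and quotients), and the weak equivalences and cofibrations are retract-closed, the former inheriting two-out-of-three from the quasi-isomorphisms.

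Next I would match the cofibrations with the monomorphisms. One inclusion is immediate: a cofibration of dgas is injective (being a retract of a semifree extension), and $\Omega f$ restricted to the weight-one generators $\bar C[-1]\subseteq\Omega C$ is $\bar f[-1]$ followed by the inclusion of generators, so $\Omega f$ injective forces $f$ injective. For the reverse inclusion --- that $\Omega$ sends a monomorphism $C\hookrightarrow D$ to a cofibration of dgas --- one argues along the coradical filtration of $D$: after choosing a graded complement of $\bar f(\bar C)$ in $\bar D$ adapted to that filtration, one builds $\Omega D=T(\bar D[-1])$ from $\Omega C=T(\bar C[-1])$ as a transfinite composite of semifree extensions, adjoining the new generators one coradical layer at a time. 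Here conilpotency is decisive: the reduced coproduct strictly lowers coradical degree, so the cobar differential of a layer-$n$ generator already lands in the subalgebra assembled from the earlier layers, while the internal-differential contribution is absorbed by a standard refinement of the filtration. Hence $\Omega f$ is a retract of a relative cell complex of semifree extensions, i.e.\ a cofibration, and the transferred structure indeed has the monomorphisms as its cofibrations.

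The substantive part --- and the main obstacle --- is the two functorial factorisations together with the acyclicity of the trivial cofibrations, for which left transfer offers no automatic ``acyclicity condition''; here one must argue essentially as in Lef\`{e}vre-Hasegawa \cite{lefevre}. I would fix a small set $I$ of monomorphisms whose relative cell complexes exhaust the monomorphisms up to retract, and a small set $J$ of explicit elementary trivial cofibrations (adjoining a contractible cofree tensor factor), verify the smallness required --- using that colimits in $\cat{cndgc}_k$ are underlying --- and run the small object argument to obtain the factorisations. The crux is that every transfinite composite of pushouts of maps in $J$ must be a weak equivalence, and this is delicate precisely because, as noted above, the cobar construction does not preserve quasi-isomorphisms in general. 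One handles it by using the explicitly contractible shape of the generators of $J$ and the compatibility of $\Omega$ with the relevant filtered colimits, together with a careful filtration argument on the cobar construction (via the coradical filtration, whose associated graded carries only the internal differential) to control convergence by hand; the canonical resolutions $\Omega BA\xrightarrow{\sim}A$ and $C\xrightarrow{\sim}B\Omega C$ recorded earlier are what tether these computations to honest quasi-isomorphisms.

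Finally the lifting axioms fall out: trivial cofibrations lift against fibrations by the definition of ``fibration'', and cofibrations lift against trivial fibrations by the usual retract argument, now that both factorisations are available and the three classes are retract-closed with two-out-of-three for the weak equivalences. This produces the model structure with exactly the stated weak equivalences and cofibrations. With it, $\Omega\dashv B$ becomes a Quillen equivalence onto $\dga$ --- the derived unit and counit being the quasi-isomorphisms $C\to B\Omega C$ and $\Omega BA\to A$ --- and it is in this form that the theorem underpins the Koszul-duality results of the preceding subsections.
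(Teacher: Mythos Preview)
The paper does not give its own proof of this statement: it is quoted verbatim from Lef\`evre-Hasegawa with a bare citation to \cite[1.3.1.2a]{lefevre} and no further argument. So there is no in-paper proof to compare your proposal against.

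That said, your outline is broadly in line with how the result is actually established in \cite{lefevre}. Framing it as left transfer along $\Omega$ is a reasonable modern repackaging, and your identification of the transferred cofibrations with the monomorphisms via the coradical filtration is the right idea (and your remark that the internal-differential contribution is handled by a further refinement is correct: over a field one decomposes each coradical layer of the complement into cohomology plus contractibles to get the required triangularity). Two comments. First, the honest work is exactly where you say it is---the acyclicity of $J$-cell maps---and your sketch there is thin: ``explicitly contractible shape'' plus ``compatibility of $\Omega$ with filtered colimits'' does not by itself force $\Omega$ of a transfinite pushout to be a quasi-isomorphism, and in \cite{lefevre} this is handled by a careful direct analysis rather than a formal transfer argument. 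If you want to run a genuine left-transfer argument you would need to invoke something like the Hess--K\k{e}dziorek--Riehl--Shipley acyclicity criterion and check its hypotheses, which is not obviously easier than the direct verification. Second, be careful with the target: the Quillen equivalence is with $\cat{dga}_k$ (unbounded augmented dgas), not with the nonpositive category $\dga$ that Hinich's theorem is cited for in the paper; this does not affect your argument but the model structure you are transferring from should be the unbounded one.
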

Every weak equivalence is a quasi-isomorphism, but the converse is not true \cite[2.4.3]{lodayvallette}. Every object in $\cat{cndgc}_k$ is cofibrant, and the fibrant objects are the semicofree conilpotent coalgebras, i.e. those that are tensor coalgebras after forgetting the differential (the tensor coalgebra $T^cV$ is the cofree conilpotent coalgebra on $V$). The bar-cobar resolution $C \to B\Omega C$ provides a functorial fibrant resolution, just as the cobar-bar resolution $\Omega B A \to A$ is a cofibrant dga resolution:
\begin{prop}Let $C$ be a conilpotent dgc. The natural map $C \to B\Omega C$ is a fibrant resolution.
\end{prop}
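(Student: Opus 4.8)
The plan is to check the two conditions that make $\eta_C : C \to B\Omega C$ a fibrant resolution in the model category $\cat{cndgc}_k$: that $B\Omega C$ is fibrant, and that $\eta_C$ is a weak equivalence. I would also record that $\eta_C$ is a cofibration, so that it is a resolution in the strong sense. Fibrancy is the easy half: the fibrant objects are the semicofree conilpotent dgcs, i.e. those whose underlying graded coalgebra is a tensor coalgebra, and $B\Omega C$ is such by construction --- $\Omega C$ has underlying graded algebra the free algebra $T(\bar C[-1])$, so its augmentation ideal $\overline{\Omega C}$ is a direct sum of tensor powers of $\bar C[-1]$, and $B$ of any augmented dga has underlying graded coalgebra the tensor coalgebra $T^c$ on the shifted augmentation ideal.

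The weak-equivalence condition is where care is needed. The weak equivalences in $\cat{cndgc}_k$ are the maps $f$ with $\Omega f$ a quasi-isomorphism, which is a priori stronger than $f$ itself being a quasi-isomorphism; so the fact that the unit $C \to B\Omega C$ is always a quasi-isomorphism (one half of the bar--cobar resolution theorem quoted above) does not immediately suffice. Instead I would use the triangle identity for the bar--cobar adjunction $\Omega \dashv B$: writing $\varepsilon$ for the counit, one has $\varepsilon_{\Omega C} \circ \Omega(\eta_C) = \id_{\Omega C}$. The counit $\varepsilon_{\Omega C} : \Omega B \Omega C \to \Omega C$ is a quasi-isomorphism by the other half of that theorem (applied to the augmented dga $\Omega C$), and the displayed composite is the identity, so two-out-of-three forces $\Omega(\eta_C)$ to be a quasi-isomorphism. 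By the definition of the weak equivalences, $\eta_C$ is therefore a weak equivalence.

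For the cofibration claim, note that under the adjunction bijection $\hom_{\cat{dgc}}(C, B\Omega C) \cong \hom_{\cat{dga}}(\Omega C, \Omega C)$ the unit $\eta_C$ is the map corresponding to $\id_{\Omega C}$; concretely this means that composing $\eta_C$ with the cofree projection onto the length-one part of $B\Omega C$ and then with the projection $\overline{\Omega C}[1] \to \bar C$ onto the length-one part of $\Omega C$ recovers the identity of $\bar C$ (and $\eta_C$ is the coaugmentation on $k$). In particular $\eta_C$ is injective, hence a levelwise monomorphism, hence a cofibration; together with the previous paragraph it is a trivial cofibration, so $C \to B\Omega C$ is a functorial fibrant resolution.

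The one genuine obstacle is recognising that one must route through the triangle identity and two-out-of-three rather than quoting the quasi-isomorphism statement directly: the model structure on $\cat{cndgc}_k$ is set up precisely so that it has more weak equivalences than quasi-isomorphisms, and this is exactly the subtlety that makes the statement worth proving. The rest is routine unwinding of the explicit bar and cobar constructions.
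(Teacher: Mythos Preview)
Your argument is correct: fibrancy of $B\Omega C$ is immediate from its underlying tensor coalgebra, and the triangle identity $\varepsilon_{\Omega C}\circ\Omega(\eta_C)=\id_{\Omega C}$ together with two-out-of-three is exactly the right way to see that $\eta_C$ is a weak equivalence in the sense required (i.e.\ that $\Omega(\eta_C)$ is a quasi-isomorphism), rather than merely a quasi-isomorphism of dgcs. The injectivity check for the cofibration claim is also fine.

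One small slip in your commentary: you write that the model structure on $\cat{cndgc}_k$ has \emph{more} weak equivalences than quasi-isomorphisms. The inclusion goes the other way --- every weak equivalence is a quasi-isomorphism but not conversely (as the paper itself notes just before this proposition). Your underlying point, that being a quasi-isomorphism is \emph{not sufficient} to be a weak equivalence and hence one cannot simply quote $C\simeq B\Omega C$, is correct; just the word ``more'' should be ``fewer'' (equivalently, the weak-equivalence condition is strictly stronger).

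As for comparison with the paper: the paper does not give an argument at all --- it simply cites two statements from Lef\`evre-Hasegawa's thesis. Your proof is the natural unpacking of what those citations contain, and is in that sense more informative than the paper's treatment.
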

\begin{proof}
	Follows from 1.3.2.2 and 1.3.2.3 of \cite{lefevre}.
\end{proof}
\begin{prop}[\cite{lefevre}, 1.3.1.2b]
The pair $(\Omega, B)$ is a Quillen equivalence between $\cat{cndgc}_k$ and $\cat{dga}_k$, the category of unbounded dgas.
\end{prop}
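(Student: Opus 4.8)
The plan is to check the two conditions defining a Quillen equivalence directly, relying on the resolution results recorded above. The key structural observation is that every object of $\cat{cndgc}_k$ is cofibrant while every object of $\cat{dga}_k$ is fibrant. Consequently, once $(\Omega, B)$ is known to be a Quillen adjunction, the derived unit at a conilpotent dgc $C$ is just the ordinary unit $C \to B\Omega C$ (since $\Omega C$ is already fibrant), and the derived counit at a dga $A$ is just the ordinary counit $\Omega B A \to A$ (since $BA$ is already cofibrant). But $\Omega B A \to A$ is a quasi-isomorphism, hence a weak equivalence of dgas, and $C \to B\Omega C$ is a fibrant resolution, hence a weak equivalence of conilpotent dgcs. (Alternatively the unit is handled by applying $\Omega$: the triangle identity makes $\Omega C \xrightarrow{\Omega\eta_C} \Omega B\Omega C \xrightarrow{\epsilon_{\Omega C}} \Omega C$ the identity, and $\epsilon_{\Omega C}$ is a quasi-isomorphism, so $\Omega\eta_C$ is a quasi-isomorphism and $\eta_C$ is by definition a weak equivalence.) Thus the whole of the statement is the assertion that $(\Omega, B)$ is a Quillen adjunction.

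For that it is enough to show $\Omega$ is left Quillen, i.e.\ that it preserves cofibrations and trivial cofibrations. Since the weak equivalences of $\cat{cndgc}_k$ are, by definition, exactly the maps $f$ with $\Omega f$ a quasi-isomorphism, $\Omega$ preserves all weak equivalences, and hence it preserves trivial cofibrations as soon as it preserves cofibrations. Everything therefore comes down to a single claim: $\Omega$ sends a levelwise monomorphism $i\colon C \hookrightarrow D$ of conilpotent dgcs to a cofibration of dgas.

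This is the step I expect to be the main obstacle. The approach is to pick a graded-linear splitting $\bar D[-1] \cong \bar C[-1] \oplus W$, so that, forgetting differentials, $\Omega D \cong \Omega C \ast T(W)$ is the free product of $\Omega C$ with the free algebra on $W$; then filter $W$ using the coradical (conilpotency) filtration of $D$ and verify that the cobar differential is triangular for this filtration, so that the differential of each new generator lands in the subalgebra generated by $\Omega C$ and the earlier generators. Here one uses that $\bar C$ is a subcoalgebra of $\bar D$ and that $D$ is conilpotent. Triangularity exhibits $\Omega C \to \Omega D$ as a transfinite composition of maps $\Omega C \ast T(W_{<\lambda}) \to \Omega C \ast T(W_{\le\lambda})$, each obtained by adjoining the generators $W_{\le\lambda}/W_{<\lambda}$ with differential into the previous stage, hence a pushout of a coproduct of generating cofibrations of $\cat{dga}_k$; therefore $\Omega C \to \Omega D$ is a cofibration. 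Carrying out this filtration bookkeeping precisely is exactly Lefèvre-Hasegawa's argument \cite[1.3.1.2, 1.3.2]{lefevre}; granted it, the Quillen equivalence follows formally from the first two paragraphs.
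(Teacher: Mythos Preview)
The paper does not give its own proof of this proposition; it is simply quoted from \cite[1.3.1.2b]{lefevre} with the citation serving as the proof. Your sketch is correct and is essentially the standard argument from that reference: reduce to showing $\Omega$ preserves cofibrations (since it preserves weak equivalences by definition), then use a graded-linear splitting and the coradical filtration to exhibit $\Omega i$ as a relative cell complex, and conclude the Quillen equivalence from the bar-cobar resolution results already recorded in the paper.
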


\begin{prop}\label{sharpprop}
The functor $\{A_\alpha\}_\alpha \mapsto\varinjlim_\alpha A_\alpha^*$ is an equivalence $\proart\to (\cat{cndgc}^{\geq 0}_k)^\text{op}$.
\end{prop}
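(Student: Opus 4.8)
The plan is to reduce the statement to the finite-dimensional case, where it is simply linear duality between finite-dimensional dgas and dgcs, and then to pass to ind-objects. Write $\mathcal F$ for the full subcategory of $\cat{cndgc}^{\geq 0}_k$ on the finite-dimensional objects.

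First I would record the finite-dimensional duality. If $C \in \mathcal F$ then the canonical maps $C^*\otimes C^* \to (C\otimes C)^*$ and $C \to C^{**}$ are isomorphisms, so the discussion preceding \ref{kdforart} shows that $(C^*,\Delta^*,\epsilon^*)$ is a nonpositively graded Artinian local dga: coassociativity of $\Delta$ dualises to associativity, the counit to a unit, the coderivation $d$ to a derivation, conilpotence of $\bar C$ to nilpotence of $\overline{C^*}$, and finite-dimensionality is preserved. Conversely, for $A \in \dgart$ the same discussion gives $A^* \in \mathcal F$. Both assignments are functorial, and the evaluation maps $C \to C^{**}$, $A \to A^{**}$ are natural isomorphisms of dgcs, respectively dgas (again by finiteness), so linear duality is an equivalence $(-)^*\colon \mathcal F \xrightarrow{\ \sim\ } (\dgart)^{\mathrm{op}}$.

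The heart of the proof is to identify $\cat{cndgc}^{\geq 0}_k$ with the free filtered cocompletion of $\mathcal F$, i.e.\ $\cat{cndgc}^{\geq 0}_k \simeq \cat{ind}(\mathcal F)$. On one hand, every object of $\mathcal F$ is compact in $\cat{cndgc}^{\geq 0}_k$: the forgetful functor to graded vector spaces commutes with filtered colimits — tensor products do, and both conilpotence and nonnegativity of the grading pass to filtered colimits — so any coalgebra map out of a finite-dimensional object into a filtered colimit factors through a term. On the other hand, every $C \in \cat{cndgc}^{\geq 0}_k$ is the filtered union of its finite-dimensional coaugmented conilpotent sub-dgcs: given a homogeneous $x \in \bar C$, conilpotence provides $n$ with $\bar\Delta^{(n)}x = 0$, and since $d$ is a coderivation also $\bar\Delta^{(n)}(dx) = 0$, so iterating the reduced coproduct on $x$ and on $dx$ terminates; the finite-dimensional graded span of $1$, of $x$ and $dx$, and of all the tensor factors appearing in the $\bar\Delta^{(i)}x$ and $\bar\Delta^{(i)}(dx)$ for $i < n$ is then closed under $\Delta$ by coassociativity, closed under $d$ by the coderivation identity, and contains the coaugmentation, hence is a finite-dimensional coaugmented conilpotent sub-dgc of $C$ containing $x$. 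The poset of such sub-dgcs is filtered, since a finite sum of finite-dimensional sub-coalgebras is again a finite-dimensional sub-coalgebra (conilpotent because $C$ is), and its colimit is $C$. I expect this ``dg fundamental theorem of coalgebras'' to be the one genuinely non-formal ingredient, and hence the main obstacle: the care is in verifying that closing up simultaneously under $\Delta$ and $d$ stays finite-dimensional, conilpotence bounding the number of iterations needed and the coderivation identity preventing $d$ from escaping the finite stage. This should be extractable from the literature (e.g.\ \cite{lodayvallette}, \cite{lefevre}), and if one prefers to avoid it one can instead work throughout with strict pro-presentations as provided by \ref{strictpro}.

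Finally I would assemble. Applying $\cat{ind}(-)$ to the equivalence of the second paragraph and using the definitional identity $\cat{ind}(\mathcal D^{\mathrm{op}}) = (\cat{pro}\,\mathcal D)^{\mathrm{op}}$ gives $\cat{ind}(\mathcal F) \simeq \cat{ind}\big((\dgart)^{\mathrm{op}}\big) = \proart^{\mathrm{op}}$; combining with $\cat{cndgc}^{\geq 0}_k \simeq \cat{ind}(\mathcal F)$ and passing to opposites yields an equivalence $\proart \xrightarrow{\ \sim\ } (\cat{cndgc}^{\geq 0}_k)^{\mathrm{op}}$. Unwinding the identifications, a pro-object $\{A_\alpha\}_\alpha$ is sent to the ind-object $\{A_\alpha^*\}_\alpha$ of $\mathcal F$, realised inside $\cat{cndgc}^{\geq 0}_k$ as $\varinjlim_\alpha A_\alpha^*$, which is exactly the functor in the statement. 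As a direct check of full faithfulness one has, using compactness of each $B_\beta^*$ and the fact that $\hom_{\cat{cndgc}}(-,X)$ turns colimits into limits,
$$\hom_{\proart}\big(\{A_\alpha\}_\alpha,\{B_\beta\}_\beta\big) \;=\; \varprojlim_\beta\varinjlim_\alpha \hom\big(B_\beta^*, A_\alpha^*\big) \;=\; \hom_{\cat{cndgc}}\big(\varinjlim_\beta B_\beta^*,\ \varinjlim_\alpha A_\alpha^*\big),$$
which is precisely $\hom$ between the images in $(\cat{cndgc}^{\geq 0}_k)^{\mathrm{op}}$, while essential surjectivity is immediate from the second paragraph.
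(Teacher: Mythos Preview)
Your proposal is correct and follows essentially the same route as the paper: both establish the finite-dimensional duality $\mathcal F \simeq (\dgart)^{\mathrm{op}}$, identify $\cat{cndgc}^{\geq 0}_k$ with $\cat{ind}(\mathcal F)$ via the dg version of Sweedler's theorem plus compactness of finite-dimensional coalgebras, and then take ind/pro of the duality. The only difference is cosmetic: the paper cites the dg Sweedler theorem from Getzler--Goerss rather than sketching it, and phrases the argument as ``the colimit functor $\cat{ind}(\mathcal F)\to \cat{cndgc}^{\geq 0}_k$ is fully faithful and essentially surjective'' rather than invoking the universal property of ind-completion, but the content is the same.
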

\begin{proof}
Via the linear dual, an Artinian local dga is the same thing as a finite-dimensional coaugmented conilpotent dg-coalgebra over $k$. This provides an equivalence between $(\proart)^\text{op}$ and $\cat{ind}(\cat{fd-cndgc}_k^{\geq 0})$. A classical theorem of Sweedler says that a (non-dg) coalgebra is the filtered colimit of its finite-dimensional subcoalgebras. The same remains true for dgcs \cite[1.6]{getzlergoerss}. In particular, the image of the colimit functor $\varinjlim: \cat{ind}(\cat{fd-cndgc}_k^{\geq 0}) \to \cat{dgc}_k^{\geq 0}$ is precisely $\cat{cndgc}_k^{\geq 0}$. I claim that $\varinjlim$ is fully faithful. For this it is enough to prove that finite-dimensional conilpotent coalgebras are compact: i.e. given a finite-dimensional conilpotent dgc $C$, and $D=\{D_\alpha\}_\alpha$ a filtered system of finite dgcs, that there is a natural isomorphism $\varinjlim_\alpha\hom(C,D_\alpha)\to \hom(C,\varinjlim_\alpha D_\alpha)$. This follows because since $C$ is finite-dimensional, every map $C \to \varinjlim_\alpha D_\alpha $ must factor through one of the $D_\alpha$ \cite[1.9]{getzlergoerss}. 
\end{proof}
\begin{defn}\label{csharp}
If $C$ is a nonnegative dgc, let $C^\sharp\in \proart$ denote the levelwise dual of its filtered system of finite-dimensional sub-dgcs.
\end{defn}
It is easy to see that $C \mapsto C^\sharp$ is the inverse functor to  $\{A_\alpha\}_\alpha \mapsto\varinjlim_\alpha A_\alpha^*$, and that $C^*$ and $ \varprojlim C^\sharp$ are isomorphic dgas.
\begin{prop}
The functor $C \mapsto C^\sharp$ is a Quillen equivalence $(\cat{cndgc}^{\geq 0}_k)^\text{op} \to \proart$.
\end{prop}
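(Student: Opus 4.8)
The functor $(-)^\sharp$ is already an equivalence of categories by \ref{sharpprop}, with quasi-inverse $\Phi\colon\{A_\alpha\}_\alpha \mapsto \varinjlim_\alpha A_\alpha^*$, and we have seen that $\varprojlim C^\sharp \cong C^*$ as dgas; regard $\Phi$ as left adjoint to $(-)^\sharp$. The plan is to check that this equivalence respects the model structures: first that $(-)^\sharp$ is right Quillen (equivalently $\Phi$ is left Quillen), and then the one additional condition needed to promote the resulting Quillen adjunction to a Quillen equivalence.

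For the first part I must check that $(-)^\sharp$ preserves fibrations and trivial fibrations. A fibration in $(\cat{cndgc}^{\geq 0}_k)^\text{op}$ is, read in $\cat{cndgc}^{\geq 0}_k$, a levelwise monomorphism $\iota\colon C \hookrightarrow D$; since $\hom_k(-,k)$ is exact, $\varprojlim(\iota^\sharp) = \iota^*\colon D^* \to C^*$ is a levelwise surjection, so $\iota^\sharp$ is a fibration in $\proart$ by \ref{proartmodel}. For weak equivalences: every weak equivalence in $\cat{cndgc}^{\geq 0}_k$ is a quasi-isomorphism, so if $f$ is a weak equivalence then exactness of $\hom_k(-,k)$ shows $H^n(f^\sharp)$ is an isomorphism of profinite vector spaces (being the levelwise dual of the isomorphism $H^n(f)$), hence $f^\sharp$ is a weak equivalence in $\proart$. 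Thus $(-)^\sharp$ preserves fibrations and weak equivalences, hence trivial fibrations, and is right Quillen.

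So $\Phi \dashv (-)^\sharp$ is a Quillen adjunction between model categories whose two functors are mutually inverse equivalences of the underlying categories. As in $\cat{cndgc}_k$, every object of $\cat{cndgc}^{\geq 0}_k$ is cofibrant, hence every object of $(\cat{cndgc}^{\geq 0}_k)^\text{op}$ is fibrant; so for cofibrant $A \in \proart$ no fibrant replacement is needed and the derived unit of the adjunction is just the ordinary unit $A \to (\Phi A)^\sharp$, an isomorphism. By the standard criterion for a Quillen equivalence it therefore remains only to show that $(-)^\sharp$ reflects weak equivalences between fibrant objects --- and all objects being fibrant, this says that if $h^\sharp$ is a weak equivalence in $\proart$ then $h$ is a weak equivalence in $\cat{cndgc}^{\geq 0}_k$. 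Unwinding the duality as above, $h^\sharp$ is a weak equivalence in $\proart$ precisely when $h$ is a quasi-isomorphism, so the claim reduces to: every quasi-isomorphism in $\cat{cndgc}^{\geq 0}_k$ is a weak equivalence, equivalently $\Omega$ preserves quasi-isomorphisms between nonnegatively graded coaugmented conilpotent dgcs.

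This last point is the main obstacle. I would attack it by filtering the cobar construction by word length: a quasi-isomorphism $f\colon C \to D$ induces a map of filtered complexes $\Omega f$ which is a quasi-isomorphism on each associated graded piece (this reduces, via the K\"unneth theorem, to the fact that $H(\bar f)$ is an isomorphism), so the induced spectral sequences agree from the $E_1$-page on. The delicate part is that these spectral sequences converge to $H(\Omega C)$ and $H(\Omega D)$: when $\bar C$ has a nonzero degree-zero component --- equivalently, when the dual pro-Artinian dga has degree-zero nilpotents, which genuinely occurs --- the cobar construction $\Omega C$ is unbounded below, so the length filtration is neither bounded below nor, as a decreasing filtration by subcomplexes, complete, and one must argue convergence with care. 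Alternatively one may be able to extract this from \cite{lefevre}, or to deduce it from the identification $\Omega(A_\alpha^*) \cong A_\alpha^! \simeq \R\enn_{A_\alpha}(k)$ together with the fact that $\Omega$ commutes with filtered colimits. Granting the claim, $(-)^\sharp$ both preserves and reflects weak equivalences and is an equivalence of the underlying categories, so it induces an equivalence of homotopy categories and $\Phi \dashv (-)^\sharp$ is a Quillen equivalence.
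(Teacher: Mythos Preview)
Your approach is essentially the same as the paper's: check that $(-)^\sharp$ is right Quillen by verifying it preserves fibrations and weak equivalences, then conclude Quillen equivalence from the fact that the underlying functors are inverse equivalences of categories. The one place you diverge is exactly the point you flag as the ``main obstacle'': that every quasi-isomorphism in $\cat{cndgc}^{\geq 0}_k$ is a weak equivalence. The paper does not attempt to prove this by hand; it simply invokes \cite[1.3.5.1]{lefevre}, which asserts that when one restricts the model structure on $\cat{cndgc}_k$ to the nonnegative part, the weak equivalences are \emph{precisely} the quasi-isomorphisms. So your suspicion ``one may be able to extract this from \cite{lefevre}'' is exactly right, and once you cite that result the rest of your argument goes through without the spectral-sequence discussion. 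Your convergence worries about the cobar filtration are legitimate in general, which is why the paper outsources this step rather than arguing it directly.
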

\begin{proof}
If one restricts the model structure on $\cat{cndgc}_k$, then one gets a model structure on $\cat{cndgc}^{\geq 0}_k$, with weak equivalences precisely the quasi-isomorphisms of dgcs \cite[1.3.5.1]{lefevre}.  Moreover, the bar-cobar resolution $C \to B\Omega C$ is still a fibrant resolution functor (as long as one takes the good truncation to get a weakly equivalent dgc concentrated in nonnegative degrees). I claim that the equivalence $C \mapsto C^\sharp$ is actually a Quillen equivalence. For this, it is enough to check that it is right Quillen. Let $g^\text{op}: D \to C$ be a fibration in $(\cat{cndgc}^{\geq 0}_k)^\text{op}$, i.e. $g:C \to D$ is a levelwise injection of dgcs. Hence, $g^*: D^* \to C^*$ is a levelwise surjection of dgas. But $g^*\cong \varprojlim g^\sharp$, so that $g^\sharp$ is a fibration in $\proart$. Similarly, suppose that $g^\text{op}: D \to C$ is a weak equivalence, i.e. $g: C \to D$ is a quasi-isomorphism. Dualising, $g^*\cong \varprojlim g^\sharp$ is a quasi-isomorphism, and so $g^\sharp$ is a weak equivalence, since, by \ref{limreflects}, $f$ is a weak equivalence of pro-Artinian dgas if and only if $\varprojlim f$ is a quasi-isomorphism.
\end{proof}
So we have a diagram of Quillen adjunctions (we only draw the right adjoints) $$(\cat{cndgc}^{\geq 0}_k)^\text{op} \xrightarrow{C \mapsto C^\sharp} \proart \xrightarrow{\varprojlim} \cat{dga}_k \xrightarrow{B} \cat{cndgc}_k$$where the left-hand map is an equivalence. Note that the composition is not isomorphic to the identity, even on the level of homotopy categories: a nonnegatively graded dgc $C$ is mapped to $B(C^*)$, which has essentially no chance of being even quasi-isomorphic to $C$.

\subsection{Koszul duality for pro-Artinian algebras}
\begin{defn}
	Say that a nonpositive dga $A\in \dga$ is \textbf{good} if \begin{itemize}
		\item $A$ is quasi-isomorphic to $\varprojlim \mathcal{A}$ for some pro-Artinian dga $\mathcal{A}$.
		\item $A$ is cohomologically locally finite.
	\end{itemize}
\end{defn}
In the presence of the second condition, the first condition is actually equivalent to requiring simply that $H^0(A)$ is an Artinian local algebra. One direction of this equivalence is clear: if $A$ is quasi-isomorphic to $\varprojlim \mathcal{A}$ and $H^0(A)$ is finite, then it is a finite-dimensional local ring and hence Artinian local. The other direction is a nontrivial result we later prove as \ref{goodchar}, which will require deformation-theoretic methods. We will of course not use this fact until then. We are about to prove a Koszul duality result for the class of good dgas -- we begin by noting an important finiteness property.
\begin{prop}\label{kdfinite}
	Let $A\in \dga$ be a nonpositive dga. If $A$ is cohomologically locally finite then so are $BA$ and $A^!$.
\end{prop}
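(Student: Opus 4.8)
The plan is to analyze the bigraded structure of $BA$ directly. Recall $BA$ has underlying graded vector space $\bigoplus_{n\geq 0}(\bar A[1])^{\otimes n}$, with total differential $\partial = d_V + d_B$, where $d_V$ is internal and $d_B$ is the bar differential lowering tensor-length by one. Since $A$ is nonpositive, $\bar A$ sits in degrees $\leq 0$, so $\bar A[1]$ sits in degrees $\leq -1$; hence $(\bar A[1])^{\otimes n}$ sits in degrees $\leq -n$. Thus for a fixed total cohomological degree $j$, only finitely many tensor-lengths $n$ (namely $n \leq -j$) contribute, so $BA^j$ is a finite direct sum of pieces $\big((\bar A[1])^{\otimes n}\big)^j$. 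The first key step is therefore to observe that cohomological local finiteness of $BA$ is a statement we can check length-by-length, after passing to the filtration by tensor-length.

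The second step is to run the tensor-length filtration spectral sequence (the standard filtration $F_pBA = \bigoplus_{n\leq p}(\bar A[1])^{\otimes n}$ from \cite[2.2.3]{lodayvallette}). Its $E_0$-page has differential $d_V$, so $E_1$ computes $\bigoplus_n H\big((\bar A[1])^{\otimes n}, d_V\big) = \bigoplus_n \big(H(\bar A)[1]\big)^{\otimes n}$ by the Künneth theorem over the field $k$. Since $A$ is cohomologically locally finite, each $H^i(\bar A)$ is finite-dimensional, and by nonpositivity $H(\bar A[1])$ lives in degrees $\leq -1$; so each graded piece of $(H(\bar A)[1])^{\otimes n}$ is finite-dimensional, and again only finitely many $n$ contribute in each total degree. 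Hence $E_1$ is locally finite in each total degree. The filtration is exhaustive and, in each fixed total degree, finite (bounded), so the spectral sequence converges and $H^j(BA)$ is a subquotient of a finite-dimensional space, hence finite-dimensional. This gives cohomological local finiteness of $BA$.

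For $A^!=(BA)^*$: since $BA$ is cohomologically locally finite and $(-)^*$ is exact, $H^j(A^!) = H^j((BA)^*) = (H^{-j}(BA))^*$, which is finite-dimensional because $H^{-j}(BA)$ is. So $A^!$ is cohomologically locally finite as well, completing the proof.

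The main obstacle — really the only subtlety — is making sure the length filtration genuinely terminates in each total degree so that convergence of the spectral sequence is unproblematic and no completion/$\varprojlim^1$ issues arise; this is exactly where nonpositivity of $A$ is used, and it should be spelled out carefully. Everything else (Künneth over a field, exactness of linear duality) is routine.
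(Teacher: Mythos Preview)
Your proof is correct and follows essentially the same approach as the paper: both arguments use the tensor-length filtration on $BA$ to obtain a spectral sequence with $E_1$-page given by tensor powers of $H(\bar A)$, observe that nonpositivity forces only finitely many tensor-lengths to contribute in each total degree (ensuring convergence and local finiteness), and then deduce the statement for $A^!$ from exactness of the linear dual. Your version is somewhat more explicit about Künneth and the convergence subtlety, but the underlying argument is the same.
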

\begin{proof}Since the linear dual is exact, the statement for $A^!$ is equivalent to the statement for $BA$. To prove the latter, filter $BA$ by the tensor powers of $A$ to obtain a spectral sequence ${H^p(A^{\otimes q}) \Rightarrow H^{p-q}(BA)}$. Since there are only finitely many nonzero $H^p(A^{\otimes q})$ with $p-q$ fixed, and they are all finite-dimensional, $H^{p-q}(BA)$ must also be finite-dimensional.
\end{proof}
\begin{rmk}
One can also prove \ref{kdfinite} by applying the $A_\infty$ bar construction to an $A_\infty$ minimal model for $A$, which yields a locally finite model for $BA$.	
\end{rmk}
\begin{thm}\label{kdgood}
	Let $A$ be a dga. If $A$ is good then $A$ is quasi-isomorphic to its double Koszul dual.
\end{thm}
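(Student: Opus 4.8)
The plan is to reduce the statement for an arbitrary good dga $A$ to the already-established case of nonpositive Artinian local dgas, Corollary~\ref{kdforart}. Write $A \simeq \varprojlim \mathcal{A}$ for a pro-Artinian dga $\mathcal{A} = \{A_\alpha\}_\alpha$, which we may take to be strict by Proposition~\ref{strictpro}, and recall from Corollary~\ref{limisexact} that $\varprojlim$ is exact on $\proart$. The key point is that all four functors in play — the bar construction $B$, the linear dual $(-)^*$, the pro-Artinian realisation $\varprojlim$, and their coalgebra counterparts — interact compatibly with cofiltered/filtered limits. Concretely, I would first observe that $BA \simeq B(\varprojlim_\alpha A_\alpha)$, and since each $A_\alpha$ is finite-dimensional, $B$ commutes with the (essentially levelwise) limit so that $BA \simeq \varprojlim_\alpha BA_\alpha$ as dgcs; dually, this identifies $A^{!} = (BA)^*$ with the colimit $\varinjlim_\alpha A_\alpha^{!}$ of the Koszul duals of the Artinian pieces. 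By Proposition~\ref{kdfinite}, cohomological local finiteness of $A$ ensures $BA$ and $A^{!}$ are cohomologically locally finite, which is exactly what is needed to re-apply $B$ and $(-)^*$ without size pathologies in the second round.

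Next I would run the same analysis one level up. Since $A^{!} \simeq \varinjlim_\alpha A_\alpha^{!}$ and each $A_\alpha^{!}$ is (by the Proposition preceding Corollary~\ref{kdforart}, together with nonpositivity of $A_\alpha$) the cobar construction $\Omega(A_\alpha^*)$ — hence levelwise the dual of a conilpotent dgc and thus itself amenable to the finite-dimensional duality — we get $B(A^{!}) \simeq \varprojlim_\alpha B(A_\alpha^{!})$ and therefore $A^{!!} = (B(A^{!}))^* \simeq \varinjlim_\alpha A_\alpha^{!!}$. Now Corollary~\ref{kdforart} applies termwise: $A_\alpha \to A_\alpha^{!!}$ is a quasi-isomorphism for every $\alpha$, since each $A_\alpha$ is a nonpositive Artinian local dga. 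Passing to the colimit and using that filtered colimits of quasi-isomorphisms of cohomologically locally finite complexes are quasi-isomorphisms, we obtain $A \simeq \varinjlim_\alpha A_\alpha \simeq \varinjlim_\alpha A_\alpha^{!!} \simeq A^{!!}$, as required. The naturality of the zigzag follows from the naturality of the unit/counit maps $A_\alpha \to A_\alpha^{!!}$ and of all the limit/colimit comparison maps.

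The main obstacle I expect is making precise the commutation of $B$ and $(-)^*$ with the pro/ind-limits — i.e. verifying that $BA \simeq \varprojlim_\alpha BA_\alpha$ and $A^{!!} \simeq \varinjlim_\alpha A_\alpha^{!!}$ genuinely hold rather than just having comparison maps. The subtlety is that $B$ involves an infinite direct sum of tensor powers $\bigoplus_n \bar A^{\otimes n}[n]$, and direct sums do not in general commute with cofiltered limits; this is precisely where nonpositivity and cohomological local finiteness are used, as in the proof of the Proposition before Corollary~\ref{kdforart} — in each fixed total degree only finitely many tensor powers contribute, so the apparent infinite sum is locally finite and the limit/colimit can be moved through. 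I would handle this by working one cohomological degree at a time, reducing to the finite-dimensional linear-duality equivalence $\cat{pro}(\cat{fd-vect}_k) \simeq \cat{vect}_k^{\mathrm{op}}$ used in Proposition~\ref{limreflects} and the compactness of finite-dimensional coalgebras from Proposition~\ref{sharpprop}. Alternatively, as the remark after Proposition~\ref{kdfinite} hints, one can sidestep some bookkeeping by fixing an $A_\infty$ minimal model of $A$ and computing both Koszul duals via the (strictly functorial) $A_\infty$ bar construction, which produces honest locally finite models throughout; but the pro-Artinian argument is more in the spirit of the surrounding development and I would present that as the main line.
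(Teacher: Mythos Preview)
Your proposal has a genuine gap in the handling of limits versus colimits. The pro-Artinian dga $\mathcal{A}=\{A_\alpha\}$ presents $A$ as a \emph{cofiltered limit} $A=\varprojlim_\alpha A_\alpha$, so the final line ``$A\simeq\varinjlim_\alpha A_\alpha$'' is simply false and the chain $A\simeq\varinjlim_\alpha A_\alpha\simeq\varinjlim_\alpha A_\alpha^{!!}\simeq A^{!!}$ collapses. Tracing back, the claim $B(A^{!})\simeq\varprojlim_\alpha B(A_\alpha^{!})$ also has the wrong variance: you have just expressed $A^{!}$ as the \emph{filtered colimit} $\varinjlim_\alpha A_\alpha^{!}$, and $B$ (being built from tensor powers and direct sums) commutes with filtered colimits, so $B(A^{!})\simeq\varinjlim_\alpha B(A_\alpha^{!})$; dualising then gives $A^{!!}\simeq\varprojlim_\alpha A_\alpha^{!!}$, a limit. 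To salvage your line one would now have to pass the termwise quasi-isomorphisms $A_\alpha^{!!}\to A_\alpha$ of Corollary~\ref{kdforart} through $\varprojlim$, but inverse limits do not in general preserve quasi-isomorphisms and the $A_\alpha^{!!}$ are not Artinian, so this needs its own Mittag-Leffler argument.

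More seriously, the step you treat as routine---``dually, this identifies $A^{!}=(BA)^*$ with $\varinjlim_\alpha A_\alpha^{!}$''---is the heart of the matter. Continuity of $B$ does give $BA\cong\varprojlim_\alpha BA_\alpha$, but $(\varprojlim_\alpha BA_\alpha)^*$ is \emph{not} $\varinjlim_\alpha(BA_\alpha)^*$: the linear dual turns colimits into limits, not the reverse, and already for a countable product of lines the discrepancy is enormous. The paper's proof constructs the comparison map $\phi:\varinjlim_\alpha(BA_\alpha)^*\to(BA)^*$ and shows it is a quasi-isomorphism by dualising twice, using cohomological local finiteness of $BA$ (Proposition~\ref{kdfinite}), local finiteness of each $BA_\alpha$, and the Mittag-Leffler condition coming from strictness of $\mathcal{A}$. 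Once that is established, the paper does \emph{not} iterate termwise via Corollary~\ref{kdforart}; instead it observes that $\varinjlim_\alpha A_\alpha^{!}\cong\Omega C$ for $C:=\varinjlim_\alpha A_\alpha^*$ (by cocontinuity of $\Omega$), applies the single bar--cobar quasi-isomorphism $C\to B\Omega C$, and dualises once to obtain $(\varinjlim_\alpha A_\alpha^{!})^{!}\simeq C^*\cong A$. This avoids altogether the problem of pushing a system of quasi-isomorphisms through an inverse limit.
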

\begin{proof}
	By assumption there is a pro-Artinian dga $\mathcal{A}$ and a quasi-isomorphism between $A$ and $\varprojlim \mathcal{A}$. Since the bar construction and the Koszul dual preserve quasi-isomorphisms, we may assume that $A=\varprojlim \mathcal{A}$. Moreover, we may assume that $\mathcal{A}$ is strict (\ref{strictpro}). Let $\mathcal{C}=\mathcal{A}^*$ be the corresponding ind-conilpotent dgc, and put $C:= \varinjlim \mathcal{C}$. It is clear that $\mathcal{A}^! \cong \Omega \mathcal{C}$ as ind-dgas. Taking colimits and using cocontinuity of $\Omega$ we get $\varinjlim \mathcal{A}^! \cong \Omega C$. Hence, $B\varinjlim \mathcal{A}^!$ is weakly equivalent to $C$. Dualising, we see that $(\varinjlim \mathcal{A}^!)^!$ is quasi-isomorphic to $C^* \cong A$. So it is enough to show that $\varinjlim \mathcal{A}^!$ is quasi-isomorphic to $A^!$.
	
	\p Put $\mathcal{D}:=B\mathcal{A}$; it is a pro-conilpotent dgc that is concentrated in nonpositive degrees. Put $D:=\varprojlim \mathcal{D}$, where we take the limit in the category of conilpotent coalgebras. Note that this limit exists because the category of conilpotent coalgebras is a coreflective subcategory of the category of all coalgebras \cite[1.3.33]{aneljoyal}, and a coreflective subcategory of a complete category is complete. Since $B$ is continuous we have $D \cong BA$. There is a natural algebra map $\phi:\varinjlim \mathcal{D}^* \to D^*$. Note that $\mathcal{D}^*=\mathcal{A}^!$ and that $D^* = A^!$, so it is enough to show that $\phi$ is a quasi-isomorphism. For $n \in \Z$, consider the induced linear map $$\psi_n:\qquad\varinjlim(H^n(\mathcal{D}^*)) \xrightarrow{\cong}H^n(\varinjlim\mathcal{D}^*) \xrightarrow{H^n\phi} H^n(D^*) \xrightarrow{\cong}H^{-n}(D)^*$$where we have used exactness of filtered colimits and the linear dual, and dualise it to obtain a map $$\chi_n:\qquad H^{-n}(D) \to H^{-n}(D)^{**} \xrightarrow{\psi_n^*} (\varinjlim(H^n(\mathcal{D}^*)))^* \xrightarrow{\cong} \varprojlim H^n(\mathcal{D^*})^{*} \xrightarrow{\cong} \varprojlim H^{-n}(\mathcal{D}^{**})$$where we have used exactness of the linear dual again along with the fact that contravariant Hom sends colimits to limits. By \ref{kdfinite}, $D=BA$ is cohomologically locally finite, which means that $\qquad H^{-n}(D) \to H^{-n}(D)^{**}$ is an isomorphism. Similarly, each level ${\mathcal{D}_\alpha}$ of $\mathcal{D}$ is locally finite, since it is the bar construction on an Artinian local dga. In particular, the natural map $H^{-n}(\mathcal{D}_\alpha) \to H^{-n}(\mathcal{D}_\alpha^{**})$ which sends $[v]$ to $[\mathrm{ev}_v]$ is an isomorphism. Let $[u] \in H^{-n}(D)$; one can compute that $\chi_n([u])=([\mathrm{ev}_{u_\alpha}])_\alpha$, where $u_\alpha$ is the image of $u$ under the natural map $D \to \mathcal{D}_\alpha$. Hence, the composition $H^{-n}(D) \xrightarrow{\chi_n}\varprojlim H^{-n}(\mathcal{D}^{**}) \xrightarrow{\cong} \varprojlim H^{-n}(\mathcal{D})$ of $\chi_n$ with the inverse to the natural isomorphism sends $[u]$ to $[u_\alpha]_\alpha$. But this is precisely the natural map $H^{-n}(D) \to \varprojlim H^{-n} (\mathcal{D})$. Since $B$ preserves surjections, and we chose $\mathcal{A}$ to be strict, $\mathcal{D}=B\mathcal{A}$ is a strict pro-dgc, and in particular satisfies the Mittag-Leffler condition. Hence, the natural map $H^{-n}(D) \to \varprojlim H^{-n} (\mathcal{D})$ is an isomorphism for all $n$. Now it follows that $\chi_n$, $\psi_n$, and $H^n\phi$ are isomorphisms for all $n$. Hence, $\phi$ is a quasi-isomorphism.
\end{proof}
\begin{rmk}
Note that instead of requiring that $A$ itself be cohomologically locally finite, it is enough to require that $BA$ is cohomologically locally finite. If the dgc $BA$ is cohomologically locally finite and admits a minimal model, then $A$ has a resolution $\Omega B A$ with finitely many generators in each level, which can be thought of as a finiteness condition.
\end{rmk}

\subsection{Derivations}\label{dersctn}
If $A\to B$ is a map of commutative $k$-algebras and $M$ is a $B$-module, then a derivation $A \to M$ is the same as a map of $B$-augmented $k$-algebras $A \to B \oplus M$, where $B \oplus M$ is the square-zero extension of $B$ by $M$. When $A=B$ and the map is the identity, a derivation is the same as a section of the projection $A\oplus M \to A$. In this part, we make the same observation in the noncommutative derived world. We prove a key technical result stating that derived derivations from a good dga are the same as derived derivations from its limit (\ref{rder}). Note that when our algebras are noncommutative, one must use bimodules in order to talk about derivations. We broadly follow Tabuada \cite{tabuadaNCAQ}, who is generalising the seminal work of Quillen \cite{quillender} for ungraded commutative algebras. A reference for ungraded noncommutative algebras is \cite{ginzburgnc}.
\begin{defn}
Let $B$ be a dga and $M$ a $B$-bimodule. The \textbf{square-zero extension} of $B$ by $M$ is the dga $B \oplus M$ whose underlying dg-vector space is $B\oplus M$, with multiplication given by $(b,m).(b',m')=(bb',bm+mb')$. If $A \to B$ is a dga map then a \textbf{derivation} $A \to M$ is a map of $B$-augmented dgas $A \to B\oplus M$, which is equivalently a morphism $A \to B\oplus M$ in the overcatgory $\cat{dga}_k / B$. The set of derivations $A \to M$ is $\mathrm{Der}_B(A,M):=\hom_{\cat{dga}_k / B}(A, B \oplus M)$.
\end{defn}
One can easily check that a derivation $A \to M$ is the same as an $A$-linear map $A \to M$ satisfying the graded Leibniz formula.
\begin{prop}[\cite{tabuadaNCAQ}, 4.6]
The square-zero extension functor $B-\cat{bimod} \to \cat{dga}_k / B$ admits a left adjoint $A\mapsto \Omega(A)_B$, which we refer to as the functor of \textbf{noncommutative K\"ahler differentials}.
\end{prop}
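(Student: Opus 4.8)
The plan is to give an explicit construction of the left adjoint, which simultaneously justifies the name ``noncommutative K\"ahler differentials''. I will first treat the absolute case $A=B$, $f=\id_A$, and then obtain the general statement by a base-change adjunction.

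\textbf{Step 1 (absolute differentials).} For a dga $A$, let $\mu:A\otimes_k A\to A$ be the multiplication and set $\Omega^1_A:=\ker\mu$. Since $\mu$ is a chain map of $A$-bimodules, $\Omega^1_A$ is a dg $A$-sub-bimodule of $A\otimes_k A$ (not merely a graded one). Define $d_A:A\to\Omega^1_A$ by $a\mapsto 1\otimes a-a\otimes 1$; a short computation with the Koszul sign rule shows this is a degree-zero chain map satisfying $d_A(ab)=(d_Aa)b+a(d_Ab)$, hence is a derivation in the sense of the preceding Definition. I would then prove the universal property: for every dg $A$-bimodule $M$, postcomposition-free precomposition $\phi\mapsto\phi\circ d_A$ is a natural isomorphism $\hom_{A-\cat{bimod}}(\Omega^1_A,M)\xrightarrow{\sim}\mathrm{Der}_k(A,M)$. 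For surjectivity, given a derivation $D$ extend it to the $k$-linear map $A\otimes_k A\to M$, $a\otimes b\mapsto aD(b)$, and restrict to $\Omega^1_A$; using $D(1)=0$ and the identity $\sum a_ib_i=0$ valid for any element $\sum a_i\otimes b_i\in\Omega^1_A$, one checks the restriction is a chain map of $A$-bimodules lifting $D$. For injectivity, note that $\Omega^1_A$ is generated as a bimodule by the image of $d_A$, so a bimodule map out of it is determined by its composite with $d_A$.

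\textbf{Step 2 (base change).} For general $f:A\to B$, view $B$ as an $A$-bimodule via $f$ and set $\Omega(A)_B:=B\otimes_A\Omega^1_A\otimes_A B$, a dg $B$-bimodule, with relative universal derivation $d_{A/B}:A\to\Omega(A)_B$, $a\mapsto 1\otimes d_Aa\otimes 1$. The functor $N\mapsto B\otimes_A N\otimes_A B$ is left adjoint to restriction of scalars $\mathrm{res}_f:B-\cat{bimod}\to A-\cat{bimod}$, so for every $B$-bimodule $M$ there are natural isomorphisms
\[
\hom_{B-\cat{bimod}}(\Omega(A)_B,M)\cong\hom_{A-\cat{bimod}}(\Omega^1_A,\mathrm{res}_fM)\cong\mathrm{Der}_k(A,\mathrm{res}_fM),
\]
the second being Step 1. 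Unwinding the square-zero extension, a $k$-derivation $A\to\mathrm{res}_fM$ is exactly a $k$-linear chain map $D:A\to M$ with $D(ab)=f(a)D(b)+D(a)f(b)$, i.e. precisely an element of $\mathrm{Der}_B(A,M)=\hom_{\cat{dga}_k/B}(A,B\oplus M)$. Concatenating, $\hom_{B-\cat{bimod}}(\Omega(A)_B,M)\cong\hom_{\cat{dga}_k/B}(A,B\oplus M)$ naturally in $M$; functoriality of $A\mapsto\Omega(A)_B$ and naturality in $A$ then follow from the standard criterion that assembles pointwise representations into an adjoint functor, which exhibits $A\mapsto\Omega(A)_B$ as the left adjoint sought.

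\textbf{Main obstacle.} The argument is formal once Step 1 is established, so the only genuine work is the sign bookkeeping needed to verify that $d_A$, the comparison map $A\otimes_k A\to M$, and their restrictions are chain maps of the correct degree, together with the verification that $\Omega^1_A$ is closed under the differential of $A\otimes_k A$. I would also remark that one could instead invoke the adjoint functor theorem --- the square-zero extension functor manifestly preserves all limits and both categories are locally presentable --- but this would not produce the explicit differentials bimodule, which is the content worth recording.
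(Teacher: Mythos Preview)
Your proof is correct and more detailed than the paper's. The paper's proof is a single line invoking Freyd's adjoint functor theorem --- exactly the alternative you mention in your final paragraph. The explicit description you give, $\Omega(A)_A\cong\ker\mu$ and $\Omega(A)_B\cong B\otimes_A\Omega(A)_A\otimes_A B$, is not part of the paper's proof but is recorded immediately afterwards as a separate remark. So the two approaches converge on the same content: the paper establishes existence abstractly and then states the explicit form without justification, whereas you derive the explicit form directly and thereby prove existence. Your route has the advantage of making the isomorphism $\hom_{B\text{-}\cat{bimod}}(\Omega(A)_B,M)\cong\mathrm{Der}_B(A,M)$ concrete and of exhibiting the universal derivation, which is useful downstream; the paper's route is shorter and avoids the sign bookkeeping you flag as the main obstacle.
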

\begin{proof}
An application of Freyd's adjoint functor theorem.
\end{proof}
\begin{rmk}
If $A \to B$ is a morphism of commutative $k$-algebras, then $\Omega(A)_B$ does not agree with the usual commutative K\"ahler differentials. Indeed, one has $\Omega(A)_A\cong \ker(\mu: A \otimes_k A \to A)$, and the commutative K\"ahler differentials are $\Omega(A)_A/\Omega(A)_A^2$. In general one has $\Omega(A)_B\cong B\otimes_A \Omega(A)_A \otimes_A B$ which is the pullback of the bimodule $\Omega(A)_A$ along $A \to B$.
\end{rmk}
\begin{cor}
Let $A \to B$ be a dga map and $M$ a $B$-bimodule. Then the set $\mathrm{Der}_B(A,M)$ is naturally a dg-vector space.
\end{cor}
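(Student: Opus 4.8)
The plan is to combine the adjunction of the previous Proposition with the observation that the category of $B$-bimodules is canonically enriched in dg-vector spaces. Since $\Omega(-)_B$ is left adjoint to the square-zero extension functor $B-\cat{bimod}\to\cat{dga}_k/B$, there is a natural bijection
$$\mathrm{Der}_B(A,M)=\hom_{\cat{dga}_k/B}(A,B\oplus M)\;\cong\;\hom_{B-\cat{bimod}}(\Omega(A)_B,M),$$
so it suffices to realise the hom-set on the right as part of a natural dg-vector space. A $B$-bimodule is the same thing as a dg-module over the enveloping dga $B^e:=B\otimes_k B^{\mathrm{op}}$, and dg-modules over any dga form a dg-category: for bimodules $N$ and $M$ there is an internal-hom complex $\dgh_{B^e}(N,M)$ whose degree-$n$ part consists of the $B^e$-linear maps $N\to M$ homogeneous of degree $n$ (not required to commute with the differentials), with differential $f\mapsto\partial_M\circ f-(-1)^{|f|}f\circ\partial_N$, and whose degree-zero cocycles are exactly $\hom_{B-\cat{bimod}}(N,M)$. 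Transporting this structure along the bijection above, $\dgh_{B^e}(\Omega(A)_B,M)$ is the required dg-vector space; it is functorial in $M$ and contravariantly functorial in $A$ because the adjunction and the internal hom both are.

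It is worth spelling out the concrete upshot, which can also be obtained directly without the adjunction. Regard $M$ as a bimodule over $A$ via $A\to B$. The degree-$n$ component of $\mathrm{Der}_B(A,M)$ is then the space of $k$-linear maps $d:A\to M$, homogeneous of degree $n$, satisfying the graded Leibniz rule $d(ab)=d(a)\cdot b+(-1)^{n|a|}a\cdot d(b)$; these sit as a graded subspace of the dg-vector space $\hom_k^{\bullet}(A,M)$ of all $k$-linear maps (equipped with the commutator differential $f\mapsto\partial_M\circ f-(-1)^{|f|}f\circ\partial_A$), and one checks --- using that $\partial_A$ and $\partial_M$ are themselves derivations --- that this subspace is preserved by the differential, so it is a sub-dg-vector-space $\mathrm{Der}^{\bullet}_B(A,M)$. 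Its degree-zero cocycles are precisely the $d$ that are both of degree zero and chain maps, i.e. those extending to a dga map $A\to B\oplus M$ over $B$; this recovers the set $\hom_{\cat{dga}_k/B}(A,B\oplus M)$ of the definition.

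I expect no genuine obstacle here. The only point needing care is the sign bookkeeping in checking that the commutator differential maps derivations to derivations and squares to zero --- a standard computation --- together with the (purely cosmetic) decision to regard $\mathrm{Der}_B(A,M)$ as the dg-vector space just constructed rather than merely as its set of degree-zero cocycles; this abuse is harmless and is in any case the natural convention for the passage to the derived derivations $\R\mathrm{Der}$ later in this subsection.
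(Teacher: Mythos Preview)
Your proposal is correct and follows essentially the same approach as the paper: the paper's proof is the single sentence ``The category of dg-$B$-bimodules is naturally enriched over $\cat{dgvect}_k$,'' which, combined with the adjunction of the preceding Proposition, is exactly your first paragraph. Your second paragraph unpacking the concrete description of the derivation complex is extra detail not in the paper, but it is correct and compatible with the paper's later use of $\R\mathrm{Der}$.
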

\begin{proof}
The category of dg-$B$-bimodules is naturally enriched over $\cat{dgvect}_k$.
\end{proof}
The category $\cat{dga}_k / B$ is a model category, with model structure induced from that on $\cat{dga}_k$. The category $B-\cat{bimod}:=B\otimes_k B^{\text{op}}-\cat{Mod}$ is also a model category in the usual way. It is easy to see that the square-zero extension functor $B\oplus-:B-\cat{bimod} \to \cat{dga}_k/B$ is right Quillen. Since every object in $B-\cat{bimod}$ is fibrant, $B\oplus-$ is its own right derived functor. However, since not every dga is cofibrant, the noncommutative K\"ahler differentials have a nontrivial left derived functor.
\begin{defn}
The \textbf{noncommutative cotangent complex} functor is $\mathbb{L}(-)_B:=\mathbb{L}\Omega(-)_B$, the total left derived functor of $\Omega(-)_B$.
\end{defn}
The model category $B-\cat{bimod}$ is a dg model category, in the sense that it is enriched over $\cat{dgvect}_k$ in a way compatible with the model structure. We denote the enriched hom-complexes by $\dgh$ (see \ref{dgcats} for more about dg categories). The interested reader should consult Hovey \mbox{\cite[4.2.18]{hovey}} for a rigorous definition of enriched model category; in the terminology used there a dg model category is a $\mathrm{Ch}(k)$-model category. In particular, $B-\cat{bimod}$ has a well-defined notion of derived hom-complexes, and we may use the Quillen adjunction with $\cat{dga}_k / B$ to define complexes of derived derivations.
\begin{defn}
Let $A \to B$ be a dga map and let $M$ be a $B$-bimodule. Let $QA \to A$ be a cofibrant resolution. The space of \textbf{derived derivations} from $A$ to $M$ is the dg-vector space $$\R\mathrm{Der}_B(A,M):=\mathrm{Der}_B(QA,M)\cong\dgh_B(\mathbb{L}(A)_B,M)$$where we use the notation $\dgh$ to mean the enriched hom.
\end{defn}
Different choices of resolution for $A$ yield quasi-isomorphic spaces of derived derivations. One has an isomorphism $H^0(\R\mathrm{Der}_B(A,M))\cong \hom_{\mathrm{Ho}(\cat{dga}_k / B)}(A, B \oplus M)$.
\p We mimic the above constructions for pro-Artinian dgas. We will only be interested in the case when the base algebra $B$ is ungraded, which will avoid the need to define bimodules over pro-Artinian dgas in generality (we remark on how to do this in \ref{probimods}). We give an example of such a situation in \ref{hoprolem}. Suppose that $B$ is an Artinian local $k$-algebra and that $\mathcal{A}$ is a pro-Artinian dga with a map to $B$. If $M$ is a dg-$B$-bimodule, then $M$ is naturally a bimodule over $\varprojlim\mathcal{A}$.
\begin{defn}
Let $B$ be an Artinian local $k$-algebra and let $\mathcal{A}\to B$ be a pro-Artinian dga with a map to $B$. Let $M$ be a finite dg-$B$-bimodule concentrated in nonnegative degrees. Note that $B \oplus M$ is still Artinian local. A \textbf{derivation} $\mathcal{A}\to M$ is a map $\mathcal{A}\to B \oplus M$ in the overcategory ${\proart}/B$. The set of all derivations $\mathcal{A}\to M$ is denoted $\mathrm{Der}_B(\mathcal{A},M)$.
\end{defn}
If $\mathcal{A}=\{\mathcal{A}_\alpha \}_\alpha$ with each $\mathcal{A}_\alpha $ Artinian local, then $\mathrm{Der}_B(\mathcal{A},M)\cong \varinjlim_\alpha \mathrm{Der}_B(\mathcal{A}_\alpha, M)$. Hence, $\mathrm{Der}_B(\mathcal{A},M)$ naturally acquires the structure of a dg-vector space. We wish to define derived derivations as derivations from a resolution; before we do this we need to check that the definition makes sense.
\begin{lem}\label{prolemma}
Let $B$ be an Artinian local $k$-algebra and let $M$ be a finite dg-$B$-bimodule concentrated in nonnegative degrees. The functor $\mathrm{Der}_B(-,M):\proart/B \to \cat{dgvect}_k$ preserves weak equivalences between cofibrant objects.
\end{lem}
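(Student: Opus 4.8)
The plan is to realise $\mathrm{Der}_B(-,M)$ as the composite of a left Quillen functor with an enriched-hom functor, and then to appeal to Ken Brown's lemma. Recall first that the square-zero extension $N\mapsto B\oplus N$, from finite $B$-bimodules (in the appropriate degrees) to $\proart/B$, is right Quillen: a surjection of bimodules, resp.\ a surjective quasi-isomorphism, is sent to a map whose limit is a levelwise surjection, resp.\ is moreover a quasi-isomorphism, hence to a fibration, resp.\ a trivial fibration. In particular $B\oplus M\to B$ is a fibrant object of $\proart/B$ (here one uses that $B\oplus M$ is Artinian local). Its left adjoint is the functor $\Omega(-)_B$ of pro-noncommutative Kähler differentials, obtained by applying the construction of \cite{tabuadaNCAQ} levelwise to a strict pro-presentation (\ref{strictpro}); adjointness upgrades to a natural identification $\mathrm{Der}_B(\mathcal A,M)\cong\dgh_B(\Omega(\mathcal A)_B,M)$ of dg-vector spaces, the right-hand side being the filtered colimit over levels of the levelwise identifications --- this is where finiteness of $M$ enters.

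Granting this, the argument is short. Being left Quillen, $\Omega(-)_B$ carries a weak equivalence between cofibrant objects of $\proart/B$ to a quasi-isomorphism between cofibrant $B$-bimodules, by Ken Brown's lemma. Since the $B$-bimodules form a $\mathrm{Ch}(k)$-enriched model category in which every object is fibrant, the enriched hom $\dgh_B(-,M)$ carries quasi-isomorphisms between cofibrant bimodules to quasi-isomorphisms of dg-vector spaces; exactness of filtered colimits then lets one commute the colimit over pro-levels past cohomology. Hence $\dgh_B(\Omega(-)_B,M)\cong\mathrm{Der}_B(-,M)$ preserves weak equivalences between cofibrant objects, as desired.

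The step I expect to require care is the pro-object bookkeeping: one must make precise that $\Omega(-)_B$ is left adjoint to, and left Quillen against, $B\oplus-$ at the level of pro-categories --- for instance by first reducing a given weak equivalence to a level map via \cite{kashschap} and tracking cofibrancy levelwise --- and that the enriched hom into the finite bimodule $M$, formed as a filtered colimit of levelwise enriched homs, has the stated homotopical behaviour. A more hands-on route, which avoids putting a model structure on pro-bimodules, is to use Ken Brown's lemma to reduce to a trivial cofibration $j\colon\mathcal A\hookrightarrow\mathcal A'$ of cofibrant objects and then to dévisse in $M$: since $k$ is algebraically closed and $B$ is local, every finite $B$-bimodule carries a finite filtration with subquotients shifts $k[n]$ of the residue field; and $\mathrm{Der}_B(\mathcal A,-)$ is exact, because on underlying graded vector spaces $\mathrm{Der}_B(\mathcal A_\alpha,-)$ is $\hom$ out of the projective graded $B$-bimodule $\Omega(\mathcal A_\alpha)_B$ and filtered colimits are exact. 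The resulting long exact sequences reduce the claim to $M=k[n]$, hence (shifting) to $M=k$, where $\mathrm{Der}_B(\mathcal A,k)$ is the $k$-dual of the complex of indecomposables of $\mathcal A$ over $B$; a trivial cofibration of cofibrant objects induces a quasi-isomorphism there, since up to retract it only attaches acyclic cells. Ken Brown's lemma then delivers the general statement.
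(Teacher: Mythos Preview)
Your proposal is correct and follows essentially the same approach as the paper: the paper's proof also extends $B\oplus-$ to a functor $\cat{pro}\mathcal{C}\to\proart/B$ (where $\mathcal{C}$ is finite nonnegative $B$-bimodules), observes it is right Quillen, and concludes that its left adjoint $\Omega(-)_B$ (applied levelwise) is left Quillen. You are considerably more explicit about the subsequent step of composing with the enriched hom into $M$, and your alternative d\'evissage argument is not in the paper, but the core strategy is the same.
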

\begin{proof}
Let $\mathcal{C}$ be the category of finite dg-$B$-bimodules concentrated in nonnegative degrees. The square-zero extension functor extends to a functor $B\oplus-:\cat{pro}\mathcal{C}\to \proart/B$. This functor has a left adjoint $\Omega(-)_B$, given by applying the noncommutative K\"ahler differentials functor levelwise. The functor $B\oplus-$ is clearly right Quillen and hence $\Omega(-)_B$ is left Quillen.
\end{proof}
\begin{rmk}\label{pronccot}
Taking this argument seriously leads one to define the \textbf{pro-noncommutative cotangent complex} $\mathbb{L}(\mathcal{A})_B\in \mathrm{Ho}(\cat{pro}(B-\cat{bimod}))$ of an object $\mathcal{A}\in \proart/B$.
\end{rmk}
\begin{defn}
Let $B$ be an Artinian local $k$-algebra and let $\mathcal{A}\to B$ be a pro-Artinian dga with a map to $B$. Let $Q\mathcal{A} \to \mathcal{A}$ be a cofibrant resolution. Let $M$ be a finite dg-$B$-bimodule concentrated in nonnegative degrees. The space of \textbf{derived derivations} from $\mathcal{A}$ to $M$ is the dg-vector space $$\R\mathrm{Der}_B(\mathcal{A},M):=\mathrm{Der}_B(Q\mathcal{A},M).$$
\end{defn}
By \ref{prolemma}, different choices of resolution for $\mathcal A$ yield quasi-isomorphic spaces of derived derivations. One has an isomorphism $H^0(\R\mathrm{Der}_B(\mathcal{A},M))\cong \hom_{\mathrm{Ho}(\proart / B)}(\mathcal{A}, B \oplus M)$. The main technical result of this section is that the two notions of derived derivation match up for good dgas. We will first prove this for $B\cong k$, where the proof is simpler (because the action of $\mathfrak{m}_\mathcal A$ on $M$ is trivial), and then we will adapt the argument to general $B$ via filtering by the action of $\mathfrak{m}_B$ to reduce to the case $B\cong k$. Observe that any pro-Artinian dga $\mathcal{A}$ admits an augmentation $\mathcal{A}\to k$.
\begin{lem}\label{prerder}
Let $\mathcal{A}$ be a pro-Artinian dga. Let $M$ be a finite-dimensional dg-$k$-vector space concentrated in nonpositive degrees. Assume that $A:=\varprojlim \mathcal{A}$ is cohomologically locally finite. Then there is a quasi-isomorphism $$\R\mathrm{Der}_{k}(\mathcal{A},M) \cong \R\mathrm{Der}_{k}(A,M) .$$
\end{lem}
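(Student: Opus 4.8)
The plan is to compute both derived derivation complexes explicitly via Koszul duality and the (co)bar construction, and then to read off the comparison from the quasi-isomorphism $\varinjlim\mathcal{A}^!\to A^!$ that is already produced inside the proof of Theorem~\ref{kdgood} --- this is precisely the step at which cohomological local finiteness of $A$ is used. For the algebra side I would use the functorial semifree (hence cofibrant) resolution $\Omega BA\to A$, which is free as a graded algebra on the shifted space $\overline{BA}[-1]$. A derivation $\Omega BA\to M$ is determined by its restriction to these generators, and the cobar part of the differential on $\Omega BA$ takes values in the square of the augmentation ideal, so it is invisible to derivations into the $k$-bimodule $M$; hence $\R\mathrm{Der}_k(A,M)\cong\dgh_k(\overline{BA}[-1],M)$ as complexes. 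Since $M$ is finite-dimensional, $\dgh_k(-,M)\cong(-)^*\otimes M$ naturally, so, using $A^!=(BA)^*$, this identifies $\R\mathrm{Der}_k(A,M)$ with $\overline{A^!}[1]\otimes M$. (Equivalently: the reduced bar construction computes the noncommutative cotangent complex relative to $k$.)

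For the pro side I would transport everything through the equivalence $\proart\simeq(\cat{cndgc}^{\geq 0}_k)^{\mathrm{op}}$ of \ref{sharpprop}. Write $C:=\varinjlim\mathcal{A}^*$ for the conilpotent dgc corresponding to $\mathcal{A}$; then $Q\mathcal{A}:=(B\Omega C)^\sharp$, obtained by dualizing the bar--cobar fibrant replacement $C\to B\Omega C$ (truncated to nonnegative degrees), is a cofibrant resolution of $\mathcal{A}$ in $\proart$, since $B\Omega C$ is semicofree. Dualizing again, $\mathrm{Der}_k(Q\mathcal{A},M)$ is the complex of morphisms of coaugmented dgcs from the square-zero coextension $k\oplus M^*$ into $B\Omega C$; comultiplicativity forces any such morphism to take values in the primitives of $B\Omega C$, which are the cogenerators $\overline{\Omega C}[1]$, so this complex is $\dgh_k(M^*,\overline{\Omega C}[1])\cong M\otimes\overline{\Omega C}[1]$, again using finiteness of $M$. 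As $\Omega$ is cocontinuous and $\Omega(\mathcal{A}^*_\alpha)\cong\mathcal{A}^!_\alpha$ levelwise (cf.\ the proof of \ref{kdgood}), $\overline{\Omega C}[1]\cong\bigl(\varinjlim\overline{\mathcal{A}^!}\bigr)[1]$, whence $\R\mathrm{Der}_k(\mathcal{A},M)\cong\bigl(\varinjlim\overline{\mathcal{A}^!}\bigr)[1]\otimes M$.

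It then remains to compare the two answers. The natural map $\varinjlim\mathcal{A}^!\to A^!$ restricts to $\varinjlim\overline{\mathcal{A}^!}\to\overline{A^!}$, and it is a quasi-isomorphism: this is precisely the quasi-isomorphism denoted $\phi$ in the proof of Theorem~\ref{kdgood}, and it is there --- through \ref{kdfinite} together with the Mittag--Leffler property of the strict pro-dgc $B\mathcal{A}$ --- that cohomological local finiteness of $A$ enters. Shifting this quasi-isomorphism and tensoring with $M$ (a bounded complex of finite-dimensional $k$-vector spaces, hence exact) yields a quasi-isomorphism $\R\mathrm{Der}_k(\mathcal{A},M)\simeq\R\mathrm{Der}_k(A,M)$, which by construction is natural in $M$ and can be checked to be compatible with the canonical comparison map $\R\mathrm{Der}_k(\mathcal{A},M)\to\R\mathrm{Der}_k(A,M)$.

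I expect the one genuinely delicate point to be the bookkeeping in the coalgebra step: one must verify that the equivalence of \ref{sharpprop} carries square-zero extensions to square-zero coextensions and respects the dg-enrichments by (co)derivations, that $(B\Omega C)^\sharp$ is cofibrant in $\proart$ with $\varprojlim(B\Omega C)^\sharp\simeq A$, and that derivations out of a ``pro-free'' pro-Artinian dga are computed on primitives exactly as claimed. Once that is in place, the algebra side simply mirrors the classical computation of $\R\mathrm{Der}$ out of a bar--cobar resolution, and the crucial quasi-isomorphism is imported wholesale from \ref{kdgood}.
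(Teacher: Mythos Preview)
Your proposal is correct and follows essentially the same route as the paper: compute $\R\mathrm{Der}_k(A,M)$ via the cofibrant resolution $\Omega BA\to A$ to obtain $A^![1]\otimes M$, compute $\R\mathrm{Der}_k(\mathcal{A},M)$ via the cofibrant resolution $(B\Omega C)^\sharp$ (coming from the dgc fibrant replacement $C\to B\Omega C$) to obtain $\Omega C[1]\otimes M\cong(\varinjlim\mathcal{A}^!)[1]\otimes M$, and then invoke the quasi-isomorphism $\varinjlim\mathcal{A}^!\to A^!$ established in the proof of \ref{kdgood}. The only cosmetic difference is that on the pro side you pass through coalgebra morphisms out of the square-zero coextension $k\oplus M^*$, whereas the paper argues directly that $(B\Omega C)^\sharp$ is pro-freely generated by $\Omega C^\sharp[-1]$ and evaluates derivations on those generators; these are dual descriptions of the same computation.
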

\begin{proof}
The idea is that the cofibrant resolutions agree. For brevity, we will omit the bar notation for (co)augmentation (co)ideals. Consider first the space $\R\mathrm{Der}_{k}(A, M)$. Since $\Omega B A \to A$ is a cofibrant resolution, we have $\R\mathrm{Der}_{k}(A, M)$ quasi-isomorphic to $ \mathrm{Der}_k(\Omega B A, M)$. Now, $\Omega B A$ is freely generated by $BA[-1]$, so $\mathrm{Der}_k(\Omega B A, M) \cong \dgh_k(BA[-1],M)$ as dg-vector spaces. Since $M$ is finite-dimensional, $\dgh_k(BA[-1],M)$ is the same as $A^![1]\otimes_k M$ (here is where we are using that $B\cong k$; the underlying graded vector spaces are always isomorphic but in general the differential of the right hand side acquires a twist from the action of $\mathfrak{m}_B$ on $M$). Consider now the space $\R\mathrm{Der}_{k}(\mathcal{A},M)$. We use the equivalence of $\proart$ with conilpotent dgcs, along with the fact that $C \to B \Omega C$ is a fibrant dgc resolution, to see that $\R\mathrm{Der}_{k}(\mathcal{A},M)$ is quasi-isomorphic to $ \mathrm{Der}_k((B\Omega(\varinjlim\mathcal{A}^*))^\sharp,M)$. The dgc $B\Omega(\varinjlim\mathcal{A}^*)$ is cofreely cogenerated by $\Omega(\varinjlim \mathcal{A}^*)[1]$, so that the pro-Artinian algebra $(B\Omega(\varinjlim \mathcal{A}^*))^\sharp$ is freely generated by the profinite vector space $\Omega(\varinjlim \mathcal{A}^*)^\sharp[-1]$. Hence we have isomorphisms of dg-vector spaces$${\mathrm{Der}_k((B\Omega(\varinjlim\mathcal{A}^*))^\sharp,M)\cong  \dgh_{\cat{pro}(\cat{fdvect}_k)}(\Omega(\varinjlim \mathcal{A}^*)^\sharp[-1],M)\cong \dgh_k(M^*,\Omega(\varinjlim\mathcal{A}^*)[1])}.$$Again, because $M$ is finite-dimensional and $B\cong k$, this is isomorphic to $\Omega(\varinjlim \mathcal{A}^*)[1]\otimes_k M$. So it suffices to show that $\Omega(\varinjlim \mathcal{A}^*)$ and $A^!$ are quasi-isomorphic as dg-vector spaces. This is similar to the proof of \ref{kdgood}: first note that $\Omega(\varinjlim \mathcal{A}^*) \cong \varinjlim \Omega( \mathcal{A}^*)$ because $\Omega$ is cocontinuous, and that $\Omega( \mathcal{A}^*) \cong \mathcal{A}^!$ because each level of $\mathcal{A}$ is Artinian local. Hence, as in \ref{kdgood}, $\Omega(\varinjlim \mathcal{A}^*)$ is quasi-isomorphic to $A^!$, as required. Note that this last fact uses that $BA$ is cohomologically locally finite, which is the only place we use the hypothesis that $A$ is cohomologically locally finite.
\end{proof}
Now we will extend the argument to cover all $B$.
\begin{thm}\label{rder}
Let $B$ be an Artinian local $k$-algebra and let $\mathcal{A}\to B$ be a pro-Artinian dga with a map to $B$. Let $M$ be a finite-dimensional $B$-bimodule concentrated in nonpositive degrees. Assume that $A:=\varprojlim \mathcal{A}$ is cohomologically locally finite. Then there is a quasi-isomorphism $$\R\mathrm{Der}_{B}(\mathcal{A},M) \cong \R\mathrm{Der}_{B}(A,M) .$$
\end{thm}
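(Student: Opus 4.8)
The plan is to reduce Theorem \ref{rder} to the already-established case $B \cong k$ of Lemma \ref{prerder} by a filtration argument on the $B$-action on $M$. Since $B$ is Artinian local with maximal ideal $\mathfrak m_B$, the ideal $\mathfrak m_B$ is nilpotent, say $\mathfrak m_B^{N+1}=0$, so the bimodule $M$ carries a finite filtration $M \supseteq M\mathfrak m_B \supseteq M\mathfrak m_B^2 \supseteq \cdots \supseteq M\mathfrak m_B^{N+1}=0$ by sub-bimodules (using that $\mathfrak m_B$ is two-sided). The associated graded pieces $\mathrm{gr}^i M = M\mathfrak m_B^i / M\mathfrak m_B^{i+1}$ are finite-dimensional dg-$B$-bimodules on which $\mathfrak m_B$ acts as zero on both sides, hence are really just finite-dimensional dg-$(B/\mathfrak m_B)$-bimodules, i.e. dg-$k$-vector spaces concentrated in nonpositive degrees (with trivial $B$-action).

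Next I would observe that the square-zero extension functor, and hence both flavours of derived derivation $\R\mathrm{Der}_B(-,M)$ and $\R\mathrm{Der}_B(\mathcal A,-)$, are exact in the bimodule variable in the appropriate sense: a short exact sequence $0 \to M' \to M \to M'' \to 0$ of $B$-bimodules induces a short exact sequence of square-zero extensions over $B$, and applying $\mathrm{Der}_B(Q(-),-)$ (with $Q$ a fixed cofibrant resolution, of either $A$ in $\cat{dga}_k/B$ or $\mathcal A$ in $\proart/B$) yields a short exact sequence of derivation complexes, hence a long exact sequence in cohomology. Concretely, $\mathrm{Der}_B(QA,M) \cong \dgh_B(\mathbb L(A)_B, M)$ and the enriched hom out of a fixed (cofibrant) bimodule is exact in $M$. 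Therefore, running the filtration on $M$, it suffices to prove the quasi-isomorphism $\R\mathrm{Der}_B(\mathcal A, N) \cong \R\mathrm{Der}_B(A,N)$ when $N$ is a finite-dimensional dg-$k$-vector space in nonpositive degrees with trivial $B$-bimodule structure, and then assemble via the two long exact sequences and a five-lemma / induction on the length $N$ of the filtration, comparing the two sides term by term (the comparison map is the natural one induced by the realisation functor $\varprojlim$, which is compatible with all the structure).

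For the base case — trivial $B$-action on $N$ — I would argue that a $B$-augmented derivation into a square-zero extension by a trivially-acted bimodule factors through the $k$-augmentation, so that $\mathrm{Der}_B(QA, N) \cong \mathrm{Der}_k(QA, N)$ and similarly $\mathrm{Der}_B(Q\mathcal A, N) \cong \mathrm{Der}_k(Q\mathcal A, N)$; more care is needed because a cofibrant resolution of $A$ in $\cat{dga}_k/B$ need not be cofibrant in $\cat{dga}_k$, but since we only want a quasi-isomorphism we can compare against $\Omega BA$ exactly as in the proof of Lemma \ref{prerder}, and likewise for $\mathcal A$ against the dual bar-cobar resolution $(B\Omega(\varinjlim \mathcal A^*))^\sharp$. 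Indeed, the entire computation of Lemma \ref{prerder} — identifying both spaces with $A^![1] \otimes_k N$ up to quasi-isomorphism, using that $\Omega(\varinjlim \mathcal A^*)$ and $A^!$ agree as dg-vector spaces, which is where cohomological local finiteness of $A$ (equivalently of $BA$) enters — goes through verbatim when the $B$-action is trivial, because the only place the hypothesis $B \cong k$ was used there was precisely to ensure the differential on the relevant hom-complex does not pick up a twist from the $\mathfrak m_B$-action.

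The main obstacle I anticipate is the bookkeeping around compatibility of the comparison map with the filtration: one must check that the natural transformation $\R\mathrm{Der}_B(\mathcal A, -) \Rightarrow \R\mathrm{Der}_B(A,-)$ (coming from $\varprojlim$ and the identification of its value on trivial modules via Lemma \ref{prerder}) is genuinely a map of the two long exact sequences attached to the filtration $M\mathfrak m_B^\bullet$, which in turn requires that the chosen cofibrant resolutions $Q\mathcal A$ and $QA$ can be taken compatibly (e.g. $QA = \varprojlim Q\mathcal A$, or at least that $\varprojlim$ of a cofibrant resolution of $\mathcal A$ computes $\R\mathrm{Der}_B(A,-)$ — this should follow from $\varprojlim$ being right Quillen and the analysis in \ref{prolemma}). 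Granting that, the five-lemma finishes the induction. The only genuinely new input beyond Lemma \ref{prerder} is thus the exactness of derived derivations in the bimodule variable together with the nilpotence of $\mathfrak m_B$; everything else is a formal dévissage.
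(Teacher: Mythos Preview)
Your approach is essentially the same as the paper's: both filter by powers of $\mathfrak{m}_B$ to reduce to the case where the $B$-action on the bimodule is trivial, and then invoke Lemma~\ref{prerder}. The packaging differs slightly. You filter $M$ itself and run a five-lemma along the associated long exact sequences in $\R\mathrm{Der}_B(-,-)$, which forces you to construct and track a natural comparison map $\R\mathrm{Der}_B(\mathcal{A},-)\to\R\mathrm{Der}_B(A,-)$; you correctly flag this as the main bookkeeping issue. The paper instead redoes the explicit computation of \ref{prerder}: it identifies $\R\mathrm{Der}_B(A,M)$ with the twisted complex $A^![1]\otimes_k^\Delta M$, then filters \emph{that} complex by $M\mathfrak{m}_B^p$ and uses the resulting spectral sequence to show the twist is cohomologically invisible, i.e.\ $A^![1]\otimes_k^\Delta M\simeq A^![1]\otimes_k M$; the pro-Artinian side is handled identically. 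This yields the same answer on both sides without ever needing a direct comparison map, neatly sidestepping the compatibility issue you anticipate. Either route is fine; the paper's is a little shorter because it avoids that bookkeeping, while yours is more conceptual and makes clearer that the result is a formal d\'evissage from the $B=k$ case.
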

\begin{proof}
We follow the proof of \ref{prerder}. This time we have to care about twists. We still have a quasi-isomorphism $\R\mathrm{Der}_{k}(A, M)\cong \dgh_k(BA[-1],M)$. However, the differential on $\dgh_k$ is twisted by the action of $B$ on $M$: explicitly, if $f\in \dgh_k(BA[-1],M)$, then $df$ gains an extra term $\Delta f$, defined by $\Delta f (v)=f(v_{(1)}).v_{(2)}+v_{(1)}.f(v_{(2)})$ where we are using Sweedler notation for the comultiplication $\Delta(v)=v_{(1)}\otimes v_{(2)}$. Across the isomorphism of underlying graded vector spaces $\dgh_k(BA[-1],M)\cong A^![1]\otimes_k M$, the twist $\Delta$ on $\dgh_k(BA[-1],M)$ corresponds to a twist in the differential on $A^![1]\otimes_k M$; let $A^![1]\otimes^\Delta_k M$ denote $A^![1]\otimes_k M$ equipped with this twisted differential. Filtering $M$ by the action of $\mathfrak{m}_B$ gives a finite filtration $F^p= A^![1]\otimes^\Delta_k M.\mathfrak{m}^p_B$ on $A^![1]\otimes^\Delta_k M$. The associated graded pieces are $\mathrm{gr}^p_F:=F^p/F^{p+1}\cong A^![1]\otimes^\Delta_k\mathrm{gr}^p_M$, where we put $\mathrm{gr}^p_M:=M.\mathfrak{m}^p_B/M.\mathfrak{m}^{p+1}_B$. One obtains a convergent spectral sequence $( A^![1]\otimes^\Delta_k\mathrm{gr}^p_M)^q \implies H^{p+q}(A^![1]\otimes^\Delta_k M)$. The twist in the differential of $A^![1]\otimes^\Delta_k M$ disappears upon passing to the associated graded pieces and so one has $\mathrm{gr}^p_F\cong A^![1]\otimes_k\mathrm{gr}^p_M$. In other words, the natural map $A^![1]\otimes_k M\to A^![1]\otimes^\Delta_k M$ is an isomorphism on associated graded pieces. By considering the same spectral sequence for $A^![1]\otimes_k M$, we see that the natural map is actually a quasi-isomorphism. Hence we get a quasi-isomorphism $\R\mathrm{Der}_{k}(A, M)\cong A^![1]\otimes_k M$, as before. The argument to show that $\R\mathrm{Der}_{B}(\mathcal{A},M)\cong\Omega(\varinjlim \mathcal{A}^*)[1]\otimes_k M$ is similar. The proof that $\Omega(\varinjlim \mathcal{A}^*)[1]\otimes_k M\cong A^![1]\otimes_k M$ is the same as before.
\end{proof}
\begin{rmk}
Continuing on from \ref{pronccot}, the above proof gives a quasi-isomorphism between $\varprojlim\mathbb{L}(\mathcal{A})_B$ and $ \mathbb{L}(A)_B$. In other words, the (pro-)noncommutative cotangent complex functor commutes up to quasi-isomorphism with $\varprojlim$, as long as we assume some finiteness conditions.
\end{rmk}
\begin{rmk}\label{probimods}Let $A$ be a pro-Artinian dga. In the spirit of \cite{quillender}, one could define a \textbf{pro-$A$-bimodule} to be a group object in the category of pro-Artinian dgas with a map to $A$. Equivalently, this is an $A$-bimodule in the category of profinite dg vector spaces. In this framework, one can also define pro-derivations, pro-noncommutative K\"ahler differentials, and the pro-noncommutative cotangent complex. If the underlying vector space of a pro-bimodule $M$ is constant, then $M$ is a bimodule over some $A_\alpha$, and hence a bimodule over $A_\beta$ for all $ \beta\to\alpha$. The proofs of \ref{prerder} and \ref{rder} adapt to cover the case when $M$ is a constant pro-$A$-bimodule, and also provide comparisons between the cotangent complexes.
\end{rmk}

\subsection{Koszul duality for homotopy pro-Artinian algebras}
The main result of this part is a characterisation of good dgas (\ref{goodchar}), which can also be thought of as a strictification result. Call a dga $A$ \textbf{homotopy pro-Artinian} if $HA$ is pro-Artinian (in the sense that it is a limit of a pro-Artinian dga). We prove that a certain class of homotopy pro-Artinian dgas (namely those for which the pro-structure is that of the Postnikov tower) are good. We obtain as a corollary a Koszul duality result for this class of dgas. For a very general approach to some of the ideas of this part, see \cite[7.4]{lurieha}.
\begin{defn}Let $A \to B$ be a map of dgas. Say that a map $A' \to A$ of dgas is a \textbf{homotopy square-zero extension} over $B$ if there is a $B$-bimodule $M$ such that $A'$ is the homotopy fibre product of a diagram of the form $A \xrightarrow{\delta} B \oplus M \xleftarrow{0} B$ where $\delta$ is a derived derivation and $0$ is the zero derivation.
\end{defn}
We will be interested in the case when $B=H^0A$, and $A$ is a good dga. In this case, one can lift the natural map $A \to B$ to a map of pro-Artinian dgas. Observe that since $\varprojlim: \proart \to \dga$ is exact (\ref{limisexact}), we may regard it as the homotopy limit $\holim$; we will use this without further acknowledgement.
\begin{lem}\label{hoprolem}
	Let $\mathcal{A}\in \proart$. Assume that $H^0\varprojlim \mathcal{A}$ is finite-dimensional. Then there is a map of pro-Artinian dgas $\mathcal{A}\to H^0\varprojlim \mathcal{A}$.
\end{lem}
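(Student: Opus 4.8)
The plan is to construct the map $\mathcal{A} \to H^0\varprojlim\mathcal{A}$ directly at the level of pro-objects, using that $\mathcal{A}$ may be taken strict and that truncation to $H^0$ is functorial. First I would invoke \ref{strictpro} to replace $\mathcal{A}$ by an isomorphic strict pro-Artinian dga $\{\mathcal{A}_\alpha\}_\alpha$, so the transition maps $\mathcal{A}_\beta \to \mathcal{A}_\alpha$ are all surjective. For each $\alpha$, the $0$-th cohomology $H^0(\mathcal{A}_\alpha)$ is a finite-dimensional (in fact Artinian local) $k$-algebra, since $\mathcal{A}_\alpha$ is Artinian local and nonpositively graded; moreover there is a natural algebra map $\mathcal{A}_\alpha \to H^0(\mathcal{A}_\alpha)$ given by the composition $\mathcal{A}^0_\alpha \onto H^0(\mathcal{A}_\alpha)$ on degree zero and the zero map elsewhere — this is a dga map precisely because $\mathcal{A}_\alpha$ is nonpositive (there is nothing in positive degrees to worry about, and $Z^0 = \ker(d^0)$ is a subalgebra of $\mathcal{A}^0_\alpha$ surjecting onto $H^0$). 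These maps are evidently compatible with the transition maps, so they assemble into a map of pro-objects $\{\mathcal{A}_\alpha\}_\alpha \to \{H^0(\mathcal{A}_\alpha)\}_\alpha$ in $\proart$, the target being a pro-Artinian (ungraded) algebra.

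Next I would identify the target with the constant pro-object $H^0\varprojlim\mathcal{A}$. Since $\mathcal{A}$ is strict, it satisfies the Mittag-Leffler condition, so by \ref{limisexact} (exactness of $\varprojlim$ on $\proart$) we have $H^0\varprojlim\mathcal{A} \cong \varprojlim_\alpha H^0(\mathcal{A}_\alpha)$. By hypothesis this limit is finite-dimensional; a cofiltered limit of finite-dimensional algebras whose total limit is finite-dimensional stabilises, in the sense that the pro-object $\{H^0(\mathcal{A}_\alpha)\}_\alpha$ is isomorphic in $\proart$ to the constant pro-object on $\varprojlim_\alpha H^0(\mathcal{A}_\alpha) = H^0\varprojlim\mathcal{A}$. (Concretely: the canonical maps $\varprojlim_\alpha H^0(\mathcal{A}_\alpha) \to H^0(\mathcal{A}_\beta)$ have images forming a descending chain of subspaces of a finite-dimensional space, hence stabilise, and strictness lets one check that the resulting map of pro-objects is an isomorphism.) Composing the map from the previous paragraph with this isomorphism produces the desired map $\mathcal{A} \to H^0\varprojlim\mathcal{A}$ in $\proart$, where the target is viewed as a constant pro-Artinian dga.

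The main obstacle — such as it is — is the bookkeeping in the second paragraph: verifying that finiteness of the limit forces the pro-object $\{H^0(\mathcal{A}_\alpha)\}$ to be pro-isomorphic to a constant one, rather than merely to have constant limit. This is where strictness (Mittag-Leffler) is essential, and one should be slightly careful that the stabilisation is uniform enough to define a genuine morphism of pro-objects and not just a compatible family of maps out of the limit. Everything else — functoriality of the truncation $(-)^0/d^0$ and the fact that it lands in algebras — is routine given that all the $\mathcal{A}_\alpha$ are concentrated in nonpositive degrees.
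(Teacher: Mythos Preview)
Your proposal is correct and takes essentially the same approach as the paper: both construct compatible maps $\mathcal{A} \to H^0(\mathcal{A}_\alpha)$ and then identify the pro-object $\{H^0(\mathcal{A}_\alpha)\}_\alpha$ with the constant pro-object $H^0\varprojlim\mathcal{A}$ using the finiteness hypothesis. The only difference is cosmetic --- you set things up as a level map via strictness while the paper phrases it through limit-preservation of $\hom_{\proart}(\mathcal{A},-)$ --- and you are right that the stabilisation step is the actual content, which the paper compresses into the phrase ``Because this is Artinian by hypothesis''.
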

\begin{proof}
	Put $\mathcal{A}=\{\mathcal{A}_\alpha\}_\alpha$ with each $\mathcal{A}_\alpha$ Artinian. For all $\alpha$, there is a structure map $\mathcal{A}\to\mathcal{A}_\alpha$ and hence a map $\mathcal{A}\to H^0(\mathcal{A}_\alpha)$. These maps assemble into an element of $\varprojlim_\alpha\hom_{\proart}(\mathcal{A},H^0\mathcal{A}_\alpha)$. Because $\varprojlim$ is the homotopy limit we have $\varprojlim_\alpha H^0(\mathcal{A}_\alpha)\cong H^0\varprojlim \mathcal{A}$. Because this is Artinian by hypothesis, we get an isomorphism 
	\begin{align*}
\varprojlim_\alpha\hom_{\proart}(\mathcal{A},H^0\mathcal{A}_\alpha)&\cong \hom_{\proart}(\mathcal{A},\varprojlim_\alpha H^0\mathcal{A}_\alpha)\\&\cong \hom_{\proart}(\mathcal{A},H^0\varprojlim\mathcal{A}).\qedhere
	\end{align*}
\end{proof}
Our main examples of homotopy square-zero extensions will be provided by truncations. 
\begin{defn}
If $A$ is a dga, set $A_n:=\tau_{\geq -n}(A)$, the good truncation to degrees above $-n$. Explicitly, we have $$(A_n)^j=\begin{cases} A^j & j>-n \\ \coker(d:A^{-n-1}\to A^n) & j=-n \\ 0 & j<-n \end{cases}$$One has $H^jA_n\cong H^jA$ if $j\geq -n$ and $H^jA_n=0$ if $j<-n$.
\end{defn}
\begin{lem}\label{hsqlem}
Let $A \in \dga$ be a nonpositive dga. Then for every $n\geq 0$, the natural map $A_{n+1} \to A_n$ is a homotopy square-zero extension with base $H^0A$.
\end{lem}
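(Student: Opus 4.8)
The plan is to exhibit $A_{n+1}$ explicitly as a homotopy fibre product of the required shape. Write $B := H^0 A$ and set $M := (H^{-n-1}A)[n+1]$, a $B$-bimodule concentrated in the single degree $-n-1$ (it is a bimodule over $H^0 A$ since $H^{-n-1}A$ is a module over $H^0 A$ on both sides; note $M$ sits in nonnegative homological degree once shifted, so it is of the type allowed in the definitions of the section). First I would record the elementary fact that for any nonpositive dga $A$ there are natural maps $A_{n+1}\to A_n$ (good truncations are functorial) and $A_n \to \tau_{\geq 0}A_n \cong H^0 A = B$, so both $A_{n+1}$ and $A_n$ live over $B$.

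Next, I would construct the derived derivation $\delta\colon A_n \to M$. The point is that $A_n$ has cohomology concentrated in degrees $-n,\dots,0$, with $H^{-n}A_n \cong H^{-n}A$, while $A_{n+1}$ additionally has $H^{-n-1}A_{n+1}\cong H^{-n-1}A$; the cofibre of the map $A_{n+1}\to A_n$ is a complex with cohomology $H^{-n-1}A$ in degree $-n-1$, i.e. it is quasi-isomorphic to $M[-1]$ (the shift conventions here need to be pinned down against the paper's cone/cocone conventions from the Notation section). So the map $A_{n+1}\to A_n$ is the mapping cocone of a map $A_n \to M$ in the derived category of $A_n$-bimodules — equivalently, after choosing a cofibrant resolution $QA_n \to A_n$, of an actual bimodule map $QA_n \to B \oplus M$, i.e. a derived derivation $\delta\in \R\mathrm{Der}_B(A_n, M)$. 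Concretely, one takes the canonical square-zero-like map classifying the extension: the $k$-invariant of the Postnikov tower. I would then check that, with this $\delta$, the homotopy fibre product of $A_n \xrightarrow{\delta} B\oplus M \xleftarrow{0} B$ is quasi-isomorphic to $A_{n+1}$: the homotopy fibre of $0\colon B \to B\oplus M$ is $B \oplus M[-1]$ (homotopy kernel of the zero map on the $M$ summand), and pulling back $\delta$ along this is, by the octahedral axiom, the homotopy fibre of the composite $A_n \to B\oplus M \to M$, which is $A_n$ with a copy of $H^{-n-1}A$ reattached in degree $-n-1$ — that is exactly $A_{n+1}$.

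The main obstacle is making the identification of the $k$-invariant $\delta$ as a genuine derived derivation \emph{with base $B = H^0A$}, rather than merely a map of complexes or of $A_n$-bimodules: one must verify that the classifying map factors appropriately through the augmentation $A_n \to B$ so that $M$ is acted on through $B$, and that the homotopy pullback is formed in $\cat{dga}_k$ (not just in complexes) — i.e. that the square-zero extension $B\oplus M$ really is a dga over $B$ and the relevant maps are dga maps. Here I would lean on the Quillen adjunction between $B$-bimodules and $\cat{dga}_k/B$ recorded earlier in the section, identifying $\R\mathrm{Der}_B(A_n,M) \cong \dgh_B(\mathbb{L}(A_n)_B, M)$ and computing the latter via the truncation. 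A secondary, purely bookkeeping, obstacle is getting all the degree shifts consistent with the paper's conventions ($X[i]^j = X^{i+j}$, mapping cocone $= \mathrm{cone}(f)[-1]$); I would fix these once at the start and propagate them. Everything else — functoriality of truncation, exactness of the relevant homotopy limits, the long exact sequence comparing $H^*A_{n+1}$ and $H^*A_n$ — is routine.
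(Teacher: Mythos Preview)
Your overall strategy matches the paper's: take $B=H^0A$, take $M$ to be (a shift of) $H^{-n-1}A$, and realise $A_{n+1}$ as the homotopy pullback of $A_n \xrightarrow{\delta} B\oplus M \xleftarrow{0} B$ where $\delta$ is the Postnikov $k$-invariant. The paper also sets things up via the homotopy fibre sequence $H^{-n-1}(A)[n+1]\to A_{n+1}\to A_n$ and takes $M$ to be the next shift.

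Where the paper differs is precisely at what you flag as the ``main obstacle'' --- upgrading from a statement about bimodules/complexes to a statement in $\cat{dga}_k/B$. Rather than invoking the cotangent complex abstractly, the paper resolves this concretely: it lets $C$ be the mapping cone of $H^{-n-1}(A)[n+1]\to A_{n+1}$, observes (citing \cite[1.45]{unifying}) that $C$ carries a dga structure making $C\simeq A_n$, and that with this model the diagram $B \to B\oplus M \leftarrow C$ consists of genuine dga maps with $C\to B\oplus M$ a levelwise surjection. Then $A_{n+1}$ is the \emph{strict} pullback of dgas, and since one leg is a fibration this is also the homotopy pullback. Your suggested route --- identifying $\R\mathrm{Der}_B(A_n,M)$ via $\mathbb{L}(A_n)_B$ --- tells you the derivation exists but does not by itself establish that the resulting homotopy pullback in $\cat{dga}_k$ is $A_{n+1}$; you would still need something like the explicit-model argument to close that gap. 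So your plan is correct in outline, but the step you identify as the obstacle genuinely is one, and the paper's explicit-cone-plus-fibration trick is what makes it go through cleanly.
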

\begin{proof}
There is a homotopy fibre sequence of $A$-bimodules $$H^{-n-1}(A)[-n-1] \to A_{n+1}\to A_n$$indicating that we should take $M$ to be a shift of $H^{-n-1}(A)$. Indeed, this sequence gives a map $A_n \to H^{-n-1}(A)[-n-2]$ in the homotopy category of $A$-bimodules, and so we put $M:=H^{-n-1}(A)[-n-2]$. Let $C$ be the mapping cone of $H^{-n-1}(A)[-n-1] \to A_{n+1}$ and write $B:=H^0A$. As in the proof of \cite[1.45]{unifying}, $C$ admits the structure of a dga, quasi-isomorphic to $A_n$, and moreover $A_{n+1}$ is the strict pullback of the diagram $B \to B \oplus M \from C$. The map $C \to B\oplus M$ is a fibration, and hence $A_{n+1}$ is the homotopy pullback.
\end{proof}
\begin{lem}\label{hplem}
Let $A \in \dga$ be a nonpositive dga. Suppose that for some $n$, $A_n$ is good and $H^{-n-1}(A)$ is finite-dimensional. Then $A_{n+1}$ is good.
\end{lem}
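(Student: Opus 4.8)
The plan is to build a pro-Artinian dga $\mathcal{A}_{n+1}$ whose limit is $A_{n+1}$ by transporting, via Theorem \ref{rder}, the homotopy square-zero extension data supplied by Lemma \ref{hsqlem} from the strict dga $A_n$ to a chosen pro-Artinian model of it. First I would dispatch the cohomological finiteness half of ``good'': from $H^j(A_{n+1}) \cong H^j(A_n)$ for $j \geq -n$, from $H^{-n-1}(A_{n+1}) \cong H^{-n-1}(A)$, and from vanishing below degree $-n-1$, cohomological local finiteness of $A_{n+1}$ is immediate once we know $A_n$ is cohomologically locally finite (part of being good) and that $H^{-n-1}(A)$ is finite-dimensional (the hypothesis). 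In particular $B := H^0(A) = H^0(A_n)$ is finite-dimensional, hence Artinian local, being the zeroth cohomology of a limit of a pro-Artinian dga.

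Next, fix $\mathcal{A}_n \in \proart$ with $A_n \simeq \varprojlim \mathcal{A}_n$. Since $H^0\varprojlim\mathcal{A}_n = B$ is finite-dimensional, Lemma \ref{hoprolem} produces a map of pro-Artinian dgas $\mathcal{A}_n \to B$, so that $B$ serves as a base for derived derivations out of both $A_n$ and $\mathcal{A}_n$. Lemma \ref{hsqlem} presents $A_{n+1}$ as the homotopy pullback of $A_n \xrightarrow{\delta} B \oplus M \xleftarrow{0} B$, where $M := H^{-n-1}(A)[-n-2]$ is a finite-dimensional $B$-bimodule concentrated in a single degree and $\delta$ is a derived derivation, i.e.\ a class in $H^0\R\mathrm{Der}_B(A_n, M)$. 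Because $A_n = \varprojlim \mathcal{A}_n$ is cohomologically locally finite, Theorem \ref{rder} supplies a quasi-isomorphism $\R\mathrm{Der}_B(\mathcal{A}_n, M) \simeq \R\mathrm{Der}_B(A_n, M)$ induced by $\varprojlim$; passing to $H^0$, the class $\delta$ lifts to a class $\tilde\delta \in H^0\R\mathrm{Der}_B(\mathcal{A}_n, M) = \hom_{\mathrm{Ho}(\proart/B)}(\mathcal{A}_n, B\oplus M)$ with $\varprojlim \tilde\delta \simeq \delta$. Noting that $B\oplus M$ is again Artinian local (so an object of $\proart$) and that $\proart$ is a model category (Theorem \ref{proartmodel}), I would then define $\mathcal{A}_{n+1}$ to be the homotopy pullback of $\mathcal{A}_n \xrightarrow{\tilde\delta} B\oplus M \xleftarrow{0} B$ in $\proart$, most conveniently realised as a strict pullback along a fibration.

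To finish, I would invoke that $\varprojlim: \proart \to \dga$ is right Quillen, hence preserves homotopy pullbacks (equivalently: it preserves fibrations and all weak equivalences, and commutes with strict limits), and that it fixes the constant pro-objects $B\oplus M$ and $B$. Thus $\varprojlim \mathcal{A}_{n+1} \simeq (\varprojlim\mathcal{A}_n) \times^h_{B\oplus M} B \simeq A_n \times^h_{B\oplus M} B \simeq A_{n+1}$, where the middle equivalence uses $\varprojlim\tilde\delta \simeq \delta$ together with homotopy-invariance of homotopy pullbacks, and the last uses Lemma \ref{hsqlem}. Combined with the cohomological local finiteness established at the outset, this exhibits $A_{n+1}$ as good.

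I expect the only step that does genuine work, rather than formal bookkeeping, to be the appeal to Theorem \ref{rder}: it is exactly what guarantees that the classifying derivation $\delta$, a priori defined only on the strict dga $A_n$, descends to pro-Artinian data, and hence that the homotopy square-zero extension $A_{n+1}$ is itself expressible as the limit of a pro-Artinian dga. Secondary care is needed to confirm that $\varprojlim$ really commutes with this particular homotopy pullback and that $M$ lies in the degree range for which the derived-derivation formalism of the preceding subsection was set up; both are routine given the model-categorical results already proved.
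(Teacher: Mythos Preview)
Your proposal is correct and follows essentially the same route as the paper: present $A_{n+1}$ as a homotopy pullback via \ref{hsqlem}, use \ref{hoprolem} and \ref{rder} to lift the classifying diagram to $\proart$, take the homotopy pullback there, and conclude by noting that $\varprojlim$ commutes with homotopy limits. You are slightly more explicit than the paper about how the derivation $\delta$ is transported (by passing to $H^0$ of the quasi-isomorphism in \ref{rder}) and about why $\varprojlim$ preserves the relevant homotopy pullback, but the argument is the same.
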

\begin{proof}
It is clear that $A_{n+1}$ is cohomologically locally finite. So we just need to prove that $A_{n+1}$ is quasi-isomorphic to something in the image of $\varprojlim$. By \ref{hsqlem}, $A_{n+1}$ is the homotopy pullback of a diagram $J$ of the form $H^0(A)\to H^0(A)\oplus M \from A_n$, where $M$ is a finite module. By \ref{rder} and \ref{hoprolem}, we may view $J$ as a diagram in $\proart$; let $P\in\proart$ be the homotopy pullback. Since $\varprojlim:\proart \to\dga$ is the homotopy limit and homotopy limits commute, we see that $\varprojlim P$ is the homotopy pullback of $J$ considered as a diagram in $\dga$. But by \ref{hsqlem} this is precisely $A_{n+1}$.
\end{proof}
\begin{rmk}
	If $A_n$ is Artinian local, then $A_{n+1}$ is homotopy Artinian local, but neither $A_{n+1}$ nor $P$ need be Artinian local.
\end{rmk}
\begin{prop}\label{goodchar}
	Let $A \in \dga$ be a nonpositive dga. The following are equivalent:
	\begin{itemize}
		\item $A$ is good.
		\item $A$ is cohomologically locally finite and $H^0(A)$ is local.
	\end{itemize}
\end{prop}
\begin{proof}The forward direction is clear. For the backwards direction, assume that $A$ is cohomologically locally finite and that $H^0(A)$ is local. We may also assume that $A$ is cofibrant. Using \ref{hplem} inductively, it is easy to see that each truncation $A_n$ is good. In fact, one can say more: we obtain for each $n$ a pro-Artinian dga $P_n$ together with an isomorphism $A_n \to \varprojlim P_n$ in $\mathrm{Ho}(\dga)$. Using that $A$ is cofibrant, we obtain a dga map $A \to \varprojlim P_n$ making the obvious triangles commute. We have quasi-isomorphisms $A \cong \holim_n \varprojlim P_n \cong \varprojlim \holim_n P_n$, because $\varprojlim$ is the homotopy limit. Hence $A$ is quasi-isomorphic to something in the image of $\varprojlim$.
\end{proof}
Theorem \ref{kdgood} immediately gives the following:
\begin{thm}\label{kdfin}
	Let $A \in \dga$ be a cohomologically locally finite dga such that $H^0(A)$ is local. Then $A$ is quasi-isomorphic to its double Koszul dual.
\end{thm}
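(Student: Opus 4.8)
The plan is to obtain this as an immediate corollary of the two preceding results, Proposition \ref{goodchar} and Theorem \ref{kdgood}. Since $A$ lies in $\dga$ it is automatically concentrated in nonpositive cohomological degrees, so the dichotomy of Proposition \ref{goodchar} applies verbatim: the hypotheses here---that $A$ is cohomologically locally finite and that $H^0(A)$ is local---are exactly the second bullet point of that proposition, hence $A$ is good. Theorem \ref{kdgood} then states precisely that a good dga is quasi-isomorphic to its double Koszul dual, and applying it to $A$ concludes the argument. So the proof is genuinely one line.

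If I were to point to where the content sits, it is entirely upstream in Proposition \ref{goodchar}. The forward implication (good $\Rightarrow$ cohomologically locally finite with local $H^0$) is formal, but the converse is not: being \emph{good} requires exhibiting $A$ as $\varprojlim$ of an honest pro-Artinian dga, which is much stronger on its face than finiteness of the cohomology groups together with locality of $H^0$. That converse is the substantive step, and it is proved by climbing the Postnikov tower: one checks each truncation $A_n = \tau_{\geq -n}A$ is good by induction, using Lemma \ref{hplem} to pass from goodness of $A_n$ to goodness of $A_{n+1}$. Lemma \ref{hplem} in turn relies on realising the homotopy square-zero extension $A_{n+1} \to A_n$ of Lemma \ref{hsqlem} inside $\proart$, which is possible by the cotangent-complex comparison of Theorem \ref{rder} (derived derivations out of a good dga agree with derived derivations out of its pro-Artinian refinement) together with the lifting Lemma \ref{hoprolem}; one then assembles $A$ as a homotopy limit over $n$ and commutes this with $\varprojlim$.

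Thus the only thing Theorem \ref{kdfin} adds beyond \ref{kdgood} is the observation, supplied by \ref{goodchar}, that the a priori restrictive hypothesis ``pro-Artinian'' can be replaced by the easily verifiable conditions ``cohomologically locally finite and $H^0$ local'', so that the double Koszul duality statement becomes usable in practice. I do not expect any obstacle in this deduction itself; all the difficulty has already been absorbed into Theorem \ref{kdgood} (the bar/cobar plus ind/pro duality bookkeeping) and Proposition \ref{goodchar} (the inductive strictification).
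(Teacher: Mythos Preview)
Your proposal is correct and matches the paper's own argument exactly: the paper states \ref{kdfin} immediately after \ref{goodchar} with the one-line justification ``Theorem \ref{kdgood} immediately gives the following,'' relying on \ref{goodchar} to verify goodness and then invoking \ref{kdgood}. Your additional commentary on where the substantive work lies is accurate and helpful but not part of the proof proper.
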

\subsection{The pseudo-model category of good dgas}
Using \ref{goodchar} to write a good dga as an iterated sequence of homotopy square-zero extensions, we will extend the results of \ref{rder} to give an equivalence of pseudo-model categories between good dgas and those pro-Artinian dgas whose limits are good (the `pregood' ones). Because neither of these (pseudo)-model categories are dg model categories, we need to first translate \ref{rder} into the language of simplicial mapping spaces.
\begin{defn}
	Let $\mathcal{C}$ be a model category, and let $X,Y$ be two objects of $\mathcal{C}$. Following \mbox{{\cite[5.4.9]{hovey}}} write $\R\mathrm{Map}_{\mathcal{C}}({X},{Y}) \in \mathrm{Ho}(\cat{sSet})$ for the derived mapping space from $X$ to $Y$. 
\end{defn}
\begin{prop}\label{dugger}
Let $\mathcal{C}$ be a combinatorial dg model category. Let $X,Y$ be two objects of $\mathcal{C}$. Denote their derived hom-complex by $\R\dgh_{\mathcal{C}}(X,Y)\in D(k)$. Then the quasi-isomorphism type of $\R\dgh_{\mathcal{C}}(X,Y)$ determines the weak homotopy type of the derived mapping space $\R\mathrm{Map}_{\mathcal{C}}({X},{Y})$.
\end{prop}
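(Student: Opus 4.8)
\emph{Proof proposal.} The plan is to make $\R\mathrm{Map}_{\mathcal{C}}(X,Y)$ completely explicit in terms of $\R\dgh_{\mathcal{C}}(X,Y)$, by using the $\mathrm{Ch}(k)$-enrichment to manufacture a cosimplicial frame on $X$ and then recognising the resulting simplicial set as a Dold--Kan transform of a truncation of the derived hom-complex. Since $\mathcal{C}$ is combinatorial we may fix functorial cofibrant and fibrant replacements; replacing $X$ and $Y$ accordingly, we may assume throughout that $X$ is cofibrant and $Y$ is fibrant, so that $\R\dgh_{\mathcal{C}}(X,Y)=\dgh_{\mathcal{C}}(X,Y)$ on the nose and $\R\mathrm{Map}_{\mathcal{C}}(X,Y)$ is computed from a cosimplicial frame on $X$ together with the fibrant $Y$.

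First I would construct such a frame from the tensoring. Let $N\colon\cat{sSet}\to\mathrm{Ch}(k)$ be normalised chains of the free simplicial $k$-module, and consider the cosimplicial object $X^{\bullet}:=X\otimes N(\Delta^{\bullet})$ of $\mathcal{C}$, where $\Delta^{\bullet}$ is the standard cosimplicial simplicial set. Because tensoring with the cofibrant object $X$ is a left Quillen functor $\mathrm{Ch}(k)\to\mathcal{C}$, because $N(\Delta^{n})\to k$ is a quasi-isomorphism, and because each $N(\partial\Delta^{n})\into N(\Delta^{n})$ is a cofibration of complexes (indeed the relevant latching inclusion), one checks in the standard way, using the pushout--product axiom of the $\mathrm{Ch}(k)$-model structure, that $X^{\bullet}$ is Reedy cofibrant with $X^{0}=X$ and every $X^{n}\to X$ a weak equivalence; that is, $X^{\bullet}$ is a cosimplicial frame on $X$. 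By the framing description of homotopy function complexes \cite[Ch.~5]{hovey}, $\R\mathrm{Map}_{\mathcal{C}}(X,Y)$ is then weakly equivalent to the simplicial set $[n]\mapsto\hom_{\mathcal{C}}(X^{n},Y)$.

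Next I would rewrite this simplicial set. The enriched tensor--hom adjunction gives $\hom_{\mathcal{C}}(X^{n},Y)=\hom_{\mathcal{C}}(X\otimes N(\Delta^{n}),Y)\cong\hom_{\mathrm{Ch}(k)}\!\big(N(\Delta^{n}),\dgh_{\mathcal{C}}(X,Y)\big)$, naturally in $[n]$. Since $N(\Delta^{n})$ is connective, a chain map out of it sees only the good truncation of $\dgh_{\mathcal{C}}(X,Y)$ to nonpositive cohomological degrees, and as $n$ varies the resulting simplicial set is precisely the underlying simplicial set of the Dold--Kan transform $\Gamma$ of that truncation (this is the Yoneda identity expressing $A_{n}$ as the maps from $k[\Delta^{n}]$ in simplicial $k$-modules, fed through Dold--Kan). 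Finally, a quasi-isomorphism $\dgh_{\mathcal{C}}(X,Y)\xrightarrow{\sim}\dgh_{\mathcal{C}}(X',Y')$ restricts to a quasi-isomorphism of these truncations, hence becomes a weak equivalence of simplicial $k$-modules after applying $\Gamma$, hence a weak equivalence of underlying simplicial sets, since the forgetful functor from simplicial $k$-modules (each of which is a Kan complex) to $\cat{sSet}$ creates weak equivalences. Therefore the weak homotopy type of $\R\mathrm{Map}_{\mathcal{C}}(X,Y)$ depends only on the quasi-isomorphism type of $\R\dgh_{\mathcal{C}}(X,Y)$, as claimed.

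The main obstacle is the verification in the second paragraph that $X^{\bullet}=X\otimes N(\Delta^{\bullet})$ really is a cosimplicial frame: Reedy cofibrancy, and that every $X^{n}\to X$ is a weak equivalence. This is the only point where the compatibility of the $\mathrm{Ch}(k)$-enrichment with the model structure genuinely enters (through the pushout--product axiom applied to the latching cofibrations $N(\partial\Delta^{n})\into N(\Delta^{n})$), and it is where combinatoriality is convenient, guaranteeing that the functorial replacements and the framing machinery are available; everything after that is formal Dold--Kan bookkeeping.
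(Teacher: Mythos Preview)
Your argument is correct and complete; the cosimplicial frame $X\otimes N(\Delta^{\bullet})$ is the standard one in a $\mathrm{Ch}(k)$-model category, and the identification of $\hom_{\mathrm{Ch}(k)}(N(\Delta^{\bullet}),-)$ with the underlying simplicial set of the Dold--Kan functor applied to the connective truncation is exactly right.

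The paper takes a different, more abstract route: it invokes Dugger's results that any combinatorial dg model category is naturally enriched over symmetric spectra, by regarding unbounded chain complexes as spectrum objects in connective complexes and then applying Dold--Kan levelwise to land in spectra of simplicial sets; the derived mapping space is then recovered as the zeroth space of the resulting mapping spectrum. Your approach bypasses the spectral machinery entirely and works directly with the $\mathrm{Ch}(k)$-tensoring and a single application of Dold--Kan to the truncation, which is more elementary and self-contained. The paper's route has the advantage of situating the statement within a general framework (and in principle yields the stronger statement that the full mapping spectrum is determined), while yours makes the dependence on $\tau_{\leq 0}\R\dgh_{\mathcal{C}}(X,Y)$ completely explicit and avoids any appeal to the literature on spectral enrichments.
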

\begin{proof}By results of Dugger \cite{duggerspectra} a combinatorial dg model category is naturally enriched over the category of symmetric spectra. The basic idea is to identify unbounded dg vector spaces as spectrum objects in the category of nonpositive dg vector spaces (via shifting and good truncation), and then apply the Dold-Kan correspondence \cite[8.4]{weibel} levelwise to end up with a spectrum object in simplicial sets. See also \cite{duggershipleyspectra} for an additive version where one ends up with spectrum objects in simplicial abelian groups. In particular, taking the zeroth level of the derived mapping spectra of $\mathcal{C}$ gives an enrichment of $\mathcal{C}$ over simplicial sets. For fibrant-cofibrant objects this enrichment must be weakly equivalent to the usual one. 
\end{proof}
\begin{cor}\label{dermapspace}
Let $B$ be an Artinian local $k$-algebra and let $\mathcal{A}\to B$ be a pro-Artinian dga with a map to $B$. Let $M$ be a finite-dimensional $B$-bimodule concentrated in nonpositive degrees. Assume that $A:=\varprojlim \mathcal{A}$ is cohomologically locally finite. Then there is a weak equivalence $$\R\mathrm{Map}_{\proart/B}(\mathcal{A},B\oplus M) \cong \R\mathrm{Map}_{\cat{dga}_k/B}(A,B\oplus M) .$$
\end{cor}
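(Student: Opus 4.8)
The plan is to deduce this corollary directly from Theorem~\ref{rder} together with Proposition~\ref{dugger}. The point is that both sides of the claimed weak equivalence are derived mapping spaces in (pseudo-)model categories, and Theorem~\ref{rder} already furnishes a quasi-isomorphism between the two relevant derived hom-\emph{complexes}; Proposition~\ref{dugger} then upgrades that quasi-isomorphism of complexes to a weak homotopy equivalence of mapping spaces, provided we are in a suitably enriched combinatorial dg model-categorical situation.

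First I would unwind the identifications. On the dga side, $\R\mathrm{Map}_{\cat{dga}_k/B}(A, B\oplus M)$ computes $\hom_{\mathrm{Ho}(\cat{dga}_k/B)}(A, B\oplus M)$ on $\pi_0$, and by the Quillen adjunction between $\Omega(-)_B$ and $B\oplus-$ its homotopy type is controlled by the derived hom-complex $\R\dgh_B(\mathbb{L}(A)_B, M) = \R\mathrm{Der}_B(A,M)$ in $B$-$\cat{bimod}$, which is a combinatorial dg model category; applying Proposition~\ref{dugger} gives that the quasi-isomorphism type of $\R\mathrm{Der}_B(A,M)$ determines the weak homotopy type of $\R\mathrm{Map}_{\cat{dga}_k/B}(A,B\oplus M)$. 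Symmetrically, on the pro-Artinian side one uses the Quillen adjunction between the pro-noncommutative K\"ahler differentials $\Omega(-)_B: \proart/B \to \cat{pro}(B\text{-}\cat{bimod})$ and its right adjoint $B\oplus-$ (as set up in Lemma~\ref{prolemma} and Remark~\ref{pronccot}), so that $\R\mathrm{Map}_{\proart/B}(\mathcal{A}, B\oplus M)$ is likewise controlled by the derived hom-complex computing $\R\mathrm{Der}_B(\mathcal{A},M) = \dgh_B(\mathbb{L}(\mathcal{A})_B, M)$. Again this derived hom lives in a combinatorial dg model category (the category of pro-$B$-bimodules, with $M$ a constant object), so Proposition~\ref{dugger} applies here too.

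Having reduced both mapping spaces to their controlling complexes, I would invoke Theorem~\ref{rder}: under the standing hypotheses ($B$ Artinian local, $M$ a finite-dimensional $B$-bimodule in nonpositive degrees, $A = \varprojlim\mathcal{A}$ cohomologically locally finite) there is a quasi-isomorphism $\R\mathrm{Der}_B(\mathcal{A},M)\cong \R\mathrm{Der}_B(A,M)$. Feeding this common quasi-isomorphism type through the two instances of Proposition~\ref{dugger} yields a weak equivalence of the two derived mapping spaces, which is exactly the assertion. One small bookkeeping point: I should note that the mapping space $\R\mathrm{Map}$ computed in the overcategory $\cat{dga}_k/B$ agrees with the one computed via the cotangent complex adjunction — this is a standard fact about slice model categories and the fact that $B\oplus-$ is right Quillen, so the fibre of $\R\mathrm{Map}_{\cat{dga}_k}(A, B\oplus M) \to \R\mathrm{Map}_{\cat{dga}_k}(A,B)$ over the structure map is the derived-derivation space.

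The main obstacle is making sure the hypotheses of Proposition~\ref{dugger} are genuinely met on the pro-Artinian side — i.e.\ that $\cat{pro}(B\text{-}\cat{bimod})$ (or at least the relevant slice $\proart/B$ through its cotangent-complex presentation) is a combinatorial dg model category in the sense needed for Dugger's enrichment, and that the constant pro-object $B\oplus M$ is fibrant-cofibrant enough for the enriched mapping space to coincide with the underived simplicial one. Since $M$ is a \emph{constant} pro-bimodule and everything in sight is finite-dimensional in each level, this should be unproblematic, and I expect one can either cite the constant-object case of Remark~\ref{probimods} or simply observe that all the derived-derivation computations already took place, via $\varprojlim$, in honest dg model categories. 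The remaining steps — unwinding the slice mapping space and matching $\pi_0$ with the homotopy-category hom-set — are routine.
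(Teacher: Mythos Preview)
Your proposal is correct and follows exactly the paper's approach: the paper's proof is literally the one line ``Follows from \ref{rder} and \ref{dugger},'' and you have faithfully unpacked what that means (transfer the mapping-space computation to the bimodule category via the cotangent-complex adjunction, then apply \ref{dugger} to the quasi-isomorphism furnished by \ref{rder}). Your caveat about verifying the hypotheses of \ref{dugger} on the pro-Artinian side is a legitimate technical point the paper glosses over, but your suggested resolution --- that $M$ is a constant pro-bimodule so the computation reduces to an honest dg model category --- is the right one.
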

\begin{proof}
Follows from \ref{rder} and \ref{dugger}.
\end{proof}
\begin{defn}
Let $\cat{g}\dga \into \dga$ denote the full subcategory on good dgas. Call a pro-Artinian dga $A$ \textbf{pregood} if $\varprojlim A$ is good, and let $\cat{g}\proart \into \proart$ denote the full subcategory on pregood pro-Artinian dgas. 
\end{defn}
Note that a pro-Artinian dga is pregood if and only if for each $n$ the vector space $\varprojlim H^nA$ is finite. The following definition is a slight variant of \cite[4.1.1]{hag1}.
\begin{defn}
	A \textbf{pseudo-model category} $C$ is a full subcategory of a model category $M$ such that $C$ is closed under weak equivalences and homotopy pullbacks in $M$. 
\end{defn}
We will soon show in \ref{pseudomods} that $\varprojlim:\cat{g}\proart \to \cat{g}\dga$ is a Quillen equivalence of pseudo-model categories. The key step in proving this will be to check that the derived mapping spaces agree, which generalises \ref{dermapspace}.
\begin{prop}\label{dermap}
	Let $\mathcal{A}$, $\mathcal{A}'$ be pregood pro-Artinian dgas. Put $A:=\varprojlim \mathcal{A}$ and  $A':=\varprojlim \mathcal{A}'$. There is a weak equivalence of derived mapping spaces $$\R\mathrm{Map}_{\proart}(\mathcal{A},\mathcal{A}')\cong\R\mathrm{Map}_{\cat{dga}_k}({A},{A'}).$$
\end{prop}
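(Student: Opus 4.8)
The plan is to reduce the statement to \ref{dermapspace} by presenting $A':=\varprojlim\mathcal A'$ as an iterated homotopy square-zero extension, built up from the zero object $k$, and comparing derived mapping spaces one stage at a time. Since $\mathcal A'$ is pregood, $A'$ is good, so $B:=H^0(A')$ is Artinian local and $A'$ is cohomologically locally finite. By \ref{hsqlem} the Postnikov tower $\{A'_n:=\tau_{\geq -n}A'\}_n$ realises $A'\simeq\holim_n A'_n$ with each $A'_{n+1}\to A'_n$ a homotopy square-zero extension over $B$ by a finite bimodule $M_n$ (a shift of $H^{-n-1}A'$, concentrated in nonpositive degrees); running the proof of \ref{goodchar} together with \ref{rder} and \ref{hoprolem} produces a tower $\{P'_n\}$ in $\proart$, built by transporting across \ref{rder} the derived derivations that classify this Postnikov tower, with $\varprojlim P'_n\simeq A'_n$ for each $n$ and $\holim_n P'_n\simeq\mathcal A'$ — the latter because, by \ref{limreflects}, the canonical comparison $\mathcal A'\to\holim_n P'_n$ is a weak equivalence, as it maps under $\varprojlim$ to the canonical quasi-isomorphism $A'\xrightarrow{\sim}\holim_n A'_n$. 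Finally, since $B$ is Artinian local its augmentation ideal is nilpotent, so the nilpotence filtration $k=B/\mathfrak m_B\from B/\mathfrak m_B^2\from\cdots\from B$ exhibits $B$ as a finite iterated homotopy square-zero extension of $k$ by finite $k$-vector spaces (in a suitable nonpositive degree), exactly as in the ungraded case of \cite{unifying} (cf.\ \ref{dersctn}); splicing this below the Postnikov tower yields a tower over $k$.

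Because $\R\mathrm{Map}(X,-)$ sends homotopy limits and homotopy pullbacks in the target to the corresponding homotopy limits of spaces, in both $\dga$ and $\proart$, it now suffices to establish the following inductive step along this spliced tower: if $N\to N_0$ is a homotopy square-zero extension over an Artinian local algebra $C$ by a finite bimodule $M$ concentrated in nonpositive degrees, with pro-models $\mathcal N\to\mathcal N_0$ obtained via \ref{rder}, and if the comparison maps $\R\mathrm{Map}_{\dga}(A,N_0)\to\R\mathrm{Map}_{\proart}(\mathcal A,\mathcal N_0)$ and $\R\mathrm{Map}_{\dga}(A,C)\to\R\mathrm{Map}_{\proart}(\mathcal A,C)$ are weak equivalences, naturally in the data, then so is $\R\mathrm{Map}_{\dga}(A,N)\to\R\mathrm{Map}_{\proart}(\mathcal A,\mathcal N)$. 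Granting this, the induction starts from $\R\mathrm{Map}_{\dga}(A,k)\simeq *\simeq\R\mathrm{Map}_{\proart}(\mathcal A,k)$, since $k$ is a zero object in both categories; it then climbs through $B$, up the Postnikov tower, and finally passes to $\holim_n$ to give $\R\mathrm{Map}_{\dga}(A,A')\simeq\R\mathrm{Map}_{\proart}(\mathcal A,\mathcal A')$, which, recalling $A=\varprojlim\mathcal A$ and $A'=\varprojlim\mathcal A'$, is exactly the claim.

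To prove the inductive step, write $N$ as the homotopy pullback of $N_0\xrightarrow{\delta}C\oplus M\xleftarrow{0}C$ and $\mathcal N$ as the analogous homotopy pullback in $\proart$; then $\R\mathrm{Map}(A,N)$ and $\R\mathrm{Map}(\mathcal A,\mathcal N)$ are the corresponding homotopy pullbacks of the mapping spaces into $N_0$, $C$ and $C\oplus M$. The $N_0$- and $C$-terms agree by hypothesis, so it remains to identify $\R\mathrm{Map}_{\dga}(A,C\oplus M)$ with $\R\mathrm{Map}_{\proart}(\mathcal A,C\oplus M)$. Decomposing both over the (agreeing) mapping spaces into $C$ along $C\oplus M\to C$, the homotopy fibre over a map $\phi\colon A\to C$ and the corresponding $\psi\colon\mathcal A\to C$ is $\R\mathrm{Map}_{\dga/C}(A_\phi,C\oplus M)$, respectively $\R\mathrm{Map}_{\proart/C}(\mathcal A_\psi,C\oplus M)$, and \ref{dermapspace} identifies these — its hypotheses hold because $A=\varprojlim\mathcal A$ is cohomologically locally finite, $M$ is finite and in nonpositive degrees, and $\psi$ supplies the required map to $C$ — naturally in $\phi$ and compatibly with the derivations $\delta$ and $0$, which correspond across \ref{rder} by the construction of $\mathcal N$. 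Hence the $C\oplus M$-terms agree, so the homotopy pullbacks do too, proving the step.

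I expect the main obstacle to be the naturality bookkeeping rather than any individual step: one must know the equivalences produced by \ref{rder} and \ref{dermapspace} are natural enough to be glued over the base mapping space and compatible with the classifying derived derivations of the Postnikov tower, and one must verify that the pro-tower coming out of the proof of \ref{goodchar} genuinely has $\mathcal A'$ — not merely some pro-Artinian dga with the correct limit — as its homotopy limit. Both points are dealt with by carrying the relevant maps along throughout and appealing to \ref{limreflects}, but this is where the care is concentrated.
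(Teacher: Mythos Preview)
Your proposal is correct and follows essentially the same route as the paper: decompose $A'$ as an iterated homotopy square-zero extension starting from $k$ (first building $H^0(A')$ via the nilpotence filtration, then climbing the Postnikov tower via \ref{hsqlem}), and apply \ref{dermapspace} at each stage together with the fact that $\R\mathrm{Map}(A,-)$ preserves homotopy limits. The paper's proof is terser and does not track the pro-models or the naturality issues as explicitly as you do, but the strategy is the same.
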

\begin{proof}
First note that by \ref{dermapspace}, this is the case when $A'$ is a square-zero extension of $k$ by a finite module. Moreover, \ref{hplem} writes $A'$ as an iterated sequence of finite homotopy square-zero extensions, starting with the ungraded Artinian local algebra $H^0(A')$. Since $\R\mathrm{Map}(A,-)$ preserves homotopy limits, it hence suffices to prove the claim in the case when $A'=H^0(A')$. By the same logic, it is enough to prove that any ungraded Artinian local algebra $A'$ is an iterated sequence of finite homotopy square-zero extensions of $k$. The tower $A'/\mathfrak{m}_{A'}^n$ exhibits $A'$ as an iterated sequence of finite (classical) square-zero extensions starting from $k$, so it is enough to prove that if $\pi$ is a square-zero extension of ungraded Artinian local algebras, then $\pi$ is also a homotopy square-zero extension. But this follows from the fact that $\pi$ is surjective, so we have a quasi-isomorphism $\ker (\pi) \cong \mathrm{cocone}(\pi)$. 
\end{proof}
\begin{thm}\label{pseudomods}
Both $\cat{g}\dga \into \dga$ and $\cat{g}\proart \into \proart$ are pseudo-model categories, and $\varprojlim: \cat{g}\proart \to \cat{g}\dga$ is a Quillen equivalence.
\end{thm}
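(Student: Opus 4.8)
The plan is to verify the two defining properties of a pseudo-model category in the sense of \cite[4.1.1]{hag1} (closure under weak equivalences and under homotopy pullbacks) for both subcategories, and then to deduce the Quillen equivalence from the comparison of derived mapping spaces (\ref{dermap}) together with the definition of goodness. Closure under weak equivalences is immediate: being good is a homotopy-invariant condition on a nonpositive dga (cohomological local finiteness and locality of $H^0$ are preserved by quasi-isomorphism), and being pregood means precisely that $\varprojlim$ is good, which is likewise homotopy-invariant since $\varprojlim$ preserves and reflects weak equivalences (\ref{limreflects}). So the substance lies in closure under homotopy pullbacks.

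I would treat $\cat{g}\dga$ first. Given a cospan $A \to C \from B$ of good dgas, factor $B \to C$ as a quasi-isomorphism followed by a surjection $B' \onto C$ and let $P$ be the resulting strict pullback, a model for the homotopy pullback. The short exact sequence of complexes $0 \to P \to A \oplus B' \to C \to 0$ gives a Mayer--Vietoris long exact sequence which squeezes each $H^nP$ between finite-dimensional spaces, so $P$ is cohomologically locally finite. The same sequence exhibits $H^0(P)$ as an extension of the fibre product $H^0(A)\times_{H^0(C)}H^0(B)$ by the image of the connecting map $H^{-1}(C) \to H^0(P)$, which is a square-zero ideal — this holds for any homotopy cartesian square of dgas and can be checked by a direct cocycle computation as in \cite[1.45]{unifying}. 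Now a fibre product of Artinian local $k$-algebras over an Artinian local $k$-algebra along arbitrary homomorphisms is again Artinian local: any homomorphism of Artinian local rings is automatically local (the preimage of the maximal ideal is a proper prime, hence the maximal ideal), so the fibre product has residue field $k$ and maximal ideal the fibre product of the maximal ideals; and an extension of a local algebra by a nilpotent ideal is local. Hence $H^0(P)$ is Artinian local and $P$ is good.

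Closure of $\cat{g}\proart$ under homotopy pullbacks then follows formally. Given a cospan of pregood pro-Artinian dgas, let $\mathcal P$ be its homotopy pullback in $\proart$. Since $\varprojlim$ is exact (\ref{limisexact}) it computes the homotopy limit, and homotopy limits commute, so $\varprojlim \mathcal P$ is the homotopy pullback in $\dga$ of the cospan obtained by applying $\varprojlim$ to the three vertices; this is a cospan of good dgas, so $\varprojlim \mathcal P$ is good by the previous step, i.e. $\mathcal P$ is pregood. For the Quillen equivalence, $\varprojlim$ is right Quillen on the ambient categories and restricts to $\varprojlim : \cat{g}\proart \to \cat{g}\dga$; since it preserves fibrations and all weak equivalences (\ref{limreflects}), its total right derived functor is $\varprojlim$ itself, so it suffices to check that the induced functor $\mathrm{Ho}(\cat{g}\proart) \to \mathrm{Ho}(\cat{g}\dga)$ is an equivalence (the left adjoint then automatically induces the inverse equivalence). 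Essential surjectivity is exactly the definition of goodness: any good $A$ is quasi-isomorphic to $\varprojlim\mathcal A$ for some pro-Artinian dga $\mathcal A$, and then $\varprojlim\mathcal A \cong A$ is good, so $\mathcal A$ is pregood. Full faithfulness is \ref{dermap}: for pregood $\mathcal A, \mathcal A'$ it gives a weak equivalence $\R\mathrm{Map}_{\proart}(\mathcal A,\mathcal A') \simeq \R\mathrm{Map}_{\dga}(\varprojlim\mathcal A, \varprojlim\mathcal A')$, and since these subcategories are closed under weak equivalences and homotopy pullbacks the derived mapping spaces computed in them agree with those in the ambient categories; passing to $\pi_0 = \hom_{\mathrm{Ho}}$ gives full faithfulness. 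Thus $\varprojlim$ is a Quillen equivalence of pseudo-model categories.

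The step I expect to be the main obstacle is the homotopy-pullback closure of $\cat{g}\dga$, and within it the locality of $H^0$ of the homotopy pullback: this requires combining the square-zero nature of the kernel of $H^0(P) \onto H^0(A)\times_{H^0(C)}H^0(B)$ with the elementary but easily overlooked fact that a fibre product of Artinian local $k$-algebras along arbitrary homomorphisms is again Artinian local. Everything else is bookkeeping with the pseudo-model-category formalism of \cite{hag1} and an application of \ref{dermap}.
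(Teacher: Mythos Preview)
Your proof is correct. The main difference from the paper is in how closure of $\cat{g}\dga$ under homotopy pullbacks is established.

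You verify goodness of the homotopy pullback $P$ via the characterisation in \ref{goodchar}: you check cohomological local finiteness by Mayer--Vietoris and locality of $H^0(P)$ by analysing it as a square-zero extension of the fibre product $H^0(A)\times_{H^0(C)}H^0(B)$. The paper instead verifies goodness via the original definition: using \ref{dermap} it lifts the entire cospan $A \to C \leftarrow B$ to a cospan of pregood objects in $\proart$, takes the homotopy pullback there, and then uses that $\varprojlim$ commutes with homotopy limits (the method of \ref{hplem}) to see that the original homotopy pullback lies in the essential image of $\varprojlim$. This simultaneously handles closure of $\cat{g}\proart$ under homotopy pullbacks, so the paper gets both closures from one argument, whereas you treat them separately.

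Your route is more elementary in that it avoids invoking \ref{dermap} for the $\cat{g}\dga$ case, at the cost of the explicit ring-theoretic analysis of $H^0(P)$. One small remark: your justification that any homomorphism of Artinian local $k$-algebras is local is correct (nilpotents map to nilpotents, hence into the maximal ideal), but in the paper's setup all objects of $\dga$ are augmented, so all maps are automatically local and this step is in fact trivial. For the Quillen equivalence, your argument (essential surjectivity by definition of goodness, full faithfulness by \ref{dermap} on $\pi_0$) coincides with the paper's.
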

\begin{proof}It is clear that $\cat{g}\dga$ is closed under weak equivalences. Since $\varprojlim$ reflects weak equivalences by \ref{limreflects}, it follows that $\cat{g}\proart$ is also closed under weak equivalences. The closure of both under homotopy pullbacks follows exactly as in \ref{hplem}, using the equivalence of derived mapping spaces from \ref{dermap}. So both are pseudo-model categories. It is easy to see that $\varprojlim$ is right Quillen. By definition, it is also homotopy essentially surjective. It is homotopy fully faithful by \ref{dermap}.
\end{proof}

\section{Deformation theory}\label{defmthy}
In some sense, commutative formal deformation theory in characteristic zero is about the Koszul duality of the commutative and Lie operads. Indeed, given a commutative deformation problem, one expects it to be `controlled' in some way by a differential graded Lie algebra (\textbf{dgla}). This philosophy is originally due to Deligne, and first appears in print in a paper of Goldman and Millson \cite{goldmanmillson}. Hinich \cite{hinich} gave an interpretation of in terms of coalgebras, and the correspondence between commutative deformation problems and dglas was made precise by later work of Pridham \cite{unifying} and Lurie \cite{luriedagx}. Correspondingly, in view of the Koszul self-duality of the associative algebra operad, one should expect noncommutative deformation problems to be controlled by noncommutative algebras, and indeed this is true \cite[\S3]{luriedagx}.

\p In this section, we will make some of the above statements explicit. We will work primarily with set-valued and groupoid-valued deformation functors. We will define the Maurer-Cartan functor and deformation functor associated to a dga, and give some prorepresentability results (\ref{mcprorep}, \ref{defprorep}). As an application, we will consider deformations of modules, and prove a prorepresentability result (\ref{prorepfrm}) for framed deformations of simple modules. Framings correspond to nonunital dgas, and correspondingly we make use of nonunital dgas throughout.

\p Artinian local dgas are allowed to be concentrated in any degree throughout this section, although we will restrict to nonpositive dgas when necessary. The category of all Artinian local dgas is denoted $\cat{dgArt}_k$. If $\Gamma$ is an Artinian local dga, denote its maximal ideal by $\mathfrak{m}_\Gamma$. We will mention dglas for motivational purposes only; these are dg vector spaces together with a graded Lie bracket satisfying the graded Leibniz identity with respect to the differential. For more about commutative deformation theory via dglas, see \cite{manetti} or \cite{manettidgla}.
\subsection{The Maurer-Cartan functor and twisting morphisms}
We define the Maurer-Cartan functor associated to a nonunital dga. Note that nonunital dgas are equivalent to augmented dgas after appending a unit, and we freely make use of this equivalence.
\begin{defn}
	Let $E$ be a nonunital dga. The set of \textbf{Maurer-Cartan elements} of $E$ is the set $MC(E):=\{x \in E^1: \ dx+x^2=0\}$. Note that this agrees with the usual notion for dglas once we equip $E$ with the commutator bracket.
\end{defn}
\begin{rmk}
	Note that $x \in MC(E)$ if and only if the map $e\mapsto d(e)+xe$ is a differential on $E$.
\end{rmk}
\begin{defn}
	Let $E$ be a nonunital dga and let $C$ be a noncounital dgc. Then the complex $\hom_k(C,E)$ is a nonunital dga under the product given by $fg:=\mu_E \circ (f\otimes g)\circ \Delta_C$. This dga is the \textbf{convolution algebra}. A Maurer-Cartan element of the convolution algebra is known as a \textbf{twisting morphism}; the set of all twisting morphisms is denoted $\mathrm{Tw}(C,E)$.
\end{defn}
\begin{rmk}
	When $E$ and $C$ are augmented and coaugmented respectively, one should add the additional condition that twisting morphisms are zero when composed with the augmentation or the coaugmentation. This is the definition given in \cite{lodayvallette}.
\end{rmk}
\begin{lem}
	Let $E, Z$ be nonunital dgas, with $Z$ finite-dimensional. Then there is a natural isomorphism $$\mathrm{Tw}(Z^*,E)\cong {MC}(E \otimes Z).$$
\end{lem}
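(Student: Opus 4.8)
The plan is to unwind both sides of the claimed isomorphism $\mathrm{Tw}(Z^*,E)\cong MC(E\otimes Z)$ to concrete descriptions and exhibit a natural bijection between them, which will automatically respect the Maurer--Cartan equations because the relevant algebra structures match. First I would observe that since $Z$ is finite-dimensional, the linear dual $Z^*$ is a (non-counital) dgc, so the convolution algebra $\hom_k(Z^*,E)$ is defined, and its underlying complex is canonically isomorphic to $E\otimes Z$ via the standard evaluation isomorphism $\hom_k(Z^*,E)\cong E\otimes Z^{**}\cong E\otimes Z$ (here finite-dimensionality of $Z$ is exactly what we need). I would then check that this isomorphism of complexes is in fact an isomorphism of \emph{dgas}: the convolution product $fg=\mu_E\circ(f\otimes g)\circ\Delta_{Z^*}$ transports, under the evaluation isomorphism, to the product on $E\otimes Z$ in which one multiplies the $E$-factors via $\mu_E$ and the $Z$-factors via $\mu_Z$ (the multiplication on $Z$, since $\Delta_{Z^*}$ is dual to $\mu_Z$). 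Once this is established, the two Maurer--Cartan sets literally coincide, since $\mathrm{Tw}(Z^*,E)=MC(\hom_k(Z^*,E))$ by definition and $MC$ depends only on the dga structure.

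Carrying this out, the key steps in order are: (i) write down the evaluation map $\mathrm{ev}\colon E\otimes Z\to\hom_k(Z^*,E)$ sending $e\otimes z$ to the functional $\xi\mapsto (-1)^{|\xi||e|}\xi(z)\,e$ (with a Koszul sign to be pinned down), and verify it is a chain isomorphism using finite-dimensionality of $Z$; (ii) compute $\Delta_{Z^*}$ as the dual of $\mu_Z$, i.e. if $\{z_i\}$ is a homogeneous basis of $Z$ with dual basis $\{z_i^*\}$ and $z_iz_j=\sum_k c_{ij}^k z_k$, then $\Delta_{Z^*}(z_k^*)=\sum_{i,j}c_{ij}^k\, z_i^*\otimes z_j^*$; (iii) plug this into the convolution formula and check that $\mathrm{ev}(e\otimes z)\cdot\mathrm{ev}(e'\otimes z')=\mathrm{ev}(ee'\otimes zz')$ up to the Koszul sign, so that $\mathrm{ev}$ is a dga isomorphism; (iv) conclude that $x\in(E\otimes Z)^1$ satisfies $dx+x^2=0$ iff $\mathrm{ev}(x)$ does, giving the bijection; (v) note naturality in $E$ (and in $Z$) is immediate since every map in sight is built functorially from $\mu$, $\Delta$ and $\mathrm{ev}$.

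I expect the main obstacle to be purely bookkeeping: getting the Koszul signs in the evaluation isomorphism to come out so that it is simultaneously a chain map \emph{and} an algebra map. In particular, the sign in $\mathrm{ev}(e\otimes z)(\xi)=\pm\,\xi(z)e$ must be chosen so that the differential on $\hom_k(Z^*,E)$ (which involves $d_E\circ f - (-1)^{|f|}f\circ d_{Z^*}$, with $d_{Z^*}$ dual to $d_Z$) matches the tensor-product differential $d_E\otimes 1 + (-1)^{|{-}|}\otimes d_Z$ on $E\otimes Z$, and simultaneously so that the braiding introduced when applying $\Delta_{Z^*}$ and $\mu_E\otimes\mathrm{id}$ reproduces the braiding in the product on $E\otimes Z$. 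Since $E$ and $Z$ are non-unital and possibly concentrated in mixed degrees, there is no shortcut around tracking these signs carefully. Everything else — exactness, finite-dimensionality, naturality — is formal, so modulo the sign verification the statement follows directly from the definitions.
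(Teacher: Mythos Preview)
Your proposal is correct and follows exactly the same approach as the paper: the paper's proof is a one-liner stating that the standard linear isomorphism $E\otimes Z \to \hom_k(Z^*,E)$ is a map of nonunital dgas once the target is equipped with the convolution product, which is precisely what you carry out in detail. Your elaboration on the sign bookkeeping is more careful than the paper's ``one can check'', but the underlying argument is identical.
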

\begin{proof}
	There is a standard linear isomorphism $E \otimes Z \to \hom_k(Z^*,E)$, and one can check that this is a map of nonunital dgas after equipping $\hom_k(Z^*,E)$ with the convolution product.
\end{proof}
\begin{defn}
Let $E$ be a nonunital dga. The \textbf{nonunital bar construction} of $A$ is the counital dgc $B_\mathrm{nu}E:=B(E\oplus k)$, where $B$ is the usual bar construction for augmented dgas. In other words, the underlying graded coalgebra of $B_\mathrm{nu}E$ is the tensor coalgebra $T^c(E)$, and the differential is the usual bar differential. Similarly, there is a nonunital cobar construction $\Omega_\mathrm{nu}$ for nonunital dgcs.
\end{defn}
\begin{rmk}
We caution that every degree zero element of $E$ corresponds to a degree -1 element of $B_\mathrm{nu}E$. In particular, if the nonunital dga $E$ happens to have a unit and an augmentation, then $B_\mathrm{nu}E\neq BE$ since $B_\mathrm{nu}E$ will contain elements corresponding to the unit.
\end{rmk}
The functor of twisting morphisms is (up to (co)units) representable on either side:
\begin{thm}[\cite{lodayvallette}, 2.2.6]
	If $E$ is a nonunital dga and $C$ is a noncounital conilpotent dgc, then there are natural isomorphisms $$\hom_{\cat{dga}}(\Omega_\mathrm{nu} C, E\oplus k)\cong\mathrm{Tw}(C,E)\cong \hom_{\cat{dgc}}(C\oplus k, B_\mathrm{nu}E)$$
\end{thm}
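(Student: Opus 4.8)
The plan is to reduce everything to the classical bar-cobar adjunction (Theorem 2.2.6 of Loday--Vallette, cited in the excerpt) by carefully bookkeeping units, counits, and the extra conditions attached to twisting morphisms in the (co)augmented setting. The statement asserts a chain of two natural isomorphisms, so I would establish each separately and then observe they are compatible with the middle term $\mathrm{Tw}(C,E)$.

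First I would unwind the definitions. By construction $B_{\mathrm{nu}}E = B(E\oplus k)$, where $E\oplus k$ is the unital augmented dga obtained by freely appending a unit; its augmentation ideal is exactly $E$, so the underlying graded coalgebra of $B_{\mathrm{nu}}E$ is $T^c(E)$ with the bar differential. Dually, $\Omega_{\mathrm{nu}}C = \Omega(C\oplus k)$ for a noncounital conilpotent dgc $C$, with underlying graded algebra $T(\bar{C}[-1]) = T(C[-1])$ and cobar differential. Then for the right-hand isomorphism I would apply the classical adjunction $\hom_{\cat{dga}}(\Omega D, A)\cong \hom_{\cat{dgc}}(D, BA)$ with $A = E\oplus k$ and $D = C\oplus k$ (noting $C\oplus k$ is coaugmented conilpotent since $C$ is conilpotent), obtaining
\begin{equation*}
\hom_{\cat{dga}}(\Omega(C\oplus k), E\oplus k)\cong \hom_{\cat{dgc}}(C\oplus k, B(E\oplus k)) = \hom_{\cat{dgc}}(C\oplus k, B_{\mathrm{nu}}E).
\end{equation*}
Then I would check that this identifies with $\mathrm{Tw}(C,E)$: under the standard identification of dga maps $\Omega(C\oplus k)\to E\oplus k$ with degree $-1$ coalgebra maps $C\oplus k \to B(E\oplus k)$, both sides are well known to correspond to Maurer--Cartan elements of the convolution dga $\hom_k(C\oplus k, E\oplus k)$ that vanish on $k$ and land in $E$ — that is, to elements of $\mathrm{Tw}(C,E)$ in the sense of the excerpt's remark about (co)augmentations. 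The left-hand isomorphism $\hom_{\cat{dga}}(\Omega_{\mathrm{nu}}C, E\oplus k)\cong \mathrm{Tw}(C,E)$ is then literally this identification, and the right-hand one is the composite with the classical adjunction.

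The main obstacle — really the only subtlety — is matching the unit/counit conventions so that the "nonunital" constructions genuinely see the augmentation ideals and so that the side conditions on twisting morphisms (vanishing when composed with the (co)augmentation) are exactly encoded. Concretely I would verify: (i) the bar differential on $T^c(E)$ coming from $B(E\oplus k)$ agrees with the one in the definition of $B_{\mathrm{nu}}E$, using that the appended unit contributes nothing to the reduced part; (ii) likewise for the cobar differential on $T(C[-1])$; and (iii) the bijection between dga maps out of $\Omega_{\mathrm{nu}}C$ and Maurer--Cartan elements of the convolution algebra restricts correctly once one imposes the augmentation conditions on both ends. All of these are routine diagram chases once set up, and naturality in $E$ and $C$ is immediate since every construction used (bar, cobar, convolution, appending (co)units) is functorial. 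I expect no genuine difficulty beyond careful sign and convention tracking, which is why citing \cite[2.2.6]{lodayvallette} for the core adjunction and only adding the nonunital translation is the efficient route.
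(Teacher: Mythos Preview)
Your proposal is correct and matches the paper's approach exactly: the paper does not give a proof but simply cites \cite[2.2.6]{lodayvallette} and remarks that ``because \cite{lodayvallette} uses (co)augmented rather than non(co)unital (co)algebras, we need to reinsert the (co)units,'' which is precisely the translation you carry out by applying the classical adjunction with $A = E\oplus k$ and $D = C\oplus k$.
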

\begin{rmk}
	Because \cite{lodayvallette} uses (co)augmented rather than non(co)unital (co)algebras, we need to reinsert the (co)units.
\end{rmk}
We recall from \ref{csharp} that if $C$ is a (counital) dgc then $C^\sharp$ denotes the pro-Artinian dga constructed by levelwise dualising the filtered system of finite sub-dgcs of $C$. If $E$ is a nonunital dga, write $B_\mathrm{nu} ^\sharp E:=(B_\mathrm{nu}E)^\sharp$ for the \textbf{continuous Koszul dual}. Clearly we have $\varprojlim B_\mathrm{nu} ^\sharp E \cong (B_\mathrm{nu}E)^*$ the usual Koszul dual. Exactly as in \ref{sharpprop}, if $Z$ is a finite-dimensional nonunital dga then we have isomorphisms $$\mathrm{Tw}(Z^*,E)\cong \hom_{\cat{dgc}}(Z^*\oplus k, B_\mathrm{nu}E)\cong \hom_{\cat{pro}(\cat{dgArt}_k)}(B_\mathrm{nu}^\sharp E, Z\oplus k)$$
\begin{defn}
	Let $E$ be a nonunital dga. The \textbf{Maurer-Cartan functor} is the functor $\mathrm{MC}(E):\cat{dgArt}_k \to \cat{Set}$ which sends $\Gamma$ to the set $\mathrm{MC}(E)(\Gamma):={MC}(E\otimes \mathfrak{m}_\Gamma)$.
\end{defn}
The following Proposition is immediate:
\begin{prop}\label{mcprorep}
	Let $E$ be a nonunital dga. Then the functor $\mathrm{MC}(E)$ is prorepresented by $B_\mathrm{nu}^\sharp E$, in the sense that $\mathrm{MC}(E)$ and $\hom_{\cat{pro}(\cat{dgArt}_k)}(B_\mathrm{nu}^\sharp E, -)$ are naturally isomorphic.
\end{prop}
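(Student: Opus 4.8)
The plan is to read the statement off the chain of natural isomorphisms assembled immediately before it. Fix an Artinian local dga $\Gamma$; its maximal ideal $\mathfrak{m}_\Gamma$ is a finite-dimensional nonunital dga, and $\Gamma \cong \mathfrak{m}_\Gamma \oplus k$ as augmented dgas. Taking $Z := \mathfrak{m}_\Gamma$ in the Lemma identifying twisting morphisms into $E$ with Maurer--Cartan elements of $E\otimes Z$ gives $\mathrm{MC}(E)(\Gamma) = MC(E\otimes\mathfrak{m}_\Gamma) \cong \mathrm{Tw}(\mathfrak{m}_\Gamma^{*},E)$, while the representability statement recalled just above (obtained from the nonunital bar--cobar adjunction and the finite-dimensional duality of \ref{sharpprop}), applied with the same $Z$, gives $\mathrm{Tw}(\mathfrak{m}_\Gamma^{*},E) \cong \hom_{\cat{pro}(\cat{dgArt}_k)}(B_\mathrm{nu}^\sharp E, \mathfrak{m}_\Gamma\oplus k) = \hom_{\cat{pro}(\cat{dgArt}_k)}(B_\mathrm{nu}^\sharp E, \Gamma)$. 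Composing these produces the asserted bijection for each $\Gamma$.

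The only substantive point is naturality in $\Gamma$. A morphism $f\colon \Gamma \to \Gamma'$ in $\cat{dgArt}_k$ sends nilpotents to nilpotents, hence carries $\mathfrak{m}_\Gamma$ into $\mathfrak{m}_{\Gamma'}$, so $\Gamma\mapsto\mathfrak{m}_\Gamma$ is a functor from $\cat{dgArt}_k$ to finite-dimensional nonunital dgas. I would then check that each isomorphism in the chain above is natural in this argument: the linear isomorphism $E\otimes Z \to \hom_k(Z^{*},E)$ underlying the Lemma is evidently functorial in $Z$ and compatible with the convolution product, so it induces a natural bijection on Maurer--Cartan sets; and the nonunital bar construction $B_\mathrm{nu}$, the linear dual, the bar--cobar adjunctions, and the equivalence of $\proart$ with conilpotent dgcs are all functorial, so their composite is natural in $Z\oplus k$, hence in $\Gamma$.

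As the statement says, this is immediate; the content lies entirely in the earlier Koszul-duality machinery, so there is no genuine obstacle --- the most one has to be careful about is the (co)unit bookkeeping when moving between nonunital and augmented/coaugmented objects (recall that $B_\mathrm{nu}E = B(E\oplus k)$ carries an extra generator coming from the formal unit), and keeping in mind that, extending \ref{csharp} to the possibly unbounded dgcs appearing here as indicated before this Proposition, the pro-structure on $B_\mathrm{nu}^\sharp E$ is that of the filtered system of finite-dimensional sub-dgcs of $B_\mathrm{nu}E$, so that $\hom_{\cat{pro}(\cat{dgArt}_k)}(B_\mathrm{nu}^\sharp E,\Gamma) \cong \varinjlim_\alpha \hom_{\cat{dgArt}_k}((B_\mathrm{nu}E)_\alpha^{*},\Gamma)$ as is used implicitly above.
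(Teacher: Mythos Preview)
Your proof is correct and is exactly the paper's approach: the paper says only ``The following Proposition is immediate'' after displaying the chain of isomorphisms $\mathrm{MC}(E)(\Gamma)\cong\mathrm{Tw}(\mathfrak{m}_\Gamma^*,E)\cong\hom_{\cat{pro}(\cat{dgArt}_k)}(B_\mathrm{nu}^\sharp E,\Gamma)$, and you have simply spelled this out (together with the naturality check, which the paper leaves implicit).
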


\subsection{The gauge action}\label{gauges}
In commutative deformation theory, given a nilpotent dgla $L$ there is a gauge action of $\mathrm{exp}(L^0)$ on $\mathrm{MC}(L)$. The corresponding quotient is the deformation functor associated to $L$. One can define a similar gauge action in the noncommutative world; we follow Efimov-Lunts-Orlov \cite{ELO}.
\begin{defn}
Let $E$ be a nonunital dga. The \textbf{gauge group} functor $\mathrm{Gg}(E):\cat{dgArt}_k \to \cat{Grp}$ sends $\Gamma$ to the set $1+(E\otimes \mathfrak{m}_\Gamma)^0$, which is a group under multiplication.
\end{defn}
\begin{rmk}
If $L$ is a dgla, its gauge group has as elements formal symbols $\exp(l)$ for $l \in L^0$, and multiplication given by the Baker-Campbell-Hausdorff formula. If $E\otimes \mathfrak{m}_\Gamma$ is made into a dgla using the commutator bracket then its dgla gauge group is isomorphic to the gauge group defined above, via the map that sends each formal exponential $\exp(a)$ to the sum $\sum_n\frac{a^n}{n^!}$, which exists because $\mathfrak{m}_\Gamma$ is nilpotent.

\end{rmk}
Let $E$ be a nonunital dga and $\Gamma$ an Artinian local dga. Let $x \in \mathrm{MC}(E)(\Gamma)$ and $g \in \mathrm{Gg}(E)(\Gamma)$. Define $g.x=gxg^{-1}+gd(g^{-1})$. One can check that this gives an action of $\mathrm{Gg}(E)(\Gamma)$ on $\mathrm{MC}(E)(\Gamma)$. Regarding $d+x$ as a twisted differential on $E\otimes \mathfrak{m}_\Gamma$, the action of the gauge group is the conjugation action on the space of differentials.
\begin{rmk}
If $E\otimes \mathfrak{m}_\Gamma$ is made into a dgla using the commutator bracket, then the gauge action above is the exponential of the usual dgla gauge action (as in e.g. \cite[V.4]{manetti}).
\end{rmk}
\begin{defn}
Let $E$ be a nonunital dga. The \textbf{deformation functor} associated to $E$ is the quotient $\mathrm{Def}(E):=\mathrm{MC}(E)/\mathrm{Gg}(E)$.
\end{defn}
One can extend the set-valued functor $\mathrm{Def}(E)$ to a groupoid-valued functor. Fix $E,\Gamma$ and take $x,y \in \mathrm{MC}(E)(\Gamma)$. Observe that the group $(E\otimes \mathfrak{m}_\Gamma)^{-1}$ acts on the set $\{g \in \mathrm{Gg}(E)(\Gamma): gx=y\}$ by setting $g.h=g+d(h)+yh+hx$. Say that $g_1,g_2$ are \textbf{homotopic} if they lie in the same orbit under this action. One checks that homotopy is preserved under composition.
\begin{rmk}
A different but equivalent definition of homotopy for dglas is given as part of the proof of Theorem 3.1 of \cite{manettidgla}; the idea is that the stabiliser of $x$ in $\mathrm{Gg}(E)(\Gamma)$ acts on the set $\{g \in \mathrm{Gg}(E)(\Gamma): gx=y\}$ in the obvious way.
\end{rmk}
\begin{defn}\label{delignegrpd}
Let $E$ be a nonunital dga. The \textbf{groupoid-valued deformation functor} $\grdef(E):\cat{dgArt}_k \to \cat{Grpd}$ sends $\Gamma$ to the groupoid with objects $\mathrm{MC}(E)(\Gamma)$, and morphisms $x \to y$ the homotopy classes of gauges $g$ with $gx=y$. Clearly we have $\pi_0\grdef(E)\cong \mathrm{Def}(E)$.
\end{defn}
\begin{rmk}\label{delignermk}
One could also consider the groupoid quotient $\grdef'(E):=\mathrm{MC}(E)//\mathrm{Gg}(E)$, which is perhaps a more natural choice of definition. Of course we also have $\pi_0 \grdef'(E)\cong \mathrm{Def}(E)$. The reason for introducing homotopy is that $\mathrm{Def}$ can be further enhanced to a simplicial set valued functor $\R\mathrm{Def}$ \cite[\S 4]{jondmodss}, and $\grdef$ is its fundamental groupoid. Morally, $\grdef'$ can be thought of as the brutal 1-truncation of $\R\mathrm{Def}$, and the `extra' homotopies between gauges come from nontrivial 2-simplices (the actual 1-truncation is dependent on a choice of model, since $\R\mathrm{Def}$ is really only defined up to weak equivalence). We also remark that \ref{mcdefms} is false if one uses $\grdef'$ instead of $\grdef$. In the literature, both $\grdef(E)$ and $\grdef'(E)$ are referred to as the \textbf{Deligne groupoid} functor. If $\Gamma$ is an ungraded Artinian local algebra, and $E$ is a nonnegative dga, then $\grdef(E)(\Gamma)\cong\grdef'(E)(\Gamma)$, so the difference between the two definitions only becomes apparent when deforming along dgas.
\end{rmk}
\begin{prop}
	If $E$ and $E'$ are quasi-isomorphic nonunital dgas then the functors $\grdef(E)$ and $\grdef(E')$ are equivalent.
\end{prop}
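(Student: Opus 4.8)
The plan is to reduce to a single quasi-isomorphism and then prove homotopy invariance of the Deligne groupoid by obstruction theory, exactly as in Efimov--Lunts--Orlov \cite{ELO} (the associative analogue of the Goldman--Millson theorem \cite{goldmanmillson}). If $E$ and $E'$ are connected by a zigzag of quasi-isomorphisms it is enough to treat each arrow of the zigzag separately, so assume we are given a single quasi-isomorphism $f\colon E\to E'$. For every $\Gamma\in\cat{dgArt}_k$ the maximal ideal $\mathfrak{m}_\Gamma$ is finite-dimensional, hence $-\otimes_k\mathfrak{m}_\Gamma$ is exact and $f\otimes\id_{\mathfrak{m}_\Gamma}\colon E\otimes\mathfrak{m}_\Gamma\to E'\otimes\mathfrak{m}_\Gamma$ is again a quasi-isomorphism. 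One checks that $\mathrm{MC}(f\otimes\id)$ on objects and $\mathrm{Gg}(f)$ on morphisms respect the gauge action and the homotopy relation, so that they assemble into a natural transformation $\grdef(f)\colon\grdef(E)\to\grdef(E')$ of groupoid-valued functors; it remains to prove that $\grdef(f)_\Gamma$ is an equivalence of groupoids for each $\Gamma$.

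Fix $\Gamma$ and filter it by powers of its maximal ideal, $\Gamma=\Gamma_N\onto\cdots\onto\Gamma_1=k$ with $\Gamma_i:=\Gamma/\mathfrak{m}_\Gamma^{\,i}$, so that each $\Gamma_{i+1}\onto\Gamma_i$ is a small extension whose kernel $J_i:=\mathfrak{m}_\Gamma^{\,i}/\mathfrak{m}_\Gamma^{\,i+1}$ is a finite dg-$k$-vector space annihilated by $\mathfrak{m}_{\Gamma_{i+1}}$. We show $\grdef(f)_{\Gamma_i}$ is an equivalence by induction on $i$, the case $i=1$ being trivial since $\grdef(E)(k)$ is a point. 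For the inductive step one uses the standard description of $\grdef(E)(\Gamma_{i+1})\to\grdef(E)(\Gamma_i)$: a Maurer--Cartan element $x$ over $\Gamma_i$, lifted set-theoretically to some $\tilde x$, has obstruction class $[d\tilde x+\tilde x^2]\in H^2(E\otimes J_i)$ to admitting a Maurer--Cartan lift; when this class vanishes, the Maurer--Cartan lifts modulo gauges projecting to the identity form a torsor under $H^1(E\otimes J_i)$; and the automorphism group of a given lift, relative to that of $x$, is controlled by $H^0(E\otimes J_i)$ --- here the homotopy relation on gauges is exactly what is needed, cf.\ \ref{delignermk}. Since $f\otimes\id_{J_i}$ is a quasi-isomorphism, $H^0(E\otimes J_i)$, $H^1(E\otimes J_i)$ and $H^2(E\otimes J_i)$ are carried isomorphically to their primed analogues, and a five-lemma-style diagram chase --- run both on the sets of objects and on the morphism torsors --- promotes the equivalence over $\Gamma_i$ to one over $\Gamma_{i+1}$.

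The main obstacle is the bookkeeping in this inductive step: one must run essential surjectivity and full faithfulness in parallel through each small extension, checking that obstruction classes, spaces of liftings, and homotopy classes of automorphisms all behave compatibly with the gauge action in the associative rather than the Lie setting. A more formal alternative avoids this: extending \ref{mcprorep}, the simplicial enhancement $\R\mathrm{Def}(E)$ of \cite{jondmodss} --- whose fundamental groupoid is $\grdef(E)$ --- is represented up to weak equivalence by the continuous Koszul dual $B_\mathrm{nu}^\sharp E$, so $\grdef(E)$ depends only on the weak equivalence type of $B_\mathrm{nu}^\sharp E$; since both the bar construction and the continuous linear dual preserve quasi-isomorphisms, $B_\mathrm{nu}^\sharp f$ is a weak equivalence of pro-Artinian dgas, and the claim follows.
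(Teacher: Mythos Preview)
Your obstruction-theoretic argument via the filtration by powers of $\mathfrak{m}_\Gamma$ is correct and is precisely the method of \cite[8.1]{ELO} that the paper cites; the paper's own proof is simply the one-line observation that this argument adapts verbatim to the nonunital setting, so your proposal is an expansion of exactly the approach the paper defers to.

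Your alternative via the simplicial enhancement $\R\mathrm{Def}$ is morally right but not quite self-contained here: the representability of $\R\mathrm{Def}(E)$ by $B_\mathrm{nu}^\sharp E$ is only asserted in a later remark (after \ref{prorepfrm}) with an external reference, and the claim that $B_\mathrm{nu}^\sharp$ sends quasi-isomorphisms to weak equivalences requires some care when $E$ is unbounded, since the coalgebra weak equivalences on $\cat{cndgc}_k$ are not the quasi-isomorphisms in general (they coincide only in nonnegative degrees, cf.\ \S\ref{coalgmodel}). So the first argument is the one to keep.
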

\begin{proof}
The proof of \cite[8.1]{ELO} adapts to the nonunital setting.
\end{proof}

\subsection{Deformation functors and prorepresentability}
We will define the noncommutative analogue of Manetti's (set-valued) extended deformation functors \cite{manettiextended}. Our main result is the prorepresentability statement \ref{defprorep}, which is a homotopical version of \ref{mcprorep}.
\begin{defn}
	A morphism $A \to B$ in $\cat{dgArt}_k$ is a \textbf{small extension} if it is surjective and the kernel is annihilated by $\mathfrak{m}_A$.
\end{defn}
\begin{defn}
	Say that a functor $F:\cat{dgArt}_k \to \cat{Set}$ is a \textbf{predeformation functor} if the following conditions are satisfied:\begin{enumerate}
		\item $F(k)$ is a singleton.
		\item Let $A \to C \from B$ be a cospan in $\cat{dgArt}_k$, and consider the induced map $$\eta:F(A\times_C B)\to F(A)\times_{F(C)} F(B)$$If $A \to C$ is a small extension, then $\eta$ is surjective. If $C\cong k$ then $\eta$ is an isomorphism.
		\item Let $A \to B$ be a small extension with acyclic kernel. Then the induced map $\rho:F(A)\to F(B)$ is surjective.
	\end{enumerate}
	Say that $F$ is a \textbf{deformation functor} if in addition the map $\rho$ of iii) is an isomorphism. In \cite{manettiextended}, the conditions ii) are referred to as the \textbf{generalised Schlessinger's conditions} and iii) is referred to as \textbf{quasismoothness}.
\end{defn}
\begin{thm}
There is a cofibrantly generated model structure on $\cat{pro}(\cat{dgArt}_k)$, such that deformation functors are precisely the homotopy prorepresentable functors. More precisely, a functor $F:\cat{dgArt}_k \to \cat{Set}$ is a deformation functor if and only if there is a pro-Artinian dga $P$ such that $F(\Gamma)\cong [P,\Gamma]$, the set of maps from $P$ to $\Gamma$ in the homotopy category $\mathrm{Ho}(\cat{pro}(\cat{dgArt}_k))$.
\end{thm}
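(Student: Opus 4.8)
The plan is to transport the machinery of Pridham \cite{unifying} from the commutative (dgla) setting to the associative one; Sections \ref{kd} and \ref{defmthy} have been set up with exactly this in view. First I would construct the model structure on $\cat{pro}(\cat{dgArt}_k)$, following \cite[\S4]{unifying} and running in parallel with the nonpositive case \ref{proartmodel}: the weak equivalences are the maps inducing pro-isomorphisms on each cohomology vector space, and the generating cofibrations and generating trivial cofibrations form small sets of maps between Artinian local dgas built from square-zero extensions (acyclic modules entering the trivial ones), the rest of the structure being forced by the lifting axioms. Since $\cat{pro}(\cat{dgArt}_k)$ is complete and cocomplete and pro-objects have the smallness needed to run the small object argument (cf.\ \cite{kashschap}, \cite{grothendieckpro}), the factorisation and lifting axioms go through as in the commutative case; one finds that the fibrations are the retracts of cofiltered systems of small extensions and the trivial fibrations those whose kernels are in addition acyclic. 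Throughout, a functor $F:\cat{dgArt}_k\to\cat{Set}$ is tacitly identified with its extension to $\cat{pro}(\cat{dgArt}_k)$, which is harmless because the finiteness built into the deformation-functor axioms forces $F$ to send the relevant cofiltered limits to filtered colimits of sets.

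For the easy direction, suppose $F(\Gamma)=[P,\Gamma]$; replacing $P$ by a cofibrant model we may write $F\cong\pi_0\rmap{P}{-}$. Axiom i) holds because $k$ is terminal, so $\rmap{P}{k}$ is contractible. For axiom ii), note that $\rmap{P}{-}$ carries homotopy limits to homotopy limits, and that a small extension $A\to C$ is a fibration (being surjective), so the strict pullback $A\times_C B$ already computes the homotopy pullback; since the comparison map from a homotopy pullback of spaces to the strict pullback of $\pi_0$'s is always surjective, $\eta$ is surjective, and when $C\cong k$ the square degenerates to a product, so $\eta$ becomes a bijection. For axiom iii), a small extension with acyclic kernel is a trivial fibration, which $\pi_0\rmap{P}{-}$ inverts because $P$ is cofibrant; hence $\rho$ is an isomorphism, so $F$ is an honest deformation functor and not merely a predeformation functor.

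The substantive direction --- that every deformation functor $F$ is homotopy prorepresentable --- I would obtain by the obstruction-theoretic Postnikov-tower construction of \cite{unifying}, adapted to the associative setting. Restricting $F$ to ungraded Artinian local algebras gives a classical noncommutative deformation functor, and axioms ii) furnish a versal hull $R_0$, a complete local $k$-algebra; quasismoothness then allows one to promote $R_0$ to a genuine prorepresenting object $P_0$ for $\pi_0 F$ after passing to $\mathrm{Ho}(\cat{pro}(\cat{dgArt}_k))$. One then builds a tower $P_0\from P_1 \from P_2 \from\cdots$ with $P:=\holim_n P_n$, in which each $P_{n+1}\to P_n$ is a homotopy square-zero extension whose classifying derived derivation $P_n\to H^0(P_n)\oplus M_n$ is extracted from the obstruction theory of $F$ at stage $n$ --- it is exactly the small-extension axioms of $F$ that make this obstruction theory well posed. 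Here the comparison results \ref{rder} and \ref{pseudomods} are what let these homotopy square-zero extensions and their classifying maps be manipulated inside $\cat{pro}(\cat{dgArt}_k)$ while computing concretely with dgas via the Koszul duality of Section \ref{kd}. Finally one checks by induction that the natural transformation $[P_n,-]\to F$ is an isomorphism on $n$-th truncations, and passes to the limit using quasismoothness to guarantee $[\holim_n P_n,-]\cong\varprojlim_n[P_n,-]$.

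I expect this converse to be the main obstacle: making the obstruction theory of an abstract deformation functor precise enough to name the classifying derivations, and showing that the tower really converges to $F$ rather than to some strictly larger functor, so that nothing is lost in the limit. A possibly cleaner route is to first establish, in the spirit of \cite{ELO}, that every deformation functor has the form $\mathrm{Def}(E)=\mathrm{MC}(E)/\mathrm{Gg}(E)$ for some nonunital dga $E$; then \ref{mcprorep} identifies $\mathrm{MC}(E)$ with $\hom(B_\mathrm{nu}^\sharp E,-)$, and recognising the gauge quotient as $\pi_0$ of a derived mapping space yields $\mathrm{Def}(E)\cong[B_\mathrm{nu}^\sharp E,-]$; in that approach the obstruction-theoretic work reappears inside the proof that $F$ is some $\mathrm{Def}(E)$.
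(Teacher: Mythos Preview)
Your approach is the same as the paper's in spirit: the paper's proof is literally a one-line citation to Pridham \cite[\S4.5]{unifying} (specifically 4.36 for the model structure and 4.44 for the representability), saying that those arguments carry over to the associative setting. Your sketch of what those arguments are --- small object argument for the model structure, and a Postnikov-tower/obstruction-theory construction of the prorepresenting object for the hard direction --- is a faithful outline of Pridham's method, and your verification of the easy direction is fine.

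Two corrections to your elaboration, however. First, your appeal to \ref{rder} and \ref{pseudomods} is misplaced: those results are set up for $\proart=\cat{pro}(\cat{dgArt}_k^{\leq 0})$ and for cohomologically locally finite dgas, whereas this theorem concerns the unbounded $\cat{pro}(\cat{dgArt}_k)$ and an arbitrary deformation functor. Pridham's construction does not pass through honest dgas at all; the tower $P_0\from P_1\from\cdots$ is built \emph{internally} in the pro-category, with the classifying derivations living in $\cat{pro}(\cat{dgArt}_k)$ itself, so no comparison with $\dga$ is needed. Second, your ``cleaner route'' is backwards relative to the paper's logic: showing that an arbitrary deformation functor is of the form $\mathrm{Def}(E)$ for some nonunital dga $E$ is not easier than the representability theorem --- in fact the paper uses this very theorem to prove \ref{defprorep} (that $\mathrm{Def}(E)\cong[B_\mathrm{nu}^\sharp E,-]$), by first knowing $\mathrm{Def}(E)\cong[P,-]$ for some $P$ and then identifying $P$. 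So that alternative would either be circular or would amount to reproving the representability theorem by other means.
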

\begin{proof}
The arguments of \cite[\S4.5]{unifying} carry over. More specifically, we are using 4.36 and 4.44.
\end{proof}
\begin{rmk}
The inclusion-truncation adjunction $\proart \longleftrightarrow \cat{pro}(\cat{dgArt}_k)$ becomes a Quillen adjunction when $\proart$ is given the model structure of \ref{proartmodel}.
\end{rmk}
\begin{prop}
	Every predeformation functor $F$ admits a map $F \to F^+$ to a deformation functor, universal amongst deformation functors under $F$.
\end{prop}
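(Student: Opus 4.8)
The plan is to realise $F \mapsto F^+$ as a left adjoint to the full inclusion of deformation functors into predeformation functors, following the commutative prototype of Manetti \cite{manettiextended}. The only axiom distinguishing the two notions is quasismoothness: for a small extension $A \to B$ with acyclic kernel the comparison map $\rho: F(A) \to F(B)$ must be an \emph{isomorphism}, whereas a predeformation functor only guarantees it is surjective. So the task is to universally force these surjections to be isomorphisms while preserving the pointedness axiom i) and the Schlessinger pullback axioms ii).

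First I would isolate the class $\mathcal{W}$ of morphisms in $\cat{dgArt}_k$ that are finite compositions of small extensions with acyclic kernel, and observe that $\mathcal{W}$ is stable under base change (a pullback of a small acyclic extension along any map is again one, since $\cat{dgArt}_k$ has finite limits and acyclicity of the kernel is preserved) and that any cospan in $\mathcal{W}$ completes to a pullback square in $\mathcal{W}$. This gives a well-behaved indexing system which can be used to define a one-step correction $F^{(1)}$ of $F$ by a suitable colimit over $\mathcal{W}$-morphisms, together with a natural transformation $F \to F^{(1)}$ which is an isomorphism on $k$ and along which the maps $\rho$ become closer to injective. Because a single correction can both reintroduce non-injectivity at larger Artinian dgas and interact with axiom ii), one then iterates transfinitely, taking filtered colimits at limit stages, and sets $F^+$ to be the stabilised functor. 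Convergence is forced by the fact that every object of $\cat{dgArt}_k$ is a finite iterated small extension of $k$, so on each fixed $\Gamma$ the process stabilises after finitely many steps; that $F^+$ remains a predeformation functor uses that finite limits of sets commute with filtered colimits, so the Schlessinger exactness in ii) survives, while quasismoothness holds in the limit by construction.

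The universal property is then a diagram chase: given a deformation functor $G$ and a map $F \to G$, every structure map $F(\pi)$ with $\pi \in \mathcal{W}$ fits into a commuting square over the isomorphism $G(\pi)$, so $F \to G$ factors through $F^{(1)}$, uniquely since $F \to F^{(1)}$ is epi; inducting up the tower produces the required unique factorisation through $F^+$. I expect the main obstacle to be the bookkeeping of the middle step --- pinning down the exact form of the corrections, verifying that axioms i) and ii) persist at each stage, and checking termination --- rather than anything conceptually hard. A more structural alternative is to feed the model structure on $\cat{pro}(\cat{dgArt}_k)$ from the theorem above into a Bousfield-localisation argument: deformation functors are exactly the homotopy-prorepresentable functors, i.e. the local objects, and $F^+$ is the local replacement of $F$; this trades the explicit iteration for the task of identifying the relevant class of local equivalences.
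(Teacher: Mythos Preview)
Your high-level strategy --- universally forcing $F(\pi)$ to be bijective for $\pi\in\mathcal{W}$ --- is correct, but the paper takes a much more concrete shortcut and your execution has gaps. The paper simply transports Manetti's construction \cite[2.8]{manettiextended}: choose a functorial noncommutative path object $P(A)$ replacing the commutative $A[t,dt]_\epsilon$, equipped with two evaluations $e_0,e_1:P(A)\to A$ lying in $\mathcal{W}$, declare $a_0\sim a_1$ in $F(A)$ whenever some $c\in F(P(A))$ has $e_i(c)=a_i$, and set $F^+(A):=F(A)/{\sim}$. All the verifications (transitivity of $\sim$, axioms i)--iii) for $F^+$, and the universal property) go through exactly as in Manetti once the path object is fixed --- in one step, no iteration needed.

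Your abstract approach would buy independence from any particular path object, but the ``suitable colimit'' is never written down. The natural candidate $\varinjlim_{A\to B\in\mathcal{W}}F(B)$ requires the under-category $A\downarrow\mathcal{W}$ to be filtered, and for that you need $\mathcal{W}$ stable under \emph{pushout} along $\mathcal{W}$, not the base-change stability you check; this is not automatic, since the sum of two acyclic two-sided ideals need not be acyclic. Your stabilisation claim also fails as stated: the tower $F^{(n)}(\Gamma)$ consists of surjections of possibly infinite sets, and finite-dimensionality of $\Gamma$ does not bound the length of such a chain. Finally, the Bousfield-localisation alternative has a genuine gap: the theorem you invoke places \emph{deformation} functors inside $\mathrm{Ho}(\cat{pro}(\cat{dgArt}_k))$, but gives no pro-object representing a bare predeformation functor, so there is nothing in the model category to fibrantly replace.
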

\begin{proof}
	The proof of \cite[2.8]{manettiextended} goes through, once one replaces the commutative construction $A[t,dt]_\epsilon$ with a suitable (noncommutative) path object for $A$. The idea is to mod out $F$ by the homotopies which a deformation functor must respect.
\end{proof}
\begin{thm}
	Let $E$ be a nonunital dga. Then $\mathrm{MC}(E)$ is a predeformation functor, $\mathrm{Def}(E)$ is a deformation functor, and $\mathrm{Def}(E)\cong \mathrm{MC}(E)^+$.
\end{thm}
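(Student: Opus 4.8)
The plan is to verify the three assertions in order: the first two by direct checks of the defining axioms, the third by a universal-property argument.

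First, that $\mathrm{MC}(E)$ is a predeformation functor. Axiom (i) is immediate since $\mathfrak{m}_k=0$, so $\mathrm{MC}(E)(k)=MC(0)=\{0\}$. For axiom (ii) I would use that $E\otimes-$ is exact, hence carries the pullback $\mathfrak{m}_{A\times_C B}=\mathfrak{m}_A\times_{\mathfrak{m}_C}\mathfrak{m}_B$ to the analogous pullback of nonunital dgas; since the Maurer--Cartan equation $dx+x^2=0$ is checked coordinatewise in such a pullback, $\eta$ is in fact always a bijection, which is stronger than what is required. For axiom (iii), given a small extension $A\to B$ with acyclic kernel $I$, observe that $E\otimes I$ is an acyclic bimodule ideal of $E\otimes\mathfrak{m}_A$ which squares to zero (because $I\mathfrak{m}_A=\mathfrak{m}_A I=0$). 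Lifting $\bar x\in MC(E\otimes\mathfrak{m}_B)$ to an arbitrary $\tilde x\in(E\otimes\mathfrak{m}_A)^1$, the obstruction $o:=d\tilde x+\tilde x^2$ lies in $(E\otimes I)^2$; a short computation using the square-zero relation gives $do=0$, acyclicity of $E\otimes I$ produces $o=d\xi$, and then $\tilde x-\xi$ is a Maurer--Cartan lift of $\bar x$. Hence $\rho$ is surjective.

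Next, that $\mathrm{Def}(E)=\mathrm{MC}(E)/\mathrm{Gg}(E)$ is a deformation functor. Axioms (i) and (ii) follow from the corresponding facts for $\mathrm{MC}(E)$: $\mathrm{Gg}(E)$ sends surjections of Artinian local dgas to surjections of groups, so one can adjust a gauge to reduce (ii) to the Maurer--Cartan statement, and when $C\cong k$ everything splits as a product of the $A$- and $B$-contributions. The genuinely new point is the isomorphism in (iii): surjectivity of $\rho$ is inherited from $\mathrm{MC}(E)$, and for injectivity suppose $[x],[y]\in\mathrm{Def}(E)(A)$ become equal over $B$. Lifting the connecting gauge one may assume $x|_B=y|_B$, so $v:=x-y\in(E\otimes I)^1$; the square-zero relation forces $x^2=y^2$, hence $dv=0$, acyclicity gives $v=du$ with $u\in(E\otimes I)^0$, and then the gauge $h:=1+u$ satisfies $h\cdot x=x-du=y$. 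Thus $[x]=[y]$, so $\rho$ is an isomorphism.

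Finally, the identification $\mathrm{Def}(E)\cong\mathrm{MC}(E)^+$. The quotient map $\pi:\mathrm{MC}(E)\to\mathrm{Def}(E)$ is a surjection onto a deformation functor, so the universal property of the plus construction gives a map $p:\mathrm{MC}(E)^+\to\mathrm{Def}(E)$, automatically surjective. To see that $p$ is an isomorphism I would show that $\pi$ itself enjoys the universal property of $(-)^+$: uniqueness of any factorisation is automatic since $\pi$ is surjective, so it suffices to prove that every natural transformation $\phi:\mathrm{MC}(E)\to G$ to a deformation functor $G$ is constant on gauge orbits. Given a gauge equivalence $g\cdot x_0=x_1$ over $\Gamma$, I would integrate $g$ to a path $g_t$ and assemble the standard Maurer--Cartan element over the noncommutative path object $\Gamma\langle t,dt\rangle$ appearing in the proof of the preceding proposition, restricting to $x_0$ and $x_1$ at the two endpoints; since both endpoint evaluations $\Gamma\langle t,dt\rangle\to\Gamma$ are composites of small extensions with acyclic kernel sharing the common section $\gamma\mapsto\gamma$, $G$ sends them to one and the same isomorphism, and applying $\phi$ then forces $\phi(x_0)=\phi(x_1)$. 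The main obstacle is exactly this last step: one must treat $\Gamma\langle t,dt\rangle$ carefully, as it is only pro-Artinian, so one needs that $G$ extends to such objects and inverts the endpoint evaluations, and one must check that the path built from the integrated gauge is genuinely Maurer--Cartan; everything else is bookkeeping with square-zero ideals and exactness of $E\otimes-$.
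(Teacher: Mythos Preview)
Your approach is the one the paper intends: it simply cites Manetti's \emph{Extended deformation functors}, noting that the hand checks for $\mathrm{MC}(E)$ and $\mathrm{Def}(E)$ (his 2.17 and 2.19) and the identification $\mathrm{Def}(E)\cong\mathrm{MC}(E)^+$ (his 3.4) carry over verbatim to the nonunital associative setting. Your verifications of the predeformation and deformation axioms are correct as written; in particular your obstruction computation for axiom (iii) and the gauge argument for injectivity in $\mathrm{Def}(E)$ are exactly the standard ones.

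The one point that is not yet a proof is the last step of your part 3, and you have correctly isolated it. The free path object $\Gamma\langle t,dt\rangle$ is not Artinian local, so you cannot evaluate a deformation functor $G$ on it directly, and ``$G$ extends to pro-Artinian objects and inverts the endpoint evaluations'' is not automatic from the axioms. The fix is not to change the strategy but to use the \emph{same} path object that was already invoked in constructing $(-)^+$: the paper's preceding proposition builds $F^+$ by quotienting out homotopies coming from a specific noncommutative replacement for Manetti's $A[t,dt]_\epsilon$, and that object lies in (a tower inside) $\cat{dgArt}_k$ by design. Concretely, since $\mathfrak{m}_\Gamma^N=0$ one may work in a finite-length quotient of $\Gamma\langle t,dt\rangle$ in which the integrated gauge $g_t=\exp(tv)$ and the resulting Maurer--Cartan path $g_t\cdot x_0$ already live; the two evaluations are then genuine acyclic surjections in $\cat{dgArt}_k$, and axiom (iii) for $G$ applies on the nose. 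Once you use this Artinian path object rather than the free one, your argument goes through and matches Manetti's 3.4.
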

\begin{proof}
	We follow \cite{manettiextended}, checking that the statements about dglas adapt to the noncommutative (and nonunital) setting. The first statement is 2.17, the second is 2.19, and the third is 3.4. The first two statements are checked by hand, whilst the third follows from obstruction theory.
\end{proof}
We can now prove our desired prorepresentability result, which is the noncommutative version of \cite[4.45]{unifying}. We follow the proof given there.
\begin{prop}\label{defprorep}
Let $E$ be a nonunital dga. Then $\mathrm{Def}(E)\cong [B_\mathrm{nu}^\sharp E,-]$.
\end{prop}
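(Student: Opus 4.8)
The plan is to establish the general fact that the plus construction of a strictly prorepresentable predeformation functor whose prorepresenting object is \emph{cofibrant} is computed by the homotopy mapping sets out of that object; combined with \ref{mcprorep} and the identification $\mathrm{Def}(E)\cong\mathrm{MC}(E)^+$ this is exactly the assertion. I would first record that $[B_\mathrm{nu}^\sharp E,-]$ is a deformation functor: it is homotopy prorepresentable by inspection, hence a deformation functor by the theorem characterising deformation functors as the homotopy prorepresentable functors. The localisation functor $\cat{pro}(\cat{dgArt}_k)\to\mathrm{Ho}(\cat{pro}(\cat{dgArt}_k))$ supplies a canonical natural transformation $\iota\colon\mathrm{MC}(E)\cong\hom_{\cat{pro}(\cat{dgArt}_k)}(B_\mathrm{nu}^\sharp E,-)\to[B_\mathrm{nu}^\sharp E,-]$, and by the universal property of $(-)^+$ it factors through a map $\phi\colon\mathrm{Def}(E)=\mathrm{MC}(E)^+\to[B_\mathrm{nu}^\sharp E,-]$; it remains to invert $\phi$.

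For this it suffices to show that the pair $\bigl([B_\mathrm{nu}^\sharp E,-],\iota\bigr)$ itself satisfies the universal property of $\mathrm{MC}(E)^+$: every natural transformation $\psi\colon\mathrm{MC}(E)\to G$ with $G$ a deformation functor factors uniquely through $\iota$. Two inputs are needed. Firstly, $B_\mathrm{nu}^\sharp E$ is cofibrant in $\cat{pro}(\cat{dgArt}_k)$: being the $\sharp$-dual of the cofree conilpotent dgc $B_\mathrm{nu}E=T^c(E)$, it is a complete free pro-Artinian dga carrying a derivation (cobar) differential, cf. the free-generation statements around \ref{sharpprop} and the (co)bar resolutions of Section~\ref{kd}. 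Secondly, every object of $\cat{dgArt}_k$ is fibrant. Hence $[B_\mathrm{nu}^\sharp E,\Gamma]$ is the quotient of $\hom(B_\mathrm{nu}^\sharp E,\Gamma)$ by the homotopy relation, so $\iota$ is objectwise surjective and any factorisation of $\psi$ through $\iota$ is automatically unique. For existence, set $\xi:=\psi_{B_\mathrm{nu}^\sharp E}(\mathrm{id})\in G(B_\mathrm{nu}^\sharp E)$, using that a homotopy prorepresentable $G\cong[P_G,-]$ extends to $\cat{pro}(\cat{dgArt}_k)$; naturality of $\psi$ gives $\psi_\Gamma(f)=G(f)(\xi)$, so it is enough to see that $G(f)=G(g)$ when $f\simeq g$. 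But a right homotopy between $f,g\colon B_\mathrm{nu}^\sharp E\to\Gamma$ is a map to a path object $\Gamma\xrightarrow{\sim}\Gamma^I\xrightarrow{(d_0,d_1)}\Gamma\times\Gamma$ in $\cat{pro}(\cat{dgArt}_k)$, with $f$ and $g$ the two composites; as $d_0,d_1$ are retractions of the weak equivalence $\Gamma\to\Gamma^I$ and $G$ sends weak equivalences to bijections, $G(d_0)=G(d_1)$, whence $G(f)=G(g)$. So $\psi$ factors through $\iota$, $\phi$ is an isomorphism, and the proposition follows. This transcribes the proof of \cite[4.45]{unifying}.

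The main obstacle is not conceptual but a matter of care with pro-objects: path objects, homotopies and the (co)free-generation / cofibrancy claims all live in $\cat{pro}(\cat{dgArt}_k)$ rather than in $\cat{dgArt}_k$, so one must know that deformation functors — a priori defined only on $\cat{dgArt}_k$ — extend to $\cat{pro}(\cat{dgArt}_k)$ in a homotopy-invariant way and that $B_\mathrm{nu}^\sharp E$ is genuinely cofibrant in the model structure at hand; this is part of the structural package of \cite[\S4.5]{unifying} (notably 4.36 and 4.44) underlying the model category of pro-Artinian dgas. A purely $1$-categorical alternative, sidestepping model-theoretic homotopies, is to compare $\phi$ on tangent and obstruction spaces — both read off from $H^1(E)$ and $H^2(E)$ via the Maurer--Cartan description — and invoke the standard rigidity criterion for morphisms of deformation functors; the cofibrancy route is cleaner precisely because it renders $\iota$ surjective and avoids obstruction calculus.
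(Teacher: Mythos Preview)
Your proof is correct and follows the same overall strategy as the paper --- show that $[B_\mathrm{nu}^\sharp E,-]$ satisfies the universal property of $\mathrm{MC}(E)^+$ --- but the execution differs. The paper's argument is a two-line Yoneda trick: given a map $\hom(B_\mathrm{nu}^\sharp E,-)\to G$ with $G$ a deformation functor, write $G\cong[P,-]$ using the homotopy-prorepresentability theorem; the transformation then corresponds to an element of $[P,B_\mathrm{nu}^\sharp E]$, which in turn induces the desired map $[B_\mathrm{nu}^\sharp E,-]\to[P,-]$. No cofibrancy of $B_\mathrm{nu}^\sharp E$, no path objects, no extension of $G$ to pro-objects is invoked.

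Your route instead establishes surjectivity of $\iota$ via cofibrancy and then checks that $G$ identifies homotopic maps using path objects. This is perfectly valid, and has the merit of making uniqueness explicit, but it front-loads precisely the model-categorical subtleties you flag in your final paragraph (cofibrancy of $B_\mathrm{nu}^\sharp E$ in the unbounded pro-category, path objects possibly leaving $\cat{dgArt}_k$, extending $G$). The paper sidesteps all of this by working entirely with the representable functors $[P,-]$, for which the required compatibilities are tautological. Your approach buys a more transparent connection to the general machinery of \cite[\S4.5]{unifying}; the paper's buys brevity and avoids having to verify any cofibrancy statement.
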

\begin{proof}
It suffices to prove that $[B_\mathrm{nu}^\sharp E,-]\cong \mathrm{MC}(E)^+$. By \ref{mcprorep}, we simply need to prove that $[B_\mathrm{nu}^\sharp E,-]\cong \hom(B_\mathrm{nu}^\sharp E, -)^+$. But this is easy: take a map $\hom(B_\mathrm{nu}^\sharp E, -) \to G$ to a deformation functor $G$. Since $G$ is a deformation functor, it is homotopy prorepresentable, so put $G\cong[P,-]$. The map $\hom(B_\mathrm{nu}^\sharp E, -) \to [P,-]$ gives us an element of $[P,B_\mathrm{nu}^\sharp E]$, which gives a map $[B_\mathrm{nu}^\sharp E, -] \to G$. Hence $[B_\mathrm{nu}^\sharp E,-]$ is universal amongst deformation functors under $\hom(B_\mathrm{nu}^\sharp E, -)$.
\end{proof}

\subsection{Deforming modules}
We are interested in the noncommutative derived deformation theory of modules over a dga. Recall that an underived deformation of an $A$-module $X$ over an Artinian local ring $\Gamma$ is an $A\otimes \Gamma$-module $\tilde X$ that reduces to $X$ modulo $\mathfrak{m}_\Gamma$. A derived deformation is defined similarly:
\begin{defn}
Let $A$ be a dga and $X$ an $A$-module. Let $\Gamma$ be an Artinian local dga. A \textbf{derived deformation} of $X$ over $\Gamma$ is a pair $(\tilde X, f)$ where $\tilde X$ is an $A\otimes \Gamma$-module and $f:\tilde X \lot_\Gamma k \to X$ is an isomorphism in $D(A)$. An \textbf{isomorphism} of derived deformations is an isomorphism $\phi:\tilde X_1 \to \tilde X_2$ in $D(A\otimes\Gamma)$ such that $f_1 = f_2 \circ ({\phi \lot_\Gamma k})$.
\end{defn}
Deformations are functorial with respect to algebra maps: given a map $\Gamma \to\Gamma'$ of Artinian local dgas, and a derived deformation $\tilde X$ of $X$ over $\Gamma$, then the derived tensor product $\tilde X \lot_\Gamma \Gamma'$ is a derived deformation of $X$ over $\Gamma'$.
\begin{defn}
Let $A$ be a dga and $X$ an $A$-module. The functor $\grdef_A(X):\cat{dgArt}_k \to \cat{Grpd}$ sends an Artinian local dga $\Gamma$ to the groupoid quotient$$\grdef_A(X)(\Gamma):=\{\text{derived deformations of }X \text{ over }\Gamma\}//(\text{isomorphism})$$Put $\mathrm{Def}_A(X):=\pi_0\grdef_A(X)$. We will just write $\grdef(X)$ if there is no ambiguity.
\end{defn}
If $F:\cat{dgArt}_k \to \mathcal{C}$ is a functor, then we denote its restriction to $\dgart$ by $F^{\leq 0}$.
\begin{thm}\label{mcdefms}
Let $A$ be an ungraded algebra and $X$ an $A$-module. There is an equivalence
$$\grdef_A^{\leq 0}(X) \cong \grdef^{\leq 0}(\R\enn_A(X))$$of groupoid-valued deformation functors, where the right-hand functor is the Deligne groupoid of \ref{delignegrpd}.
\end{thm}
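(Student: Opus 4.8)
The plan is to exhibit a chain of equivalences of groupoid-valued functors, passing through a Maurer--Cartan description on both sides. The key point is that for $A$ an ungraded algebra and $X$ an $A$-module, one has a cofibrant replacement $P \xrightarrow{\sim} X$ in $\mathrm{Mod}\text{-}A$, and then $\R\enn_A(X)$ is modelled by the (nonunital) dga $E := \overline{\enn_A(P)}$ of endomorphisms of $P$ that are not required to respect the augmentation $\enn_A(P) \to k$ coming from the identity. For a nonpositive Artinian local dga $\Gamma$, a Maurer--Cartan element $x \in MC(E \otimes \mathfrak{m}_\Gamma) = \mathrm{MC}(E)(\Gamma)$ is exactly a perturbation of the differential on $P \otimes \Gamma$ making it into an $A \otimes \Gamma$-module; the condition that $\Gamma$ is nonpositive and the module $X$ sits in degree zero is what guarantees $P \otimes \Gamma$ with the twisted differential still reduces correctly mod $\mathfrak{m}_\Gamma$ (this is precisely where the $\leq 0$ restriction is used — in general one would need to worry about $MC$ elements in positive degrees and the reduction need not be an isomorphism). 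So the first step is to set up this dictionary: $MC$ elements of $E\otimes\mathfrak m_\Gamma$ correspond to derived deformations $(\tilde X, f)$ with a chosen cofibrant model, together with the canonical framing $f$.

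The second step is to upgrade this bijection on objects to an equivalence of groupoids. On the $\grdef(E)$ side the morphisms $x \to y$ are homotopy classes of gauges $g \in 1 + (E\otimes\mathfrak m_\Gamma)^0$ with $g.x = y$; such a $g$ is precisely an automorphism of the underlying graded module $P\otimes\Gamma$ that intertwines the twisted differentials, i.e. an isomorphism $\tilde X_x \to \tilde X_y$ in the category of $A\otimes\Gamma$-modules lifting the identity, hence an isomorphism of derived deformations after passing to $D(A\otimes\Gamma)$. The subtlety is that different gauges can give the same morphism in $D(A\otimes\Gamma)$, and conversely an isomorphism in the derived category need not be represented by a strict gauge — one must check that the homotopy relation on gauges (via the action of $(E\otimes\mathfrak m_\Gamma)^{-1}$ by $g.h = g + dh + yh + hx$) matches exactly the relation of "equal in $D(A\otimes\Gamma)$" on strict isomorphisms, and that every isomorphism in $D(A\otimes\Gamma)$ between cofibrant twisted modules is realised by a strict gauge. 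This is a standard but slightly delicate homotopy-theoretic argument: cofibrancy of $P$ (hence of $P\otimes\Gamma$ over $A\otimes\Gamma$, since $\Gamma$ is free over $k$) ensures that derived-category isomorphisms can be strictified, and the path-object computation identifies the homotopies.

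The third step is simply to observe functoriality in $\Gamma$: base change $\tilde X \mapsto \tilde X \lot_\Gamma \Gamma'$ on the deformation side corresponds to $x \mapsto (\id_E \otimes \iota)(x)$ on the $MC$ side under a map $\Gamma \to \Gamma'$, because the chosen cofibrant model is compatible with tensoring. Assembling these, $\grdef_A^{\leq 0}(X)$ and $\grdef^{\leq 0}(\R\enn_A(X)) = \grdef^{\leq 0}(E)$ agree as functors $\dgart \to \cat{Grpd}$. I expect the main obstacle to be the strictification in step two — showing that the naive Maurer--Cartan/gauge groupoid, which is built from strict endomorphisms of a fixed model, computes the correct homotopy-invariant groupoid of derived deformations; this is exactly the kind of point where the framing (the fixed isomorphism $f$) and the restriction to nonpositive Artinian bases do real work, and it is presumably handled by invoking the analogous commutative statement from \cite{unifying} or the module-deformation results of \cite{ELO}, as the remark after \ref{delignegrpd} and the citation of \cite[8.1]{ELO} suggest.
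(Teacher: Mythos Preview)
Your overall approach matches the paper's: the proof there is a one-paragraph sketch plus a citation to \cite[6.1, 11.6]{ELO}, exactly along your lines --- Maurer--Cartan elements of $\enn_A(P)\otimes\mathfrak{m}_\Gamma$ are twisted differentials on $P\otimes\Gamma$, gauges are strict isomorphisms reducing to the identity modulo $\mathfrak{m}_\Gamma$, and homotopies of gauges correspond to equality in the derived category. You are also right that the strictification step is the delicate one and that it is handled by the cited results of Efimov--Lunts--Orlov.

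There is, however, a confusion in your setup that you should fix. You take $E := \overline{\enn_A(P)}$ and speak of ``the augmentation $\enn_A(P)\to k$ coming from the identity'', but for a general $A$-module $X$ there is \emph{no} such augmentation: the identity gives a unit map $k\to\enn_A(P)$, not a retraction of it. The correct model in this theorem is the full unital dga $E=\enn_A(P)$, and $\grdef(E)$ is applied to it directly --- the definitions of $\mathrm{MC}(E)(\Gamma)$ and $\mathrm{Gg}(E)(\Gamma)=1+(E\otimes\mathfrak{m}_\Gamma)^0$ make sense for any dga. The augmentation and the passage to the ideal $\bar E$ only enter when $X=S$ is one-dimensional, and there they correspond to the \emph{framed} deformation functor, which is the separate result \ref{frdefmc}, not the present theorem. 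Your subsequent description of gauges as automorphisms of $P\otimes\Gamma$ ``lifting the identity'' is in fact consistent with the full gauge group $1+(\enn_A(P)\otimes\mathfrak{m}_\Gamma)^0$, so in practice you are using the right $E$; only the opening sentences need correcting.
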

\begin{proof}
This follows from 6.1 and 11.6 of \cite{ELO}. The idea is that a derived deformation of $X$ lifts to a `homotopy deformation' of a projective resolution $P \to X$, which is a deformation of the differential on $P$, which is precisely a Maurer-Cartan element of $\enn_A(P)\cong \R\enn_A(X)$. Isomorphisms between (homotopy) deformations become (homotopy classes of) gauges.
\end{proof}
\begin{rmk}
We remark that the statement of the theorem makes sense because the Deligne groupoid functor $\grdef$ is invariant under dga quasi-isomorphism.
\end{rmk}

\subsection{Framed deformations}\label{frdefs}
Assume throughout this section that $A$ is an ungraded algebra and that $S$ is a one-dimensional $A$-module. It immediately follows that $S$ is simple, so that the derived endomorphism algebra $E:=\R\enn_A(S)$ is augmented. Theorem \ref{mcdefms} tells us that $\grdef_A^{\leq 0}(S) \cong \grdef^{\leq 0}(E)$. Recalling that $\bar E$ denotes the augmentation ideal of $E$, we would like the functor $\grdef^{\leq 0}(\bar E)$, or at least its set of connected components $\mathrm{Def}^{\leq 0}(\bar E)$, to have some similar interpretation. To do this, we need to rigidify and consider framed deformations of $S$. If $X$ is a dg-$A$-module, we will use $X_k$ to mean $X$ considered just as a dg-vector space. We will also use $X_A$ when emphasising that $X$ should be viewed as an $A$-module. 
\begin{defn}\label{frmdefs}
Suppose that $X\in D(A \otimes\Gamma)$ is a derived deformation of $S_A$ over $\Gamma$. A \textbf{framing} of $X$ is an isomorphism $\nu_X: X_k \to S\otimes\Gamma$ in $D(k\otimes \Gamma)$ from $X_k$ to the trivial deformation of $S_k$. A \textbf{framed deformation} of $S$ is a pair $(X,\nu_X)$ consisting of a deformation of $S$ together with a framing. A \textbf{framed isomorphism} $F:(X,\nu_X) \to (Y,\nu_Y)$ is an isomorphism $F:X \to Y$ of deformations satisfying $\nu_X=\nu_Y\circ F_k$. The functor of \textbf{framed deformations} of $S$ is the functor $\mathrm{Def}^{\mathrm{fr}}_A(S):\cat{dgArt}_k \to \cat{Set}$ defined by $$\mathrm{Def}^{\mathrm{fr}}_A(S)(\Gamma):=\frac{\{\text{framed deformations of }S \text{ over }\Gamma\}}{(\text{framed isomorphism})}.$$
\end{defn}
\begin{rmk}\label{frdefsset}
Note that we have not defined a groupoid-valued functor of framed deformations. The reason for this is that the na\"ive groupoid quotient of framed deformations by framed isomorphisms does not have the correct deformation-theoretic meaning. If $R$ is a dga then we may consider $D(R)$ as a simplicial category, and similarly we may consider the one-object category $\{S\}\subset D(A)$ as a simplicial category. If one puts $\R\mathrm{Def}^\text{fr}_A(S)(\Gamma):=D(\Gamma\otimes A)\times^h_{D(A)}\{S\}$, where we are taking the homotopy fibre product of simplicial categories, then $\R\mathrm{Def}^\text{fr}_A(S)$ is a functor valued in simplicial sets. It is in fact a deformation functor, which follows from \cite[\S 2]{jondmodss} applied along the lines of \cite[\S 4]{jondmodss}. So the correct groupoid-valued functor of framed deformations is $\grdef^{\text{fr}}_A(S):=\Pi_1\R\mathrm{Def}^\text{fr}_A(S)$ where we take the fundamental groupoid. As we will soon show in \ref{naivegrpdlem}, the na\"ive definition is $\Pi_1(D(\Gamma\otimes A))\times^h_{\Pi_1(D(A))}\Pi_1(\{S\})$. The issue is that homotopy fibre products do not commute with $\Pi_1$, and indeed one does not have $\grdef^{\text{fr}}_A(S) \cong \Pi_1(D(\Gamma\otimes A))\times^h_{\Pi_1(D(A))}\Pi_1(\{S\})$ since the left hand side has extra homotopies coming from $\Pi_2\R\mathrm{Def}^\text{fr}_k(S)$. However, we do have an isomorphism $\mathrm{Def}^{\mathrm{fr}}_A(S) \cong \pi_0\R\mathrm{Def}^\text{fr}_A(S)$, since the fundamental groupoid commutes with homotopy fibre products on the level of connected components.
\end{rmk}We aim to prove that $\mathrm{Def}^{\mathrm{fr},\leq 0}_A(S)$ is isomorphic to the functor $\mathrm{Def}^{\leq 0}(\bar E)$, which we will prove as \ref{frdefmc}. We will do this by showing that an appropriate map of groupoids is an isomorphism on $\pi_0$. We will first need a few facts about the homotopy theory of groupoids.
\begin{prop}[\cite{stricklandgpds}, \S6]The category of groupoids admits a right proper model structure where the weak equivalences are the equivalences of categories, the fibrations are the isofibrations, and the cofibrations are the functors injective on objects.
	\end{prop}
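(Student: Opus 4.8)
The plan is to verify Quillen's axioms (MC1)--(MC5) directly, following the standard template for the ``folk'' model structure on categories. Write $\cat{Grpd}$ for the category of small groupoids, and note at the outset the characterisation that will be used repeatedly: a functor of groupoids is both an isofibration and an equivalence if and only if it is surjective on objects and fully faithful; so trivial fibrations are exactly the surjective-on-objects fully faithful functors. The easy axioms come first. The category $\cat{Grpd}$ is complete and cocomplete, so (MC1) holds; equivalences of categories satisfy two-out-of-three, giving (MC2); and injectivity on objects, the isofibration property, and being an equivalence are each stable under retracts in the arrow category (checked on objects and on hom-sets), giving (MC3).

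For the factorisation axiom (MC5) I would give two explicit functorial constructions. Given $f\colon A\to B$: to factor $f$ as a trivial cofibration followed by a fibration, let $E_f$ be the groupoid whose objects are triples $(a,b,\beta)$ with $a\in\mathrm{ob}\,A$, $b\in\mathrm{ob}\,B$ and $\beta\colon f(a)\xrightarrow{\sim}b$ an isomorphism in $B$, and whose morphisms $(a,b,\beta)\to(a',b',\beta')$ are pairs $(\phi,\psi)$ with $\beta'\circ f(\phi)=\psi\circ\beta$. Then $a\mapsto(a,f(a),\id)$ is injective on objects and an equivalence (it is fully faithful, and essentially surjective since $(a,b,\beta)\cong(a,f(a),\id)$ via $(\id_{a},\beta^{-1})$), hence a trivial cofibration, while $(a,b,\beta)\mapsto b$ is an isofibration (lift $\gamma\colon b\to b'$ to $(\id_{a},\gamma)$), hence a fibration. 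To factor $f$ as a cofibration followed by a trivial fibration, let $C_f$ be the groupoid obtained from $B$ by duplicating objects along $f$: set $\mathrm{ob}\,C_f=\mathrm{ob}\,A\sqcup\mathrm{ob}\,B$ and $C_f(x,y)=B(r(x),r(y))$, where $r$ is $\mathrm{ob}(f)$ on the first summand and the identity on the second. Then $A\into C_f$ (the inclusion on objects, and $f$ on hom-sets) is injective on objects, while $C_f\to B$ (given by $r$ on objects and the identity on hom-sets) is fully faithful and surjective on objects, hence a trivial fibration; and the composite is $f$.

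For the lifting axiom (MC4) one must check that cofibrations have the left lifting property against trivial fibrations and that fibrations have the right lifting property against trivial cofibrations. In both cases the lift is built objectwise: on objects not already pinned down by the commuting square, one uses essential surjectivity of the relevant equivalence to obtain a lift up to isomorphism, then the isofibration property to move it so that it lies strictly over the prescribed image, recording the transporting isomorphism; on morphisms one then transports uniquely using full faithfulness of the equivalence. I expect this to be the only real work, and hence the main obstacle: it is a routine but slightly fiddly bookkeeping exercise to arrange the objectwise choices coherently enough that the resulting assignment on morphisms is functorial and strictly commutes with the given square.

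Finally, right properness. The point is that an isofibration $p\colon X\to Y$ is a fibration here precisely because strict pullback along $p$ already computes the homotopy pullback: if $w\colon Z\to Y$ is an equivalence, then the projection $X\times_Y Z\to X$ is again an equivalence, as one sees by transporting an arbitrary object of $X$ into the essential image of $w$ and lifting the transporting isomorphism along $p$. This gives right properness. (The structure is not left proper, which is why only right properness is asserted.) Of course all of this is classical --- this is the canonical model structure on $\cat{Grpd}$, essentially due to Joyal--Tierney, and one may simply invoke \cite[\S6]{stricklandgpds}.
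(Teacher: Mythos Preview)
The paper does not prove this proposition at all; it simply records the statement and cites \cite[\S6]{stricklandgpds}. Your argument is therefore not so much a different route as a supplied proof where the paper gives none, and the outline you give is the standard one and is correct in all essentials.

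One genuine slip, however: your parenthetical ``the structure is not left proper'' is false. Every groupoid is cofibrant in this model structure, since the unique functor $\emptyset \to G$ is vacuously injective on objects, and any model category in which every object is cofibrant is automatically left proper. So the canonical model structure on $\cat{Grpd}$ is proper on both sides; the paper only asserts right properness because that is all it subsequently uses.
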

\begin{lem}[\cite{stricklandgpds}, \S6]\label{grpdhfiblem}
Suppose that $\bullet \to B$ is a pointed groupoid, and $F: A \to B$ is a functor between groupoids. The homotopy fibre of $F$ is the groupoid with objects the pairs $(a,u)$ with $a \in A$ and $u:Fa \to \bullet \ $, and morphisms $(a,u) \to (a',u')$ those maps $v:a \to a'$ such that $u=u'\circ Fv$.
\end{lem}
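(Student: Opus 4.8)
The plan is to compute the homotopy fibre explicitly, using a mapping path space built from the standard path object for groupoids, and then to recognise the resulting groupoid as the one described in the statement. Throughout I use the model structure just introduced, in which every groupoid is fibrant and fibrations are the isofibrations.

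First I would recall the path object. Let $\mathbf{I}$ be the ``interval groupoid'': two objects $0,1$ and a unique isomorphism between them. For a groupoid $B$, the functor category $B^{\mathbf{I}}:=\mathrm{Fun}(\mathbf{I},B)$ has objects the morphisms $u\colon b_{0}\to b_{1}$ of $B$, and morphisms $u\to u'$ the commuting squares $(f_{0},f_{1})$ with $f_{1}u=u'f_{0}$. The diagonal $B\to B^{\mathbf{I}}$ is an equivalence of categories, and $(\mathrm{ev}_{0},\mathrm{ev}_{1})\colon B^{\mathbf{I}}\to B\times B$ is an isofibration, so $B\to B^{\mathbf{I}}\to B\times B$ is a path object; in particular each individual evaluation $\mathrm{ev}_{i}$ is an isofibration (it is the composite of the previous map with a projection $B\times B\to B$, which is a fibration since $B$ is fibrant). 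Now form the mapping path space of the basepoint $\bullet\colon\mathbf{1}\to B$, namely the strict pullback $P:=\mathbf{1}\times_{B,\mathrm{ev}_{0}}B^{\mathbf{I}}$. Unwinding definitions, $P$ has objects the morphisms $u\colon\bullet\to b$ of $B$ and exactly one morphism between any two of its objects; hence $\mathbf{1}\to P$, picking out $\mathrm{id}_{\bullet}$, is a weak equivalence, while $\mathrm{ev}_{1}\colon P\to B$ is an isofibration (lift $\beta\colon b\to b'$ through $u\colon\bullet\to b$ by $\beta u\colon\bullet\to b'$). Thus $\mathbf{1}\xrightarrow{\sim}P\xrightarrow{\mathrm{ev}_{1}}B$ is a factorisation of the basepoint as a weak equivalence followed by a fibration, so the homotopy fibre of $F\colon A\to B$ over $\bullet$ is the ordinary pullback $A\times_{B,\mathrm{ev}_{1}}P$.

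Finally I would identify this pullback with the groupoid in the statement. An object is a pair $(a,p)$ with $a\in A$ and $p$ an object of $P$ such that $\mathrm{ev}_{1}(p)=Fa$; that is, $p$ is a morphism $u\colon\bullet\to Fa$ in $B$, equivalently (since $B$ is a groupoid) the datum $u\colon Fa\to\bullet$. A morphism $(a,u)\to(a',u')$ is a pair $(v,w)$ with $v\colon a\to a'$ in $A$, $w\colon p\to p'$ in $P$, and $\mathrm{ev}_{1}(w)=Fv$; but $P$ has a unique morphism $p\to p'$, so there is no extra data beyond $v$, and the pullback condition together with the defining relation in $P$ forces the identity $u=u'\circ Fv$ (in the statement's convention). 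This is precisely the described groupoid.

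I do not expect a real obstacle here; this is essentially the content of the cited section of Strickland's notes. The only points needing care are the routine model-categorical bookkeeping — verifying that $\mathrm{ev}_{1}\colon P\to B$ is an isofibration and $\mathbf{1}\to P$ an equivalence, so that replacing $\mathbf{1}\to B$ by $P\to B$ before pulling back genuinely computes the homotopy fibre — and keeping track of the harmless convention $u\leftrightarrow u^{-1}$ relating $u\colon\bullet\to Fa$ with the statement's $u\colon Fa\to\bullet$.
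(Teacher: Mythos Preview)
Your argument is correct. The paper does not supply its own proof of this lemma: it is stated with a citation to Strickland's notes and used as a black box, so there is nothing in the paper to compare against. What you have written is the standard mapping-path-space computation of a homotopy fibre in the canonical model structure on groupoids, and the details you give (that $B^{\mathbf I}$ is a path object, that $P$ is contractible with $\mathrm{ev}_1\colon P\to B$ an isofibration, and the unwinding of the strict pullback) are all accurate, including the bookkeeping with the convention $u\leftrightarrow u^{-1}$.
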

For brevity we will often drop the Artinian local dgas from the notation and reason with groupoid-valued functors as if they were groupoids. The following lemma will reduce our study of framed deformations to the study of the Deligne groupoid of $E$, where we will be able to argue explicitly with gauges and homotopies.
\begin{lem}\label{naivegrpdlem}
The set-valued functor $\mathrm{Def}^{\mathrm{fr},\leq 0}_A(S)$ is $\pi_0$ of the homotopy fibre of the natural map of groupoid-valued functors $\grdef^{\leq 0}(E) \to \grdef^{\leq 0}(k)$ induced by the augmentation $E \to k$.
\end{lem}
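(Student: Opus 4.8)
The plan is to transport the map $\grdef^{\leq 0}(E)\to\grdef^{\leq 0}(k)$ through Theorem~\ref{mcdefms} into the setting of derived deformations of modules, and then read off the homotopy fibre using the explicit formula of~\ref{grpdhfiblem}.

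First I would note that, since $S$ is one-dimensional, $S_k\cong k$ as a complex of $k$-vector spaces, so $\R\enn_k(S_k)=\enn_k(k)=k$; applying~\ref{mcdefms} to the $A$-module $S$ and to the $k$-module $S_k$ therefore yields equivalences $\grdef^{\leq 0}(E)\simeq\grdef^{\leq 0}_A(S)$ and $\grdef^{\leq 0}(k)\simeq\grdef^{\leq 0}_k(S_k)$. The crux of the argument is to identify, under these equivalences, the functor induced by the augmentation $E\to k$ with the restriction-of-scalars functor $\grdef^{\leq 0}_A(S)\to\grdef^{\leq 0}_k(S_k)$, $\tilde X\mapsto\tilde X_k$. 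Here I would unwind the proof of~\ref{mcdefms}: fixing a projective $A$-resolution $P\to S$, the equivalence sends a Maurer--Cartan element $\xi$ of $\enn_A(P)\otimes\mathfrak{m}_\Gamma$ to the deformation $(P\otimes\Gamma,\,d+\xi)$, and restriction of scalars sends this to $(P_k\otimes\Gamma,\,d+\xi)$, which is the deformation attached to $\xi$ regarded as a Maurer--Cartan element of $\enn_k(P_k)\otimes\mathfrak{m}_\Gamma$ (note that $P_k$ is a bounded-above complex of free $k$-modules resolving $S_k$, hence an admissible resolution). Thus restriction of scalars corresponds to the map of Deligne groupoids induced by the dga map $\enn_A(P)\to\enn_k(P_k)$ that forgets $A$-linearity; since the composite $E\simeq\enn_A(P)\to\enn_k(P_k)\simeq k$ is precisely the augmentation of $E$, and $\grdef$ is invariant under dga quasi-isomorphism, the two maps agree.

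Next I would compute the homotopy fibre of $\grdef^{\leq 0}_A(S)\to\grdef^{\leq 0}_k(S_k)$ over the basepoint given by the trivial deformation $S\otimes\Gamma$, using~\ref{grpdhfiblem}. An object of this homotopy fibre is a pair $(\tilde X,u)$ where $\tilde X$ is a derived deformation of $S_A$ over $\Gamma$ and $u\colon\tilde X_k\to S\otimes\Gamma$ is an isomorphism of derived deformations of $S_k$ --- that is, exactly a framing of $\tilde X$ in the sense of~\ref{frmdefs} --- and a morphism $(\tilde X,u)\to(\tilde X',u')$ is an isomorphism $v\colon\tilde X\to\tilde X'$ of deformations with $u=u'\circ v_k$, i.e. exactly a framed isomorphism. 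Hence the homotopy fibre is the groupoid $\{\text{framed deformations of }S\text{ over }\Gamma\}\,/\!/\,(\text{framed isomorphism})$, whose set of connected components is $\mathrm{Def}^{\mathrm{fr},\leq 0}_A(S)(\Gamma)$ by~\ref{frmdefs}; since every identification above is natural in $\Gamma$, this proves the lemma.

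The only genuinely delicate point is the middle step: verifying that the map induced by the augmentation is restriction of scalars. This rests entirely on the fact that the equivalence of~\ref{mcdefms} is realised by twisting the differential of a fixed projective resolution, together with the evident compatibility of such a resolution with restriction to $k$; granting this, the identification of $\enn_A(P)\to\enn_k(P_k)$ with the augmentation $E\to k$ is immediate (both are the identity on $H^0=\enn_A(S)=\enn_k(S_k)=k$), and everything else is a mechanical application of~\ref{grpdhfiblem} and the definitions.
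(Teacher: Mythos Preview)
Your proposal is correct and follows essentially the same route as the paper: transport the augmentation-induced map to the module side via Theorem~\ref{mcdefms}, then read off the homotopy fibre using~\ref{grpdhfiblem} and match it with the definition of framed deformations. The paper simply asserts that the equivalences of~\ref{mcdefms} assemble into a commutative square, whereas you unwind this and explain why the forgetful map $\enn_A(P)\to\enn_k(P_k)$ realises the augmentation; this extra care is welcome and not redundant. The only point the paper makes that you omit is the observation that $\grdef^{\leq 0}_k(S)(\Gamma)$ has a single object when $\Gamma$ is nonpositive (since then $(k\otimes\mathfrak m_\Gamma)^1=0$ and the unique Maurer--Cartan element is $0$); this is not logically required for your argument, because you fix the basepoint explicitly, but it is worth recording since it explains why the choice of basepoint is immaterial.
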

\begin{proof}Fix a dga $\Gamma\in \dgart$. Let $\mathcal{G}$ be the groupoid whose objects are the framed deformations of $S$ over $\Gamma$ and whose morphisms are the framed isomorphisms. Clearly  $\mathrm{Def}^{\mathrm{fr},\leq 0}_A(S)(\Gamma)\cong \pi_0 \mathcal{G}$. It is easy to see that $\grdef^{\leq 0}_k(S)(\Gamma)$ is a groupoid with one object (this is where we are using the nonnegativity hypothesis), and an application of \ref{grpdhfiblem} allows us to conclude that we have an equivalence ${\mathcal{G}\cong\mathrm{hofib}(\grdef^{\leq 0}_A(S)(\Gamma) \to \grdef^{\leq 0}_k(S)(\Gamma))}$. The equivalences of \ref{mcdefms} assemble into a commutative square of groupoids $$\begin{tikzcd} \grdef^{\leq 0}_A(S)(\Gamma)\ar[r]\ar[d,"\simeq"] & \grdef^{\leq 0}_k(S)(\Gamma)\ar[d,"\simeq"] \\  \grdef^{\leq 0}(E)(\Gamma)\ar[r] & \grdef^{\leq 0}(k)(\Gamma)
	\end{tikzcd}$$ with vertical maps equivalences, and it follows that the homotopy fibres of the rows are equivalent. In particular, they have the same $\pi_0$.
\end{proof}
\begin{lem}\label{semidirectprod}
The sequence $\bar E \to E \to k$ induces a short exact sequence of gauge groups $\mathrm{Gg}(\bar E) \to \mathrm{Gg}(E) \to \mathrm{Gg}(k)$. Moreover, this sequence is split exact, so one has a semidirect product decomposition $\mathrm{Gg}(E)\cong  \mathrm{Gg}(\bar E)\rtimes \mathrm{Gg}(k)$.
\end{lem}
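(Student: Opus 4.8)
The plan is to verify everything directly from the definition $\mathrm{Gg}(-)(\Gamma)=1+\bigl((-)\otimes\mathfrak{m}_\Gamma\bigr)^0$, using functoriality of $\mathrm{Gg}$ in maps of (nonunital) dgas together with the exactness of $-\otimes_k\mathfrak{m}_\Gamma$. First I would record the structure maps. The inclusion $\iota\colon\bar E\into E$ of the augmentation ideal is a map of nonunital dgas, since $\bar E$ is a two-sided ideal and hence closed under multiplication; the augmentation $\pi\colon E\onto k$ is a dga map; and the unit $j\colon k\into E$ is a dga map which, because $E$ is augmented, satisfies $\pi\circ j=\id_k$. Applying $\mathrm{Gg}$ and evaluating at an Artinian local dga $\Gamma$ yields group homomorphisms $\mathrm{Gg}(\bar E)(\Gamma)\to\mathrm{Gg}(E)(\Gamma)\to\mathrm{Gg}(k)(\Gamma)$, each of the form $1+x\mapsto 1+f(x)$ for the relevant $f$; that these are homomorphisms is exactly the statement that $f$ (extended to the unitalisations) respects products. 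Likewise $j$ induces a homomorphism $\mathrm{Gg}(k)(\Gamma)\to\mathrm{Gg}(E)(\Gamma)$ which, by functoriality and $\pi\circ j=\id_k$, splits the map $\mathrm{Gg}(E)(\Gamma)\to\mathrm{Gg}(k)(\Gamma)$.

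Next I would check exactness at each spot. Because $k$ is a field, tensoring the short exact sequence of complexes $0\to\bar E\to E\to k\to 0$ with $\mathfrak{m}_\Gamma$ and then taking degree-$0$ parts stays exact, so $0\to(\bar E\otimes\mathfrak{m}_\Gamma)^0\to(E\otimes\mathfrak{m}_\Gamma)^0\to(k\otimes\mathfrak{m}_\Gamma)^0\to0$ is exact. Translating by $1$, this says precisely that $\mathrm{Gg}(\bar E)(\Gamma)\to\mathrm{Gg}(E)(\Gamma)$ is injective, that $\mathrm{Gg}(E)(\Gamma)\to\mathrm{Gg}(k)(\Gamma)$ is surjective, and that an element $1+x$ lies in the kernel of the latter if and only if $x$ maps to $0$ in $k\otimes\mathfrak{m}_\Gamma$, i.e.\ if and only if $x\in\bar E\otimes\mathfrak{m}_\Gamma$, i.e.\ if and only if $1+x$ is in the image of the former. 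So the sequence of gauge groups is short exact; combined with the splitting from the first paragraph, the standard description of a split extension of groups gives $\mathrm{Gg}(E)\cong\mathrm{Gg}(\bar E)\rtimes\mathrm{Gg}(k)$, the action being conjugation inside $\mathrm{Gg}(E)$. All of this is natural in $\Gamma$, so it holds at the level of functors.

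I do not expect a genuine obstacle here: the argument is a formal consequence of the definitions. The only points that warrant a moment's care are bookkeeping ones --- checking that $\iota$, $\pi$, $j$ are honestly maps of nonunital dgas so that $\mathrm{Gg}$ is functorial on them, and keeping track of the fact that one always works with the degree-$0$ components of the tensor products --- and both are immediate.
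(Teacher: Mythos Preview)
Your proof is correct and is precisely the ``simple check'' the paper alludes to: you have spelled out what the paper leaves implicit, namely that tensoring the short exact sequence $0\to\bar E\to E\to k\to 0$ with $\mathfrak{m}_\Gamma$ over the field $k$ and taking degree-$0$ parts stays exact, and that the unit $k\to E$ provides the splitting. There is no difference in approach.
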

\begin{proof}
This is a simple check. Note that the semidirect product decomposition is not respected by homotopies between gauges.
\end{proof}

Now we are ready to prove a set-valued analogue of \ref{mcdefms} for framed deformations.
\begin{prop}\label{frdefmc}	
The functors $\mathrm{Def}^{\mathrm{fr},\leq 0}_A(S)$ and $\mathrm{Def}^{\leq 0}(\bar E)$ are isomorphic.
\end{prop}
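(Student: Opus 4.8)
The plan is to exhibit an isomorphism between $\mathrm{Def}^{\mathrm{fr},\leq 0}_A(S)$ and $\mathrm{Def}^{\leq 0}(\bar E)$ by unwinding both sides in terms of Maurer--Cartan elements and gauges, using Lemma \ref{naivegrpdlem} to replace framed deformations with the homotopy fibre of $\grdef^{\leq 0}(E)\to\grdef^{\leq 0}(k)$. First I would fix $\Gamma\in\dgart$ and compute the homotopy fibre using Lemma \ref{grpdhfiblem}: an object is a pair $(x,u)$ with $x\in\mathrm{MC}(E)(\Gamma)=MC(E\otimes\mathfrak m_\Gamma)$ and $u$ a morphism in $\grdef^{\leq 0}(k)(\Gamma)$ from the image of $x$ to the basepoint, i.e. (a homotopy class of) a gauge $g\in\mathrm{Gg}(k)(\Gamma)$ taking the projected Maurer--Cartan element of $k\otimes\mathfrak m_\Gamma$ to $0$. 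A morphism $(x,u)\to(x',u')$ is a homotopy class of gauges $v\in\mathrm{Gg}(E)(\Gamma)$ with $vx=x'$ such that $u = u'\circ (\text{image of }v)$ in $\grdef^{\leq 0}(k)(\Gamma)$.

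The key move is the semidirect product decomposition $\mathrm{Gg}(E)\cong\mathrm{Gg}(\bar E)\rtimes\mathrm{Gg}(k)$ from Lemma \ref{semidirectprod}. I would use the splitting to rigidify: every object $(x,u)$ of the homotopy fibre is isomorphic, via a gauge lying over $u^{-1}\in\mathrm{Gg}(k)(\Gamma)$, to one of the form $(x', e)$ where $x'\in MC(E\otimes\mathfrak m_\Gamma)$ projects to $0$ in $k\otimes\mathfrak m_\Gamma$ --- equivalently $x'\in MC(\bar E\otimes\mathfrak m_\Gamma)=\mathrm{MC}(\bar E)(\Gamma)$ --- and $e$ is the identity. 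This identifies $\pi_0$ of the homotopy fibre with $MC(\bar E\otimes\mathfrak m_\Gamma)$ modulo the morphisms that preserve the second coordinate. A morphism $(x',e)\to(x'',e)$ is a homotopy class of gauges $v\in\mathrm{Gg}(E)(\Gamma)$ with $vx'=x''$ whose $\mathrm{Gg}(k)$-component acts trivially; I claim these are exactly the homotopy classes of gauges in $\mathrm{Gg}(\bar E)(\Gamma)$. The containment of $\mathrm{Gg}(\bar E)$-gauges is clear; for the converse, a general such $v$ decomposes as $v_0\cdot 1$ with $v_0\in\mathrm{Gg}(\bar E)(\Gamma)$ up to the $\mathrm{Gg}(k)$-part being forced to $1$ by the constraint on $u,u'$ --- here one has to be slightly careful because the semidirect product decomposition is not respected by homotopies (as flagged in Lemma \ref{semidirectprod}), so I would argue on representatives first and then check that the homotopy relation on $\mathrm{Gg}(E)$-gauges restricts correctly to the homotopy relation on $\mathrm{Gg}(\bar E)$-gauges, using that the $(E\otimes\mathfrak m_\Gamma)^{-1}$-action defining homotopy is by the formula $g.h = g + d(h) + x'' h + h x'$ and that $\bar E$ is an ideal. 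This yields $\pi_0(\mathrm{hofib}) \cong \mathrm{MC}(\bar E)(\Gamma)/\mathrm{Gg}(\bar E)(\Gamma) = \mathrm{Def}(\bar E)(\Gamma)$, naturally in $\Gamma$, which is the claim.

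The main obstacle I anticipate is the bookkeeping around homotopy classes of gauges: the semidirect product splitting is only a splitting of groups, not compatible with the homotopy relation, so the reduction of a morphism in the homotopy fibre to a $\mathrm{Gg}(\bar E)$-gauge has to be done at the level of the homotopy equivalence rather than by a strict normal-form argument. Concretely, I expect to spend most of the work verifying that two framed deformations become framed-isomorphic precisely when the corresponding Maurer--Cartan elements of $\bar E\otimes\mathfrak m_\Gamma$ are gauge-equivalent under $\mathrm{Gg}(\bar E)(\Gamma)$, which amounts to checking that the extra homotopies built into $\grdef$ (Remark \ref{delignermk}) do not identify anything new after passing to $\pi_0$ --- this is exactly the content of the final sentence of Remark \ref{frdefsset}, that $\Pi_1$ commutes with homotopy fibre products on $\pi_0$, so I would invoke that rather than re-prove it. Everything else (functoriality in $\Gamma$, that the constructed maps are mutually inverse) is a routine diagram chase.
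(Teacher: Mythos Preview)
Your approach is essentially the one the paper takes: reduce to the homotopy fibre of $\grdef^{\leq 0}(E)\to\grdef^{\leq 0}(k)$ via Lemma~\ref{naivegrpdlem}, normalise the framing coordinate to the identity using the splitting $\mathrm{Gg}(E)\cong\mathrm{Gg}(\bar E)\rtimes\mathrm{Gg}(k)$, and then compare morphisms. One point deserves sharpening. Your intermediate claim that morphisms $(x',e)\to(x'',e)$ in the homotopy fibre are \emph{exactly} the homotopy classes of gauges in $\mathrm{Gg}(\bar E)$ is too strong: the paper's functor $F$ from the na\"ive Deligne groupoid of $\bar E$ to $\mathcal{H}'$ is \emph{not} an equivalence (see the remark following the proof), precisely because the semidirect product splitting does not respect homotopies. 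What is true, and all that is needed for $\pi_0$, is that whenever such a morphism exists it can be represented by a gauge in $\mathrm{Gg}(\bar E)$.

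The concrete step you are missing for this is a homotopy-lifting argument: pick a representative $g\in\mathrm{Gg}(E)(\Gamma)$ of the morphism; since its image $g_k$ in $\mathrm{Gg}(k)(\Gamma)$ is homotopic to $1$, there is $h\in(k\otimes\mathfrak m_\Gamma)^{-1}$ with $g_k.h=1$. Lift $h$ along the surjection $(E\otimes\mathfrak m_\Gamma)^{-1}\twoheadrightarrow(k\otimes\mathfrak m_\Gamma)^{-1}$ to $h'$; then $(g.h')_k=1$, so by the semidirect product decomposition $g.h'\in\mathrm{Gg}(\bar E)(\Gamma)$, and $g.h'$ connects the same two Maurer--Cartan elements. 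This gives injectivity of $\pi_0F$; surjectivity is immediate since $F$ is the identity on objects (here you use that $(k\otimes\mathfrak m_\Gamma)^1=0$ for $\Gamma$ nonpositive, so $\mathrm{MC}(E)(\Gamma)=\mathrm{MC}(\bar E)(\Gamma)$ already). Your invocation of Remark~\ref{frdefsset} at the end is not needed: that remark concerns the simplicially-enriched framed deformation functor, whereas here everything happens at the groupoid level and Lemma~\ref{naivegrpdlem} has already done the work.
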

\begin{proof}Let $\mathcal{H}$ denote the homotopy fibre of the map $\grdef^{\leq 0}(E) \to \grdef^{\leq 0}(k)$ induced by the augmentation. By \ref{grpdhfiblem}, we may identify $\mathcal{H}$ as the groupoid whose elements are the pairs $(X,u)$ with $X$ an object of $\grdef^{\leq 0}(E)$ and $u:\bullet \to \bullet$ an automorphism of the trivial deformation $\bullet \in \grdef^{\leq 0}(k)$. A morphism $(X,u)\to (X',u')$ is a map $\phi:X \to X'$ in $\grdef^{\leq 0}(E)$ such that $u=u'\circ \phi_k$ where $\phi_k$ denotes the image of $\phi$ under $\grdef^{\leq 0}(E) \to \grdef^{\leq 0}(k)$. Let $u:\bullet \to \bullet$ be an automorphism of the trivial deformation $\bullet \in \grdef^{\leq 0}(k)$. Then $u$ is the homotopy class of a gauge $g \in \mathrm{Gg}(k)$. By \ref{semidirectprod} we may regard $g$ as a gauge in $\mathrm{Gg}(E)$. If $[g]$ denotes the homotopy class of $g$, then $[g]:(X,u)\to ([g]X,\id_\bullet)$ is an isomorphism in $\mathcal{H}$. So $\mathcal{H}$ is equivalent to the groupoid $\mathcal{H}'$ whose objects are deformations $X \in \grdef^{\leq 0}(E)$ and whose morphisms are the maps $\phi:X \to X'$ such that $\phi_k=\id_\bullet$. Let $\mathcal{K}$ be the groupoid whose objects are the same as $\grdef^{\leq 0}(\bar E)$ and morphisms are the gauges $\mathrm{Gg}(\bar E)$ (i.e. $\mathcal{K}$ is the na\"ive Deligne groupoid $\grdef'^{\leq 0}(\bar E)$ of \ref{delignermk}). Observe that, since we are using nonpositive dgas, $\mathcal{K}$ has exactly the same objects as $\grdef^{\leq 0}(E)$. Let $F$ be the functor $\mathcal{K}\to \mathcal{H}'$ that is the identity on objects, and on morphisms sends $g$ to $[g]$. It is well defined since by \ref{semidirectprod} the image of $[g]$ is the identity in the group $\grdef^{\leq 0}(k)$. One has an isomorphism $\mathrm{Def}^{\leq 0}(\bar E)\cong \pi_0\mathcal{K}$, and by \ref{naivegrpdlem} one also has an isomorphism $\mathrm{Def}^{\mathrm{fr},\leq 0}_A(S)\cong\pi_0 \mathcal{H}'$. So to prove the proposition it suffices to show that $\pi_0F$ is an isomorphism. Because $F$ is a surjection on objects, $\pi_0 F$ is a surjection. To show that $\pi_0F$ is an injection, it is enough to check that if there is a morphism $\phi:X \to X'$ in $\mathcal{H}'$ then there is some gauge $g'\in \mathrm{Gg}(\bar E)$ such that $\phi=[g']$. Fix a $\Gamma\in \dgart$ and let $\phi:X \to X'$ be a map in $\mathcal{H}'(\Gamma)$. Pick a representative $g \in \mathrm{Gg}(E)(\Gamma)$ for $\phi$. Because $\phi_k=\id_\bullet$, there is a homotopy $h \in (k\otimes \mathfrak{m}_\Gamma)^{-1}$ such that $g_k.h=\id_\bullet$. Lift $h$ along the surjection $(E\otimes \mathfrak{m}_\Gamma)^{-1}\to (k\otimes \mathfrak{m}_\Gamma)^{-1}$ to obtain an $h'\in (E\otimes \mathfrak{m}_\Gamma)^{-1}$. Then $(g.h')_k=\id_\bullet$, and so by \ref{semidirectprod} we conclude that $g.h'\in \mathrm{Gg}(\bar E)(\Gamma)$. Hence, $g$ is homotopic to some $g'\in\mathrm{Gg}(\bar E)(\Gamma)$, and in particular $\phi=[g']$ is a morphism $X \to X'$.
\end{proof}
\begin{rmk}Continuing the discussion of \ref{frdefsset}, we remark that the proof of \ref{frdefmc} does not adapt to show that the na\"ive groupoid-valued functor of framed deformations is equivalent to $\grdef^{\leq 0}(\bar E)$. This is because the semidirect product decomposition of \ref{semidirectprod} does not preserve homotopies, so the functor $F$ is not an equivalence. To put it another way, the representative $g'\in \mathrm{Gg}(\bar E)$ of $\phi$ chosen at the end of proof of \ref{frdefmc} is not unique up to homotopy in $\grdef^{\leq 0}(\bar E)$. The obstruction to uniqueness is $\pi_2\R\mathrm{Def}^{\leq 0}(k)$, which in general is nontrivial.
\end{rmk}
We can now state our main theorem about prorepresentability.
\begin{thm}\label{prorepfrm}
Let $A$ be an ungraded algebra and $S$ a one-dimensional $A$-module. The set-valued deformation functors $\mathrm{Def}^{\mathrm{fr},\leq 0}_A(S)$ and $[B^\sharp \R\enn_A(S), -]$ are equivalent.
\end{thm}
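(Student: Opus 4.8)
The plan is to obtain the theorem by combining the two substantial results proved above: Proposition \ref{frdefmc}, which identifies $\mathrm{Def}^{\mathrm{fr},\leq 0}_A(S)$ with $\mathrm{Def}^{\leq 0}(\bar E)$ for $E:=\R\enn_A(S)$, and the prorepresentability statement \ref{defprorep}, which gives $\mathrm{Def}(\bar E)\cong[B_\mathrm{nu}^\sharp\bar E,-]$. The only genuine task left is to check that the prorepresenting object $B_\mathrm{nu}^\sharp\bar E$ agrees with the object $B^\sharp\R\enn_A(S)$ named in the statement, and that restricting from $\cat{dgArt}_k$ to $\dgart$ causes no trouble.

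First I would observe that a one-dimensional $A$-module $S$ is simple, so $E=\R\enn_A(S)$ is augmented with $H^0(E)\cong\enn_A(S)\cong k$; choosing a minimal model for $E$, its bar construction $BE$ is a nonnegatively graded conilpotent dgc, so that $B^\sharp E\in\proart$ is defined and $\bar E$ is a genuine nonunital dga. Applying \ref{defprorep} to $\bar E$ gives a natural isomorphism $\mathrm{Def}(\bar E)\cong[B_\mathrm{nu}^\sharp\bar E,-]$ of $\cat{Set}$-valued functors on $\cat{dgArt}_k$. Restricting along $\dgart\into\cat{dgArt}_k$ and using the Quillen adjunction $\proart\longleftrightarrow\cat{pro}(\cat{dgArt}_k)$ recorded after the homotopy-prorepresentability theorem, this restriction is computed as maps in $\mathrm{Ho}(\proart)$; hence $\mathrm{Def}^{\leq 0}(\bar E)\cong[B_\mathrm{nu}^\sharp\bar E,-]$ as functors on $\dgart$.

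Next I would identify $B_\mathrm{nu}^\sharp\bar E$ with $B^\sharp E$. By definition $B_\mathrm{nu}\bar E=B(\bar E\oplus k)$, and $\bar E\oplus k\cong E$ as augmented dgas, so $B_\mathrm{nu}\bar E=BE$ on the nose. It matters here that we bar the augmentation ideal $\bar E$ rather than $E$ itself, so that no spurious tensor generator corresponding to the unit of $E$ is introduced (cf. the caution following the definition of $B_\mathrm{nu}$). Taking continuous linear duals gives $B_\mathrm{nu}^\sharp\bar E=(BE)^\sharp=B^\sharp E=B^\sharp\R\enn_A(S)$. Composing the isomorphism $\mathrm{Def}^{\mathrm{fr},\leq 0}_A(S)\cong\mathrm{Def}^{\leq 0}(\bar E)$ of \ref{frdefmc} with $\mathrm{Def}^{\leq 0}(\bar E)\cong[B^\sharp\R\enn_A(S),-]$ then yields the theorem.

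I expect no serious obstacle: the mathematical content has all been established already, in \ref{frdefmc} (which rests on the Efimov--Lunts--Orlov comparison \ref{mcdefms} and the groupoid bookkeeping of \ref{naivegrpdlem}) and in \ref{defprorep}. The points that do need a little care are the degree and (co)augmentation bookkeeping for the bar construction -- the equality $B_\mathrm{nu}\bar E=BE$ and the verification that $B^\sharp E$ really lands in $\proart$ -- together with the observation, via the Quillen adjunction, that restricting a homotopy-prorepresentable functor along $\dgart\into\cat{dgArt}_k$ remains prorepresented by the same object.
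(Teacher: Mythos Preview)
Your proposal is correct and follows essentially the same approach as the paper: combine \ref{frdefmc} with \ref{defprorep} and observe that $B_\mathrm{nu}^\sharp\bar E\cong B^\sharp E$ since $B_\mathrm{nu}\bar E=B(\bar E\oplus k)\cong BE$. Your additional care about choosing a minimal model (so that $\bar E$ lives in strictly positive degrees and hence $BE$ is nonnegatively graded, ensuring $B^\sharp E\in\proart$) and about the restriction along $\dgart\hookrightarrow\cat{dgArt}_k$ are sensible elaborations the paper leaves implicit.
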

\begin{proof}
Combine \ref{frdefmc} with \ref{defprorep}, observing that if $E$ is an augmented dga with augmentation ideal $\bar E$ there is an isomorphism $B^\sharp E \cong B_\mathrm{nu}^\sharp \bar E$ of pro-Artinian algebras.
\end{proof}
\begin{rmk}
The functor $\R\mathrm{Def}^\text{fr}_A(S)$ of \ref{frdefsset} is similarly prorepresentable. There is a natural map $\hom(B^\sharp \R\enn_A(S),-)\to \R\mathrm{Def}^\text{fr}_A(S)$, and the results of \cite[Sections 3.2 and 3.3]{jonrepdstacks} show that passing to right derived functors gives a weak equivalence $\R\mathrm{Map}(B^\sharp \R\enn_A(S),-)\to \R\mathrm{Def}^\text{fr}_A(S)$.
\end{rmk}

\begin{rmk}
If $S$ is a simple $A$-module, not necessarily one-dimensional, then the results of this section ought to remain true in some form. The algebra $\R\enn_k(S)$ may be larger, and so not every deformation of $S$ along a nonpositive dga may admit a framing. If $S$ is a direct sum of $m$ simple modules, then one should be able to carry out a similar analysis using \textbf{pointed} deformations, as in \cite{laudalpt} or \cite{kawamatapointed}, where the base ring is no longer $k$ but $k^m$. When $S$ is the direct sum of a finite semisimple collection of perfect $A$-modules, this is done in \cite{ELO2}. Removing the perfect hypothesis ought to be possible; one would have to repeat our Koszul duality arguments in the pointed setting.
\end{rmk}

\section{The derived quotient and the dg singularity category}
In this section we will remark on some properties of Braun-Chuang-Lazarev's derived quotient \cite{bcl}. We will mostly be interested in derived quotients of ungraded algebras by idempotents. Using ideas of the previous two sections, we will prove (\ref{maindefmthm}) that in certain situations the derived quotient admits a deformation-theoretic interpretation. We will consider the relationship between the derived quotient $\dq$ and the singuarity category of the corner ring $R=eAe$. In \ref{dgsing} we will define the dg singularity category, and we will prove a key technical result (\ref{qisolem}) allowing us to compare it to the dg category $\per\dq$ of perfect dg-modules over $\dq$.

\p The derived quotient is a natural object to study, and has been investigated before by a number of authors: for example it appears in Kalck and Yang's work \cite{kalckyang} \cite{kalckyang2} on relative singularity categories, de Thanhoffer de V\"olcsey and Van den Bergh's paper \cite{dtdvvdb} on stable categories, and Hua and Zhou's paper \cite{huazhou} on the noncommutative Mather-Yau theorem. Our study of the derived quotient will unify some of the aspects of all of the above work.

\subsection{Derived localisation}\label{derloc}
The derived quotient is a special case of a general construction -- the derived localisation. Let $A$ be any dga over $k$ (the construction works over any commutative base ring). Let $S \subseteq H(A)$ be any collection of homogeneous cohomology classes. Braun, Chuang and Lazarev define the \textbf{derived localisation} of $A$ at $S$, denoted by $\dloc$, to be a dga universal with respect to homotopy inverting elements of $S$.
\begin{defn}[\cite{bcl}, \S 3]
Let $Q A \to A$ be a cofibrant replacement of $A$. The \textbf{derived under category} $A \downarrow^\mathbb{L} \cat{dga}$ is the homotopy category of the under category $Q A \downarrow \cat{dga}$ of dgas under $Q A$. A $QA$-algebra $f:Q A \to Y$ is \textbf{$S$-inverting} if for all $s \in S$ the cohomology class $f(s)$ is invertible in $HY$. The \textbf{derived localisation} $\dloc$ is the initial object in the full subcategory of $S$-inverting objects of $A \downarrow^\mathbb{L} \cat{dga}$.
\end{defn} 
\begin{prop}[\cite{bcl}, 3.10, 3.4, 3.5]
The derived localisation exists, is unique up to unique isomorphism in the derived under category, and is quasi-isomorphism invariant.
\end{prop}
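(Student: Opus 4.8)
The plan is to treat the three assertions in increasing order of difficulty. \emph{Uniqueness} is immediate from the definition: $\dloc$ is an initial object of a full subcategory of the derived under category $A \downarrow^{\mathbb{L}} \cat{dga}$, and an initial object of any category is unique up to unique isomorphism. For \emph{quasi-isomorphism invariance}, let $\phi\colon A \to A'$ be a quasi-isomorphism and $QA \to A$ a cofibrant replacement; then $QA \to A \xrightarrow{\phi} A'$ is also a cofibrant replacement of $A'$, so both derived under categories may be formed from the same under category $QA \downarrow \cat{dga}$ and are thereby identified. The isomorphism $H(\phi)$ carries $S$ to a subset $S' \subseteq H(A')$, and a $QA$-algebra $QA \to Y$ is $S$-inverting if and only if it is $S'$-inverting (the relevant cohomology classes of $Y$ are literally the same). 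Hence the subcategories of inverting objects coincide, and so do their initial objects.

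The content lies in \emph{existence}, which I would establish by a homotopy pushout, reducing to the case of a free dga. Fix a cofibrant replacement $QA \to A$ and, for each $s \in S$, a cocycle $z_s \in QA$ representing $s$; these assemble into a map $\iota\colon F \to QA$ out of the free tensor dga $F := k\langle x_s : s \in S\rangle$ with $|x_s| = |s|$ and zero differential. Writing $F[S^{-1}]$ for the ordinary universal (Cohn) localisation of $F$ at its generators, set
\begin{equation*}
\dloc \;:=\; QA \amalg^{\mathbb{L}}_{F} F[S^{-1}],
\end{equation*}
the homotopy pushout along $\iota$ and along $F \into F[S^{-1}]$; this exists because $\cat{dga}$ is a combinatorial model category and $F$ is cofibrant. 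To verify the universal property, apply $\R\mathrm{Map}_{\cat{dga}}(-,B)$ for a test dga $B$, turning the pushout into the homotopy pullback of $\R\mathrm{Map}_{\cat{dga}}(QA,B) \to \R\mathrm{Map}_{\cat{dga}}(F,B) \leftarrow \R\mathrm{Map}_{\cat{dga}}(F[S^{-1}],B)$. Since $F$ is a free product, $\R\mathrm{Map}_{\cat{dga}}(F,B) \simeq \prod_s \R\mathrm{Map}_{\cat{dga}}(k\langle x_s\rangle, B)$ is a product of copies of the underlying ``space of degree-$|s|$ elements'' of $B$; the key point is that the right-hand map identifies $\R\mathrm{Map}_{\cat{dga}}(F[S^{-1}],B)$ with the union of those path components on which the generators are sent to invertible cohomology classes of $B$, and does so by a homotopy equivalence onto that union. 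Granting this, the homotopy pullback is exactly the subspace of $\R\mathrm{Map}_{\cat{dga}}(A,B) \simeq \R\mathrm{Map}_{\cat{dga}}(QA,B)$ consisting of the $S$-inverting components, which is precisely the corepresentable description of the derived localisation. (One can package the same argument as a left Bousfield localisation of $\cat{dga}$ at the set of maps $\{k\langle x_s\rangle \to k\langle x_s\rangle[x_s^{-1}]\}$, with $\dloc$ a fibrant replacement of $QA$.)

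The \emph{main obstacle} is the key point just invoked, namely that inverting the generator of a free algebra is homotopically well-behaved: that $k\langle x\rangle \to k\langle x\rangle[x^{-1}]$ is a homological epimorphism (``stably flat''), so the displayed homotopy pushout agrees with the strict one, and that $k\langle x\rangle[x^{-1}]$ corepresents the units space, i.e.\ ``the space of two-sided inverses of a fixed element is contractible whenever it is nonempty''. The concrete route is to build a semifree model for $k\langle x\rangle[x^{-1}]$ by adjoining a generator $y$ in degree $-|x|$ together with generators $h,h'$ in degree $-|x|-1$ with $dh = xy-1$ and $dh' = yx-1$, and to check that this (single round of) cell attachments already resolves the Cohn localisation; threading this identification through the homotopy pushout and checking compatibility of all the comparison maps is where the real work is, everything else being formal.
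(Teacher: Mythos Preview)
Your proposal is correct and matches the paper's treatment: the paper does not supply its own proof but simply cites \cite{bcl}, and the remark immediately following the proposition records precisely your homotopy-pushout construction along $A \leftarrow k\langle S\rangle \to k\langle S, S^{-1}\rangle$. Your identification of the main obstacle---that $k\langle x\rangle \to k\langle x, x^{-1}\rangle$ is a homological epimorphism, so that derived and classical localisation of a free algebra coincide---is exactly the substance of \cite[3.10]{bcl}.
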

\begin{rmk}
The derived localisation is the homotopy pushout of the span \mbox{$A \from k\langle S \rangle \to k\langle S, S^{-1}\rangle$}.
\end{rmk}
\begin{defn}[\cite{bcl}, 4.2 and 7.1]
Let $X$ be an $A$-module. Say that $X$ is \textbf{$S$-local} if, for all $s\in S$, the map $s: X \to X$ is a quasi-isomorphism. Say that $X$ is \textbf{$S$-torsion} if $\R\hom_A(X,Y)$ is acyclic for all $S$-local modules $Y$. Let $D(A)_{S\text{-loc}}$ be the full subcategory of $D(A)$ on the $S$-local modules, and let $D(A)_{S\text{-tor}}$ be the full subcategory on the $S$-torsion modules.
\end{defn}
Similarly as for algebras, one defines the notion of the derived localisation $\mathbb{L}_S(X)$ of an $A$-module $X$. It is not too hard to prove the following:
\begin{thm}[\cite{bcl}, 4.14 and 4.15]
Localisation of modules is smashing, in the sense that \newline $X \to X \lot_A \dloc$ is the derived localisation of $X$. Moreover, restriction of scalars gives an equivalence of $D(\dloc)$ with $D(A)_{S\text{-loc}}$.
\end{thm}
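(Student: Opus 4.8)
The plan is to realise the localisation of $A$-modules as a smashing Bousfield localisation of $D(A)$, identify the resulting localisation functor as derived tensor with $\dloc$, and then read off the equivalence of derived categories. First I would characterise the relevant subcategories homologically: an $A$-module $Y$ is $S$-local exactly when it is right-orthogonal (under $\R\hom_A$) to the set $\mathcal E$ of cones $\mathrm{cone}(s\colon A[-|s|]\to A)$ for $s\in S$, since acyclicity of $\R\hom_A(\mathrm{cone}(s),Y)$ is equivalent to $s\colon Y\to Y$ being a quasi-isomorphism. Thus $D(A)_{S\text{-loc}}=\mathcal E^{\perp}$, and by the definition of $S$-torsion, $D(A)_{S\text{-tor}}={}^{\perp}(\mathcal E^{\perp})$; since $\mathcal E$ consists of compact objects of the compactly generated category $D(A)$, this coincides with the localising subcategory $\mathrm{Loc}(\mathcal E)$ generated by $\mathcal E$. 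Neeman's Bousfield localisation theory then supplies a functorial triangle $\Gamma X\to X\xrightarrow{\eta_X}LX\to$ with $\Gamma X$ $S$-torsion and $LX$ $S$-local, and $\eta_X$ is by construction initial among maps from $X$ to an $S$-local module, so $LX$ is a model for $\mathbb L_S(X)$.

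Next I would make $L$ concrete. Because $\mathrm{Loc}(\mathcal E)$ is generated by compact objects of $D(A)$, the localisation $L$ is smashing (Miller/Neeman): it preserves coproducts and there is a natural isomorphism $LX\simeq X\lot_A LA$, and $LA$ is a smashing idempotent, hence carries the structure of an associative $A$-algebra under $A$ with $LA\lot_A LA\xrightarrow{\sim}LA$. One then identifies $LA$ with $\dloc$: since $LA$ is $S$-local the unit $A\to LA$ is $S$-inverting, while conversely for any $S$-inverting algebra $B$, right multiplication by a cocycle representing $s$ is a quasi-isomorphism on $B$ (being invertible on cohomology), so $B$ is $S$-local as an $A$-module and $A\to B$ factors uniquely through $LA$ in $D(A)$; upgrading this factorisation to an algebra map — via the idempotent-algebra formalism, or concretely via the homotopy pushout $\dloc\simeq A\lot_{k\langle S\rangle}k\langle S,S^{-1}\rangle$ — and invoking uniqueness of the derived localisation gives $LA\simeq\dloc$. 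Hence $X\to X\lot_A\dloc\simeq LX\simeq\mathbb L_S(X)$, which is the first assertion.

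For the second assertion, restriction of scalars along $A\to\dloc$ gives a functor $D(\dloc)\to D(A)$; its essential image lies in $D(A)_{S\text{-loc}}$ since each $s$ acts invertibly on any $\dloc$-module, it is fully faithful because $\dloc\lot_A\dloc\xrightarrow{\sim}\dloc$ forces $\R\hom_{\dloc}(M,N)\simeq\R\hom_A(M,N)$ for $\dloc$-modules $M,N$, and it is essentially surjective onto $D(A)_{S\text{-loc}}$ because an $S$-local $Y$ satisfies $Y\simeq LY\simeq Y\lot_A\dloc$, visibly in the image. Therefore restriction of scalars is an equivalence $D(\dloc)\xrightarrow{\sim}D(A)_{S\text{-loc}}$, and $\dloc$ is its own localisation, i.e.\ $\dloc\lot_A\dloc\simeq\dloc$.

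I expect the main obstacle to be the second step: passing from the abstractly-defined smashing idempotent $LA$ to the $\mathbb E_1$-algebra $\dloc$ with its universal property, and in particular establishing the homotopy-epimorphism property $\dloc\lot_A\dloc\simeq\dloc$. Although this falls out formally once one knows that smashing localisations of $D(A)$ are represented by idempotent $A$-algebras, it is the real technical core, and in \cite{bcl} it is instead proved directly from the homotopy-pushout presentation by reducing to the free-algebra case $k\langle S\rangle\to k\langle S,S^{-1}\rangle$, where it can be checked by hand.
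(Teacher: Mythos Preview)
The paper does not prove this statement at all: it is simply quoted from \cite{bcl} with the citation ``4.14 and 4.15'' and no argument given. So there is no proof in the paper to compare against; what you have done is supply an independent proof sketch where the paper is content to cite.

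Your argument is sound and is essentially the ``Bousfield-localisation from the outside'' approach: you recognise $D(A)_{S\text{-loc}}$ as the right orthogonal of a set of compact objects, invoke Neeman's finite localisation theorem to get a smashing localisation $L$, and then identify the idempotent algebra $LA$ with $\dloc$ via its universal property. You correctly flag the one genuinely delicate point, namely promoting the $A$-module identification $LA\simeq\dloc$ to an identification of $A$-algebras; this is exactly where \cite{bcl} does the work, and as you note they go the other way round, establishing the homotopy-epimorphism property $\dloc\lot_A\dloc\simeq\dloc$ directly from the pushout description (reducing to the free case $k\langle S\rangle\to k\langle S,S^{-1}\rangle$) and deducing the smashing and equivalence statements from that. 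Your route has the advantage of situating everything in the well-developed general theory of smashing localisations, at the cost of importing that machinery; the approach in \cite{bcl} is more self-contained but requires the explicit computation in the free case. Either way the content is the same, and your sketch would serve as a perfectly good alternative proof.
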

One defines a \textbf{colocalisation functor} pointwise by setting $\mathbb{L}^S(X):=\mathrm{cocone}(X \to \mathbb{L}_S(X))$. An easy argument shows that $\mathbb{L}^S(X)$ is $S$-torsion. 
\begin{defn}\label{colocdga}The \textbf{colocalisation} of $A$ along $S$ is the dga $\mathbb{L}^S(A):=\R\enn_A\left(\oplus_{s \in S}\,\mathrm{cone}(A \xrightarrow{s} A)\right)$.
\end{defn}Note that the dga $\mathbb{L}^S(A)$ may differ from the colocalisation of the $A$-module $A$. If $S$ is a finite set, then the dga $\mathbb{L}^S(A)$ is a compact $A$-module, and we get the analogous:
\begin{thm}[\cite{bcl}, 7.6]
Let $S$ be a finite set. Then $D(\mathbb{L}^S(A))$ and $D(A)_{S\text{-tor}}$ are equivalent.
\end{thm}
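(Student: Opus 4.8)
The plan is to exhibit $D(\mathbb{L}^S(A))$ as the localising subcategory of $D(A)$ generated by the single compact object $T:=\bigoplus_{s\in S}\mathrm{cone}(A\xrightarrow{s}A)$, and then to identify that subcategory with $D(A)_{S\text{-tor}}$; combining the two identifications gives the equivalence. \textbf{First}, $T$ is compact in $D(A)$, since each $\mathrm{cone}(A\xrightarrow{s}A)$ is a two-term complex of finitely generated free modules, hence perfect, and $S$ is finite. Moreover $T$ is $S$-torsion: for $S$-local $Y$, applying $\R\hom_A(-,Y)$ to the triangle $A\xrightarrow{s}A\to\mathrm{cone}(A\xrightarrow{s}A)\to A[1]$ yields a triangle $Y[-1]\to\R\hom_A(\mathrm{cone}(A\xrightarrow{s}A),Y)\to Y\xrightarrow{s}Y$ in which $s$ is a quasi-isomorphism, so the middle term is acyclic; summing over $s$ gives $\R\hom_A(T,Y)\simeq 0$.

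\textbf{Next}, $D(A)_{S\text{-tor}}$ is a localising subcategory of $D(A)$: it is plainly closed under triangles and retracts, and closed under coproducts because $\R\hom_A(\bigoplus_i X_i,Y)\simeq\prod_i\R\hom_A(X_i,Y)$ and a product of acyclic complexes is acyclic. Reading the triangle computation above backwards shows that an object $Y$ lies in the right orthogonal $T^{\perp}$ (computed in $D(A)$) precisely when $s\colon Y\to Y$ is a quasi-isomorphism for every $s\in S$, i.e. $T^{\perp}=D(A)_{S\text{-loc}}$. Since $D(A)$ is compactly generated (by $A$) and $T$ is compact, Neeman's Bousfield localisation machinery applies: writing $\mathcal{T}$ for the localising subcategory of $D(A)$ generated by $T$, it is the essential image of a colocalisation functor, and every $X\in D(A)$ sits in a triangle $CX\to X\to LX$ with $CX\in\mathcal{T}$ and $LX\in T^{\perp}=D(A)_{S\text{-loc}}$. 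Because such a torsion–local factorisation of $X$ is unique, and by \cite{bcl} (4.14--4.15) the map $X\to X\lot_A\mathbb{L}_S(A)$ is precisely the $S$-localisation, we may identify $LX$ with $\mathbb{L}_S(X)$ and hence $CX$ with $\mathbb{L}^S(X)$.

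\textbf{Now} identify $\mathcal{T}$ with $D(A)_{S\text{-tor}}$. The inclusion $\mathcal{T}\subseteq D(A)_{S\text{-tor}}$ holds because $T$ is torsion and $D(A)_{S\text{-tor}}$ is localising. Conversely, if $X$ is $S$-torsion, apply $\R\hom_A(-,\mathbb{L}_S(X))$ to the triangle $CX\to X\to\mathbb{L}_S(X)$: the term $\R\hom_A(CX,\mathbb{L}_S(X))$ vanishes since $CX$ is torsion and $\mathbb{L}_S(X)$ is local, so $\R\hom_A(\mathbb{L}_S(X),\mathbb{L}_S(X))\simeq\R\hom_A(X,\mathbb{L}_S(X))$, and the latter is acyclic because $X$ is torsion and $\mathbb{L}_S(X)$ is local; hence $\mathrm{id}_{\mathbb{L}_S(X)}$ is null-homotopic, $\mathbb{L}_S(X)\simeq 0$, and $X\simeq CX\in\mathcal{T}$. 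Therefore $D(A)_{S\text{-tor}}=\mathcal{T}$ is a compactly generated triangulated category with single compact generator $T$, and since its inclusion into $D(A)$ is full we have $\R\enn_{\mathcal{T}}(T)=\R\enn_A(T)=\mathbb{L}^S(A)$. By the standard derived Morita theorem (Keller), $\R\hom_A(T,-)$ then induces an equivalence $D(A)_{S\text{-tor}}\xrightarrow{\ \sim\ }D(\mathbb{L}^S(A))$, as required.

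\textbf{The main obstacle} is this middle step: abstract Bousfield localisation only yields that $\mathcal{T}$ is \emph{contained} in the torsion category, and proving equality requires knowing that the colocalisation triangle $CX\to X\to LX$ genuinely computes the $S$-torsion part — which is where the identification of $LX$ with the Braun--Chuang--Lazarev localisation $\mathbb{L}_S(X)$, together with the resulting vanishing argument, does the real work. The compact-generation input (Neeman) and the derived Morita input (Keller) are standard triangulated-category technology, and the triangle chases of the first two paragraphs are routine.
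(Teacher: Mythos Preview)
Your argument is correct and is essentially the standard proof of this result; it is the approach taken in \cite{bcl} itself. Note, however, that the paper does not give its own proof of this statement: it is recorded as a citation of \cite[7.6]{bcl}, with only the preceding sentence (``If $S$ is a finite set, then the dga $\mathbb{L}^S(A)$ is a compact $A$-module, and we get the analogous:'') serving as a hint. So there is no in-paper proof to compare against beyond that remark, and your reconstruction --- compactness of $T$, identification $T^\perp=D(A)_{S\text{-loc}}$, Bousfield localisation to get $\mathcal{T}=D(A)_{S\text{-tor}}$, then Keller's derived Morita theorem --- is exactly the intended line.
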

Neeman-Thomason-Trobaugh-Yao localisation gives the following:
\begin{thm}[\cite{bcl}, 7.3]\label{ntty}
Let $S$ be finite. Then there is a sequence of dg categories $$\per\mathbb{L}^S(A) \to \per A \to \per \dloc$$which is exact up to direct summands.
\end{thm}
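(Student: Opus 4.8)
The plan is to deduce this from the corresponding localisation sequence of big derived categories together with the Neeman--Thomason--Trobaugh localisation theorem; beyond bookkeeping nothing genuinely new is required.

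First I would assemble the localisation sequence at the level of unbounded derived categories. The fact that localisation of modules is smashing --- that $X \to X\lot_A\dloc$ is the derived localisation, and that restriction of scalars identifies $D(\dloc)$ with $D(A)_{S\text{-loc}}$ --- says precisely that $L := (-)\lot_A\dloc\colon D(A)\to D(\dloc)$ has a fully faithful right adjoint (restriction along $A\to\dloc$) with essential image $D(A)_{S\text{-loc}}$, and that $\ker L = D(A)_{S\text{-tor}}$ (an object $X$ lies in $\ker L$ iff $\mathbb{L}_S(X)\simeq 0$ iff $X\simeq\mathbb{L}^S(X)$, which is the torsion condition). Combined with the equivalence $D(\mathbb{L}^S(A))\simeq D(A)_{S\text{-tor}}$, this gives a localisation sequence of compactly generated triangulated categories
$$D(\mathbb{L}^S(A)) \longrightarrow D(A) \xrightarrow{\ L\ } D(\dloc)$$
with compact generators $\mathbb{L}^S(A)$, $A$ and $\dloc$ respectively.

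Next I would check that both functors preserve compact objects, which is where the hypothesis that $S$ is finite enters. The functor $L$ is a left adjoint, hence coproduct-preserving, and it sends the compact generator $A$ to $\dloc$, which is compact in $D(\dloc)$; a coproduct-preserving exact functor carrying a compact generator to a compact object maps all of $\per A=\thick(A)$ into $\per\dloc$. For the fully faithful inclusion $D(\mathbb{L}^S(A))=D(A)_{S\text{-tor}}\into D(A)$, finiteness of $S$ is exactly what makes $\mathbb{L}^S(A)$ compact as an $A$-module; being also a compact generator of the torsion subcategory, it follows that the inclusion carries $\per\mathbb{L}^S(A)$ fully faithfully into $\per A$.

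Finally I would apply the Neeman--Thomason--Trobaugh localisation theorem: for a localisation sequence of compactly generated triangulated categories whose functors preserve compactness, restricting to subcategories of compact objects yields a sequence
$$\per\mathbb{L}^S(A) \longrightarrow \per A \longrightarrow \per\dloc$$
exact up to direct summands --- the composite is zero, the first functor is fully faithful, and the induced functor on the Verdier quotient $\per A/\per\mathbb{L}^S(A)\to\per\dloc$ is fully faithful with dense image (every object of $\per\dloc$ is a summand of one in its essential image). Since all three categories and both functors are naturally dg, and the dg quotient of $\per A$ by $\per\mathbb{L}^S(A)$ has this Verdier quotient as its homotopy category (Keller \cite{kellerdgquot}, Drinfeld \cite{drinfeldquotient}), the conclusion upgrades to dg categories. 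The one genuine subtlety --- and the reason one can only conclude exactness up to direct summands --- is that the Verdier quotient of $\per A$ by the thick subcategory $\per\mathbb{L}^S(A)$ need not be idempotent complete even though the ambient categories are, the obstruction being of Thomason type (a class in a cokernel of $K_0$-groups); everything else is formal once $S$ finite has been used to place $\mathbb{L}^S(A)$ among the compact $A$-modules.
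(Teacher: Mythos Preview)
Your proposal is correct and follows exactly the approach the paper indicates: the paper does not actually prove this statement but merely cites it from \cite{bcl}, prefacing it with the sentence ``Neeman--Thomason--Trobaugh--Yao localisation gives the following,'' which is precisely the strategy you have written out in detail. Your expansion --- assembling the smashing localisation sequence on unbounded derived categories, using finiteness of $S$ to ensure the inclusion of the torsion part preserves compacts, and then invoking Neeman--Thomason--Trobaugh to pass to compact objects --- is the standard argument behind the cited result.
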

\begin{rmk}[\cite{bcl}, 7.9]\label{bclrcl}
The localisation and colocalisation functors fit into a recollement $$\begin{tikzcd}[column sep=huge]
D(A)_{S\text{-loc}} \ar[r]& D(A)\ar[l,bend left=25]\ar[l,bend right=25]\ar[r] & D(A)_{S\text{-tor}}\ar[l,bend left=25]\ar[l,bend right=25]
\end{tikzcd}$$We will see a concrete special case of this in \ref{recoll}.
\end{rmk}

\begin{defn}[\cite{bcl}, 9.1 and 9.2]
Let $A$ be a dga and let $e$ be an idempotent in $H^0(A)$. The \textbf{derived quotient} $\dq$ is the derived localisation $\mathbb{L}_{1-e}A$.
\end{defn}
Clearly, $\dq$ comes with a natural quotient map from $A$. One can write down an explicit model for $\dq$, at least when $k$ is a field.
\begin{prop}\label{drinfeld}
Let $A$ be a dga over $k$, and let $e\in H^0(A)$ be an idempotent. Then the derived quotient $\dq$ is quasi-isomorphic as a dga over $k$ to the dga $$B:=\frac{A\langle h \rangle}{(he=eh=h)}\;, \quad d(h)=e$$with $h$ in degree -1.
\end{prop}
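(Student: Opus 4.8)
The plan is to realize the derived quotient $\dq = \mathbb{L}_{1-e}A$ via its description as a homotopy pushout and then compute that pushout explicitly. Recall from the remark following the definition that $\mathbb{L}_S(A)$ is the homotopy pushout of the span $A \from k\langle S\rangle \to k\langle S, S^{-1}\rangle$; here $S = \{1-e\}$, so we must understand the map $k\langle 1-e\rangle = k[t] \to A$ sending $t \mapsto 1-e$, and glue in a two-sided inverse for $1-e$ up to homotopy. Concretely, $k[t,t^{-1}]$ is the localisation of $k[t]$ inverting $t$, and a standard cofibrant model for this localisation over $k[t]$ is obtained by adjoining a degree $-1$ variable killing $t\cdot s - 1$; equivalently one can use the Koszul-type model $k[t]\langle s\rangle$ with $d(s) = ts - 1$ (or a symmetrised version). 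Taking the homotopy pushout then amounts to base-changing this cofibration along $k[t]\to A$, which adds to $A$ a degree $-1$ generator whose differential records the failure of $1-e$ to be invertible.

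First I would make precise the claim that inverting the single element $1-e$ is the same as inverting $1-e$ acting on the summand $(1-e)A(1-e)$ while leaving $eAe$ untouched, so that the ``new'' generator $h$ can be taken to satisfy $eh = he = h$; this is the source of the relation $(he = eh = h)$ in the statement. The point is that $e$ is already invertible in $eH^0(A)e$ (it is the unit there) and $1-e$ is already invertible on $(1-e)H^0(A)(1-e)$ is not — rather, to force $1-e$ invertible globally we are forcing $e\mapsto 0$, i.e. the natural map $A \to \dq$ kills $e$. So the model $B$ should be $A$ with a degree $-1$ element $h$ adjoined, subject to $h$ living in the ``$e$-corner'' in the sense $ehe = h$, wait — the relations say $eh = h = he$, i.e. $h$ is fixed by left and right multiplication by $e$; and $d(h) = e$, so that in cohomology $e$ becomes a coboundary, hence zero. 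I would verify directly that $H^0(B) = A/AeA$ and more generally identify $B$ as a semifree (hence cofibrant) $A$-algebra.

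The core computation is then: show the canonical map $A \to B$ is $(1-e)$-inverting and initial among such in the derived under category. For $(1-e)$-invertibility, note that in $H(B)$ we have $[e] = [d(h)] = 0$, so $[1-e] = 1$ is a unit; that is immediate. For initiality/universality, the cleanest route is to exhibit $B$ as the homotopy pushout $A \otimes^{\mathbb{L}}_{k[t]} k[t,t^{-1}]$: since $B$ is obtained from $A$ by freely adjoining $h$ with $d(h) = e = 1 - (1-e)$ and imposing the corner relations, one checks that $B \cong A \otimes_{k[t]} k[t]\langle s\rangle$ after the change of variables $s \leftrightarrow$ (an appropriate combination involving $h$ and the idempotents), where $k[t]\langle s\rangle$, $d(s) = ts-1$, is the chosen cofibrant resolution of $k[t,t^{-1}]$ over $k[t]$ — the corner relations are exactly what is needed to make this identification work, because $1-e$ is not a non-zero-divisor and the naive one-variable Koszul model must be adjusted to account for the idempotent splitting. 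Cofibrancy of $k[t]\langle s\rangle \to$ something, together with the fact that $k[t]$ is cofibrant and $A$ is a $k[t]$-algebra, gives that this strict pushout computes the homotopy pushout, which by the remark is $\dloc = \dq$.

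The main obstacle I anticipate is getting the combinatorics of the idempotent relations exactly right: a single idempotent $e$ splits $A$ into four pieces $eAe, eA(1-e), (1-e)Ae, (1-e)A(1-e)$, and ``inverting $1-e$'' interacts with this decomposition in a way that forces $h$ to sit in a specific corner and satisfy $eh = he = h$ rather than, say, $(1-e)h(1-e) = h$. One has to be careful about which of $e$, $1-e$ is being inverted versus killed, and correspondingly whether $d(h) = e$ or $d(h) = 1-e$, and in which corner $h$ lives — and then to check that with these conventions the resulting $B$ genuinely has the correct homotopy type (e.g. that no extra cohomology is introduced and that $H(B) \cong H(A)/H(A)eH(A)$ in the expected range). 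A secondary technical point is justifying that the strict pushout equals the homotopy pushout, which needs the relevant maps to be cofibrations in $\cat{dga}_k$; this follows from semifreeness of the standard localisation model, but should be stated carefully. I would also double-check quasi-isomorphism invariance is not needed here since we are producing an explicit model, though the Proposition on existence/uniqueness of derived localisations quoted above guarantees the answer is well-defined up to the appropriate equivalence.
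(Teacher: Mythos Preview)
The paper's proof is a one-line citation to \cite[9.6]{bcl}, together with the remark that since $k$ is a field, $A$ is flat (hence left proper) over $k$, which is the hypothesis that result requires. You are therefore attempting to reconstruct an argument the paper deliberately outsources.

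Your overall strategy --- realise $\dq$ as the homotopy pushout $A \coprod^{\mathbb L}_{k\langle t\rangle} k\langle t,t^{-1}\rangle$ along $t\mapsto 1-e$, replace one leg by a cofibration, and compute the strict pushout --- is correct and is indeed how \cite{bcl} proceeds. But your proposed cofibrant model has a genuine error: the dga ``$k[t]\langle s\rangle$ with $d(s)=ts-1$, $s$ in degree $-1$'' does not typecheck, since $|ts|=-1\neq 0=|1|$. A semifree replacement of $k\langle t\rangle \to k\langle t,t^{-1}\rangle$ in the noncommutative setting requires adjoining a degree-$0$ inverse $s$ and then infinitely many higher cells to kill $ts-1$, $st-1$, and the resulting syzygies; it is not a one-cell Koszul complex. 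The collapse to the compact formula $B=A\langle h\rangle/(eh=he=h)$, $d(h)=e$, happens only \emph{after} pushing out along $t\mapsto 1-e$: the idempotent relation $(1-e)^2=1-e$ forces any inverse of $1-e$ to equal $1-e$ itself, so inverting $1-e$ amounts to killing $e$, and the corner relations $eh=he=h$ are precisely what makes the infinite tower of cells redundant. You correctly flag this collapse as ``the main obstacle'', but your sketch does not carry it out, and with the wrong starting model it cannot. The left-properness hypothesis (automatic over a field) is what ensures the strict pushout along the cofibration computes the homotopy pushout --- this is the one substantive point the paper's proof does record.
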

\begin{proof}
This is essentially Proposition 9.6 of \cite{bcl}; because $k$ is a field, $A$ is flat (and in particular left proper) over $k$. One can check that the quotient map $A \to B$ is the obvious one.
\end{proof}
\subsection{Cohomology of the derived quotient}
In this part we will restrict to the case when $A$ is an ungraded $k$-algebra, and $e\in A$ is an idempotent. Write $R$ for the corner ring $eAe$. We will investigate the cohomology of the derived quotient $B:=\dq$. As an $A$-bimodule, $B$ can be thought of as an augmented Tor complex: let $\mu$ be the multiplication map $Ae \lot_{R} eA \to Ae \otimes_{R}eA \to A$, and let $l:A \to B$ be the localisation map.
\begin{prop}\label{dqexact}
There is an exact triangle of $A$-bimodules $Ae \lot_{R} eA \xrightarrow{\mu} A \xrightarrow{l} B \to $.
\end{prop}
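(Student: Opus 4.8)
The plan is to produce the exact triangle by exhibiting $B = \dq$ as a mapping cone. Recall from Proposition~\ref{drinfeld} that $B$ has an explicit model $A\langle h\rangle/(he=eh=h)$ with $d(h)=e$ and $|h|=-1$, and that the localisation map $l:A\to B$ is the evident inclusion. As a complex of $A$-bimodules, I would first identify the underlying graded bimodule of $B$. Since $h$ satisfies $he=eh=h$, any word in $A\langle h\rangle$ modulo the relations is an alternating product $a_0 h a_1 h \cdots h a_n$ where the interior $a_i$ lie in $eAe = R$ (because $h$ absorbs $e$ on both sides). Hence as a graded $A$-bimodule one has $B \cong A \oplus \bigoplus_{n\geq 1} Ae\otimes_R (eAe)^{\otimes (n-1)} \otimes_R eA\,[n]$, with the degree-$n$ part being $Ae\otimes_R eA$ tensored with $n-1$ copies of the shift. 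In other words, the part of $B$ in negative degrees is precisely the (shifted) bar resolution computing $Ae\lot_R eA$.

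The key step is then to make this precise: let $T := Ae\otimes_R (\text{bar resolution of } R \text{ over } R)\otimes_R eA$, or more cleanly, take a cofibrant (semifree) resolution of $eA$ as a left $R$-module, say $P\to eA$, and set $T := Ae\otimes_R P$, a model for $Ae\lot_R eA$. Matching this against the explicit Drinfeld-type model, I would write down an $A$-bimodule map $\mu_T : T \to A$ lifting the multiplication $\mu$, and check that the mapping cone $\mathrm{cone}(\mu_T)$ is isomorphic — as an $A$-bimodule, compatibly with the differentials — to the model $B$ of Proposition~\ref{drinfeld}, with the structure map $A\to \mathrm{cone}(\mu_T)$ corresponding to $l$. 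This is essentially a bookkeeping identification: the generator $h$ and its powers correspond to the bar-resolution cells, the internal differential $d_B$ restricted to words of length $\geq 2$ is the bar differential on $T$, and the "boundary" piece of $d_B$ that sends a length-one word $ahb$ to $aeb$ is exactly the map $\mu_T$ being coned off. Once $B \simeq \mathrm{cone}(\mu_T)$ is established, the defining triangle of a mapping cone gives $T \xrightarrow{\mu_T} A \xrightarrow{l} B \to T[1]$, which upon passing to the derived category and identifying $T$ with $Ae\lot_R eA$ is the asserted exact triangle $Ae\lot_R eA \xrightarrow{\mu} A \xrightarrow{l} B\to{}$.

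Alternatively — and this may be cleaner to write — I would avoid the explicit model and instead argue via the recollement/colocalisation machinery already recalled: by definition $B = \mathbb{L}_{1-e}A$, and the colocalisation fits $\mathbb{L}^{1-e}(A) \to A \to \mathbb{L}_{1-e}(A)$ into an exact triangle of $A$-modules (indeed of bimodules, since all the functors are given by tensoring with bimodules). It then remains to identify the colocalisation term $\mathbb{L}^{1-e}(A) = \mathrm{cocone}(A\to \dloc)$ with $Ae\lot_R eA$, shifted appropriately, i.e.\ to show $\mathrm{cone}(Ae\lot_R eA \xrightarrow{\mu} A) \simeq \dq$. For this one checks that $Ae\lot_R eA$ is $(1-e)$-torsion (it is annihilated by the idempotent $1-e$ on cohomology, since $e\cdot(Ae\lot_R eA)\cdot e$ already computes it) and that $\mathrm{cone}(\mu)$ is $(1-e)$-local, then uses the universal property of the colocalisation triangle.

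The main obstacle, in either approach, is the identification step — matching the explicit combinatorial model of $\dq$ (the alternating words in $h$) with the bar complex computing $Ae\lot_R eA$, and verifying that the differentials correspond under this matching, including that the relevant signs from the Koszul rule work out and that the map being coned off is genuinely $\mu$ and not some twist of it. Everything else (existence of the triangle, bimodule structure, derived-category identification) is formal once that compatibility is pinned down. I would expect the cleanest write-up to combine the two: use the Drinfeld model to guess the bimodule isomorphism, then verify it is a chain map by a direct but short computation on generators.
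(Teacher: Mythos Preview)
Your primary approach---identify the negative part of the Drinfeld model with a bar complex and recognise $B$ as the cone on $\mu$---is exactly the paper's proof. One correction: the graded pieces are $B^{-n}\cong Ae\otimes_k R^{\otimes_k(n-1)}\otimes_k eA$, with tensors over $k$, not over $R$. The relations $he=eh=h$ only force the interior letters into $eAe$; they do \emph{not} impose $R$-bilinearity between adjacent slots, so your displayed formula with $\otimes_R$ collapses incorrectly. What one actually obtains is the \emph{relative} bar complex computing $\tor^{R/k}(Ae,eA)$, and the paper then uses that $k$ is a field to identify relative Tor with absolute Tor, giving $Ae\lot_R eA$. Your ``more cleanly'' reformulation via a cofibrant resolution $P\to eA$ is fine and sidesteps this, but you should be aware that the explicit model you wrote down is the relative one.

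Your alternative via the colocalisation triangle is a genuinely different route and does work, provided you argue directly from the general framework of \S\ref{derloc} rather than from the specific recollement of Theorem~\ref{recoll}, which in the paper is \emph{deduced from} this proposition. Concretely: $Ae\lot_R eA$ is $(1-e)$-torsion because $\R\hom_A(Ae\lot_R eA,Y)\simeq \R\hom_R(Ae,Ye)$ vanishes whenever $Y$ is $(1-e)$-local, and $\mathrm{cone}(\mu)$ is $(1-e)$-local because $\mathrm{cone}(\mu)\cdot e\simeq \mathrm{cone}(Ae\xrightarrow{\id}Ae)\simeq 0$; uniqueness of the torsion--local decomposition then forces $\mathrm{cocone}(l)\simeq Ae\lot_R eA$. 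This is cleaner conceptually but gives less: the paper's explicit identification is reused later (e.g.\ in the proof of Theorem~\ref{qisolem}) where one needs to know that $\dq$ acts on the model by concatenation.
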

\begin{proof}We use the description of \ref{drinfeld}. We check that as an $A$-bimodule, $B$ is quasi-isomorphic to $\mathrm{cone}(\mu)$. Since the map from $A$ to the cone is the identity in degree zero, this will be enough. Observe that a typical element of $B^{-n}$ looks like a path $x_0hx_1h\cdots hx_n$ where $x_0=x_0e$, $x_n=ex_n$, and $x_j=ex_je$ for $0<j<n$. In other words, $B^{-n} \cong Ae \otimes R\otimes\cdots R \otimes eA$, where the tensor products are taken over $k$ and there are $n$ of them. It is clear that this isomorphism is $A$-bilinear. It is not hard to see that the differential is the Hochschild differential: $x_0hx_1h\cdots hx_n$ gets sent to the sum $\sum_{i=0}^{n-1} (-1)^i x_0h\cdots hx_ix_{i+1}h\cdots hx_n$. Consider the truncation $$\tau_{\leq-1}B \simeq \cdots \to Ae\otimes R\otimes R\otimes eA\to Ae\otimes R\otimes eA\to Ae\otimes eA$$From the above, we see that this truncation is exactly the complex $T$ that computes the relative Tor groups $\tor^{R/k}(Ae,eA)$. Since $k$ is a field, the relative Tor groups are the same as the absolute Tor groups, and hence $T$ is a model for the derived tensor product bimodule $Ae\lot_{R}eA$. So we have $$B \simeq \mathrm{cone}(\tau_{\leq-1}B \to A) \simeq \mathrm{cone}(T \to A)\simeq \mathrm{cone}(Ae\lot_{R}eA \to A)$$and one checks that the map $\tau_{\leq-1}B \to A$ is actually the multiplication map $\mu$.
\end{proof}
\begin{rmk}
The exact triangle $Ae \lot_{R} eA \to A \to B \to $ already appears in \cite[\S 7]{kalckyang2}.
\end{rmk}
The following is immediately obtained by considering the long exact sequence associated to the exact triangle $Ae \lot_{R} eA \xrightarrow{\mu} A \xrightarrow{l} B \to $.
\begin{cor}\label{derquotcohom}
Let $A$ be an algebra over a field $k$, and let $e\in A$ be an idempotent.
Then the derived quotient $\dq$ is a nonpositive dga with cohomology spaces 
$$H^j(\dq)\cong\begin{cases}
0 & j>0
\\ A/AeA & j=0
\\ \ker(Ae\otimes_{R}eA \to A) & j=-1
\\ \tor^{R}_{-j-1}(Ae,eA) & j<-1
\end{cases}$$
\end{cor}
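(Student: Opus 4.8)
The plan is to extract the cohomology directly from the exact triangle of $A$-bimodules
$$Ae \lot_{R} eA \xrightarrow{\mu} A \xrightarrow{l} B \to$$
of \ref{dqexact} by passing to its long exact sequence in cohomology. First I would record that the nonpositivity of $\dq$ as a dga is immediate from the explicit model of \ref{drinfeld}, in which the only new generator $h$ sits in degree $-1$. Next I would note that the middle term $A$, being an ungraded algebra, is concentrated in degree $0$, while $Ae \lot_{R} eA$ is concentrated in nonpositive degrees with $H^{-n}(Ae \lot_{R} eA) \cong \tor^{R}_n(Ae, eA)$ for $n \geq 0$; this is exactly the identification made in the proof of \ref{dqexact}, using that relative and absolute Tor coincide over a field. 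In particular $H^0(Ae \lot_{R} eA) \cong Ae \otimes_{R} eA$ and $H^j(Ae \lot_{R} eA) = 0$ for $j > 0$.

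Then I would chase the long exact sequence
$$\cdots \to H^j(Ae \lot_{R} eA) \to H^j(A) \to H^j(B) \to H^{j+1}(Ae \lot_{R} eA) \to H^{j+1}(A) \to \cdots$$
degree by degree. For $j \geq 1$ the two groups on either side of $H^j(B)$ vanish, so $H^j(B) = 0$. For $j = 0$ the sequence reduces to $Ae \otimes_{R} eA \xrightarrow{\mu} A \to H^0(B) \to 0$; since $\mu$ factors as $Ae \lot_{R} eA \to Ae \otimes_{R} eA \to A$ and the image of the second map is the two-sided ideal $AeA$, we get $H^0(B) \cong A/AeA$. For $j = -1$ we have $0 \to H^{-1}(B) \to Ae \otimes_{R} eA \xrightarrow{\mu} A$, so $H^{-1}(B) \cong \ker(Ae \otimes_{R} eA \to A)$. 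Finally for $j \leq -2$ both $H^j(A)$ and $H^{j+1}(A)$ vanish, so the connecting homomorphism is an isomorphism $H^j(B) \xrightarrow{\cong} H^{j+1}(Ae \lot_{R} eA) \cong \tor^{R}_{-j-1}(Ae, eA)$.

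There is essentially no obstacle: the result is forced by \ref{dqexact} together with exactness. The only points needing a moment's attention are the indexing dictionary between the homological grading of Tor and the cohomological grading of the complex (so that a shift by one appears in the $j \leq -2$ case), and the elementary observation that the image of the multiplication map $Ae \otimes_{R} eA \to A$ is precisely $AeA$.
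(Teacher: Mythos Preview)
Your proposal is correct and follows exactly the same approach as the paper, which simply says the corollary is ``immediately obtained by considering the long exact sequence associated to the exact triangle'' of \ref{dqexact}. You have spelled out this long exact sequence chase in full detail, including the indexing and the identification of the image of $\mu$ with $AeA$; nothing more is required.
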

\begin{rmk}
The ideal $AeA$ is said to be \textbf{stratifying} if the map $Ae \lot_{R} eA \to AeA$ is a quasi-isomorphism. It is easy to see that $AeA$ is stratifying if and only if $H^0:\dq \to A/AeA$ is a quasi-isomorphism.
\end{rmk}
\subsection{Derived quotients of path algebras}
When $A$ is the path algebra of a quiver with relations, and $e$ is the idempotent corresponding to a set of vertices, then one can interpret the cohomology groups $H^j(\dq)$ in terms of the geometry of the quiver, at least for small $j$. We think of the modules $H^j(\dq)$ as being a (co)homology theory (with coefficients in $k$) for quivers with relations and specified vertices. We will be particularly interested in $H^{-1}(\dq)$, since the description of \ref{derquotcohom} is not particularly explicit.
\begin{defn}Let $(Q,I,V)$ be a triple consisting of a finite quiver $Q$ with a finite set of relations $I$ and a collection of vertices $V$. Let $A=kQ/(I)$ be the path algebra over $k$, and let $e$ be the idempotent of $A$ corresponding to $V$. Write $H_j(Q,I,V;k):=H^{-j}(\dq)$. Note the reindexing, so that $H_*(Q,I,V;k)$ is concentrated in nonnegative degrees.
\end{defn}
Note that we really consider $I$ and the ideal $(I)$ to be different; in particular $(I)$ is usually not finite. By \ref{cohomsupport}, each $H_j(Q,I,V;k)$ is a module over the $k$-algebra $H_0(Q,I,V;k)\cong A/AeA$. It is clear that $H_0(Q,I,V;k)\cong A/AeA$ is the algebra on those paths in the quiver that do not pass through $V$. Dually, $R=eAe$ is the algebra on those paths starting and ending in $V$. To analyse $H_1(Q,I,V;k)$ we need to introduce some new terminology.
\begin{defn}
A \textbf{marked relation} $m$ for the triple $(Q,I,V)$ is a formal sum $m=\sum_iu_i\vert v_i$ with each $u_i \in Ae$ and $v_i \in eA$, such that the composition $\sum_iu_iv_i$ is a relation from $I$. We think of the vertical bar as the `marking'.
\end{defn}
\begin{prop}The module $H_1(Q,I,V;k)$ is spanned over $A/AeA$ by the marked relations.
\end{prop}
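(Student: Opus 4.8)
The plan is to combine the explicit cohomology computation of Corollary \ref{derquotcohom} with a reduction to the free path algebra. By that corollary, $H_1(Q,I,V;k)=H^{-1}(\dq)$ is canonically $\ker\bigl(\mu\colon Ae\otimes_R eA\to A\bigr)$, where $R=eAe$ and $\mu$ is induced by multiplication. Writing $F:=kQ$ and $J:=(I)\triangleleft F$, so that $A=F/J$, and keeping the name $e$ for the idempotent of $F$ cut out by $V$, I would first record the elementary identities $Ae=Fe/Je$, $eA=eF/eJ$, $R=eFe/eJe$, and prove the one nontrivial point: the multiplication map $\mu_F\colon Fe\otimes_{eFe}eF\to FeF$ is an isomorphism. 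This holds because every path passing through $V$ has a canonical decomposition at its first visit to $V$, and the relation defining $\otimes_{eFe}$ identifies all of its decompositions with that canonical one; thus the canonical splittings form a $k$-basis of $Fe\otimes_{eFe}eF$ mapping bijectively onto the path basis of $FeF$.

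Next, since $\otimes$ is right exact and the $eFe$-action on $Ae$ and on $eA$ factors through $R=eAe$, one gets $Ae\otimes_R eA=Ae\otimes_{eFe}eA$, which is the quotient of $Fe\otimes_{eFe}eF\cong FeF$ by the images of $Je\otimes eF$ and $Fe\otimes eJ$, i.e.\ by $JeF+FeJ$. Under this identification $\mu$ is induced by $FeF\hookrightarrow F\twoheadrightarrow A$, so its image is $AeA$ and
\[
H_1(Q,I,V;k)\;\cong\;(J\cap FeF)/(JeF+FeJ),
\]
an $A$-bimodule which descends to an $A/AeA=F/(J+FeF)$-bimodule because $AeA\cdot(J\cap FeF)\subseteq FeJ$ and symmetrically on the right. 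A marked relation $m=\sum_i u_i\mid v_i$ then maps to the class of the relation $r\in I\cap FeF$ it refines: lifting $u_i\in Ae$, $v_i\in eA$ to $\tilde u_i\in Fe$, $\tilde v_i\in eF$ with $\sum_i\tilde u_i\tilde v_i=r$ (possible as $r$ is a combination of paths through $V$, each of which splits canonically), one obtains the class $[r]$; injectivity of $\mu_F$ shows this is independent of the marking, and of the chosen lifts modulo $JeF+FeJ$.

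For the spanning statement I would take $z\in J\cap FeF$, write $z=\sum_k a_k r_k b_k$ with $r_k\in I$ homogeneous and $a_k,b_k$ paths of $F$, and reduce modulo $JeF+FeJ$: if some $a_k$ meets $V$, factor it at such a visit to land in $FeJ$, symmetrically for $b_k$ and $JeF$; and the terms with $r_k\in FeF$ are already $A/AeA$-multiples of marked relations, so may be discarded. This leaves $z=\sum_k a_k r_k b_k$ with every $a_k,b_k$ avoiding $V$ and every $r_k\notin FeF$. Splitting $r_k=r_k^{(\mathrm{in})}+r_k^{(\mathrm{out})}$ into its through-$V$ and $V$-avoiding parts, and using that all paths in $r_k$ share source and target (which, since $r_k\notin FeF$, lie outside $V$), one checks that $a_k r_k^{(\mathrm{out})}b_k$ is exactly the $V$-avoiding component of $a_k r_k b_k$; since $z\in FeF$ these components cancel, $\sum_k a_k r_k^{(\mathrm{out})}b_k=0$ in $F$, and hence $z=\sum_k a_k r_k^{(\mathrm{in})}b_k$.

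The main obstacle is to finish here: the $r_k^{(\mathrm{in})}$ are not themselves relations, so one must still show that $\sum_k a_k r_k^{(\mathrm{in})}b_k$ lies in $F\cdot(I\cap FeF)\cdot F+JeF+FeJ$ --- indeed I expect it to vanish in $H_1$ in this residual case. The point will be that the identity $\sum_k a_k r_k^{(\mathrm{out})}b_k=0$ is a relation in the free path algebra $\bar F:=F/FeF\cong k(Q\setminus V)$ among the images of the $r_k$; unwinding such syzygies of $\bar J:=(J+FeF)/FeF$ --- and peeling off, by induction on the number of distinct relations involved, the pieces that actually pass through $V$ --- should show that the corresponding combination of the through-$V$ parts is absorbed by $JeF+FeJ$. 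I would expect this last combinatorial step, controlling syzygies over the path algebra $\bar F$, to be the technical heart of the argument.
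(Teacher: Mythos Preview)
Your reformulation via the free path algebra $F=kQ$ is correct and arguably cleaner than the paper's approach: the isomorphism $H_1\cong (J\cap FeF)/(JeF+FeJ)$ holds for the reasons you give, and the reductions (pushing terms with $a_k$ or $b_k$ meeting $V$ into $FeJ+JeF$, and discarding terms with $r_k\in I\cap FeF$ as already in the span) are fine. The paper instead works directly in the bar complex $Ae\otimes_k R\otimes_k eA\to Ae\otimes_k eA\to A$; it shows that cocycles whose lift has $\mu(\tilde x)=0$ in $F$ are coboundaries, and then simply \emph{asserts} that the remaining cocycles satisfy ``$x=\sum_i a_i' m_i b_i'$, where each $m_i$ is a marked relation''. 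That unproved assertion is exactly the point you flag as the ``main obstacle'', so both arguments stall at the same step.

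Unfortunately the syzygy strategy you sketch cannot close the gap in general, because the residual term need not lie in $JeF+FeJ$ (nor in the span of marked relations). Take $Q$ with vertices $1,2$, arrows $q\colon 1\to 1$, $p\colon 1\to 2$, $s\colon 2\to 1$, the single relation $r=q^2-ps$, and $V=\{2\}$. Since the monomial $q^2$ never meets $V$, one has $I\cap FeF=\varnothing$, so there are no marked relations in your sense. Yet $z=qr-rq=psq-qps$ lies in $(J\cap FeF)\cap e_1Fe_1$ in degree $3$, while $(JeF+FeJ)$ in degree $3$ is spanned by $rp=q^2p-psp\in e_1Fe_2$ and $sr=sq^2-sps\in e_2Fe_1$, neither of which meets $e_1Fe_1$. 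Hence $[z]\neq 0$ in $H_1$, and your residual case is genuinely nonzero here --- the ``peeling off syzygies in $\bar F$'' idea has nothing to peel, since only one relation is involved. The same example breaks the paper's bare assertion. Either an extra hypothesis is tacitly in force (for instance that every relation has all its monomials meeting $V$, which does hold in the paper's own worked example), or ``marked relation'' must be read more liberally than ``a splitting of some $r\in I\cap FeF$''; in either case the step you isolated cannot be completed by the combinatorial argument you outline.
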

\begin{proof}
We know that $H_1(Q,I,V;k)$ is the middle cohomology of the complex $$Ae\otimes_k R \otimes_k eA \xrightarrow{d} Ae \otimes_k eA \xrightarrow{\mu} A$$where $d$ is the Hochschild differential and $\mu$ is the multiplication. If we write a vertical bar instead of the tensor product symbol, it is immediate that each $A$-bilinear combination of marked relations is a $(-1)$-cocycle in this complex. The $(-1)$-cocycles are all of two forms: firstly, those $x$ such that $\mu (x)$ is zero in $kQ$, and secondly, those $x$ such that $\mu (x)\in (I)$. If $\mu (x)=0$ in $kQ$, then $x$ must just be of the form $x=\sum_i(p^i_1\vert p^i_2 p^i_3 - p^i_1p^i_2 \vert p^i_3)$ and it is easy to see that $x$ must be a coboundary, since $d(p\vert q \vert r)=pq\vert r -p\vert qr$. So $H_1(Q,I,V;k)$ is spanned by those $x$ such that $\mu (x)\in (I)$. But this means that $\mu(x)=\sum_i a_i r_i b_i$ where each $r_i$ is a relation, and each $a_i,b_i$ is in $A$. But then we see that $x=\sum_i a_i' m_i b_i'$, where each $m_i$ is a marked relation. So $H_1(Q,I,V;k)$ is spanned over $A$ by the marked relations. Pick a 1-cocycle $amb$, where $a,b \in A$ and $m$ is a marked relation. If either $a$ or $b$ are in $AeA$, then $amb$ is in fact a coboundary: for example if $a=uev$ then $amb=d(u\vert vmb)$. In other words, $H_1(Q,I,V;k)$ is actually spanned over $A/AeA$ by the marked relations. 
\end{proof}
\begin{cor}\label{h1finite}
Let $Q$ be a finite quiver and $I$ a finite set of relations in $kQ$. Set $A:=kQ/(I)$. Pick a set of vertices $V \subseteq Q$ and let $e\in A$ be the corresponding idempotent. Then if $A/AeA$ is finite-dimensional, so is $H_1(Q,I,V;k)$.
\end{cor}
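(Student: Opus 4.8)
The plan is to refine the preceding Proposition: rather than merely knowing that $H_1(Q,I,V;k)$ is spanned over $A/AeA$ by the marked relations, I want to show that it is spanned over $A/AeA$ by \emph{finitely many} marked relations. Since $A/AeA$ is assumed finite-dimensional, this bounds $\dim_k H_1(Q,I,V;k)$ and proves the corollary.

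The key observation is that every marked relation $m=\sum_i u_i\vert v_i$ carries with it its underlying relation $r:=\sum_i u_iv_i$, which by definition is an element of the \emph{finite} set $I$. So it suffices to prove that, for each fixed $r\in I$, any two marked relations with underlying relation $r$ determine the same class in $H_1(Q,I,V;k)$. Granting this, I may choose one marked relation $m_r$ for each $r\in I$ for which a marked relation exists, and conclude that the finitely many classes $[m_r]$ span $H_1(Q,I,V;k)$ over $A/AeA$.

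To prove that two marked relations $m,m'$ with common underlying relation $r$ are cohomologous, I would recall from the proof of the preceding Proposition that $H_1(Q,I,V;k)$ is the middle cohomology of $Ae\otimes_k R\otimes_k eA\xrightarrow{d}Ae\otimes_k eA\xrightarrow{\mu}A$, and note that the difference $m-m'$ satisfies $\mu(m-m')=r-r=0$ \emph{already in $kQ$}, not merely in $A$. This is precisely the ``first form'' of cocycle analysed there: any $x\in Ae\otimes_k eA$ with $\mu(x)=0$ in $kQ$ is a sum of terms $p_1\vert p_2p_3-p_1p_2\vert p_3=-d(p_1\vert p_2\vert p_3)$, hence a coboundary. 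Therefore $[m]=[m']$ in $H_1(Q,I,V;k)$, as required.

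The corollary then follows: by \ref{cohomsupport}, $H_1(Q,I,V;k)$ is a module over $H_0(Q,I,V;k)\cong A/AeA$, and by the refinement above it is generated over this ring by the finite set $\{[m_r]\}_{r\in I}$, so $\dim_k H_1(Q,I,V;k)\le |I|\cdot(\dim_k A/AeA)^2<\infty$. There is no genuinely hard step here; the only point requiring care is the bookkeeping between $kQ$ and its quotient $A$ — specifically the fact that ``same underlying relation'' forces the difference of two markings to have $\mu$-image vanishing \emph{in $kQ$}, which is exactly the setting in which the explicit contracting homotopy from the previous proof applies.
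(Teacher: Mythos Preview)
Your argument is correct, but it takes a different route from the paper's. The paper simply counts: each relation in the finite set $I$ is a finite $k$-linear combination of paths, each of finite length, so there are only finitely many ways to insert a marking into each relation; hence the total set of marked relations is finite, and the previous Proposition finishes the job. You instead prove the sharper statement that any two markings of a fixed $r\in I$ are cohomologous, by reusing the observation from the previous proof that a cocycle whose $\mu$-image vanishes already in $kQ$ is a coboundary. This yields the bound $|I|\cdot(\dim_k A/AeA)^2$, which is at least as good as the paper's bound $\ell\cdot(\dim_k A/AeA)^2$ with $\ell=\sum_i\ell_i\ge |I|$. The paper's approach is more elementary (pure counting, no further appeal to the coboundary lemma), while yours is slightly more conceptual and gives a tighter estimate; indeed the paper notes the homotopies $\vert uv\simeq u\vert v\simeq uv\vert$ only as a way to refine its count after the fact, whereas you build the whole argument on this identification.
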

\begin{proof}
There are a finite number of relations and hence a finite number of marked relations: since each relation is of finite length, there are only finitely many ways to mark it. This shows that $H_1(Q,I,V;k)$ is finite over the finite-dimensional algebra $A/AeA$, and hence finite-dimensional. Note that we can get an explicit upper bound for the dimension: write relation $i$ as a sum of monomials $q_i^j$, each of length $\ell_i^j$. It is easy to see that there are homotopies $\vert uv \simeq u\vert v \simeq uv \vert$ in $(\dq)^{-1}$, and so each monomial $q_i^j$ has at most $\max(1,\ell_i^j -1)$ markings that are not homotopic. Put $\ell_i:=\prod_j\max(1,\ell_i^j -1)$; then relation $i$ has at most $\ell_i$ markings, because we can mark each monomial individually. Put $\ell:=\sum_i \ell _i$, so that there are at most $\ell$ marked relations spanning $H_1(Q,I,V;k)$. Hence if $A/AeA$ has dimension $d$, an upper bound for the dimension of $H_1(Q,I,V;k)$ is $d^2\ell$.
\end{proof}

\begin{rmk}
One can use similar ideas to show that $H_2(Q,I,V;k)$ is spanned by cocycles of the form $u\vert v \vert w$, where $uv=v$ and $vw=w$. For if $\sum^n_iu_iv_i\vert w_i=\sum^n_iu_i\vert v_iw_i$, and all of the $u_i,v_i,w_i$ are monomials, then there exists some permutation $\sigma$ such that $u_iv_i=u_{\sigma i}$ and $w_i=v_{\sigma i}w_{\sigma i}$. Restricting to the orbits of $\sigma$, we may assume that $\sigma$ is a cycle, and write $u_iv_i=u_{i+1}$ and $w_{i-1}=v_iw_i$, where the subscripts are taken modulo $ n$. One then shows by induction on $r$ that $\sum_i^ru_i\vert v_i \vert w_i$ is homotopic to $u_1\vert v_1v_2\cdots v_{r-1}v_r\vert w_r$, and the claim follows upon taking $u=u_1, v=v_1\cdots v_n$, and $w=w_n$.
\end{rmk}
\begin{ex}\label{quiv1} Let $Q$ be the quiver $$
    \begin{tikzcd}
e_1 \arrow[rr,bend left=20,"x"]  && e_2 \arrow[ll,bend left=20,"w"']\ar[ld,"y"] \\ & e_3 \ar[lu,"z"]&
\end{tikzcd}$$ with relations $w=yz$ and $xyz=yzx=zxy=0$. Let $e=e_1+e_2$, so that the corresponding set of vertices $V$ is $\{e_1,e_2\}$. It is not hard to compute that $\mathrm{dim}_k(R)=4$, $\mathrm{dim}_k(A)=9$, and $\mathrm{dim}_k(A/AeA)=1$. Moreover, $R$ is not a left or a right ideal of $A$, since $x \in R$ but neither $xy$ nor $zx$ are in $R$. We remark that $R$ need not be the path algebra of the `full subquiver' $Q_V$ on $V$, since relations outside of $V$ can influence $R$: observe that $xw$ is zero in $R$, but nonzero in $kQ_V$. We compute $\ell=7$, and hence our bound for the dimension $n$ of $H_1(Q,I,V;k)$ is $7$. One can check that there are at most $5$ (homotopy classes of) nontrivial marked relations, namely $\vert w -y\vert z$, $x\vert yz$, $yz\vert x$, $z\vert xy$, and $zx\vert y$. So a better estimate for $n$ is $5$. But this is still too large, as $x\vert yz$ and $yz\vert x$ are both coboundaries, and $z\vert xy \simeq zx \vert y$. So $n$ is at most 2. We see that $w -yz$ and $zxy$ are relations in $I$ that cannot be `seen' from $V$: they start and finish outside of $V$, but pass through $V$ (where we can mark them), and moreover do not contain any `subrelations' lying entirely in $V$. In fact, one can check using \ref{derquotcohom} that $n$ is precisely 2, and $H_1(Q,I,V;k)$ has basis $\{\vert w -y\vert z, z\vert xy\}$.
\end{ex}

\subsection{Recollements}
Loosely speaking, a recollement (see \cite{bbd} or \cite{jorgensen} for a definition) between three triangulated categories $(T',T,T'')$ is a collection of functors describing how to glue $T$ from a subcategory $T'$ and a quotient category $T''$. One can think of a recollement as a strong sort of short exact sequence $T' \to T \to T''$.
\begin{thm}[c.f. \cite{kalckyang}, Proposition 2.10 and \cite{bcl}, Remark 9.5]\label{recoll}
Let $A$ be an algebra over $k$, and let $e\in A$ be an idempotent. Write $B:=\dq$ and $R:=eAe$, and put \begin{align*}
i^*:=-\lot_A B, &\quad j_!:= -\lot_{R} eA
\\ i_*=\R\hom_{B}(B,-), &\quad j^!:=\R\hom_A(eA,-)
\\ i_!:=\lot_{B}B, & \quad j^*:=-\lot_A Ae
\\ i^! :=\R\hom_{A}(B,-), & \quad j_*:=\R\hom_{R}(Ae,-)
\end{align*}
Then the diagram of unbounded derived categories
$$\begin{tikzcd}[column sep=huge]
D(B) \ar[r,"i_*=i_!"]& D(A)\ar[l,bend left=25,"i^!"']\ar[l,bend right=25,"i^*"']\ar[r,"j^!=j^*"] & D(R)\ar[l,bend left=25,"j_*"']\ar[l,bend right=25,"j_!"']
\end{tikzcd}$$
is a recollement diagram.
\end{thm}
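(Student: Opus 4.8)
The plan is to verify the defining axioms of a recollement for the explicit six functors given, and the key observation is that all of the required adjunctions and compatibilities are either standard properties of the derived localisation functor $i^*=-\lot_A B$ and its neighbours, or else follow from the recollement for the localisation-colocalisation pair already recorded in \ref{bclrcl}. First I would reduce to an identification: by \ref{dqexact} we have $B = \dq = \mathbb{L}_{1-e}A$, and the module-level statement of \cite{bcl} (quoted before \ref{colocdga}) gives that restriction of scalars along $A \to B$ is fully faithful, with essential image $D(A)_{(1-e)\text{-loc}}$. Similarly the functor $j^* = -\lot_A Ae$ realises $D(R)$, and the key point is that an $A$-module is $(1-e)$-torsion precisely when it is supported on $R$-side, i.e. the colocalisation subcategory $D(A)_{(1-e)\text{-tor}}$ is equivalent to $D(R)$ via restriction along $A \to R$ — this is the content of identifying the recollement of \ref{bclrcl} with the asserted one after the dictionary $\mathbb{L}^{1-e}(A) \simeq B$ and $\mathbb{L}_{1-e}(A) \simeq$ (the localisation realising $D(R)$). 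Actually here one must be slightly careful: in \ref{bclrcl} the \emph{local} category is $D(\dloc) = D(B)$ and the \emph{torsion} category is equivalent to $D(\mathbb{L}^S(A))$; the claim is that for $S = 1-e$ the latter is $D(R)$, since $eAe$-modules are exactly the ones built from the single object $\mathrm{cone}(A\xrightarrow{1-e}A) \simeq Ae[0]$ in the appropriate sense. I would spell this identification out as the first substantive step.

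Second, with the two outer categories correctly identified, I would check that the six functors as written are the standard recollement functors attached to that data. The pair $(i^* , i_* = i_!)$: $i^* = -\lot_A B$ is left adjoint to restriction $i_* = \R\hom_B(B,-)$, which is the usual extension/restriction adjunction along $A \to B$; that $i_*$ is also a left adjoint (hence $i_* = i_!$) is exactly the statement that $B$ is a compact, in fact a \emph{smashing} localisation — this is \cite{bcl} 4.14/4.15 together with finiteness of $S = \{1-e\}$. The further right adjoint $i^! = \R\hom_A(B,-)$ exists by compactness of $B$ as an $A$-module (which holds here since $1-e$ is a single element and $\mathbb{L}^S(A)$ is perfect, cf.\ the discussion before \ref{ntty}). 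On the quotient side, $j^! = j^* = -\lot_A Ae = \R\hom_A(eA,-)$: the equality of these two functors is the statement that $Ae$ is a projective $A$-module so that $-\lot_A Ae = -\otimes_A Ae$ and $\R\hom_A(eA,-) = \hom_A(eA,-) = -\cdot e$ agree (both compute "multiply by $e$"); and $j_! = -\lot_R eA$, $j_* = \R\hom_R(Ae,-)$ are its left and right adjoints by the tensor-hom adjunction over $R = eAe$. These are all routine once the framework is set up, so I would state them compactly rather than belabour each adjunction.

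Third, I would verify the three recollement axioms: (a) $i_*$ is fully faithful — immediate from $B$ being a localisation (restriction along $A \to B$ is fully faithful); equivalently $j^* i_* = 0$, i.e. $B \lot_A Ae$ is acyclic, which follows since $Be = 0$ in $B$ because $e$ is homotopy-annihilated. (b) $j_!$ and $j_*$ are fully faithful, equivalently $j^* j_! \cong \id \cong j^! j_*$ — this is just $eA \otimes_R Ae$-bookkeeping, namely $eAe = R$, so $eA \lot_R Ae$ has $e(-)e = R$; more precisely $j^* j_! = -\lot_R eAe = -\lot_R R = \id$, and dually for $j^! j_*$. (c) the gluing/exactness condition: for each $X \in D(A)$ the canonical triangles $i_! i^! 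X \to X \to j_* j^* X$ and $j_! j^* X \to X \to i_* i^* X$ are exact triangles. The second of these is, up to the identifications above, precisely the defining exact triangle of the derived localisation applied to the module $X$ (localisation is smashing, \cite{bcl} 4.14), namely $\mathbb{L}^{1-e}(X) \to X \to X\lot_A B$; the first is the analogous colocalisation triangle. So (c) reduces to the recollement \ref{bclrcl} of \cite{bcl} 7.9, which is exactly this. I expect the main obstacle to be bookkeeping step one — pinning down the precise equivalence between the abstractly-defined torsion category $D(\mathbb{L}^{1-e}(A))$ appearing in \ref{bclrcl} and $D(R)$, and matching Kalck-Yang's functors $j_!, j_*$ with the colocalisation side of \cite{bcl}'s recollement — since after that identification every axiom is either an adjunction formality or a direct citation of \ref{bclrcl}. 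This is why the theorem is stated as "c.f." both sources: the content is in reconciling the two presentations rather than in any new computation.
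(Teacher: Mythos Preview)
Your approach is essentially correct but different from the paper's. The paper gives a \emph{direct} proof: it records that $(i^*,i_*,i^!)$ and $(j_!,j^*,j_*)$ are adjoint triples by tensor--hom, notes full faithfulness of $i_*$ is clear and cites \cite[2.10]{kalckyang} for that of $j_!,j_*$, checks $j^*i_*=0$ because $B\cdot e$ is acyclic, and then obtains the two gluing triangles by applying $\R\hom_A(-,X)$ and $X\lot_A-$ to the bimodule exact triangle $Ae\lot_R eA \to A \to B$ of \ref{dqexact}. You instead route through the abstract localisation/colocalisation recollement of \ref{bclrcl}, identifying its torsion side with $D(R)$ via $\mathbb{L}^{1-e}(A)\simeq\R\enn_A(\mathrm{cone}(1-e))$ being Morita equivalent to $R$, and then matching the six functors. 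Both work; the paper's argument is shorter because the bimodule triangle \ref{dqexact} gives the gluing triangles in one line with no bookkeeping, whereas your reduction requires tracking the Morita equivalence through all six functors (which you correctly flag as the main obstacle). In fact the paper proves the identification $D(R)\simeq D(A)_{(1-e)\text{-tor}}$ as a \emph{consequence} of this theorem (\ref{Rcoloc}), rather than as an input.

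Two small corrections. First, your justification for the existence of $i^!$ is wrong: $B=\dq$ is typically \emph{not} compact as an $A$-module (the paper says so explicitly just after \ref{Rcoloc}), and compactness is irrelevant here --- $i^!=\R\hom_A(B,-)$ is the right adjoint to restriction $i_!=-\lot_B B$ purely by tensor--hom. Second, the equality $i_*=i_!$ has nothing to do with smashing: it is the tautology that restriction of scalars along $A\to B$ is simultaneously $\R\hom_B(B,-)$ and $-\lot_B B$. Smashing is what makes $i^*$ agree with localisation of modules, which you use elsewhere, but not what makes $i_*=i_!$.
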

\begin{proof}We give a rather direct proof. It is clear that $(i^*,i_*=i_!,i^!)$ and $(j_!, j^!=j^*,j_*)$ are adjoint triples, and that $i_*=i_!$ is fully faithful. Fullness and faithfulness of $j_!$ and $j_*$ follow from \cite[2.10]{kalckyang}. The composition $j^*i_*$ is tensoring by the $B-R$-bimodule $B.e$, which is acyclic since $B$ is $e$-killing in the sense of \cite[\S9]{bcl}. The only thing left to show is the existence of the two required classes of distinguished triangles. Put $T:=Ae\lot_{R}eA$. First observe that 
\begin{align*}& i_!i^!\cong \R\hom_A(B,-)
\\ & j_*j^* \cong \R\hom_{R}(Ae,\R\hom_A(eA,-))\cong \R\hom_A(T,-)
\\ & j_!j^! \cong -\lot_A T
\\ &  i_*i^* \cong -\lot_A B\end{align*}
Now, recall from \ref{dqexact} the existence of the distinguished triangle of $A$-bimodules $$T \xrightarrow{\mu} A \xrightarrow{l} B \to$$
Taking any $X$ in $D(A)$ and applying $\R\hom_A(-,X)$ to this triangle, we obtain a distinguished triangle of the form $i_!i^!X \to X \to j_*j^* X \to$. Similarly, applying $X\lot_A-$, we obtain a distinguished triangle of the form $j_!j^!X \to X \to i_*i^* X \to$.
\end{proof}
\begin{rmk}This recollement is given in \cite[9.5]{bcl}, although they are not explicit with their functors. This generalises a recollement constructed by Cline, Parshall, and Scott \cite{cps}. If the ideal $AeA$ is stratifying, then the recollement above reduces to exactly that of \cite{cps}.
\end{rmk}
\begin{prop}\label{Rcoloc}
In the above setup, $D(R)$ is equivalent to the derived category of $(1-e)$-torsion modules.
\end{prop}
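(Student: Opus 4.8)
The plan is to use the recollement of \ref{recoll} to exhibit $D(R)$ as the essential image of the fully faithful functor $j_!=-\lot_{R}eA\colon D(R)\to D(A)$, and then to identify that image with the subcategory $D(A)_{(1-e)\text{-tor}}$ of $(1-e)$-torsion modules. Write $B:=\dq$ and $i^*:=-\lot_A B$ as in \ref{recoll}. The claim will follow once we show that the essential image of $j_!$ equals $\ker(i^*)=\{X\in D(A): X\lot_A B\simeq 0\}$ and that $\ker(i^*)=D(A)_{(1-e)\text{-tor}}$.

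For the first equality: on the one hand, for $Y\in D(R)$ and $Z\in D(B)$ the adjunctions $i^*\dashv i_*$ and $j_!\dashv j^!=j^*$ give $\R\hom_A(i^*j_!Y,Z)\cong\R\hom_A(j_!Y,i_*Z)\cong\R\hom_{R}(Y,j^*i_*Z)$, and $j^*i_*$ is the zero functor since, as observed in the proof of \ref{recoll}, it is given by tensoring with the acyclic bimodule $B.e$; hence $i^*j_!=0$, so every object in the image of $j_!$ lies in $\ker(i^*)$. On the other hand, the proof of \ref{recoll} produces a distinguished triangle $j_!j^!X\to X\to i_*i^*X\to{}$ for every $X\in D(A)$; if $i^*X\simeq 0$ then $i_*i^*X\simeq 0$, so $j_!j^!X\to X$ is an isomorphism and $X$ lies in the image of $j_!$. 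Since $j_!$ is fully faithful, it therefore restricts to an equivalence onto $\ker(i^*)$.

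For the second equality, recall that $\dq=\mathbb{L}_{1-e}(A)$ is the derived localisation of $A$ at $1-e$ and that derived localisation of modules is smashing, so $i^*=-\lot_A B$ is precisely the localisation functor $\mathbb{L}_{1-e}(-)$ on $D(A)$. An object $X$ is $(1-e)$-torsion exactly when $\R\hom_A(X,Y)$ is acyclic for every $(1-e)$-local $Y$; the $(1-e)$-local modules are precisely the objects $i_*Z$ with $Z\in D(B)$, and $\R\hom_A(X,i_*Z)\cong\R\hom_B(i^*X,Z)$ by the adjunction between extension and restriction of scalars along $A\to B$, so $X$ is $(1-e)$-torsion if and only if $i^*X\simeq 0$; equivalently, this is the torsion side of the localisation--colocalisation recollement of \ref{bclrcl}. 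Combining the two equalities, $j_!$ restricts to an equivalence $D(R)\xrightarrow{\ \sim\ }D(A)_{(1-e)\text{-tor}}$. I do not expect a real obstacle here, since the statement is a formal consequence of \ref{recoll} together with the smashing property of derived localisation; the only point demanding some care is matching up the two recollements, namely observing that the ``closed'' embedding $i_*\colon D(B)\to D(A)$ of \ref{recoll} is restriction along the localisation map $A\to\dq$ and hence identifies $D(B)$ with $D(A)_{(1-e)\text{-loc}}$, even though $\dq$ is itself a localisation of $A$.
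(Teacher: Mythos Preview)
Your proof is correct (up to a harmless typo: the first $\R\hom_A(i^*j_!Y,Z)$ should be $\R\hom_B$), but it takes a different route from the paper. The paper argues in two ways: first, it invokes the abstract fact that a recollement is determined by fixing one half, so that the recollement of \ref{recoll} (whose ``local'' side is $D(B)$) must agree with that of \ref{bclrcl}, forcing the ``torsion'' side to be $D(R)$; second, and more concretely, it directly computes the colocalisation dga $\mathbb{L}^{1-e}(A)$ by observing that $\mathrm{cone}(A\xrightarrow{1-e}A)\cong eA$ (since $(1-e)A$ is a summand of $A$) and $\R\enn_A(eA)\cong R$, then applies \cite[7.6]{bcl}.

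Your argument instead stays entirely inside the recollement of \ref{recoll}: you use the glueing triangle $j_!j^!X\to X\to i_*i^*X$ to identify $\operatorname{im}j_!=\ker i^*$, and then unwind the definition of torsion via the $(i^*,i_*)$ adjunction and the smashing property to get $\ker i^*=D(A)_{(1-e)\text{-tor}}$. This is a perfectly good self-contained proof that avoids both the external uniqueness-of-recollements statement and the explicit computation of $\mathbb{L}^{1-e}(A)$. What the paper's second argument buys, which yours does not, is the sharper statement that the colocalisation \emph{dga} is itself $R$; this is used later in the proof of \ref{Fdg}.
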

\begin{proof}
Recollements are determined completely by fixing one half (e.g. \cite[Remark 2.4]{kalck}). Now the result follows from the existence of the recollement of \ref{bclrcl}. More concretely, one can check that the colocalisation $\mathbb{L}^{1-e}A$ is $R$: because $(1-e)A$ is a summand of $A$, we have $\mathrm{cone}(A \xrightarrow{1-e} A)\cong eA$, and we know that $\R\enn_A(eA)\cong R$ because $eA$ is a projective $A$-module.
\end{proof}
We show that $\dq$ is a relatively compact $A$-module; before we do this we first introduce some notation.
\begin{defn}Let $\mathcal X$ be a subclass of objects of a triangulated category $\mathcal{T}$. Then $\thick_\mathcal{T} \mathcal X $ denotes the smallest triangulated subcategory of $\mathcal{T}$ containing $\mathcal{X}$ and closed under taking direct summands. Similarly, $\langle \mathcal X \rangle_\mathcal{T}$ denotes the smallest triangulated subcategory of $\mathcal{T}$ containing $\mathcal{X}$, and closed under taking direct summands and all existing set-indexed coproducts. We will often drop the subscripts if $\mathcal T$ is clear. If $\mathcal X$ consists of a single object $X$, we will write $\thick X$ and $\langle X \rangle$.
\end{defn}
\begin{ex}
Let $A$ be a dga. Then $\langle A \rangle_{D(A)}\cong D(A)$, whereas $\thick_{D(A)}(A) \cong \per A$.
\end{ex}
\begin{defn}
Let $\mathcal{T}$ be a triangulated category and let $X$ be an object of $\mathcal{T}$. Say that $X$ is \textbf{relatively compact} (or \textbf{self compact}) in $\mathcal{T}$ if it is compact as an object of $\langle X \rangle_{\mathcal{T}}$.
\end{defn}
\begin{prop}
The right $A$-module $\dq$ is relatively compact in $D(A)$.
\end{prop}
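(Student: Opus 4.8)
The plan is to identify the localising subcategory $\langle B\rangle_{D(A)}$ generated by $B:=\dq$ with the whole of $D(B)$, and then invoke the fact that $B$ is its own compact generator there. Write $l\colon A\to B$ for the localisation map. Since $B=\mathbb{L}_{1-e}A$, the smashing localisation result \cite[4.15]{bcl} tells us that restriction of scalars along $l$ is a fully faithful triangulated functor $i_*\colon D(B)\to D(A)$ whose essential image is $D(A)_{(1-e)\text{-loc}}$; moreover $i_*=i_!$ is a left adjoint (to $i^!=\R\hom_A(B,-)$) by the recollement \ref{recoll}, so it preserves coproducts, and being additive it preserves direct summands.

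First I would check that $i_*$ restricts to an equivalence $D(B)\xrightarrow{\sim}\langle B\rangle_{D(A)}$. On the one hand, the essential image of $i_*$ is a localising subcategory of $D(A)$ containing $i_*B=B$, hence it contains $\langle B\rangle_{D(A)}$. On the other hand, $B$ is a dga, so $\langle B\rangle_{D(B)}=D(B)$; since $i_*$ is triangulated and preserves coproducts and direct summands, the full subcategory $\{X\in D(B):i_*X\in\langle B\rangle_{D(A)}\}$ is a localising subcategory of $D(B)$ containing $B$, hence all of $D(B)$, so that $i_*(D(B))\subseteq\langle B\rangle_{D(A)}$. Therefore $i_*\colon D(B)\to\langle B\rangle_{D(A)}$ is fully faithful and essentially surjective, hence an equivalence, and it carries the rank-one free module $B\in D(B)$ to $B\in D(A)$.

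Finally, $B$ is perfect as a $B$-module and hence compact in $D(B)$; since an equivalence of triangulated categories preserves compact objects, $B$ is compact in $\langle B\rangle_{D(A)}$, which is exactly the assertion that $B$ is relatively compact in $D(A)$. I do not expect a serious obstacle here: the only delicate point is pinning down the essential image of $i_*$ as $\langle B\rangle_{D(A)}$ rather than something strictly larger, and this is forced by $B$ being a compact generator of $D(B)$ together with coproduct-preservation of $i_*$. All the substantive content sits in the localisation formalism of \cite{bcl}, which may be cited directly; one could alternatively argue more concretely starting from the exact triangle $Ae\lot_R eA\to A\to B\to$ of \ref{dqexact}.
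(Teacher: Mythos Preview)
Your argument is correct and is essentially the same as the paper's: both use that $i_*$ is a fully faithful coproduct-preserving embedding and that $\dq$ is compact in $D(\dq)$, so that $\langle i_*(\dq)\rangle\subseteq\im i_*$ and compactness transfers. The paper simply packages this into a citation of \cite[1.7]{jorgensen} rather than unpacking the identification $i_*\colon D(B)\xrightarrow{\sim}\langle B\rangle_{D(A)}$ as you do.
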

\begin{proof}
The embedding $i_*$ is a left adjoint and so respects coproducts. Hence $i_*(\dq)$ is relatively compact in $D(A)$ by \cite[1.7]{jorgensen}. The essential idea is that $\dq$ is compact in $D(\dq)$, the functor $i_*$ is an embedding, and $\langle i_*(\dq) \rangle \subset \im i_*$.
\end{proof}
In situations when $\dq$ is not a compact $A$-module (e.g. when it has nontrivial cohomology in infinitely many degrees), this gives interesting examples of relatively compact objects that are not compact.
\begin{defn}
Let $D(A)_{A/AeA}$ denote the full subcategory of $D(A)$ on those modules $M$ with each $H^j(M)$ a module over $A/AeA$.
\end{defn}
\begin{prop}\label{cohomsupport}There is a natural triangle equivalence $D(\dq)\cong D(A)_{A/AeA}$.
\end{prop}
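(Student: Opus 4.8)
The plan is to realise $D(\dq)$ inside $D(A)$ via restriction of scalars along the localisation map $l\colon A \to B := \dq$, and then to identify its essential image. Recall that $\dq = \mathbb{L}_{1-e}A$ is the derived localisation of $A$ at the single element $1-e$. Hence \cite[4.15]{bcl} applies: restriction of scalars along $l$ --- which is exactly the fully faithful functor $i_* = i_!$ appearing in the recollement \ref{recoll} --- induces a triangle equivalence of $D(\dq)$ with $D(A)_{(1-e)\text{-loc}}$, the full subcategory of $D(A)$ on those complexes $X$ for which multiplication by $1-e$ is a quasi-isomorphism $X \to X$. So it remains to show that $D(A)_{(1-e)\text{-loc}} = D(A)_{A/AeA}$ as subcategories of $D(A)$.

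Since multiplication by $1-e$ is a chain map, it is a quasi-isomorphism on $X$ if and only if it is an isomorphism on $H^j(X)$ for every $j$, and each $H^j(X)$ is naturally a right $A$-module (as $A$ lives in degree $0$). The claim therefore reduces to an elementary statement about an ordinary right $A$-module $N$: multiplication by $1-e$ is a bijection on $N$ if and only if $N$ is a module over $A/AeA$, i.e. $N \cdot AeA = 0$. I would verify this directly. If $N \cdot AeA = 0$ then, since $e \in AeA$, we get $Ne = 0$, so $x \mapsto x(1-e) = x - xe = x$ is the identity of $N$. Conversely, if $x \mapsto x(1-e)$ is injective, then $(xe)(1-e) = xe - xe = 0$ forces $xe = 0$ for all $x \in N$; thus $Ne = 0$ and hence $N \cdot AeA = (NA)eA \subseteq NeA = 0$.

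Combining the two steps yields the equivalence, and it is natural because it is implemented by the single fixed functor $i_*$. I do not expect a genuine obstacle here; the only points requiring care are (i) checking that the localisation-theoretic subcategory $D(A)_{(1-e)\text{-loc}}$ of \cite{bcl} coincides, degreewise, with the cohomological-support subcategory $D(A)_{A/AeA}$ --- this is precisely the elementary lemma above applied to each cohomology module --- and (ii) confirming that the equivalence furnished by \cite[4.15]{bcl} is the same functor as the $i_*$ of \ref{recoll}, which is immediate since both are restriction of scalars along $l$. (As a sanity check on triangulatedness, $A/AeA$-modules form a Serre subcategory of $A$-modules closed under extensions, so $D(A)_{A/AeA}$ is a triangulated subcategory of $D(A)$ --- but this also follows a posteriori from the equivalence.)
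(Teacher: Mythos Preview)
Your argument is correct. The paper's own proof simply defers to the proof of \cite[2.10]{kalckyang} together with the fact that a recollement is determined by fixing one half, so your approach is genuinely different: rather than comparing two recollements, you go straight through the Braun--Chuang--Lazarev identification $D(\mathbb{L}_{1-e}A)\simeq D(A)_{(1-e)\text{-loc}}$ and then reduce the comparison $D(A)_{(1-e)\text{-loc}}=D(A)_{A/AeA}$ to an elementary idempotent calculation on cohomology modules. This is arguably cleaner within the paper's own framework, since \cite[4.15]{bcl} is already recorded in \S\ref{derloc} and your proof needs nothing beyond that plus the trivial observation that right multiplication by the idempotent $1-e$ is bijective on an $A$-module $N$ if and only if $Ne=0$ if and only if $N\cdot AeA=0$. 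The paper's route has the virtue of tying the result explicitly to the recollement picture, but yours makes the cohomological-support description entirely transparent and avoids invoking an external reference.
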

\begin{proof}Follows from the proof of \cite[2.10]{kalckyang}, along with the fact that recollements are determined completely by fixing one half.
\end{proof}
\begin{prop}\label{semiorthog}
The derived category $D(A)$ admits a semi-orthogonal decomposition $$D(A)\cong \langle D(A)_{A/AeA}, \langle eA \rangle \rangle = \langle \im i_*, \im j_! \rangle $$
\end{prop}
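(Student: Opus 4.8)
The plan is to extract the decomposition directly from the recollement of \ref{recoll}. Recall that that proof produces, for every $X \in D(A)$, a functorial exact triangle $j_!j^!X \to X \to i_*i^*X \to$. Since $i_*$ and $j_!$ are exact and fully faithful, their essential images $\im i_*$ and $\im j_!$ are triangulated subcategories of $D(A)$; moreover $i_* = i_!$ and $j_!$ are left adjoints, hence preserve coproducts (and direct summands, by a short argument using that the counits $i_!i^! \to \id$ and $j_!j^! \to \id$ are isomorphisms on the respective essential images), so these images are in fact localising subcategories. Thus the triangle above exhibits every object of $D(A)$ as an extension of an object of $\im i_*$ by an object of $\im j_!$, which is the gluing half of a semi-orthogonal decomposition.

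Next I would identify the two pieces with the stated categories. By \ref{cohomsupport}, restriction of scalars along $A \to B$, which is precisely the functor $i_*$, gives an equivalence $D(B) \cong D(A)_{A/AeA}$, so $\im i_* = D(A)_{A/AeA}$. For the other piece, $j_! = - \lot_{R} eA$ sends $R$ to $eA$; since $\langle R \rangle_{D(R)} = D(R)$ and $j_!$ preserves coproducts, triangles and summands, its essential image is a localising subcategory of $D(A)$ containing $eA$, while conversely any localising subcategory of $D(A)$ containing $eA$ contains $\im j_!$. Hence $\im j_! = \langle eA \rangle$.

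It remains to check the orthogonality $\hom_{D(A)}(j_!Z, i_*Y) = 0$ for all $Y \in D(B)$, $Z \in D(R)$. By the $(j_!, j^!)$-adjunction this group equals $\hom_{D(R)}(Z, j^!i_*Y)$, and $j^!i_* = j^*i_*$ (as $j^! = j^*$ in this recollement) vanishes: indeed $j^*i_*$ is tensoring with the $B$-$R$-bimodule $Be$, which is acyclic because $B$ is $e$-killing, exactly as observed in the proof of \ref{recoll}. Combining this vanishing with the functorial triangle from the first paragraph yields the semi-orthogonal decomposition $D(A) \cong \langle \im i_*, \im j_! \rangle = \langle D(A)_{A/AeA}, \langle eA \rangle \rangle$. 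I do not expect a real obstacle here — the statement is a formal consequence of the recollement; the only point needing care is matching conventions, i.e. verifying that it is the direction $\hom(\im j_!, \im i_*) = 0$ that holds (pinned down by $j^*i_* = 0$ together with the shape of the gluing triangle $j_!j^!X \to X \to i_*i^*X \to$), rather than the reverse.
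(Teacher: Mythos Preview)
Your argument is correct and follows the same route as the paper: the semi-orthogonal decomposition is read off directly from the recollement of \ref{recoll}. The only difference is that the paper compresses everything into a one-line citation of \cite[3.6]{jorgensen}, whereas you have unpacked that reference by exhibiting the gluing triangle, the orthogonality via $j^*i_*=0$, and the identifications $\im i_* = D(A)_{A/AeA}$ and $\im j_! = \langle eA\rangle$ explicitly.
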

\begin{proof}
This is an easy consequence of \cite[3.6]{jorgensen}.
\end{proof}
We finish with a couple of facts about t-structures (see \cite{bbd} for a definition).
\begin{thm}\label{tstrs}
The category $D(\dq)$ admits a t-structure $\tau$ with $\tau^{\leq 0}=\{X: H^i(X)=0 \text{ for } i>0\}$ and $\tau^{\geq 0}=\{X: H^i(X)=0 \text{ for } i<0\}$. Moreover, $H^0$ is an equivalence from the heart of $\tau$ to $\cat{Mod}-A/AeA$. Furthermore, gluing $\tau$ to the natural t-structure on $D(R)$ via the recollement diagram of \ref{recoll} produces the natural t-structure on $D(A)$.
\end{thm}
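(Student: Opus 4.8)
Write $B$ for the derived quotient $\dq$; by \ref{derquotcohom} it is a nonpositive dga, and by \ref{cohomsupport} restriction of scalars along the quotient map $A\to B$ identifies $D(B)$ with the full triangulated subcategory $D(A)_{A/AeA}\subseteq D(A)$. I would first observe that $D(A)_{A/AeA}$ is stable under the truncation functors $\tau^{\leq n},\tau^{\geq n}$ of the standard t-structure on $D(A)$, because each $H^j(\tau^{\leq n}M)$ is a subquotient of $H^j(M)$ and $A/AeA$-modules are closed under subquotients; a full triangulated subcategory stable under the ambient truncations inherits a t-structure. Since the equivalence of \ref{cohomsupport} does not alter underlying complexes (it is a restriction of scalars), transporting this back yields exactly the t-structure $\tau$ with the stated $\tau^{\leq 0}$ and $\tau^{\geq 0}$, i.e. the evident one cut out by cohomological degree. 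An object of its heart is an object of $D(A)_{A/AeA}$ with cohomology concentrated in degree $0$, hence canonically quasi-isomorphic to the $A/AeA$-module $H^0(X)$, and conversely every $A/AeA$-module placed in degree $0$ lies in the heart; so $H^0$ is an equivalence from the heart of $\tau$ onto $\cat{Mod}-A/AeA$.

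\textbf{Reducing the gluing statement to exactness.} By the gluing recipe for recollements \cite[1.4.10]{bbd}, gluing $\tau$ to the standard t-structure on $D(R)$ along the recollement of \ref{recoll} produces a t-structure $\mathcal{T}$ on $D(A)$. To identify $\mathcal{T}$ with the standard t-structure I would use the elementary fact that if two t-structures on a triangulated category satisfy $\mathcal{T}_1^{\leq 0}\subseteq\mathcal{T}_2^{\leq 0}$ and $\mathcal{T}_1^{\geq 0}\subseteq\mathcal{T}_2^{\geq 0}$ then $\mathcal{T}_1=\mathcal{T}_2$ (immediate from $\mathcal{T}_i^{\leq 0}={}^\perp\mathcal{T}_i^{\geq 1}$). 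Applying this to $\mathcal{T}$ and the standard t-structure, and using that every object of $D(A)$ fits into the recollement triangles $j_!j^*X\to X\to i_*i^*X\to$ and $i_*i^!X\to X\to j_*j^*X\to$, it suffices to check that, with respect to the standard t-structure on $D(A)$, the functors $i_*$ and $j_!$ are right t-exact (which forces $\mathcal{T}^{\leq 0}\subseteq D(A)^{\leq 0}$, since each $j_!j^*X$ and $i_*i^*X$ then lies in $D(A)^{\leq 0}$ and $D(A)^{\leq 0}$ is closed under extensions) and $i_*$ and $j_*$ are left t-exact (forcing $\mathcal{T}^{\geq 0}\subseteq D(A)^{\geq 0}$ dually).

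\textbf{The exactness checks.} Now $i_*\colon D(B)\to D(A)$ is restriction of scalars along the nonpositive dga map $A\to B$, so it preserves underlying complexes and is in particular t-exact. From \ref{recoll}, $j_!=-\lot_R eA$ and $j_*=\R\hom_R(Ae,-)$; since $eA$ and $Ae$ are each concentrated in cohomological degree $0$ (being direct summands of $A$ as right, resp. left, $A$-modules), $j_!$ may be computed by tensoring against a projective resolution of $eA$ in degrees $\leq 0$, hence sends connective complexes to connective complexes, while $j_*$ may be computed by mapping out of a projective resolution of $Ae$ in degrees $\leq 0$, hence sends coconnective complexes to coconnective complexes. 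This gives the required exactness, and hence the gluing claim. As a cross-check one can instead verify right t-exactness of $i^*=-\lot_AB$ and left t-exactness of $i^!=\R\hom_A(B,-)$ using the triangle $Ae\lot_ReA\to A\to B\to$ of \ref{dqexact}: tensoring it with $X$ places $i^*X$ in a triangle with $X$ and $X\lot_A(Ae\lot_ReA)\cong Xe\lot_ReA$, which is connective whenever $X$ is, and applying $\R\hom_A(-,X)$ places $i^!X$ in a triangle with $X$ and $\R\hom_A(Ae\lot_ReA,X)\cong\R\hom_R(Ae,Xe)$, which is coconnective whenever $X$ is.

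\textbf{Main obstacle.} There is no deep point: the whole argument rests on $B$ being nonpositive and on $eA$, $Ae$ living in degree $0$. The only care needed is bookkeeping --- matching the functors of \ref{recoll} against the direction conventions of \cite{bbd}, keeping straight which half of the recollement controls the aisle and which controls the coaisle --- together with the routine identifications $X\lot_A(Ae\lot_ReA)\cong Xe\lot_ReA$ and $\R\hom_A(Ae\lot_ReA,X)\cong\R\hom_R(Ae,Xe)$ coming from $eA$ and $Ae$ being summands of a free module on the relevant side.
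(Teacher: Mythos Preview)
Your argument is correct and proceeds along the same lines as the paper's, only with the details made explicit. The paper obtains the first two sentences by citing \cite[2.1(a)]{kalckyang}, whereas you reprove them directly by observing that $D(A)_{A/AeA}$ is closed under the standard truncations on $D(A)$ and transporting across the equivalence of \ref{cohomsupport}; this is exactly the content of the cited result. For the gluing claim the paper simply notes that the natural t-structure on $D(A)$ restricts (via $i^*,i^!,j^*$) to $\tau$ and to the natural t-structure on $D(R)$, and invokes uniqueness to conclude; your argument is the mirror image, showing the glued t-structure is contained in the standard one via the recollement triangles and the right/left t-exactness of $i_*,j_!,j_*$. Both directions use the same elementary fact that a one-sided containment of t-structures forces equality, so the arguments are interchangeable. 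One small wording point: in justifying right t-exactness of $j_!=-\lot_R eA$ it is cleaner to resolve the \emph{input} module in nonpositive degrees rather than $eA$, but the conclusion is unaffected.
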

\begin{proof}
The first two sentences are precisely the content of \cite[2.1(a)]{kalckyang}. The last assertion holds because gluing of t-structures is unique, and restricting the natural t-structure on $D(A)$ clearly gives $\tau$ along with the natural t-structure on $D(R)$.
\end{proof}

\subsection{Deformation theory}\label{dqdefm}
We give the derived quotient a deformation-theoretic interpretation. The following Proposition generalises an argument given in the proof of \cite[5.5]{kalckyang}.
\begin{prop}\label{dqiskd}
Suppose that $A$ is a $k$-algebra and $e\in A$ an idempotent. Suppose that $A/AeA$ is an Artinian local $k$-algebra, and let $S$ be the quotient of $A/AeA$ by its radical. Suppose furthermore that $\dq$ is cohomologically locally finite. Then the dga $\R\enn_A(S)$ is augmented, and its Koszul dual is quasi-isomorphic to $\dq$.
\end{prop}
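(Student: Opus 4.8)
The plan is to bootstrap off the Koszul duality machinery of Section~\ref{kd}. First I would note that $S$ is one-dimensional: since $k$ is algebraically closed and $A/AeA$ is Artinian local, its residue field is $k$, so $S \cong k$ and in particular $\enn_A(S) = k$. Next, recall from \ref{derquotcohom} that $\dq$ is a nonpositive dga with $H^0(\dq) = A/AeA$. Because $\dq$ is nonpositive, the projection $\dq \onto H^0(\dq)$ is a dga map, and composing it with the quotient $A/AeA \onto S$ by the radical exhibits $\dq$ as an augmented dga, so that $\dq \in \dga$; moreover the trivial $\dq$-module determined by this augmentation is precisely $S$.

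The heart of the argument is to produce a quasi-isomorphism $(\dq)^! \simeq \R\enn_A(S)$. Since $S$ is annihilated by $AeA$, it lies in $D(A)_{A/AeA}$; under the equivalence $D(\dq) \simeq D(A)_{A/AeA}$ of \ref{cohomsupport}, which is induced by restriction of scalars along the localisation map $l : A \to \dq$, the module $S$ corresponds to the trivial $\dq$-module, and full faithfulness gives $\R\enn_A(S) \simeq \R\enn_{\dq}(S)$. Now $\dq$ is an augmented dga and $S$ is its trivial module $k$, so Lemma~\ref{barcofiblem} combined with the derived hom-tensor adjunction --- exactly as in the discussion following the definition of the Koszul dual --- yields $(\dq)^! \simeq \R\enn_{\dq}(S) \simeq \R\enn_A(S)$. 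In particular $\R\enn_A(S)$ is quasi-isomorphic to $(\dq)^!$, which is augmented as noted just after the definition of the Koszul dual; this settles the augmentedness assertion.

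Finally I would dualize once more. By hypothesis $\dq$ is cohomologically locally finite and $H^0(\dq) = A/AeA$ is local, so Theorem~\ref{kdfin} applies and gives $\dq \simeq (\dq)^{!!}$. Since the Koszul dual preserves quasi-isomorphisms, the quasi-isomorphism $(\dq)^! \simeq \R\enn_A(S)$ of the previous paragraph yields $\left(\R\enn_A(S)\right)^! \simeq \left((\dq)^!\right)^! = (\dq)^{!!} \simeq \dq$, as required.

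I expect the main obstacle to be the bookkeeping in the second paragraph: checking that the three incarnations of $S$ --- as an $A$-module, as a $\dq$-module via \ref{cohomsupport}, and as the trivial module along the augmentation $\dq \to k$ --- really agree, and that restriction along $l$ is fully faithful on the relevant subcategory, so that $\R\enn$ may be computed in either $D(A)$ or $D(\dq)$ with the same answer. Once this identification is in hand, Lemma~\ref{barcofiblem} and Theorem~\ref{kdfin} make the rest essentially formal, and the only role of the two finiteness-type hypotheses is to justify the appeal to \ref{kdfin} for $\dq$.
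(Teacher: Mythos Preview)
Your proposal is correct and follows essentially the same route as the paper: identify $S$ as one-dimensional, use full faithfulness of $D(\dq)\to D(A)$ to get $\R\enn_A(S)\simeq\R\enn_{\dq}(S)$, invoke \ref{barcofiblem} to recognise this as $(\dq)^!$, and then apply \ref{kdfin} to obtain $\dq\simeq(\dq)^{!!}\simeq\R\enn_A(S)^!$. The paper's proof is terser but the logical skeleton is identical, and your more explicit bookkeeping of the three incarnations of $S$ is a genuine clarification rather than a deviation.
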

\begin{proof}
Because $A/AeA$ is local, $S$ is a one-dimensional $A$-module. The augmentation on $\R\enn_A(S)$ is given by the natural map to $\enn_A(S)\cong k$. Since $S$ is naturally a module over $\dq$, and $D(\dq) \to D(A)$ is fully faithful, we have $\R\enn_A(S)\cong \R\enn_{\dq}(S)$. Note that $\dq \to A/AeA \to S$ is also an augmentation of dgas. Hence we have $(\dq)^!\cong \R\enn_{\dq}(S)$ by \ref{barcofiblem}. So $(\dq)^{!!}\cong \R\enn_A(S)^!$. It now suffices to prove that $(\dq)^{!!}$ is quasi-isomorphic to $\dq$, which follows from an application of \ref{kdfin}.
\end{proof}
\begin{rmk}
This result can be viewed as saying that the derived category $D(\dq)$ is triangle equivalent to its formal completion along $S$ in the sense of \cite{efimov}. If $S$ is perfect over $A$ then one can prove this more directly using results of \cite[\S4]{efimov}.
\end{rmk}

\begin{ex}\label{atiyahflop}
	Proposition \ref{dqiskd} lets us carry out some quite explicit computations. The following appears as Example 5.2.2 of \cite{dtdvvdb}. Let $A$ be the path algebra of the quiver with relations \vspace{-15pt}
	\begin{center}\begin{figure}[ht]\hspace{0.275\linewidth}
			\begin{minipage}[c]{0.2\linewidth}
				\centering
				$\begin{tikzcd}
				1 \arrow[rr,bend left=15,"b"] \arrow[rr,bend left=50,"a"]  && 2 \arrow[ll,bend left=15,"s"] \arrow[ll,bend left=50,"t"] &
				\end{tikzcd}$
			\end{minipage}
			\hspace{0.05\linewidth}
			\begin{minipage}[c]{0.2\linewidth}
				\centering$asb=bsa$\\
				$sbt=tbs$\\
				$atb=bta$\\
				$sat=tas$\\
			\end{minipage}
		\end{figure}
	\end{center}
	\vspace{-30pt} and let $e=e_1$ be the simple at the vertex $1$. One can easily check that $A/AeA\cong k$ (which is certainly Artinian local) since there are no loops at $1$. A resolution of the simple $S=A/AeA$ is given by
	$$
	P_2 \xrightarrow{\left(\begin{smallmatrix} b \\ -a \end{smallmatrix}\right)} P_1^2 \xrightarrow{\left(\begin{smallmatrix} bt & at \\ -bs & -as \end{smallmatrix}\right)} P_1^2 \xrightarrow{(s \; t)} P_2$$where $P_i=e_iA$ is the projective at $i$, and it easily follows that the Ext-algebra of $S$ is $k[x]/x^2$, with $x$ placed in degree 3. It is also easy to see that $\R\enn_A(S)$ must be formal. If $\dq$ is cohomologically locally finite, then it must be the Koszul dual of $k[x]/x^2$, which is the polynomial ring $k[\eta]$ with $\eta$ placed in degree -2.
\end{ex}
\begin{rmk}In future work \cite{dcalg}, we will be able to interpret the dga $k[\eta]$ as the derived contraction algebra of the Atiyah flop, which has base $R=\frac{k[x,y,u,v]}{(xy-uv)}$. Indeed, the algebra $A$ appearing in \ref{atiyahflop} is a noncommutative crepant resolution $A=\enn_R(R\oplus M)$ of $R$. It is true that $\dq$ is cohomologically locally finite, which follows from the fact that $M$ is projective away from the singular point combined with the Ext computations of \ref{stabcat}. More directly, one can deduce that $\dq \cong k[\eta]$ using the proof of Corollary 10.7 of \cite{kalckyang2}. The advantage of our current approach is that it extends without change to the singular setting, where the graded algebra $\ext^*_A(S)$ may not be finite-dimensional.
\end{rmk}
Putting $P:=B^\sharp(\R\enn_A(S))$, the continuous Koszul dual of $\R\enn_A(S)$, we have a quasi-isomorphism $\varprojlim P \cong \dq$. Note that if $Q$ is any other pro-Artinian dga with $\varprojlim Q \cong \dq$, then $Q$ is weakly equivalent to $P$, because $\varprojlim$ reflects weak equivalences by \ref{limreflects}. So up to weak equivalence, one can say that $\dq$ canonically admits the structure of a pro-Artinian dga. Moreover, by \ref{prorepfrm} it prorepresents the functor of framed deformations of $S$:
\begin{thm}\label{maindefmthm}Let $A$ be a $k$-algebra and $e\in A$ an idempotent. Suppose that $A/AeA$ is a local algebra and that $\dq$ is cohomologically locally finite. Let $S$ be $A/AeA$ modulo its radical, regarded as a right $A$-module. Then $\dq$ is naturally a pro-Artinian dga, and there is an isomorphism of functors $\mathrm{Def}^{\mathrm{fr},\leq 0}_A(S)\cong [\dq,-]$.
\end{thm}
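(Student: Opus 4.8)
The plan is to assemble this statement formally from the Koszul-duality result \ref{dqiskd} and the prorepresentability theorem \ref{prorepfrm}; all of the real work has already been done, so what remains is bookkeeping about the pro-Artinian structure.

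First I would note that since $A/AeA$ is local, $S$ is one-dimensional, so $\R\enn_A(S)$ is an augmented dga and Proposition \ref{dqiskd} supplies a quasi-isomorphism $(\R\enn_A(S))^!\simeq\dq$. Setting $P:=B^\sharp\R\enn_A(S)$, the continuous Koszul dual, one has $\varprojlim P\cong (B\R\enn_A(S))^* = (\R\enn_A(S))^!$, hence $\varprojlim P\simeq\dq$. This exhibits $\dq$ as the limit of a pro-Artinian dga. To see that this pro-Artinian structure is canonical up to weak equivalence: if $Q\in\proart$ is any other pro-Artinian dga with $\varprojlim Q\simeq\dq$, then $\varprojlim Q$ and $\varprojlim P$ are quasi-isomorphic, and since $\varprojlim$ reflects weak equivalences by \ref{limreflects}, $Q$ and $P$ are weakly equivalent in $\proart$. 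So $P$ is, up to weak equivalence, intrinsic to $\dq$, and it makes sense to write $[\dq,-]$ for $[P,-]$.

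Then \ref{prorepfrm} gives an isomorphism of set-valued functors $\mathrm{Def}^{\mathrm{fr},\leq 0}_A(S)\cong[B^\sharp\R\enn_A(S),-]=[P,-]$, where $[-,-]$ denotes maps in $\mathrm{Ho}(\cat{pro}(\dgart))$ (restricted to $\dgart$); here one uses the Quillen adjunction $\proart\longleftrightarrow\cat{pro}(\dgart)$ from the remark after \ref{proartmodel} to pass between the two homotopy categories, so that a weak equivalence of pregood objects of $\proart$ induces an isomorphism of the corepresented functors on $\dgart$. Composing with the identification $[P,-]\cong[\dq,-]$ from the previous step yields $\mathrm{Def}^{\mathrm{fr},\leq 0}_A(S)\cong[\dq,-]$, as desired.

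The substantive obstacles — the strictification/Koszul-duality statement \ref{kdfin} underlying \ref{dqiskd}, and the deformation-theoretic prorepresentability of \ref{prorepfrm} — have already been overcome in Sections \ref{kd} and \ref{defmthy}, so the only genuine care needed here is the compatibility of homotopy categories just mentioned and the observation that weakly equivalent objects of $\proart$ corepresent naturally isomorphic functors; I expect this to be the one point worth writing out rather than a serious difficulty.
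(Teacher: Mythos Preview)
Your proposal is correct and follows essentially the same route as the paper: the paper also sets $P:=B^\sharp(\R\enn_A(S))$, observes $\varprojlim P\simeq\dq$ via \ref{dqiskd}, invokes \ref{limreflects} for canonicity of the pro-Artinian structure, and then applies \ref{prorepfrm}. The extra care you take in aligning the homotopy categories is sensible but not something the paper dwells on.
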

\begin{rmk}
A pointed version of this is proved in \cite{huakeller}, under the additional assumption that $A$ has finite global dimension.
\end{rmk}
\begin{rmk}
	The pro-Artinian structure on the dga $k[\eta]$ of \ref{atiyahflop} is the obvious one: namely, we identify it with the inverse limit of the truncations $k[\eta]/\eta^n$. Note that this makes sense because $\eta$ has a nonzero degree; if $\eta$ had degree zero the limit would of course be $k\llbracket \eta\rrbracket$.
\end{rmk}

\subsection{Singularity categories} In this part we recall some results of Kalck and Yang \cite{kalckyang} \cite{kalckyang2} on relative singularity categories from the perspective of the derived quotient. Let $A$ be an algebra over $k$, and let $e\in A$ be an idempotent. Write $R$ for the corner ring $eAe$. Note that by \ref{recoll}, the functor $j_!= -\lot_{R} eA$ embeds $D(R)$ into $D(A)$. In fact, since $D(R)=\langle R\rangle$, we have $j_{!}D(R)=\langle eA \rangle$. Similarly, restricting to compact objects shows that $j_{!}\per R = \thick (eA) \subseteq \per A$. Recall that the \textbf{singularity category} of $R$ is the Verdier quotient $D_\text{sg}R:=D^b(R)/ \per R$. It vanishes if and only if $R$ has finite right global dimension.
\begin{defn}[\cite{kalckyang}]
The \textbf{relative singularity category} is the Verdier quotient $$\Delta_R(A) := \frac{D^b(A)}{j_! \per R} \cong \frac{D^b(A)}{\thick (eA)}$$
\end{defn}
In \cite{kalckyang2}, this is referred to as the \textbf{singularity category of $A$ relative to $e$}. The map $j^*: D(A) \to D(R)$ sends $\thick(eA)$ into $\per R$, and hence defines a map $j^*:\Delta_R(A) \to D_\text{sg}R$. In fact, $j^*$ is onto, which follows from \cite[3.3]{kalckyang}.
\begin{defn}
Write $ D_\mathrm{fg}(\dq)$ for the subcategory of $D(\dq)$ on those modules whose total cohomology is finitely generated over $A/AeA$. Simiarly, write $\per_\mathrm{fg}(\dq)$ for the subcategory of $\per(\dq)$ on those modules whose total cohomology is finitely generated over $A/AeA$.
\end{defn}
\begin{lem}
Assume that $A$ is right noetherian. Then $\ker j^* \cong D_\mathrm{fg}(\dq)$.
\end{lem}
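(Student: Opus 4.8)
The plan is to exhibit mutually quasi-inverse equivalences between $D_\mathrm{fg}(\dq)$ and $\ker j^*$, realised by the restrictions of the recollement functors $i_*$ and $i^*$ of \ref{recoll}. Write $B:=\dq$. By \ref{cohomsupport} the functor $i_*$ identifies $D(B)$ with the full subcategory $D(A)_{A/AeA}$, with quasi-inverse $i^*$. Since $A$ is right noetherian, so is $A/AeA$, and hence a complex of $A/AeA$-modules has finitely generated total cohomology precisely when it is cohomologically bounded with each cohomology module finitely generated. Thus $i_*$ carries $D_\mathrm{fg}(B)$ isomorphically onto the full subcategory $\mathcal D\subseteq D^b(A)$ of bounded complexes with degreewise finitely generated cohomology, each of whose cohomology modules is annihilated by $AeA$. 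It therefore suffices to prove that the composite $\Phi\colon \mathcal D\hookrightarrow D^b(A)\to \Delta_R(A)$ is an equivalence onto $\ker j^*$.

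First, $\Phi$ has image in $\ker j^*$: any $M\in\mathcal D$ lies in $D(A)_{A/AeA}=\im i_*$, so $M\cong i_*i^*M$ and hence $j^*M\cong j^*i_*i^*M=0$ by the recollement identity $j^*i_*=0$; in particular $j^*M\in\per R$. For full faithfulness I would invoke the standard fact that in a Verdier quotient $\mathcal T/\mathcal N$, if $N$ is right orthogonal to $\mathcal N$ then $\hom_{\mathcal T/\mathcal N}(-,N)\cong\hom_{\mathcal T}(-,N)$. Here $\mathcal N=j_!\per R=\thick(eA)$; and for $M\in\mathcal D$ and $n\in\Z$, projectivity of $eA$ gives $\hom_{D^b(A)}(eA[n],M)\cong (H^{-n}M)e=0$, since $H^{-n}M$ is an $A/AeA$-module and $e\in AeA$. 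As right orthogonality to $eA$ is a thick condition, $M$ is right orthogonal to all of $\thick(eA)$, so $\hom_{\Delta_R(A)}(M',M)\cong\hom_{D^b(A)}(M',M)$ for all $M',M\in\mathcal D$; combined with full faithfulness of $i_*$ this shows $\Phi$ is fully faithful.

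For essential surjectivity, let $X\in\ker j^*$ and choose a representative $X\in D^b(A)$ with $j^*X=X\lot_A Ae\in\per R$. Tensoring the triangle $Ae\lot_R eA\xrightarrow{\mu}A\to B\to$ of \ref{dqexact} with $X$ over $A$ produces a triangle $j_!j^*X\to X\to i_*i^*X\to$, in which $j_!j^*X\cong (X\lot_A Ae)\lot_R eA$ lies in $\thick_{D(A)}(eA)=j_!\per R$ because $j^*X\in\per R=\thick_{D(R)}(R)$ and $j_!$ is triangulated. Hence $X\cong i_*i^*X$ in $\Delta_R(A)$. Moreover $i^*X$ is the cone of a map from $j_!j^*X$ — a perfect, hence bounded complex of finitely generated $A$-modules — to $X\in D^b(A)$, so $i^*X\in D^b(A)$ with finitely generated cohomology; and $i^*X\in D(B)$, so all of its cohomology is annihilated by $AeA$. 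Therefore $i^*X\in\mathcal D$ and $X\cong\Phi(i^*X)$, which proves essential surjectivity and hence the lemma. The only delicate point is this last finiteness bookkeeping: it is precisely the perfectness of $j_!j^*X$ — forced by $X\in\ker j^*$ together with $eA$ being finitely generated projective — that keeps $i^*X$ inside $\mathcal D$ rather than in a larger, possibly unbounded, subcategory of $D(B)$; everything else is formal once the recollement of \ref{recoll} is in hand.
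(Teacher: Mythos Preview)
Your argument is correct and follows essentially the same route as the paper: both identify $\ker j^*$ with the full subcategory $\mathcal D\subseteq D^b(A)$ of complexes with finitely generated $A/AeA$-cohomology (this is the paper's $\thick_{D(A)}(\cat{mod}\text{-}A/AeA)$), and then identify $\mathcal D$ with $D_{\mathrm{fg}}(\dq)$ via $i_*$. The paper delegates both steps to \cite[6.13]{kalckyang} and \cite[2.12]{kalckyang}, whereas you have unpacked them into a self-contained argument using the recollement of \ref{recoll}, the orthogonality of $\mathcal D$ to $\thick(eA)$, and the localisation triangle $j_!j^*X\to X\to i_*i^*X\to$; the finiteness bookkeeping you flag in the last paragraph is exactly what makes the essential surjectivity go through.
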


\begin{proof}
The proof of \cite[6.13]{kalckyang} shows that $\ker j^* \cong \thick_{D(A)}(\cat{mod}-A/AeA)$, so it suffices to show that $\thick_{D(A)}(\cat{mod}-A/AeA)\cong D_\mathrm{fg}(\dq)$. But this can be shown to hold via a modification of the proof of \cite[2.12]{kalckyang}.
\end{proof}
\begin{rmk}
If $A/AeA$ is a finite-dimensional algebra, let $\mathcal{S}$ be the finite set of simple $A/AeA$-modules. Then $D_\mathrm{fg}(\dq)\cong \thick(\mathcal{S})$. Because each simple in $\mathcal{S}$ need not be perfect over $\dq$, the category $\per_\mathrm{fg}(\dq)$ may be smaller than $D_\mathrm{fg}(\dq)$. If $A$ has finite right global dimension then each simple is perfect over $A$, hence perfect over $\dq$, and so we have $D_\mathrm{fg}(\dq)\cong \per_\mathrm{fg}(\dq)$.
\end{rmk}
In what follows, we will often need to make the technical assumption that the singularity category $D_\text{sg}(R)$ is idempotent complete. This is the case when $R$ is Gorenstein and a finitely generated module over a commutative complete local noetherian $k$-algebra \cite[Lemma 5.5]{kalckyang2}. The second condition is satisfied for example when $R$ is a finite-dimensional $k$-algebra, or when $R$ is itself a commutative complete local noetherian $k$-algebra. In this setting, Kalck and Yang observed that there is a triangle functor ${\Sigma: \per(\dq) \to D_{\mathrm{sg}}(R)}$, sending $\dq$ to the right $R$-module $Ae$:
\begin{prop}[c.f. \cite{kalckyang2}, 6.6] \label{kymap}
Suppose that $A$ is right noetherian and $D_\text{sg}(R)$ is idempotent complete. Then there is a map of triangulated categories $\Sigma:\per(\dq) \to D_\text{sg}(R)$, sending $\dq$ to $Ae$. Moreover $\Sigma$ has image $\thick_{D_\text{sg}(R)}(Ae)$ and kernel $\per_\mathrm{fg}(\dq)$.
\end{prop}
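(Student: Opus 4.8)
The plan is to build $\Sigma$ as a Verdier quotient functor and then transport it across an idempotent completion. First I would apply Neeman--Thomason--Trobaugh--Yao localisation (\ref{ntty}) to the singleton $S=\{1-e\}$: since $\mathbb{L}_{1-e}A$ is by definition $\dq$ and $\mathbb{L}^{1-e}A\cong R$ by the proof of \ref{Rcoloc}, this gives a sequence $\per R\xrightarrow{j_!}\per A\xrightarrow{i^*}\per\dq$ that is exact up to direct summands, with $j_!(\per R)=\thick_{\per A}(eA)$, $i^*=-\lot_A\dq$, and $\ker i^*=\thick(eA)$. Thus $i^*$ induces a fully faithful exact functor $\per A/\thick(eA)\into\per\dq$ with dense image; since $\per\dq=\thick(\dq)$ is idempotent complete, this identifies $\per\dq$ with the idempotent completion of $\per A/\thick(eA)$. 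On the other hand the functor $j^*:\Delta_R(A)\to D_\text{sg}(R)$ introduced above restricts, along $\per A\to\per A/\thick(eA)\into\Delta_R(A)$, to an exact functor $F:\per A/\thick(eA)\to D_\text{sg}(R)$; this is legitimate because $j^*(eA)=eAe=R$ is free over $R$, hence zero in $D_\text{sg}(R)$. As $D_\text{sg}(R)$ is idempotent complete by hypothesis, $F$ extends uniquely (up to natural isomorphism) along $\per A/\thick(eA)\into\per\dq$ to an exact functor $\Sigma:\per\dq\to D_\text{sg}(R)$, and since $\dq=i^*(A)$ corresponds to the class of $A$ we get $\Sigma(\dq)=A\lot_A Ae=Ae$, as wanted.

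The essential image is quick: $\per\dq=\thick(\dq)$ and $\Sigma$ is exact, so $\im\Sigma\subseteq\thick_{D_\text{sg}(R)}(Ae)$; conversely $\im\Sigma$ is a triangulated subcategory containing $Ae=\Sigma(\dq)$, so its closure under direct summands contains $\thick_{D_\text{sg}(R)}(Ae)$, and a short argument with idempotent completions (using that $\per\dq$ and $D_\text{sg}(R)$ are idempotent complete, via the fully faithful functor $\per\dq/\per_\mathrm{fg}(\dq)\to D_\text{sg}(R)$ coming from the localisation) shows $\im\Sigma$ is already summand-closed. Hence $\im\Sigma=\thick_{D_\text{sg}(R)}(Ae)$.

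I expect the kernel computation to be the main obstacle. The strategy is to reduce to the lemma immediately preceding, which identifies $\ker\bigl(j^*:\Delta_R(A)\to D_\text{sg}(R)\bigr)$ with $D_\mathrm{fg}(\dq)$ via the fully faithful embedding $D_\mathrm{fg}(\dq)\cong\thick_{D(A)}(\cat{mod}-A/AeA)\into\Delta_R(A)$ built from \ref{cohomsupport} and the semi-orthogonal decomposition \ref{semiorthog}. Given $X\in\per\dq$, pick a lift $\tilde X\in\per A$ with $i^*\tilde X\cong X$ (available up to summands by \ref{ntty}); then $\Sigma(X)$ is the image of $\tilde X\lot_A Ae$ in $D_\text{sg}(R)$, so $\Sigma(X)=0$ exactly when the class $[\tilde X]\in\Delta_R(A)$ lies in $\ker j^*\cong D_\mathrm{fg}(\dq)$. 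Using the recollement triangle $j_!j^!\tilde X\to\tilde X\to i_*i^*\tilde X\to$ of \ref{recoll} to split off the ``$A/AeA$-part'' of $\tilde X$, this should translate into $H^*(X)$ being finitely generated over $A/AeA$, i.e. $X\in D_\mathrm{fg}(\dq)\cap\per\dq=\per_\mathrm{fg}(\dq)$. The delicate points are (i) checking that the equivalences $D(\dq)\cong D(A)_{A/AeA}$, $\per A/\thick(eA)\into\Delta_R(A)$, and $\ker j^*\cong D_\mathrm{fg}(\dq)$ are mutually compatible, so that membership of $[\tilde X]$ in $D_\mathrm{fg}(\dq)$ inside $\Delta_R(A)$ really does correspond to finiteness of the cohomology of $X$; and (ii) passing to idempotent completions cleanly, for which one uses that $\per_\mathrm{fg}(\dq)$ is a thick subcategory of $\per\dq$ — which holds because $A$, and hence $A/AeA$, is right noetherian. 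This bookkeeping runs parallel to the proof of \cite[6.6]{kalckyang2}, which one can follow essentially verbatim; the only genuine additions are the use of \ref{ntty} to pin down $\per\dq$ as an idempotent completion and the verification that $\Sigma$ restricts to $j^*$ on $\per A/\thick(eA)$.
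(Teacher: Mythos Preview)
Your approach is essentially identical to the paper's: both build $\Sigma$ by identifying $\per(\dq)$ with the idempotent completion of $\per A/\thick(eA)$ via \ref{ntty}, and then extend the composite $\per A/\thick(eA)\hookrightarrow\Delta_R(A)\xrightarrow{j^*}D_\text{sg}(R)$ using idempotent completeness of $D_\text{sg}(R)$. The paper's kernel argument is considerably terser than yours---it simply observes that $\ker\Sigma$ is the preimage in $\per(\dq)$ of $D_\mathrm{fg}(\dq)$, which is $\per_\mathrm{fg}(\dq)$ by definition---so your worries about compatibility of equivalences and recollement triangles, while not wrong, are unnecessary bookkeeping; and your image argument's appeal to the fully faithful functor $\per\dq/\per_\mathrm{fg}(\dq)\to D_\text{sg}(R)$ is mildly circular, whereas the paper just uses that the image of $G^\omega$ is the summand-closure of the image of $G$, which is automatically thick.
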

\begin{proof}
We already have a map $j^*:\Delta_R(A)\to D_\text{sg}(R)$, with kernel $D_\mathrm{fg}(\dq)$. The inclusion $\per A \into D^b(A)$ gives a map $\per A / j_!\per R \to \Delta_R(A)$, which is a triangle equivalence if $A$ has finite right global dimension. The composition $G:\per A / j_!\per R \to D_\text{sg}(R)$ is easily seen to send $A$ to $Ae$, and since $A$ generates $\per A / j_!\per R$, the map $G$ hence has image $\thick(Ae)$. As in \cite[2.12]{kalckyang} (which is an application of Neeman-Thomason-Trobaugh-Yao localisation; c.f. \ref{ntty}), the map $i^*$ gives a triangle equivalence $$i^*:\left(\frac{\per A}{j_!\per R}\right)^\omega \xrightarrow{\cong} \per \dq$$where $\mathcal{T}^\omega$ denotes the idempotent completion of $\mathcal{T}$. Let $\Sigma$ be the map
$$\Sigma:\per(\dq) \xrightarrow{(i^*)^{-1}} \left(\frac{\per A}{j_!\per R}\right)^\omega \xrightarrow{G^\omega} D_\text{sg}(R)^\omega.$$Since $D_\text{sg}(R)$ is idempotent complete, $D_\text{sg}(R)^\omega\cong D_\text{sg}(R)$, and since $i^*$ takes $A$ to $\dq$, we see that $\im \Sigma=\thick(Ae)$ as required. It is easy to see that the kernel of $\Sigma$ is the preimage in $\per (\dq)$ of $D_\mathrm{fg}(\dq)$, which is precisely $\per_\mathrm{fg}(\dq)$.
\end{proof}
For future reference, it will be convenient to give $\Sigma$ a name.
\begin{defn}
We refer to the triangle functor $\Sigma$ of Proposition \ref{kymap} as the \textbf{singularity functor}.
\end{defn}
\begin{prop}[\cite{kalckyang2}, 1.2]\label{dsgsmooth}Suppose that $A$ is right noetherian and of finite right global dimension, and $D_\text{sg}(R)$ is idempotent complete. Then the singularity functor induces a triangle equivalence $$\Sigma:\frac{\per(\dq)}{D_{\text fg}(\dq)} \to D_\text{sg}(R).$$
\end{prop}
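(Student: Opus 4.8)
The plan is to deduce this from the computation of $\ker\Sigma$ and $\im\Sigma$ in \ref{kymap}, sharpening both with the finite global dimension hypothesis, and then to identify the induced functor as a Verdier localisation.

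First I would extract the two consequences of $A$ having finite global dimension. Then every simple $A$-module is perfect over $A$, hence over $\dq$ (the remark preceding \ref{kymap}), so $D_\mathrm{fg}(\dq)=\per_\mathrm{fg}(\dq)$; by \ref{kymap} the latter is $\ker\Sigma$, so $\Sigma$ really does descend to a triangle functor $\bar\Sigma\colon\per(\dq)/D_\mathrm{fg}(\dq)\to D_\text{sg}(R)$. Also $D^b(A)=\per A$, so $\Delta_R(A)=\per A/j_!\per R$ and $D^b(A)$ is generated by $A$ as a triangulated subcategory; since $j^*\colon\Delta_R(A)\to D_\text{sg}(R)$ is essentially surjective by \cite[3.3]{kalckyang}, the category $D_\text{sg}(R)$ is the thick subcategory generated by $j^*A=Ae$ (closed under summands, where one uses that $D_\text{sg}(R)$ is idempotent complete). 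Thus $\thick_{D_\text{sg}(R)}(Ae)=D_\text{sg}(R)$, and by \ref{kymap} the functors $\Sigma$ and $\bar\Sigma$ are essentially surjective.

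It remains to prove that $\bar\Sigma$ is fully faithful, which is the substantive point. I would invoke the equivalence $i^*\colon(\per A/j_!\per R)^\omega\xrightarrow{\ \sim\ }\per(\dq)$ from the proof of \ref{kymap}, under which $\Sigma$ corresponds to the idempotent completion of $G=j^*\colon\Delta_R(A)=\per A/j_!\per R\to D_\text{sg}(R)$ (the two descriptions of $G$ agree since $\per A=D^b(A)$), with $\ker j^*=D_\mathrm{fg}(\dq)$ by the lemma preceding \ref{kymap}. So it suffices to show that $j^*$ realises $D_\text{sg}(R)$, up to direct summands, as the Verdier quotient $\Delta_R(A)/D_\mathrm{fg}(\dq)$. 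This is a restriction of the recollement of \ref{recoll}: $j^*\colon D(A)\to D(R)$ is already a Verdier localisation with kernel $\im i_*$, which by \ref{cohomsupport} is $D(\dq)$, and the Neeman--Thomason--Trobaugh--Yao localisation of \ref{ntty}, applied with $S=\{1-e\}$ and $\mathbb{L}^{1-e}(A)\cong R$ (\ref{Rcoloc}), provides the compatible statement after cutting down to the homologically finite subcategories. Passing to idempotent completions and using once more that $D_\text{sg}(R)$ is idempotent complete then gives the equivalence $\bar\Sigma\colon\per(\dq)/D_\mathrm{fg}(\dq)\xrightarrow{\ \sim\ }D_\text{sg}(R)$.

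I expect the main obstacle to be exactly this last identification: matching the localisation $j^*\colon D(A)\to D(R)$ of unbounded categories with the quotient $D^b(R)\to D_\text{sg}(R)$ so as to obtain $\Delta_R(A)/D_\mathrm{fg}(\dq)\simeq D_\text{sg}(R)$ up to summands, while keeping track of idempotent completions throughout --- essentially a reformulation of \cite[1.2]{kalckyang2} through the derived quotient. If that bookkeeping becomes unwieldy, a fallback is to prove full faithfulness of $\bar\Sigma$ directly: since both the source and target are generated under shifts, cones and summands by the image of $A$, a d\'evissage in both variables reduces the claim to bijectivity of $\bar\Sigma$ on $\hom(A,A[n])$ for every $n\in\Z$.
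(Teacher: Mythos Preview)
Your approach is essentially the same as the paper's: use \ref{kymap} together with the finite global dimension hypothesis to identify both $\ker\Sigma$ and $\im\Sigma$. The paper's proof is considerably terser than yours---it checks that $\im\Sigma = D_\text{sg}(R)$ (via surjectivity of $j^*$) and that $\per_\mathrm{fg}(\dq)=D_\mathrm{fg}(\dq)$ (via the observation that $i^*$ preserves compacts, so $D_\mathrm{fg}(\dq)\cong i^*i_*D_\mathrm{fg}(\dq)\subseteq\per(\dq)$), and then stops. In particular the paper does not spell out the full faithfulness step you worry about in your final paragraph; it is implicitly absorbed into the citation \cite[1.2]{kalckyang2} (and the remark that this equivalence is the one of \cite[5.1.1]{dtdvvdb}).

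Your instinct that the ``last identification'' is the real content is correct, and your sketch for it is sound: once $A$ has finite global dimension one has $\per A/j_!\per R=\Delta_R(A)$, and the recollement furnishes $j^*$ with adjoints, so that $j^*:\Delta_R(A)\to D_\text{sg}(R)$ is a localisation whose kernel you have already identified as $D_\mathrm{fg}(\dq)$; idempotent completeness of $D_\text{sg}(R)$ then lets you pass through $(\cdot)^\omega$ without loss. Your fallback d\'evissage argument would also work. Either way, you are filling in details the paper leaves to the reference rather than taking a different route.
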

\begin{proof}
The image of $\Sigma$ is all of $D_\text{sg}(R)$ because $j^*:\Delta_R(A)\to D_\text{sg}(R)$ is onto. Since $A$ has finite right global dimension, every finitely generated $\dq$-module is perfect over $A$. Since $i^*$ respects compact objects, $D_{\text fg}(\dq) \cong i^*i_*D_{\text fg}(\dq)\subseteq \per(\dq)$. Hence, $\per_\mathrm{fg}(\dq)=D_\mathrm{fg}(\dq)$.
\end{proof}
\begin{rmk}
This equivalence is essentially the same as that of \cite[5.1.1]{dtdvvdb}.
\end{rmk}
\begin{rmk}Suppose that $A$ is right noetherian. Then using the Nine Lemma for triangulated categories one can show that the inclusion $D_{\text fg}(\dq) \into \Delta_R(A)$ and the projection $D^b(A) \onto D_{\text{sg}}(R)$ induce a sequence$$\frac{D_{\text fg}(\dq) }{\per_\mathrm{fg}(\dq)} \longrightarrow D_\text{sg}(A) \longrightarrow \frac{D_{\text{sg}}(R)}{\thick_{D_\text{sg}(R)}(Ae)}$$which is exact up to direct summands. Intuitively, this tells us that $\dq$ is not more singular than $A$. Indeed, if $B$ is an unbounded dga then $D_{\text fg}(B)$ should be thought of as  $D^b(B)$, and the quotient $D_{\text fg}(B) /\per_\mathrm{fg}(B)$ should be thought of as the singularity category $D_{\text{sg}}(B)$.
\end{rmk}

\subsection{DG categories}\label{dgcats}
In this part we will recall some facts about dg categories, in particular about the dg quotient. Survey articles on dg categories include \cite{toendglectures} and \cite{keller}.
\begin{defn}
A ($k$-linear) \textbf{dg category} is a category $\mathcal{C}$ enriched over the monoidal category $(\cat{dgvect}_k,\otimes)$ of dg-vector spaces with the usual tensor product. In other words, to every pair of elements $(x,y)\in\mathcal{C}^2$ we assign a chain complex $\dgh_\mathcal{C}(x,y)$, to every triple $(x,y,z)$ we assign a chain map $\dgh_\mathcal{C}(x,y)\otimes \dgh_\mathcal{C}(y,z) \to \dgh_\mathcal{C}(x,z)$ satisfying associativity, and for every $x \in \mathcal{C}$ we assign a map $k \to \dgh_\mathcal{C}(x,x)$ which is a unit with respect to composition.
\end{defn}
Note in particular that for any object $x \in\mathcal{C}$, the complex $\dge_{\mathcal{C}}(x):=\dgh_\mathcal{C}(x,x)$ naturally has the structure of a (unital) dga. We will frequently omit the subscript $\mathcal{C}$ if the context is clear.
\begin{rmk}
	A more usual notation for the enriched hom is $\underline{\hom}$. We will not use this since it risks confusion with the standard notation used for homsets in the stable category of a ring, which we will use later in \ref{stabcat}.
\end{rmk}
\begin{defn}
A \textbf{dg functor} $F:\mathcal{C}\to\mathcal{D}$ between two dg categories is an enriched functor; i.e. a map of objects $\mathcal{C}\to \mathcal{D}$ together with, for every pair $(x,y)\in\mathcal{C}^2$, a component $F_{xy}:\dgh_\mathcal{C}(x,y) \to \dgh_\mathcal{D}(Fx,Fy)$ satisfying unitality and associativity conditions.
\end{defn}
In particular, a dg functor $F:\mathcal{C}\to\mathcal{D}$ induces dga morphisms $F_{xx}:\dge_{\mathcal{C}}(x)\to\dge_{\mathcal{D}}(Fx)$ for every $x \in \mathcal{C}$.
\begin{defn}
Let $\mathcal{C}$ be a dg category. The \textbf{homotopy category} of $\mathcal{C}$ is the $k$-linear category $[\mathcal{C}]$ whose objects are the same as $\mathcal{C}$ and whose hom-spaces are $\hom_{[\mathcal{C}]}(x,y):=H^0(\dgh_\mathcal{C}(x,y))$. Composition is inherited from $\mathcal{C}$.
\end{defn}
\begin{defn}Let $F:\mathcal{C}\to\mathcal{D}$ be a dg functor.\begin{itemize}
\item $F$ is \textbf{quasi-fully faithful} if all of its components $F_{xy}$ are quasi-isomorphisms.
\item $F$ is \textbf{quasi-essentially surjective} if the induced functor $[F]:[\mathcal{C}]\to[\mathcal{D}]$ is essentially surjective.
\item $F$ is a \textbf{quasi-equivalence} if it is quasi-fully faithful and quasi-essentially surjective.
\end{itemize}
\end{defn}
In a dg category, one may define shifts and mapping cones via the Yoneda embedding into the category of modules. This is equivalent to defining them as representing objects of the appropriate functors; e.g. $x[1]$ should represent $\dgh(x,-)[-1]$.
\begin{defn}
Say that a dg category is \textbf{pretriangulated} if it contains a zero object and is closed under shifts and mapping cones. 
\end{defn}If $\mathcal{C}$ is pretriangulated then the homotopy category $[\mathcal{C}]$ is canonically triangulated, with translation functor given by the shift. We list some standard pretriangulated dg categories:
\begin{defn}\label{dgcatlist}
If $A$ is a dga, then $D_\mathrm{dg}(A)$ is the dg category of cofibrant dg-modules over $A$, and $\cat{per}_\mathrm{dg}(A) \subseteq {D}_\mathrm{dg}(A)$ is the dg-subcategory on compact objects. In addition, if $A$ is a $k$-algebra then $D^b_\mathrm{dg}(A)$ denotes the dg category of cofibrant dg-$A$-modules with bounded cohomology; these are precisely the bounded above complexes of projective $A$-modules with bounded cohomology.
\end{defn}
 All of these dg categories are pretriangulated. In the notation of \cite{toendglectures}, $\cat{per}_\mathrm{dg}(A)$ is $\hat{A}_{\text{pe}}$. One has equivalences of triangulated categories $[D_\mathrm{dg}(A)]\cong D(A)$, ${[D^b_\mathrm{dg}(A)]\cong D^b(A)}$ and $[\cat{per}_\mathrm{dg}(A)]\cong\cat{per}(A)$, via standard arguments about dg model categories \cite{toendglectures}. Note that in the dg categories above, $\dgh$ is a model for the derived hom $\R\hom$; we will implicitly use this fact often.

\p If $A$ is a ring, then the singularity category of $A$ is the Verdier quotient $D^b(A)/\cat{per}(A)$. One can also take dg quotients of dg categories; these were first considered by Keller \cite{kellerdgquot} and an explicit construction using ind-categories was given by Drinfeld \cite{drinfeldquotient}. Note that if $\mathcal{C}$ is a dg category then so is $\cat{ind}\mathcal{C}$.
\begin{defn}[\cite{drinfeldquotient}]
	Let $\mathcal{A}$ be a dg category and $\mathcal{B}$ a full dg subcategory. Then the dg quotient $\mathcal{A}/\mathcal{B}$ is the subcategory of $\cat{ind}\mathcal{A}$ on those $X$ such that:\begin{enumerate}
		\item $\dgh_{\cat{ind}\mathcal{A}}(\mathcal{B},X)$ is acyclic.
		\item There exists $a \in \mathcal{A}$ and a map $f:a \to X$ with $\mathrm{cone}(f)\in \cat{ind}\mathcal{B}$.
	\end{enumerate}
Since $\cat{ind}\mathcal{A}$ is a dg category, so is $\mathcal{A}/\mathcal{B}$.
\end{defn}
\begin{thm}[\cite{drinfeldquotient}, 3.4]\label{drinfeldpretr}
Let $\mathcal{A}$ be a pretriangulated dg category and $\mathcal{B}\into \mathcal{A}$ a full pretriangulated dg subcategory. Then $[\mathcal{A}/\mathcal{B}]\cong [\mathcal{A}]/[\mathcal{B}]$. In other words, the dg quotient is a dg enhancement of the Verdier quotient.
\end{thm}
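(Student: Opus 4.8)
The statement is Drinfeld's theorem, so the plan is to reconstruct his argument, working directly with the ind-category model of $\mathcal{A}/\mathcal{B}$ given above. The first step is to produce the comparison functor. Composing the ind-Yoneda embedding $\mathcal{A}\to\cat{ind}\mathcal{A}$ with Drinfeld's reflection $L\colon\cat{ind}\mathcal{A}\to\mathcal{A}/\mathcal{B}$ (which exists precisely because $\mathcal{A}/\mathcal{B}$ is carved out of $\cat{ind}\mathcal{A}$ by the two orthogonality/generation conditions) yields a dg functor $q\colon\mathcal{A}\to\mathcal{A}/\mathcal{B}$. I would record two basic facts: $L$ sends every object $Y$ of $\cat{ind}\mathcal{B}$ to a zero object of $[\mathcal{A}/\mathcal{B}]$ — indeed $0\to Y$ has cone $Y\in\cat{ind}\mathcal{B}$, and $L$ inverts maps with cone in $\cat{ind}\mathcal{B}$ — and in particular $q$ sends each object of $\mathcal{B}$ to something that is zero in $[\mathcal{A}/\mathcal{B}]$. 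Passing to homotopy categories, $[q]\colon[\mathcal{A}]\to[\mathcal{A}/\mathcal{B}]$ is a triangle functor annihilating $[\mathcal{B}]$, so the universal property of the Verdier quotient produces a canonical triangle functor $\bar q\colon[\mathcal{A}]/[\mathcal{B}]\to[\mathcal{A}/\mathcal{B}]$, and the theorem becomes the claim that $\bar q$ is an equivalence.

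Essential surjectivity of $\bar q$ is the easy half. The second defining condition of $\mathcal{A}/\mathcal{B}$ says every object $X$ admits $a\in\mathcal{A}$ and a map $f\colon a\to X$ with $\mathrm{cone}(f)\in\cat{ind}\mathcal{B}$; applying $L$ to the triangle $a\xrightarrow{f}X\to\mathrm{cone}(f)\to a[1]$ and using $L(\mathrm{cone}(f))\cong 0$ in $[\mathcal{A}/\mathcal{B}]$ identifies $X\cong q(a)$ there. The pretriangulated hypotheses on $\mathcal{A}$ and $\mathcal{B}$ are what make these cones honest and the resulting triangle exact in $[\mathcal{A}/\mathcal{B}]$.

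The real content is full faithfulness: that for all $a,a'\in\mathcal{A}$ the map $\hom_{[\mathcal{A}]/[\mathcal{B}]}(a,a')\to H^0\dgh_{\mathcal{A}/\mathcal{B}}(q(a),q(a'))$ is an isomorphism. I would compute the right-hand side by unwinding the $\cat{ind}$-category $\mathrm{Hom}$: present $q(a')=L(a')$ as a filtered system whose transition maps have cones in (shifts of) $\cat{ind}\mathcal{B}$, use the first defining condition (derived right $\mathcal{B}$-orthogonality) to see what survives on the target side, and thereby rewrite the hom-complex as a filtered colimit of the complexes $\dgh_{\mathcal{A}}(a,a'')$ indexed by "resolutions" $a''\to a'$ whose cone lies in $\mathcal{B}$. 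Taking $H^0$ and commuting cohomology past the filtered colimit then reproduces exactly Verdier's calculus-of-fractions description of $\hom_{[\mathcal{A}]/[\mathcal{B}]}(a,a')$. This identification is where I expect the main obstacle to lie, and where Drinfeld's argument is genuinely technical: one must choose the filtered systems presenting $L(a)$ and $L(a')$ functorially, check that the indexing categories of resolutions are filtered and cofinal (this is exactly where pretriangulatedness earns its keep, via the octahedral axiom), and verify the acyclicity/Mittag-Leffler-type statements needed to pass cohomology through the relevant limits and colimits.

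If pushing the direct ind-category computation through cleanly turned out to be too painful, there are two fallbacks I would reach for. One is to compare $\mathcal{A}/\mathcal{B}$ with the "naive" dg quotient obtained by freely adjoining to each $b\in\mathcal{B}$ a contracting homotopy $\epsilon_b$ of degree $-1$ with $d\epsilon_b=\mathrm{id}_b$; its hom-complexes admit an explicit bar-type colimit presentation that can be analysed by hand, and one then shows the two constructions are quasi-equivalent. The other is to Yoneda-embed everything into $D_{\mathrm{dg}}(\mathcal{A})$, take the dg quotient by the dg subcategory generated by $\mathcal{B}$, and deduce the homotopy-category statement from Neeman's localization theorem for compactly generated triangulated categories — with the caveat that one must then track an idempotent-completion issue that is invisible in the present formulation since $\mathcal{A}$ is assumed only pretriangulated.
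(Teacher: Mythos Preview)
The paper does not prove this theorem; it is stated with a citation to \cite{drinfeldquotient} and used as a black box. So there is no ``paper's own proof'' to compare against. Your proposal is a reasonable sketch of how one might reconstruct Drinfeld's argument, and the overall architecture---build a comparison functor $[\mathcal{A}]/[\mathcal{B}]\to[\mathcal{A}/\mathcal{B}]$, verify essential surjectivity from condition~ii) of the definition, and deduce full faithfulness by identifying the ind-hom colimit with the calculus-of-fractions description of Verdier homs---is correct in outline and matches the structure of Drinfeld's actual proof.

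One point to tighten: you invoke a ``reflection $L\colon\cat{ind}\mathcal{A}\to\mathcal{A}/\mathcal{B}$'' as if it were given, but the definition in the paper only carves $\mathcal{A}/\mathcal{B}$ out as a full dg subcategory of $\cat{ind}\mathcal{A}$; the existence of a left adjoint (or even a functorial assignment $a\mapsto q(a)$) is part of what has to be constructed, not assumed. Drinfeld builds $q(a)$ explicitly as a specific filtered system indexed by morphisms $b\to a$ with $b\in\mathcal{B}$ (or dually), and verifying that this system satisfies conditions~i) and~ii) is where the pretriangulated hypothesis first enters. Your fallback via the ``naive'' quotient (adjoining contracting homotopies) is in fact closer to Drinfeld's primary construction in \cite{drinfeldquotient}; the ind-category model used here is his secondary description, and the equivalence between the two is itself one of the main results of that paper.
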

One useful universal property of the dg quotient is that it can be viewed as a homotopy cofibre.
\begin{thm}[\cite{tabquot}, 4.02]
Let $\mathcal{A}$ be a dg category and $i:\mathcal{B}\into \mathcal{A}$ a full dg subcategory. Then the quotient $\mathcal{A}/\mathcal{B}$ is the homotopy cofibre of $i$, taken in the homotopy category of dg categories with quasi-equivalences.
\end{thm}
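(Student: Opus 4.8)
The plan is to exhibit Drinfeld's quotient, up to quasi-equivalence, as an explicit homotopy pushout in Tabuada's model category of dg categories, and then read off that it is the homotopy cofibre. Recall that $\cat{dgCat}_k$ carries the model structure of Tabuada in which the weak equivalences are the quasi-equivalences; the homotopy cofibre of $i:\mathcal{B}\into\mathcal{A}$ is then by definition the homotopy pushout of the span $\mathbf{0}\from\mathcal{B}\xrightarrow{i}\mathcal{A}$, where $\mathbf{0}$ denotes the terminal dg category (one object, whose endomorphism dga is the zero ring), which is the object one collapses $\mathcal{B}$ onto. So it suffices to identify $\mathcal{A}/\mathcal{B}$ with that homotopy pushout.

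The key device is Drinfeld's cone $\mathcal{C}(\mathcal{B})$: the dg category with the same objects as $\mathcal{B}$, obtained by freely adjoining to each object $U$ a morphism $\epsilon_U\in\dgh_{\mathcal{C}(\mathcal{B})}(U,U)$ of degree $-1$ with $d\epsilon_U=\id_U$. First I would record two facts. (i) Every hom-complex of $\mathcal{C}(\mathcal{B})$ is acyclic: if $df=0$ then $f=d(\epsilon_V f)$. Hence $[\mathcal{C}(\mathcal{B})]$ is the trivial category and the collapse functor $\mathcal{C}(\mathcal{B})\to\mathbf{0}$ is a quasi-equivalence. (ii) The inclusion $\mathcal{B}\into\mathcal{C}(\mathcal{B})$ is a cofibration: it is a composite of pushouts of generating cofibrations of $\cat{dgCat}_k$, attaching one ``disk''-type cell $\epsilon_U$ per object of $\mathcal{B}$. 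Next, by the universal property of freely adjoining morphisms, the ordinary pushout in $\cat{dgCat}_k$ of $\mathcal{C}(\mathcal{B})\from\mathcal{B}\into\mathcal{A}$ is precisely the dg category obtained from $\mathcal{A}$ by adjoining the same $\epsilon_U$ for $U\in\mathcal{B}$ — that is, Drinfeld's ``finite'' model for the quotient of $\mathcal{A}$ by $\mathcal{B}$. Since $k$ is a field every dg category is $k$-flat, so by Drinfeld's comparison theorem this finite model is quasi-equivalent to the ind-category construction \cite{drinfeldquotient} used in this paper; in particular both have homotopy category $[\mathcal{A}]/[\mathcal{B}]$ by \ref{drinfeldpretr}.

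It remains to see that this ordinary pushout computes the homotopy pushout. Since one leg, $\mathcal{B}\into\mathcal{C}(\mathcal{B})$, is a cofibration, it is enough to arrange that the whole span consists of cofibrant objects; this is harmless because, both $\mathcal{A}/\mathcal{B}$ and the homotopy pushout being invariant under quasi-equivalence, we may first replace $\mathcal{A}$ and $\mathcal{B}$ by a cofibrant pair (alternatively, invoke left-properness of $\cat{dgCat}_k$). Combining this with fact (i), $\mathcal{A}/\mathcal{B}$ is quasi-equivalent to the homotopy pushout of $\mathbf{0}\from\mathcal{B}\xrightarrow{i}\mathcal{A}$, i.e.\ to the homotopy cofibre of $i$. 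An alternative, mapping-space-level argument would be to show that for every dg category $\mathcal{D}$ the space $\R\mathrm{Map}_{\cat{dgCat}_k}(\mathcal{A}/\mathcal{B},\mathcal{D})$ is the homotopy fibre of $\R\mathrm{Map}(\mathcal{A},\mathcal{D})\to\R\mathrm{Map}(\mathcal{B},\mathcal{D})$ over the zero point, using Keller's universal property of the dg quotient together with To\"en's description of these mapping spaces by quasi-representable bimodules; this characterises the homotopy cofibre directly.

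The main obstacle is the bookkeeping in the last step: making the cofibrant (and full-subcategory) replacements compatible, and justifying the passage between Drinfeld's finite cell-attachment model and his ind-category model — which is exactly where the hypothesis that $k$ is a field enters, through $k$-flatness. The computations in the middle paragraph — acyclicity of $\mathcal{C}(\mathcal{B})$ and the cellular structure of $\mathcal{B}\into\mathcal{C}(\mathcal{B})$ — are routine.
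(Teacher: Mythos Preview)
The paper does not supply its own proof of this statement: it is quoted verbatim as \cite{tabquot}, 4.0.2, and used as a black box. So there is nothing to compare against in the paper itself.

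Your sketch is essentially the argument Tabuada gives in the cited reference: replace the collapse $\mathcal{B}\to\mathbf{0}$ by the cofibration $\mathcal{B}\hookrightarrow\mathcal{C}(\mathcal{B})$ into Drinfeld's contractible cone, recognise the strict pushout $\mathcal{A}\amalg_{\mathcal{B}}\mathcal{C}(\mathcal{B})$ as Drinfeld's ``small'' model for the quotient, and use that pushouts along cofibrations of cofibrant objects compute homotopy pushouts. The identification with the ind-category model via $k$-flatness is exactly Drinfeld's comparison, and you are right that working over a field makes this automatic. One caution: the parenthetical appeal to left properness of $\cat{dgCat}_k$ should be dropped, since Tabuada's model structure is not known to be left proper (and is generally believed not to be); stick with your primary route of first replacing $\mathcal{B}\hookrightarrow\mathcal{A}$ by a cofibrant pair, which is what Tabuada does and which you already state correctly.
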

\begin{defn}\label{dgsing}
Let $A$ be a $k$-algebra. The \textbf{dg singularity category} of $A$ is the Drinfeld quotient ${D}^{\mathrm{dg}}_\mathrm{sg}(A):=D^b_{\mathrm{dg}}(A) / \cat{per}_\mathrm{dg}(A)$. If $e\in A$ is an idempotent, write $R$ for the corner ring $eAe$ and $j_!$ for the functor $-\otimes^\mathbb{L}eA:D(R)\to D(A)$. It is easy to see that $j_!$ admits a dg enhancement. The \textbf{dg relative singularity category} is the Drinfeld quotient ${\Delta}^{\mathrm{dg}}_R(A):=D^b_{\mathrm{dg}}(A) / j_!\cat{per}_\mathrm{dg}(R)$.
\end{defn}
By \ref{drinfeldpretr}, we have  $[{D}^{\mathrm{dg}}_\mathrm{sg}(A)]\cong D_\mathrm{sg}(A)$ and $[{\Delta}^{\mathrm{dg}}_R(A)]\cong {\Delta}_R(A)$. The following easy Lemma is useful, since we will want to quotient by perfect complexes:
\begin{lem}\label{drinfeldlem}
	Let $\mathcal{A}$ be a dg category and $\mathcal{B}$ a full dg subcategory of compact objects. Let $X\in \cat{ind}\mathcal{A}$. Then for all $b \in \mathcal{B}$, $\dgh_{\cat{ind}\mathcal{A}}(b,X)\cong \dgh_\mathcal{A}(b,\varinjlim X)$. In particular, if $\varinjlim X\cong 0$ then $\dgh_{\cat{ind}\mathcal{A}}(\mathcal{B},X)$ is acyclic. The converse is true if $\mathcal{B}$ contains a generator of $\textbf{A}$.
\end{lem}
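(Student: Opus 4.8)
The plan is to reduce the statement to two elementary facts: the formula for morphisms in an ind-category out of a \emph{constant} object, and the definition of compactness as commuting with filtered colimits. Write $X=\{X_i\}_{i\in I}$ as a formal filtered colimit of objects of $\mathcal{A}$. Since $b\in\mathcal{B}$ is a constant ind-object, the enriched-hom complex of $\cat{ind}\mathcal{A}$ collapses to $\dgh_{\cat{ind}\mathcal{A}}(b,X)\cong\varinjlim_i\dgh_{\mathcal{A}}(b,X_i)$; this is the dg-enriched version of the remark made after \ref{procats} that $\hom_{\cat{ind}\mathcal{C}}(C,\{D_j\})\cong\hom_{\mathcal{C}}(C,\varinjlim_jD_j)$ for constant $C$, combined with the Yoneda description of $\cat{ind}\mathcal{A}$ as filtered colimits of representable dg modules. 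On the other hand, $\varinjlim X$ is by definition the realisation $\varinjlim_iX_i$, and because $b$ is compact the functor $\dgh_{\mathcal{A}}(b,-)$ commutes with this filtered colimit, so $\dgh_{\mathcal{A}}(b,\varinjlim X)\cong\varinjlim_i\dgh_{\mathcal{A}}(b,X_i)$ as well. Comparing the two displays gives the asserted isomorphism. The comparison is through honest isomorphisms of complexes, so no derived subtleties arise; the only ambient input is that $\dgh$ in the dg categories in play is a model for $\R\hom$.

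For the ``in particular'' clause: if $\varinjlim X\cong 0$ (equivalently, $\varinjlim X$ is acyclic), then $\dgh_{\mathcal{A}}(b,\varinjlim X)$ is acyclic for every object $b$, hence by the isomorphism just established $\dgh_{\cat{ind}\mathcal{A}}(b,X)$ is acyclic for every $b\in\mathcal{B}$, which is exactly the statement that $\dgh_{\cat{ind}\mathcal{A}}(\mathcal{B},X)$ is acyclic. For the converse, suppose $\mathcal{B}$ contains an object $G$ generating $\mathcal{A}$, in the sense that an object right-orthogonal to all shifts of $G$ is zero. If $\dgh_{\cat{ind}\mathcal{A}}(\mathcal{B},X)$ is acyclic then in particular the complex $\dgh_{\cat{ind}\mathcal{A}}(G,X)$ is acyclic, so by the first part $\dgh_{\mathcal{A}}(G,\varinjlim X)$ is acyclic; but acyclicity of this single complex says precisely that $H^0\dgh_{\mathcal{A}}(G[n],\varinjlim X)=0$ for all $n\in\Z$, i.e.\ that $\varinjlim X$ is right-orthogonal to every shift of $G$, whence $\varinjlim X\cong 0$.

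There is no genuine obstacle here --- the content is bookkeeping with the ind-category hom formula --- but the one point that must be handled with care is that ``compact'' should be read relative to the ambient cocomplete dg category in which the realisation $\varinjlim$ is formed (the dg category of all dg $\mathcal{A}$-modules, or $D_\mathrm{dg}(A)$ in the intended applications), so that ``$\dgh_{\mathcal{A}}(b,-)$ commutes with filtered colimits'' is literally applicable to the colimit defining $\varinjlim X$.
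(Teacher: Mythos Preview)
The paper does not give a proof of this lemma at all --- it is introduced as ``the following easy Lemma'' and left to the reader. Your argument is correct and is exactly the expected one: unwind the ind-hom formula for a constant source, invoke compactness to pull the filtered colimit out of $\dgh_{\mathcal{A}}(b,-)$, and read off the two corollaries. Your closing caveat about where the colimit $\varinjlim X$ lives and what ``compact'' must mean is also well placed; in the paper's applications $\mathcal{A}=D^b_{\mathrm{dg}}(A)$ sits inside $D_{\mathrm{dg}}(A)$ and $\mathcal{B}=\per_{\mathrm{dg}}(A)$ consists of the compact objects there, so the hypothesis is exactly what is needed.
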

Hence, the objects of ${D}^{\mathrm{dg}}_\mathrm{sg}(A)$ are precisely those ind-objects $X \in D^b_{\mathrm{dg}}(A)$ such that $\varinjlim X$ is acyclic and there is an $M \in D^b_{\mathrm{dg}}(A)$ with a map $M \to X$ with ind-perfect cone.

\subsection{The dg singularity category}
In this part, suppose that $A$ is a right noetherian $k$-algebra with idempotent $e$. Put $R:=eAe$. We will assume that $D_{\mathrm{sg}}(R)$ is idempotent complete. We show in \ref{Fdg} that the singularity functor $\Sigma:\cat{per}(\dq) \to D_{\mathrm{sg}}(R)$ lifts to a dg functor. The component of the singularity functor at $\dq$ is a dga map from $\dq$ to an endomorphism dga in ${D}^{\mathrm{dg}}_\mathrm{sg}(R)$, and in \ref{qisolem} we examine the induced map on cohomology.
\begin{prop}\label{Fdg}The singularity functor $\Sigma:\cat{per}(\dq) \to D_{\mathrm{sg}}(R)$ lifts to a dg functor $ \Sigma_\mathrm{dg}:\cat{per}_\mathrm{dg}(\dq) \to {D}^{\mathrm{dg}}_\mathrm{sg}(R)$, which we refer to as the \textbf{dg singularity functor}.
\end{prop}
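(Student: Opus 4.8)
The plan is to lift the construction of $\Sigma$ given in the proof of \ref{kymap} one step at a time, replacing each triangulated category appearing there by its canonical dg enhancement (\ref{dgcatlist}, \ref{dgsing}) and each Verdier quotient by the corresponding Drinfeld quotient. Two formal facts make this work: first, the Drinfeld quotient is a homotopy cofibre in the homotopy category of dg categories \cite{tabquot}, so a dg functor out of the ambient category that annihilates the subcategory up to coherent homotopy factors (canonically) through the quotient; second, passing to homotopy categories commutes with Drinfeld quotients (\ref{drinfeldpretr}), with dg idempotent completion, and with composition. Granting a dg enhancement of each constituent functor, one assembles $\Sigma_\mathrm{dg}$ and then checks $[\Sigma_\mathrm{dg}]=\Sigma$ by transporting these compatibilities through the construction.

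Concretely, I would first observe that $j^*=-\lot_A Ae$ admits a dg enhancement $j^*_\mathrm{dg}:D^b_\mathrm{dg}(A)\to D^b_\mathrm{dg}(R)$ (tensor with the bimodule $Ae$ and apply a functorial cofibrant replacement over $R$; cohomological boundedness is preserved since $Ae$ is projective as a left $A$-module). Because $j^*j_!\simeq\mathrm{id}$, the composite $j_!\per_\mathrm{dg}(R)\into D^b_\mathrm{dg}(A)\xrightarrow{j^*_\mathrm{dg}}D^b_\mathrm{dg}(R)\to{D}^{\mathrm{dg}}_\mathrm{sg}(R)$ is quasi-isomorphic to the inclusion $\per_\mathrm{dg}(R)\into D^b_\mathrm{dg}(R)$ followed by the quotient functor, hence is canonically nullhomotopic; by the homotopy-cofibre property this produces $j^*_\mathrm{dg}:\Delta^{\mathrm{dg}}_R(A)\to{D}^{\mathrm{dg}}_\mathrm{sg}(R)$ lifting $j^*:\Delta_R(A)\to D_\mathrm{sg}(R)$. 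Precomposing with the dg inclusion $\per_\mathrm{dg}(A)\into D^b_\mathrm{dg}(A)$ and invoking the same universal property once more gives a dg functor $G_\mathrm{dg}:\per_\mathrm{dg}(A)/j_!\per_\mathrm{dg}(R)\to{D}^{\mathrm{dg}}_\mathrm{sg}(R)$ lifting the functor $G$ of \ref{kymap}; since $D_\mathrm{sg}(R)$ is idempotent complete by hypothesis, ${D}^{\mathrm{dg}}_\mathrm{sg}(R)$ is quasi-equivalent to its dg idempotent completion, so $G_\mathrm{dg}$ extends to $G^\omega_\mathrm{dg}$ on the dg idempotent completion of its source. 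On the other side, $i^*=-\lot_A\dq$ gives a dg functor $\per_\mathrm{dg}(A)\to\per_\mathrm{dg}(\dq)$ annihilating $j_!\per_\mathrm{dg}(R)$ (since $i^*(eA)=e\dq$ is acyclic, $\dq$ being $e$-killing as in the proof of \ref{recoll}), hence descends to $\bar{i}^*_\mathrm{dg}:\per_\mathrm{dg}(A)/j_!\per_\mathrm{dg}(R)\to\per_\mathrm{dg}(\dq)$. The dg refinement of \ref{ntty} (applied with $S=\{1-e\}$, so that $\mathbb{L}^S(A)\cong R$ by \ref{Rcoloc} and $\mathbb{L}_S(A)=\dq$) then says that the induced functor on dg idempotent completions $(\bar{i}^*_\mathrm{dg})^\omega$ is a quasi-equivalence onto $\per_\mathrm{dg}(\dq)$ — note $\per_\mathrm{dg}(\dq)$ is itself idempotent complete, being the dg category of compact objects of $D(\dq)$. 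I would then set $\Sigma_\mathrm{dg}:=G^\omega_\mathrm{dg}\circ\bigl((\bar{i}^*_\mathrm{dg})^\omega\bigr)^{-1}$ as a morphism in the homotopy category of dg categories, and verify $[\Sigma_\mathrm{dg}]=G^\omega\circ(i^*)^{-1}=\Sigma$ using \ref{drinfeldpretr} and the stated compatibilities.

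The routine steps — existence of dg enhancements for the hom-tensor functors, functoriality of the Drinfeld quotient, and the fact that $[-]$ preserves idempotent completions — I would dispatch quickly. The one genuinely technical point is the dg enhancement of Neeman--Thomason--Trobaugh--Yao localisation: that the Drinfeld quotient $\per_\mathrm{dg}(A)/j_!\per_\mathrm{dg}(R)$, after dg idempotent completion, is quasi-equivalent via $i^*$ to $\per_\mathrm{dg}(\dq)$. This is the dg lift of the classical argument of \cite[2.12]{kalckyang} (equivalently of \ref{ntty}), and it should go through once everything is phrased in terms of compact generators and the Drinfeld quotient; keeping careful track throughout of which dg categories are already idempotent complete (and hence where the completion $(-)^\omega$ is and is not needed) is where I expect to spend the real effort.
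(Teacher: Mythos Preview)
Your proposal is correct and follows essentially the same route as the paper: both lift the constituents of $\Sigma$ from \ref{kymap} using the Drinfeld quotient as a homotopy cofibre (\cite{tabquot}) and the dg refinement of Neeman--Thomason--Trobaugh--Yao (\ref{ntty}, \ref{Rcoloc}). The only cosmetic difference is that the paper phrases the lifts as extensions of morphisms between homotopy cofibre sequences (e.g.\ extending $\id_{\per_\mathrm{dg}R}$ and $\per_\mathrm{dg}A\hookrightarrow D^b_\mathrm{dg}(A)$ to a map on cofibres), whereas you invoke the universal property pointwise and track the idempotent completion explicitly; the paper's version is terser but glosses over exactly the $(-)^\omega$ bookkeeping you flag as the real work.
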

\begin{proof}
We simply mimic the proof in the triangulated setting. Recalling from \ref{kymap} the construction of $\Sigma$ as a composition $\per(\dq) \xrightarrow{\Sigma_1} \Delta_R(A) \xrightarrow{\Sigma_2} {D}_\mathrm{sg}(R)$, we lift the two maps separately to dg functors. To lift $\Sigma_1$, first note that \ref{ntty} and \ref{Rcoloc} provide a homotopy cofibre sequence of dg categories $\per_{\mathrm{dg}}R \to \per_{\mathrm{dg}}A \to \per_{\mathrm{dg}}(\dq)$, in which the first map is $j_!$. There is a homotopy cofibre sequence $\per_{\mathrm{dg}}R \to D_\mathrm{dg}^b(A) \to \Delta^\mathrm{dg}_R(A)$, and we can extend $\id: \per_{\mathrm{dg}}R \to \per_{\mathrm{dg}}R$ and the inclusion $\per_{\mathrm{dg}}A \to D_\mathrm{dg}^b(A)$ into a morphism of homotopy cofibre sequences, which gives a lift of $\Sigma_1$. Lifting $\Sigma_2=j^*$ is similar and uses the sequence $\per_{\mathrm{dg}}R \to D_\mathrm{dg}^b(R) \to {D}^{\mathrm{dg}}_\mathrm{sg}(R)$.
\end{proof}
\begin{rmk}
By \ref{kymap}, the kernel of $\Sigma_\mathrm{dg}$ is a dg enhancement of the triangulated category $\per_\mathrm{fg}(\dq)$.
\end{rmk}
Observe that $\Sigma(\dq)\simeq Ae$. Since we can canonically identify $\dq$ with the endomorphism dga $\dge_{\per_{\mathrm{dg}}(\dq)}(\dq)$, the component of $\Sigma_\mathrm{dg}$ at $\dq$ gives a dga map $\dq \to \dge_{D^\mathrm{dg}_\text{sg}(R)}(Ae)$.
\begin{defn}\label{comparisonmap}
The \textbf{comparison map} $\Xi:\dq \to \dge_{D^\mathrm{dg}_\text{sg}(R)}(Ae)$ is the component of the dg singularity functor $\Sigma_{\mathrm{dg}}$ at the object $\dq \in {\per_{\mathrm{dg}}(\dq)}$.
\end{defn}
The main theorem of this part, which we are about to prove, is that under certain Cohen-Macaulay type finiteness conditions, $\Xi$ is a `quasi-isomorphism in nonpositive degrees': the truncated map $\Xi:\dq \to \tau_{\leq 0}\dge(Ae)$ is a quasi-isomorphism. Note that we have an isomorphism $H^j(\dge(Ae)) \cong \ext^j_{D_\text{sg}(R)}(Ae,Ae)$, so this allows us to interpret the cohomology of $\dq$ in terms of Ext groups in the singularity category. The proof will be an explicit computation: we pick models for $A$ and $\dge(Ae)$ and examine the induced map.
\begin{thm}\label{qisolem}Let $A$ be a right noetherian $k$-algebra with idempotent $e$. Put $R:=eAe$ and assume that $D_{\mathrm{sg}}(R)$ is idempotent complete. Assume furthermore that $R$ is noetherian on both sides, that $Ae$ is a finitely generated $R$-module, and that the natural map $\R\hom_R(Ae,R)\to\hom_R(Ae,R)$ is a quasi-isomorphism. Then, for all $j\leq 0$, the comparison map $\Xi$ induces isomorphisms $$H^j(\Xi): H^j(\dq) \xrightarrow{\simeq} H^j(\dge_{D^\mathrm{dg}_\text{sg}(R)}(Ae)) .$$
\end{thm}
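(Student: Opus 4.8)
The plan is to prove this by picking explicit, compatible models for $A$, $Ae$, and the relevant morphism complexes, and then identifying the induced map on cohomology with the localisation map $l:A \to \dq$ (more precisely, with the boundary maps in the long exact sequence of \ref{dqexact}). First I would replace $A$ by a quasi-isomorphic dga — or, since $A$ is an ungraded algebra, work directly with it — and use the Drinfeld model $B = A\langle h\rangle/(he=eh=h)$ with $d(h)=e$ from \ref{drinfeld} as our concrete representative of $\dq$; then $\dge_{\per_{\mathrm{dg}}(\dq)}(\dq)$ is just $B$ itself. On the other side, I need a cofibrant model for $Ae$ as an object of $D^b_{\mathrm{dg}}(R)$; since $Ae$ is finitely generated over the noetherian ring $R$, take a projective resolution $P_\bullet \to Ae$ of right $R$-modules and let $\dge_{D^{\mathrm{dg}}_{\mathrm{sg}}(R)}(Ae)$ be computed via the Drinfeld quotient construction — concretely, as a colimit over the system $\R\hom_R(P_\bullet, P_\bullet \oplus \text{(perfect shifts)})$, which is the standard ``stabilisation'' presentation of morphisms in a singularity category.

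The key computational input is the following identification of the two sides in low degrees. By \ref{derquotcohom}, $H^j(\dq)$ is $A/AeA$ in degree $0$ and is built from $\tor^R_{\ast}(Ae,eA)$ in negative degrees. On the singularity side, $\ext^j_{D_{\mathrm{sg}}(R)}(Ae,Ae)$ for $j \leq 0$ can be computed, using the hypothesis that $\R\hom_R(Ae,R) \to \hom_R(Ae,R)$ is a quasi-isomorphism (i.e. that $Ae$ is ``maximal Cohen–Macaulay'' in the appropriate sense), via the formula $\R\hom_R(Ae, Ae) \cong \R\hom_R(Ae,R) \lot_R Ae \cong \hom_R(Ae,R)\lot_R Ae$; and $\hom_R(Ae,R)$ is canonically $eA$ as an $R$-bimodule (using $e A e = R$ and that $Ae$ is a generator-summand situation), so this derived tensor product is precisely $eA \lot_R Ae$, whose cohomology gives the relative Tor groups appearing in \ref{derquotcohom}. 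The point is then that $D^b_{\mathrm{sg}}(R)$ truncates away the positive-degree part of $\R\hom_R(Ae,Ae)$ coming from $\per$, leaving exactly $\tau_{\leq 0}$, and the comparison with $\dge(Ae)$ happens in nonpositive degrees only. So I would set up a commuting diagram relating the exact triangle $Ae\lot_R eA \xrightarrow{\mu} A \xrightarrow{l} \dq$ of \ref{dqexact} to the analogous triangle expressing $\dge_{D^{\mathrm{dg}}_{\mathrm{sg}}(R)}(Ae)$ as a cone, and check that $\Xi$ is compatible with both, whence $H^j(\Xi)$ is an iso for $j \leq 0$ by the five lemma applied degreewise.

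More carefully, the strategy is: (1) recall from \ref{Fdg} that $\Xi$ arises as the component at $\dq$ of a dg functor built from morphisms of homotopy cofibre sequences of dg categories, so $\Xi$ is compatible up to homotopy with the Neeman–Thomason–Trobaugh–Yao localisation sequence $\per_{\mathrm{dg}}R \to \per_{\mathrm{dg}}A \to \per_{\mathrm{dg}}(\dq)$ and with $\per_{\mathrm{dg}}R \to D^b_{\mathrm{dg}}(R) \to D^{\mathrm{dg}}_{\mathrm{sg}}(R)$; (2) translate this into a map of exact triangles in $D(k)$, with the triangle of \ref{dqexact} on one side and the triangle $\R\hom_R(Ae,R)\lot_R Ae \to \text{(something perfect)} \to \dge_{D^{\mathrm{dg}}_{\mathrm{sg}}(R)}(Ae)$ on the other; (3) use the Cohen–Macaulay hypothesis to identify $\R\hom_R(Ae,R)$ with $eA$ (concentrated in degree $0$) and thus the left-hand terms of both triangles, and identify the middle terms in nonpositive degrees; (4) conclude by the long exact sequences that $H^j(\Xi)$ is an isomorphism for $j\leq 0$, noting that in degree $0$ it is just the surjection $A/AeA = H^0(\dq) \to \ext^0_{D_{\mathrm{sg}}(R)}(Ae,Ae)$ which one checks is an iso, and in negative degrees both sides are the relative Tor groups $\tor^R_{-j-1}(Ae,eA)$.

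The main obstacle I anticipate is step (3) — pinning down the precise identification $\hom_R(Ae,R) \cong eA$ of $R$-bimodules and verifying that, under this identification, the multiplication map $\mu: Ae\lot_R eA \to A$ of \ref{dqexact} matches the composition/evaluation map that appears when one unwinds the definition of $\dge_{D^{\mathrm{dg}}_{\mathrm{sg}}(R)}(Ae)$ via the Drinfeld quotient. This requires carefully chasing through the construction of $\Sigma$ and $\Sigma_{\mathrm{dg}}$ and the identification $\Sigma(\dq) \simeq Ae$, tracking the $R$-bimodule structures, and making sure the truncation coming from quotienting by $\per_{\mathrm{dg}}(R)$ exactly kills the positive-degree part and does not interfere below degree $0$. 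Establishing that the diagram genuinely commutes (not merely up to an unspecified homotopy) on the relevant cohomology groups, so that the five lemma can be applied, is the technical heart of the argument; the rest is bookkeeping with standard long exact sequences and the hypotheses that $A$ and $R$ are noetherian and $Ae$ is finitely generated and Cohen–Macaulay over $R$.
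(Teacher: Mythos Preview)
Your overall strategy---express both sides as cones on a map involving $T=Ae\lot_R eA$ and compare---is correct in spirit and is essentially what the paper does. However, your key computational input contains a genuine error. The formula $\R\hom_R(Ae,Ae)\cong\R\hom_R(Ae,R)\lot_R Ae$ is \emph{false} in the situation of interest: the natural evaluation map $\R\hom_R(Ae,R)\lot_R Ae\to\R\hom_R(Ae,Ae)$ is an isomorphism only when $Ae$ is perfect over $R$, and if $Ae$ were perfect it would vanish in $D_\text{sg}(R)$ and there would be nothing to prove. Concretely, under the CM hypothesis the left-hand side is $eA\lot_R Ae$, concentrated in nonpositive degrees (its cohomology is $\tor$), while the right-hand side is $\R\enn_R(Ae)$, concentrated in nonnegative degrees (its cohomology is $\ext$); these are not quasi-isomorphic. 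What actually happens is that $\dge_{D^\mathrm{dg}_\text{sg}(R)}(Ae)$ is the cone of a map \emph{from one to the other}, and passing to the singularity category does not simply ``truncate away the positive-degree part'' of $\R\hom_R(Ae,Ae)$ as you suggest. Your proposed triangle ``$\R\hom_R(Ae,R)\lot_R Ae\to\text{(something perfect)}\to\dge_{D^\mathrm{dg}_\text{sg}(R)}(Ae)$'' is not the right one; the middle term should be $\R\enn_R(Ae)$, which is not perfect.

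The paper avoids this by working with an explicit ind-object model. One takes the filtered system $\mathrm{Bar}(Ae)$ of perfect brutal truncations of the bar resolution $B(Ae)$, so that $Ce:=\mathrm{cone}(\mathrm{Bar}(Ae)\to Ae)$ is a concrete representative of $Ae$ in the Drinfeld quotient $D^\mathrm{dg}_\text{sg}(R)$. Computing $\dge(Ce)$ via the ind-hom formula collapses (using compactness of each perfect piece $V_\beta$ and acyclicity of $\varinjlim Ce$) to $\varinjlim_\beta\R\hom_R(Ae,\mathrm{cone}(V_\beta\to Ae))$. The CM hypothesis now enters \emph{correctly}: for each perfect $V_\beta$ one has $\R\hom_R(Ae,V_\beta)\simeq V_\beta\otimes_R\hom_R(Ae,R)\simeq V_\beta\otimes_R eA$, so the colimit becomes $\mathrm{cone}(T\to\R\enn_R(Ae))$ with $T=B(Ae)\otimes_R eA\simeq Ae\lot_R eA$. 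Since $T$ lives in negative degrees and the degree-zero cocycles of $\R\enn_R(Ae)$ are exactly $\enn_R(Ae)=A$, taking $\tau_{\leq 0}$ recovers the cone of $T\xrightarrow{\mu}A$, which is $\dq$ by \ref{dqexact}. The point you were missing is that the dualisation $\R\hom_R(Ae,-)\simeq(-)\otimes_R eA$ must be applied to the perfect approximations $V_\beta$, not to $Ae$ itself; the ind-presentation is what makes this work.
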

\begin{proof}Let $B(Ae)=\cdots \to Ae \otimes_k R \otimes_k R \to Ae\otimes_k R$ be the bar resolution of the $R$-module $Ae$. Let $\mathrm{Bar}(Ae)$ be the filtered system of perfect submodules of $B(Ae)$; we then have $B(Ae)\cong \varinjlim \mathrm{Bar}(Ae)$. Upon applying $-\otimes_R eA$ to $B(Ae)$ we obtain (one model of) $Ae \lot_{R} eA$. Noting that $j_!=-\lot_R eA$, we hence see that $\mathrm{Bar}(Ae)\otimes_R eA$ is an object of $\cat{ind}j_!\per_{\mathrm{dg}} R$. Since tensor products commute with filtered colimits, we have$$\varinjlim(\mathrm{Bar}(Ae)\otimes_R eA)\cong  \cdots \to Ae\otimes R\otimes R\otimes eA\to Ae\otimes R\otimes eA\to Ae\otimes eA\cong Ae \lot_{R} eA$$where the tensor products are taken over $k$. Observe that this is precisely the $A$-bimodule $T$ that appears in \ref{dqexact}. Note that $\mathrm{Bar}(Ae)\otimes_R eA$ also comes with a multiplication map $\mu$ to $A$ that lifts the multiplication $Ae\otimes_R eA \to A$. Let $C$ be the cone of this map; then $C$ is an ind-bounded module with a map from $A$ whose cone is in $\cat{ind}j_!\per_{\mathrm{dg}} R$. In fact, $\varinjlim C \cong \dq$: this is the content of \ref{dqexact}. Hence, if $P\in j_!\per_{\mathrm{dg}} R=\thick_{\mathrm{dg}}(eA)$, then $\dgh_{A}(P,C)\cong\R\hom_A(P,C) \cong\R\hom_A(P,\dq)\cong 0$, since $P$ is perfect and we have the semi-orthogonal decomposition of \ref{semiorthog}. Hence $C$ is a representative of the $A$-module $\dq$ in the Drinfeld quotient $\Delta^\mathrm{dg}_R(A)$.

\p We examine how $\dq$ acts on $C$. Note that the action of $\dq$ on itself on the left is particularly simple: let $x\otimes r_1 \otimes \cdots \otimes r_i \otimes y$ and $w\otimes s_1 \otimes \cdots \otimes s_j \otimes z$ be pure tensors in $\dq$ of degree $-1-i$ and $-1-j$ respectively. Then their product is just the concatenation $x\otimes r_1 \otimes \cdots \otimes r_i \otimes yw\otimes s_1 \otimes \cdots \otimes s_j \otimes z$, and the obvious analogous statement holds for degree zero elements. Hence, $\dq$ acts on $C$ by concatenation. Note that the dg functor $j^*: \Delta^\mathrm{dg}_R(A) \to {D}^{\mathrm{dg}}_\mathrm{sg}(R)$ is simply multiplication on the right by $e$. Hence, sending $C$ through this map, we obtain the ind-object $Ce\cong\mathrm{cone}(\mathrm{Bar}(Ae) \to Ae)$ that represents $\Sigma(\dq)\cong Ae$ in the dg singularity category $D^\mathrm{dg}_\text{sg}(R)$. As an aside, one can check this directly: since $B(Ae)$ resolves $Ae$, and mapping cones commute with filtered colimits, it is clear that $\varinjlim Ce$ is acyclic, and that $Ce$ admits a map from $Ae \in D^b(R)$ whose cone is the ind-perfect $R$-module $\mathrm{Bar}(Ae)$.

\p Similarly, $\dq$ acts on $Ce$ by concatenation. Now we will explicitly identify $\dge_{D^\mathrm{dg}_\text{sg}(R)}(Ae)= \dge(Ce)$; to simplify notation, we will frequently omit subscripts. We will see that in strictly negative degrees, $\dge(Ce)$ is precisely the complex $T$, and that the induced action of $T$ on $Ce$ is precisely the concatenation action. This will almost be enough to give us what we want. Write $Ce=\{W_\alpha\}_\alpha$, where each $W_\alpha$ is a cone $V_\alpha \to Ae$ with $V_\alpha$ perfect. We compute \begin{align*}
	\dge(Ce) & :=\varprojlim_\alpha\varinjlim_\beta \R\hom_R\left(\mathrm{cone}(V_\alpha \to Ae), W_\beta\right) \\ & \cong \varprojlim_\alpha\varinjlim_\beta \mathrm{cocone}\left[ \R\hom_R(Ae, W_\beta) \to\R\hom_R(V_\alpha, W_\beta)\right] 
	\\ & \cong \varprojlim_\alpha \mathrm{cocone}\left[\varinjlim_\beta  \R\hom_R(Ae, W_\beta) \to \varinjlim_\beta\R\hom_R(V_\alpha, W_\beta)\right] 
	\\ & \cong \varprojlim_\alpha \mathrm{cocone}\left[\varinjlim_\beta  \R\hom_R(Ae, W_\beta) \to \R\hom_R(V_\alpha, \varinjlim_\beta W_\beta)\right]&&\text{since } V_\alpha \text{ is compact}
	\\ & \cong \varinjlim_\beta  \R\hom_R(Ae, W_\beta) &&\text{since } \varinjlim_\beta W_\beta\cong 0.
	\\ & \cong  \varinjlim_\beta \mathrm{cone}\left[\R\hom_R(Ae,V_\beta) \to \R\hom_R(Ae,Ae)\right] 
	\\ & \cong \mathrm{cone}\left[\varinjlim_\beta \R\hom_R(Ae,V_\beta) \to \R\enn_R(Ae)\right]. 
	\end{align*}
	Fix a $\beta$ and consider $\R\hom_R(Ae, V_\beta)$. Since $V_\beta$ is perfect, and $Ae$ has some finitely generated projective resolution, we can write $\R\hom_R(Ae, V_\beta)\cong V_\beta \otimes_R \R\hom_R(Ae,R)$. By assumption, $\R\hom_R(Ae,R)\cong\hom_R(Ae,R)$, so that $\R\hom_R(Ae, V_\beta)$ is quasi-isomorphic to the tensor product $V_\beta \otimes_R \hom_R(Ae,R)$. The natural isomorphism $eA\otimes_A Ae \to R$ gives an isomorphism
$\hom_R(Ae,R)\cong eA$, so we get $\R\hom_R(Ae, V_\beta)\cong V_\beta \otimes_R eA$. So we have $$\varinjlim_\beta \R\hom_R(Ae,V_\beta)\cong\varinjlim_\beta(  V_\beta \otimes_R eA)\cong B(Ae)\otimes_R eA=T.$$Hence, we have $\dge(Ce) \cong \mathrm{cone}(T \to \R\enn_R(Ae))$. From the description above, it is not hard to see that the map $T \to \R\enn_R(Ae)$ is exactly the multiplication map $\mu$. More precisely, $xe \otimes ey \in T^0$ is sent to the derived endomorphism that multiplies by $xey \in AeA$ on the left. More generally, elements
	 of $T$ act on $Ce$ via the concatenation action. It now follows that $H^j\Xi$ must be an isomorphism in strictly negative degrees.
	
	\p It just remains to check that $H^0\Xi$ is an isomorphism. For this, it suffices to look at $\tau_{\leq 0}\dge(Ce)$, which is precisely the cone of $T \xrightarrow{\mu} A$, since the degree 0 cocycles are exactly $\enn_R(Ae)\cong A$. But now the claim is clear.
\end{proof}
Note that $\R\hom_R(Ae,R)\cong \hom_R(Ae,R)$ is a sort of Cohen-Macaulay condition, and will be satisfied when $A$ is a \textbf{partial resolution} of $R$; see \ref{partrsln} for a definition. With this in mind, the following two Corollaries can be seen as generalising a theorem of Buchweitz \cite[6.1.2.ii]{buchweitz} to the non-Gorenstein case; we will return to them in \ref{stabcat}.
\begin{cor}\label{posstabext}
Under the assumptions of \ref{qisolem}, then for $j>1$ there is an isomorphism $$\ext^j_{D_\mathrm{sg}(R)}(Ae,Ae)\cong \ext_R^{j}(Ae,Ae) .$$
\end{cor}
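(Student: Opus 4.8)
The plan is to read the statement off from the explicit model for $\dge_{D^\mathrm{dg}_\text{sg}(R)}(Ae)$ produced in the proof of \ref{qisolem}. That proof establishes, under the present hypotheses, a quasi-isomorphism
$$\dge_{D^\mathrm{dg}_\text{sg}(R)}(Ae)\;\simeq\;\mathrm{cone}\bigl(T \xrightarrow{\;\mu\;} \R\enn_R(Ae)\bigr),$$
where $T:=Ae\lot_R eA$ and $\mu$ is (the lift to $\R\enn_R(Ae)$ of) the multiplication map of \ref{dqexact}. The first step is simply to record the resulting exact triangle $T \xrightarrow{\mu} \R\enn_R(Ae)\to \dge_{D^\mathrm{dg}_\text{sg}(R)}(Ae)\to T[1]$ in $D(k)$ and to take its long exact cohomology sequence.

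The key observation is then a degree count. Since $T$ is a derived tensor product of honest $R$-modules, it is concentrated in nonpositive cohomological degrees, so $H^j(T)=0$ for every $j>0$. Hence for each $j\geq 1$ both $H^j(T)$ and $H^{j+1}(T)$ vanish, and the long exact sequence forces the natural map $H^j(\R\enn_R(Ae))\to H^j(\dge_{D^\mathrm{dg}_\text{sg}(R)}(Ae))$ to be an isomorphism. Identifying $H^j(\R\enn_R(Ae))\cong\ext^j_R(Ae,Ae)$ and $H^j(\dge_{D^\mathrm{dg}_\text{sg}(R)}(Ae))\cong\ext^j_{D_\text{sg}(R)}(Ae,Ae)$ --- the latter because $\dgh$ in $D^\mathrm{dg}_\text{sg}(R)$ computes $\R\hom$ in the Verdier quotient $D_\text{sg}(R)$ --- gives the asserted isomorphism for all $j\geq 1$, hence in particular for $j>1$.

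I do not expect a real obstacle here: the substantive content all lives in \ref{qisolem}, and what remains is routine bookkeeping with one long exact sequence. The only point needing a little care is to check that the cone presentation of $\dge_{D^\mathrm{dg}_\text{sg}(R)}(Ae)$ extracted from the proof of \ref{qisolem} is a statement about the whole complex rather than merely about its truncation $\tau_{\leq 0}$; this holds, since the chain of quasi-isomorphisms there --- which uses the Cohen-Macaulay hypothesis $\R\hom_R(Ae,R)\simeq\hom_R(Ae,R)$ to replace each $\R\hom_R(Ae,V_\beta)$ by $V_\beta\otimes_R eA$ --- is valid in every cohomological degree.
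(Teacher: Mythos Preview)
Your proof is correct and is essentially the same as the paper's: both extract from the proof of \ref{qisolem} the cone presentation $\dge_{D^\mathrm{dg}_\text{sg}(R)}(Ae)\simeq\mathrm{cone}(T\to\R\enn_R(Ae))$ with $T$ concentrated in nonpositive degrees, and then read off the vanishing of the contribution of $T$ in positive degrees. The paper phrases this as a quasi-isomorphism $\tau_{>0}\R\enn_R(Ae)\to\tau_{>0}\dge(Ce)$, while you run the long exact sequence; these are the same argument, and your observation that it in fact yields the isomorphism already for $j\geq 1$ is correct.
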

\begin{proof}
The proof above writes $\dge(Ce)$ as a cone $T \to \R\enn_R(Ae)$ with $T$ in negative degrees. Hence there is a quasi-isomorphism $\tau_{>0}\R\enn_R(Ae) \to \tau_{>0} \dge(Ce)$.
\end{proof}
\begin{cor}\label{negstabext}
Under the assumptions of \ref{qisolem}, then for $j<-1$, there is an isomorphism $$\ext^j_{D_\mathrm{sg}(R)}(Ae,Ae)\cong \tor^R_{-j-1}(Ae,eA) .$$
\end{cor}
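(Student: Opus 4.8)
The plan is to read the statement off the cone description of the endomorphism dga established in the proof of \ref{qisolem}. Write $\dge(Ce)$ for $\dge_{D^\mathrm{dg}_\text{sg}(R)}(Ae)$. There we showed, under the present hypotheses, that $\dge(Ce)$ is quasi-isomorphic to $\mathrm{cone}(\mu\colon T \to \R\enn_R(Ae))$, where $T$ is a model for the derived tensor product $Ae \lot_R eA$ — concentrated in nonpositive degrees with $H^{-i}(T) \cong \tor^R_i(Ae,eA)$ by \ref{dqexact} and \ref{derquotcohom} — and $\R\enn_R(Ae) = \R\hom_R(Ae,Ae)$ is cohomologically concentrated in nonnegative degrees. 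Applying cohomology to the triangle $T \xrightarrow{\mu} \R\enn_R(Ae) \to \dge(Ce) \to T[1]$ yields, for every $j$, an exact sequence
$$H^j(\R\enn_R(Ae)) \longrightarrow H^j(\dge(Ce)) \longrightarrow H^{j+1}(T) \longrightarrow H^{j+1}(\R\enn_R(Ae)).$$
For $j<-1$ both outer terms vanish, since $j<0$ and $j+1<0$, so $H^j(\dge(Ce)) \cong H^{j+1}(T) \cong \tor^R_{-j-1}(Ae,eA)$. As $H^j(\dge_{D^\mathrm{dg}_\text{sg}(R)}(Ae)) \cong \ext^j_{D_\mathrm{sg}(R)}(Ae,Ae)$, this is the claimed isomorphism. (This is the mirror image, in the negative range, of the proof of \ref{posstabext}.)

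Alternatively, one can avoid the cone bookkeeping altogether: by \ref{qisolem} the comparison map $\Xi$ induces an isomorphism $H^j(\dq) \xrightarrow{\simeq} H^j(\dge_{D^\mathrm{dg}_\text{sg}(R)}(Ae))$ for all $j \le 0$, and by \ref{derquotcohom} we have $H^j(\dq) \cong \tor^R_{-j-1}(Ae,eA)$ for $j<-1$. Composing these two identifications gives the result directly.

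There is no genuine obstacle: the substantive work — choosing compatible ind-resolutions of $Ae$ inside $\Delta^\mathrm{dg}_R(A)$ and $D^\mathrm{dg}_\text{sg}(R)$, and identifying $\varinjlim_\beta \R\hom_R(Ae,V_\beta)$ with $T$ via the Cohen--Macaulay hypothesis $\R\hom_R(Ae,R)\simeq \hom_R(Ae,R)$ — was already done in \ref{qisolem}. The only point requiring care is the degree convention for $T$, so that the index shift in the cone produces $\tor^R_{-j-1}$ and not $\tor^R_{-j}$ or $\tor^R_{-j-2}$; this is pinned down by \ref{dqexact} together with \ref{derquotcohom}.
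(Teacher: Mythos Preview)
Your proposal is correct, and your second (``alternative'') argument is exactly the paper's proof: combine the isomorphism $H^j(\dq)\cong H^j(\dge_{D^\mathrm{dg}_\text{sg}(R)}(Ae))$ for $j\leq 0$ from \ref{qisolem} with the computation of $H^j(\dq)$ in \ref{derquotcohom}. Your first argument, reading off the long exact sequence of the cone $T\to \R\enn_R(Ae)\to \dge(Ce)$ directly, is also fine and is the natural mirror of \ref{posstabext}; it just bypasses the intermediate identification with $H^j(\dq)$ and works with the same ingredients in a slightly different order.
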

\begin{proof}
Immediate from \ref{derquotcohom}.
\end{proof}

\section{Partial resolutions of Gorenstein rings}
In this section, we will bring in some geometric hypotheses. We will assume that the corner ring $R$ is Gorenstein, which will allow us to use Buchweitz's machinery of the stable category \cite{buchweitz}. We will specialise to the case when $R$ is a commutative complete local hypersurface, where it is well known that the shift functor of $D_\mathrm{sg}(R)$ is 2-periodic. This will then give us periodicity in the dga $\dq$ (\ref{etaex}), which will allow us to identify the cohomology algebra of $\dq$ explicitly. We will apply a recovery result of Hua-Keller \cite{huakeller} to prove that in certain situations, the quasi-isomorphism class of $\dq$ recovers the geometry of $R$ (\ref{recov}).
\subsection{The stable category}\label{stabcat}
\begin{defn}
Let $R$ be a $k$-algebra. Say that $R$ is \textbf{Gorenstein} (or \textbf{Iwanaga-Gorenstein}) if it is noetherian on both sides and of finite left and right injective dimension over itself.
\end{defn}
Complete intersections are Gorenstein:
\begin{prop}[\cite{eisenbud}, Corollary 21.19]\label{hypsgor}
Let $S$ be a commutative noetherian regular local ring and $I \subseteq S$ an ideal generated by a regular sequence. Then $S/I$ is Gorenstein.
\end{prop}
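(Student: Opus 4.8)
The plan is to reduce the statement to two standard facts — a regular local ring is Gorenstein, and the Gorenstein property passes to quotients by a non-zero-divisor — and then bootstrap along the regular sequence $\underline f = f_1,\dots,f_c$ generating $I$.

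First I would recall the characterisation that a Noetherian local ring $(R,\mathfrak m)$ with residue field $\kappa$ is Gorenstein precisely when $\operatorname{injdim}_R R<\infty$, which for a Cohen--Macaulay $R$ of dimension $d$ is equivalent to $\operatorname{Ext}^i_R(\kappa,R)$ being zero for $i\neq d$ and one-dimensional for $i=d$. For $S$ regular local of dimension $n$ this is immediate: the Koszul complex on a regular system of parameters is a minimal free resolution of $\kappa$, and dualising it realises $\operatorname{Ext}^\bullet_S(\kappa,S)$ as Koszul cohomology, which is concentrated in degree $n$, where it is $\kappa$. Hence $S$ is Gorenstein; equivalently, its canonical module satisfies $\omega_S\cong S$.

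For the descent I would work directly with the Koszul resolution rather than iterating. Since $\underline f$ is a regular sequence, $K_\bullet:=K_\bullet(\underline f;S)$ is a finite free resolution of $S/I$, and it is self-dual: $\operatorname{Hom}_S(K_\bullet,S)\cong K_\bullet[-c]$ up to a trivial rank-one twist. Therefore $\operatorname{Ext}^i_S(S/I,S)$ vanishes for $i\neq c$ and is isomorphic to $H_0(K_\bullet)=S/I$ for $i=c$. On the other hand $S/I$ is Cohen--Macaulay of dimension $\dim S-c$, being the quotient of a Cohen--Macaulay ring by a regular sequence, and $\operatorname{grade}_S(I)=c$; so the theory of canonical modules over a Gorenstein base (Eisenbud, Bruns--Herzog) identifies $\omega_{S/I}\cong\operatorname{Ext}^c_S(S/I,\omega_S)\cong\operatorname{Ext}^c_S(S/I,S)\cong S/I$. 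A Cohen--Macaulay local ring whose canonical module is free of rank one is Gorenstein, so $S/I$ is Gorenstein. (Alternatively one can induct on $c$, using the short exact sequence $0\to R\xrightarrow{x}R\to R/xR\to 0$ and $\operatorname{injdim}_R R=\dim R<\infty$ to get $\operatorname{injdim}_R(R/xR)<\infty$, and then invoking Rees's change-of-rings formula $\operatorname{injdim}_{R/xR}(R/xR)=\operatorname{injdim}_R(R/xR)-1$.)

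The only ingredient that is not pure bookkeeping is the self-duality of the Koszul complex — equivalently the change-of-rings behaviour of injective dimension along a regular element — which is classical; accordingly, in the write-up I would simply cite \cite[Corollary 21.19]{eisenbud} for the full argument.
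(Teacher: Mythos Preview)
Your proposal is correct, and in fact the paper gives no proof at all: it simply states the result with the citation to Eisenbud and moves on. Your sketch via Koszul self-duality and canonical modules is the standard argument behind that reference, so your final suggestion to just cite \cite[Corollary~21.19]{eisenbud} is exactly what the paper does.
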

In particular, if $S$ is a commutative noetherian regular local ring and $\sigma \in S$ is a non-zerodivisor then the hypersurface $S/\sigma$ is Gorenstein.
\begin{defn}
Let $R$ be a Gorenstein $k$-algebra. If $M$ is an $R$-module, write $M^\vee$ for the $R$-linear dual $\hom_R(M,R)$. A finitely generated $R$-module $M$ is \textbf{maximal Cohen-Macaulay} or just \textbf{MCM} if the natural map $\R\hom_R(M,R)\to M^\vee$ is a quasi-isomorphism.
\end{defn}
An equivalent characterisation of MCM $R$-modules is that they are those modules $M$ for which $\ext_R^j(M,R)$ vanishes whenever $j> 0$.
\begin{defn}Let $R$ be a Gorenstein $k$-algebra. The \textbf{stable category} $\stab R$ of $R$ is the category whose objects are MCM modules over $R$, and whose homsets are $\hom_{\stab R}(X,Y):=\underline{\hom}_R(X,Y)$, the set of $R$-linear maps between $X$ and $Y$ modulo those that factor through sums of summands of $R$.
\end{defn}The stable category is triangulated, with shift given by syzygy: for each $X$, choose a surjection $f:R^n \to X$, and set $\Omega X:=\ker f$. Up to projective summands, $\Omega$ is well defined, and since projective objects go to zero in $\stab R$, it defines a functor on $\stab R$. In fact, it is an endofunctor, in that $\Omega X$ is again MCM. A famous theorem of Buchweitz tells us that the stable category is the same as the singularity category:
\begin{thm}[\cite{buchweitz}]Let $R$ be a Gorenstein $k$-algebra. The categories $D_{\text{sg}}(R)$ and $\stab R$ are triangle equivalent, via the map that sends an MCM module $M$ to the object $M \in D^b(R)$.
\end{thm}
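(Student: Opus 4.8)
The plan is to reproduce Buchweitz's original argument: construct an explicit triangle functor $F\colon \stab R \to D_{\mathrm{sg}}(R)$ and show it is fully faithful and essentially surjective.

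First I would define $F$ on objects by sending an MCM module $M$ to its image under the composite $\cat{mod-}R \to D^b(R) \to D_{\mathrm{sg}}(R)$. On morphisms this is well defined and descends to $\underline{\hom}_R$, because an $R$-linear map factoring through a summand of a free module becomes zero in $D_{\mathrm{sg}}(R)$: finitely generated projectives are perfect and hence vanish in the Verdier quotient. To see $F$ is a triangle functor, recall that the suspension of $\stab R$ is the cosyzygy $\Omega^{-1}$, which preserves MCM modules over the Gorenstein ring $R$ (embed $M\cong M^{\vee\vee}$ into a free module by dualising a free cover $R^m\twoheadrightarrow M^\vee$, and observe the cokernel $C$ has $\ext^{>0}_R(C,R)=0$ since $R^m\to M^\vee$ was onto). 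The resulting short exact sequence $0\to M\to R^m\to \Omega^{-1}M\to 0$ is a distinguished triangle in $D^b(R)$ whose middle term is perfect, giving a natural isomorphism $F(\Omega^{-1}M)\cong F(M)[1]$; combined with the fact that the distinguished triangles of $\stab R$ are exactly those induced by short exact sequences of MCM modules (these being admissible conflations of the Frobenius category of MCM modules), which map to distinguished triangles of $D^b(R)$, this shows $F$ respects the triangulated structure.

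Next I would prove $F$ is fully faithful, i.e.\ that $\underline{\hom}_R(M,N)\to \hom_{D_{\mathrm{sg}}(R)}(M,N)$ is bijective for MCM $M,N$. The key input is that $M$ being MCM forces $\R\hom_R(M,R)\simeq M^\vee$ to be concentrated in degree zero, so $\ext^i_R(M,Q)=0$ for all $i\neq 0$ and every finitely generated projective $Q$. From this one deduces that any morphism $M\to P$ in $D^b(R)$ with $P$ perfect factors through a genuine projective module (truncate $P$ and lift using the Ext-vanishing), and dually for maps from perfect complexes into a sufficiently high syzygy of $N$. Feeding this into the calculus of fractions for $D^b(R)/\per(R)$ lets one normalise an arbitrary roof $M\xleftarrow{s}Z\xrightarrow{f}N$ with $\mathrm{cone}(s)$ perfect down to a genuine $R$-linear map $M\to N$, well defined modulo maps factoring through projectives --- precisely $\underline{\hom}_R(M,N)$. (Equivalently, one identifies $\hom_{D_{\mathrm{sg}}(R)}(M,N[i])$ with Tate cohomology $\widehat{\ext}^{\,i}_R(M,N)$ and uses $\widehat{\ext}^{\,0}_R(M,N)\cong\underline{\hom}_R(M,N)$.)

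Finally, for essential surjectivity: given $X\in D^b(R)$, take a projective resolution; since $R$ has finite injective dimension $d$ on each side, the $d$-th syzygy $\Omega^d X$ read off from this resolution is MCM, and the short exact sequences defining the intermediate syzygies give $X\cong (\Omega^d X)[d]$ in $D_{\mathrm{sg}}(R)$. As $F$ is a triangle functor and $[d]$ on its image is $\Omega^{-d}$, which again lands among MCM modules, $X$ lies in the essential image of $F$. I expect the fully faithful step to be the main obstacle: it rests on the structural fact that over a Gorenstein ring every finitely generated module has finite Gorenstein dimension, which is what licenses the reduction of $\hom$ in the Verdier quotient to $\underline{\hom}$, and this is really the heart of Buchweitz's theorem.
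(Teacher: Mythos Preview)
The paper does not prove this theorem: it is stated as a citation to Buchweitz \cite{buchweitz} and used as a black box, with no argument given. Your sketch is a faithful outline of Buchweitz's original proof, and the strategy---define the obvious functor, check it is triangulated via (co)syzygy sequences, prove full faithfulness using $\ext^{>0}_R(M,\text{projective})=0$ for MCM $M$ together with the calculus of fractions, and get essential surjectivity by passing to a high enough syzygy---is correct. One small imprecision: for an arbitrary bounded complex $X$ (rather than a module) the bound ``$d$-th syzygy'' must be adjusted by the amplitude of $X$, but this is harmless at the level of a sketch.
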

Hence, we can regard the dg singularity category $D^\mathrm{dg}_{\text{sg}}(R)$ as a dg-enhancement of $\stab R$.
\begin{defn}
Let $R$ be a Gorenstein $k$-algebra and let $x,y \in D^\mathrm{dg}_{\text{sg}}(R)$. Write $\R\underline{\hom}_R(x,y)$ for the complex $\dgh_{D^\mathrm{dg}_{\text{sg}}(R)}(x,y)$ and write $\R\underline{\enn}_R(x)$ for the dga $\dge_{D^\mathrm{dg}_{\text{sg}}(R)}(x)$.
\end{defn}We denote Ext groups in the singularity category by $\underline{\ext}$. Note that $\underline\hom$ coincides with $\underline\ext^0$, and that $\underline{\ext}^j(x,y)\cong H^j\R\underline{\hom}_R(x,y)$. In order to investigate the stable Ext groups, we recall the notion of the complete resolution of an MCM $R$-module -- the construction works for arbitrary complexes in $D^b(\cat{mod-}R)$.
\begin{defn}[\cite{buchweitz}, 5.6.2.]
Let $R$ be a Gorenstein $k$-algebra and let $M$ be any MCM $R$-module. Let $P$ be a projective resolution of $M$, and let $Q$ be a projective resolution of $M^\vee$. Dualising and using that $(-)^\vee$ is an exact functor on MCM modules and on projectives gives us a projective coresolution $M \to Q^\vee$. The \textbf{complete resolution} of $M$ is the (acyclic) complex $\mathbf{CR}(M):=\mathrm{cocone}(P \to Q^\vee)$. So in nonpositive degrees, $\mathbf{CR}(M)$ agrees with $P$, and in positive degrees, $\mathbf{CR}(M)$ agrees with $Q^\vee[-1]$.
\end{defn}
\begin{prop}[\cite{buchweitz}, 6.1.2.ii]\label{buchcohom}Let $R$ be a Gorenstein $k$-algebra and let $M,N$ be MCM $R$-modules. Then $$\underline{\ext}_R^j(M,N) \cong H^j\hom_R(\mathbf{CR}(M),N) .$$
\end{prop}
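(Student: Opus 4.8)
The plan is to reduce the computation of stable Ext groups to an ordinary Hom computation over the complete resolution, exactly as Buchweitz does, using the fact that $D_{\text{sg}}(R) \cong \stab R$ and that the morphism complexes in $D^\mathrm{dg}_{\text{sg}}(R)$ compute the derived homs. First I would recall that since $M$ is MCM, its complete resolution $\mathbf{CR}(M)$ is an acyclic complex of projectives whose nonpositive part is a projective resolution $P$ of $M$ and whose positive part is $Q^\vee[-1]$ for $Q$ a projective resolution of $M^\vee$. The key structural point is that $\mathbf{CR}(M)$ represents $M$ as an object of $D^\mathrm{dg}_{\text{sg}}(R)$: it is acyclic (so it becomes zero after $\varinjlim$ in the relevant ind-category), it is a complex of projectives, and the natural map $P \to \mathbf{CR}(M)$ from the genuine projective resolution has cone $Q^\vee[-1]$, which is a bounded complex of projectives, hence perfect. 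So $\mathbf{CR}(M)$ differs from $M$ (viewed via $P$) by a perfect object, which is precisely the condition for it to be an object of the Drinfeld quotient $D^\mathrm{dg}_{\text{sg}}(R)$ representing $M$; this mirrors the argument used for $Ce$ in the proof of \ref{qisolem}.

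Next I would compute $\R\underline{\hom}_R(M,N) = \dgh_{D^\mathrm{dg}_{\text{sg}}(R)}(M,N)$ using this model. Taking $\mathbf{CR}(M)$ as the source and $N$ (an honest MCM module, concentrated in degree zero) as the target, and using that each term of $\mathbf{CR}(M)$ is projective, the derived hom is computed by the honest hom complex $\hom_R(\mathbf{CR}(M),N)$. One has to check that in the Drinfeld quotient the morphism complex out of this representative is still modelled by the naive hom complex; this follows because $\mathbf{CR}(M)$ is a filtered colimit (in the ind-category) of perfect modules together with the bounded piece $P$, and maps into $N$ that factor through perfect modules contribute nothing to cohomology — the stable homs are exactly the homs modulo those factoring through projectives, which is what passing from $\hom_R(P,N)$ to $\hom_R(\mathbf{CR}(M),N)$ accomplishes. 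Taking $H^j$ then gives $\underline{\ext}_R^j(M,N) \cong H^j \hom_R(\mathbf{CR}(M),N)$, which is the claim.

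The main obstacle is the bookkeeping in the ind-category: one must verify carefully that $\hom_R(\mathbf{CR}(M),N)$ genuinely computes $\dgh_{D^\mathrm{dg}_{\text{sg}}(R)}(M,N)$ and not merely $\R\hom_{D^b(R)}(M,N)$ — i.e. that quotienting by perfect complexes on the source side is implemented precisely by replacing the projective resolution $P$ with the complete resolution $\mathbf{CR}(M)$, with no spurious contributions in either direction. Concretely this amounts to showing that $\dgh_{\cat{ind}}(\,-,N\,)$ evaluated on the ind-object representing $M$ in $D^\mathrm{dg}_{\text{sg}}(R)$ agrees with $\hom_R(\mathbf{CR}(M),N)$, using \ref{drinfeldlem} and the fact that $N$ is compact in $D^b_{\mathrm{dg}}(R)$; everything else (exactness of $(-)^\vee$ on MCM modules and projectives, acyclicity of $\mathbf{CR}(M)$, the identification of its positive and negative parts) is standard and cited. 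In fact this is precisely the content of \cite[6.1.2]{buchweitz}, so I would simply invoke that result, noting that our $\R\underline{\hom}_R(M,N)$ is the dg enhancement of Buchweitz's stable Hom and that taking cohomology recovers his formula.
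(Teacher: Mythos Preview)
The paper gives no proof of this proposition at all: it is stated with the citation \cite[6.1.2.ii]{buchweitz} and simply used as a black box, with the subsequent paragraph drawing consequences (the identifications with $\ext$ for $j>0$ and with $\tor$ for $j<-1$). Your final sentence---``I would simply invoke that result''---is therefore exactly what the paper does, and is the correct move.

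Your attempted independent argument, however, has real problems. First, you assert that $N$ is compact in $D^b_{\mathrm{dg}}(R)$, but an MCM module is compact there only when it is perfect, which would force $N\cong 0$ in the stable category; so \ref{drinfeldlem} is not available in the way you want. Second, $\mathbf{CR}(M)$ is an unbounded complex, not a bounded one, so it is not itself an object of $D^b_{\mathrm{dg}}(R)$ or an ind-object therein in any obvious way; your description of it as ``a filtered colimit of perfect modules together with the bounded piece $P$'' does not make sense since $P$ is already unbounded. Third, computing $\dgh$ in the Drinfeld quotient requires ind-representatives for \emph{both} source and target and a $\varprojlim\varinjlim$, so mapping a putative representative of $M$ directly into the module $N$ is not how the morphism complex is built. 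Buchweitz's original argument is purely in the stable/triangulated world (comparing stable Hom with Tate cohomology via complete resolutions) and does not go through the dg enhancement; the identification with the paper's $\underline{\ext}$ then follows immediately from $[D^{\mathrm{dg}}_{\mathrm{sg}}(R)]\cong D_{\mathrm{sg}}(R)$ (\ref{drinfeldpretr}). So: cite Buchweitz and move on, as the paper does.
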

In particular, if $j>0$, then $\underline{\ext}_R^j(M,N)\cong\ext_R^j(M,N)$. If $j<-1$, note that if $Q$ is as above, we certainly have $\underline{\ext}_R^j(M,N)\cong\ext_R^{-j-1}(Q^\vee,N)$.  Now, \cite{buchweitz}, 6.2.1.ii tells us that $\R\hom_R(Q^\vee,N)$ is quasi-isomorphic to $N\lot_R M^\vee$. Hence, we get $\underline{\ext}_R^j(M,N)\cong\tor^R_{-j-1}(N,M^\vee)$. Note the similarity to \ref{negstabext}.
\begin{defn}\label{partrsln}
	Let $R$ be a Gorenstein $k$-algebra. A $k$-algebra $A$ is a \textbf{(noncommutative) partial resolution} of $R$ if it is of the form $A\cong\enn_R(R\oplus M)$ for some MCM $R$-module $M$. Note that $A$ is a finitely generated module over $R$, and hence itself a noetherian $k$-algebra. Say that a partial resolution is a \textbf{resolution} if it has finite global dimension.
\end{defn}
In the sequel, we will refer to the following setup:
\begin{setup}\label{sfsetup}
Let $R$ be a Gorenstein $k$-algebra. Assume that $D_{\text{sg}}(R)$ is idempotent complete (e.g. this is the case when $R$ is a commutative complete local ring \cite[5.5]{kalckyang2}). Fix an MCM $R$-module $M$ and let $A=\enn_R(R\oplus M)$ be the associated partial resolution. Let $e\in A$ be the idempotent $e=\id_R$. 
\end{setup}
Observe that in the situation of Setup \ref{sfsetup} we have $Ae\cong R\oplus M$ and $eA\cong R \oplus M^\vee$; in particular $(Ae)^\vee\cong eA$. Note that $Ae\cong M $ in the singularity category, and indeed we have $A/AeA\cong \underline{\enn}(M)$. Theorem \ref{qisolem} immediately recovers a categorified version of \ref{buchcohom}:
\begin{prop}Suppose that we are in the situation of Setup \ref{sfsetup}. The comparison map $\Xi:\dq \to \R\underline{\enn}_R(M)$ induces a quasi-isomorphism $\dq \to \tau_{\leq 0}\R\underline{\enn}_R(M)$. Moreover, $\tau_{>0}\R\underline{\enn}_R(M)$ is quasi-isomorphic to $\tau_{>0}\R\enn_R(M)$.
\end{prop}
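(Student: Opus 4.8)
The plan is to deduce both statements from Theorem~\ref{qisolem} and its proof, once we have checked that the situation of Setup~\ref{sfsetup} meets the hypotheses of that theorem. First I would record the module-theoretic facts in this setting: $Ae\cong R\oplus M$ and $eA\cong R\oplus M^\vee$ as one-sided modules, with $(Ae)^\vee\cong eA$; since $A$ is a finitely generated module over the Gorenstein --- hence two-sided noetherian --- ring $R$, the algebra $A$ is right noetherian, and $Ae$ is a finitely generated $R$-module because $M$ is. The only substantive hypothesis of \ref{qisolem} left to verify is that the natural map $\R\hom_R(Ae,R)\to\hom_R(Ae,R)$ is a quasi-isomorphism; splitting $Ae=R\oplus M$, this is clear on the $R$ summand (as $R$ is projective) and is exactly the definition of $M$ being MCM on the $M$ summand. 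Idempotent completeness of $D_{\mathrm{sg}}(R)$ is part of Setup~\ref{sfsetup}, so \ref{qisolem} applies.

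Next I would identify the target of $\Xi$. The comparison map of \ref{comparisonmap} is a dga map $\dq\to\dge_{D^\mathrm{dg}_\text{sg}(R)}(Ae)$, and since $R$ is a perfect $R$-module it is a zero object in $D^\mathrm{dg}_\text{sg}(R)$; the cross terms and the $R$-self-term of $\dge_{D^\mathrm{dg}_\text{sg}(R)}(R\oplus M)$ are therefore acyclic, so $\dge_{D^\mathrm{dg}_\text{sg}(R)}(Ae)$ is quasi-isomorphic to $\dge_{D^\mathrm{dg}_\text{sg}(R)}(M)=\R\underline{\enn}_R(M)$, and under this identification $\Xi$ becomes the asserted map. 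Theorem~\ref{qisolem} now says $H^j(\Xi)$ is an isomorphism for all $j\leq 0$; since $\dq$ is concentrated in nonpositive cohomological degrees by \ref{derquotcohom}, the truncated map $\dq\to\tau_{\leq 0}\R\underline{\enn}_R(M)$ is therefore a quasi-isomorphism. That is the first claim.

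For the second claim I would return to the proof of \ref{qisolem}, which exhibits $\R\underline{\enn}_R(M)\simeq\dge(Ce)$ as the mapping cone of the multiplication map $T\xrightarrow{\mu}\R\enn_R(Ae)$, where $T\simeq Ae\lot_R eA$ is cohomologically concentrated in nonpositive degrees. The long exact sequence of this triangle gives $H^j\big(\R\underline{\enn}_R(M)\big)\cong H^j\big(\R\enn_R(Ae)\big)$ for all $j>0$, i.e. $\tau_{>0}\R\underline{\enn}_R(M)\simeq\tau_{>0}\R\enn_R(Ae)$ (a reformulation of \ref{posstabext}), and $\R\enn_R(Ae)=\R\enn_R(R\oplus M)$ since $Ae\cong R\oplus M$. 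Finally I would observe that $\tau_{>0}\R\enn_R(R\oplus M)\simeq\tau_{>0}\R\enn_R(M)$: decomposing $\R\enn_R(R\oplus M)$ into its four Hom-complexes, in positive degrees the terms involving the free summand $R$ vanish, since $\ext^j_R(R,-)=0$ for $j>0$ because $R$ is projective and $\ext^j_R(M,R)=0$ for $j>0$ because $M$ is MCM, leaving only $\R\enn_R(M)$.

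The only points requiring real care are the two appearances of the MCM hypothesis: first to certify the Cohen--Macaulay condition $\R\hom_R(Ae,R)\simeq\hom_R(Ae,R)$ needed to invoke \ref{qisolem}, and second to discard the off-diagonal and $R$--$R$ contributions to positive-degree Ext; both follow at once from the definition of an MCM module. Everything else is bookkeeping with the cone description in the proof of \ref{qisolem}, together with the standard fact that adjoining a zero object leaves an endomorphism dga unchanged up to quasi-isomorphism.
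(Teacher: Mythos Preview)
Your proof is correct and follows essentially the same approach as the paper: invoke \ref{qisolem} for the first statement and use the cone description from its proof (as in \ref{posstabext}) for the second. You have usefully made explicit several steps the paper leaves implicit---the verification of the hypotheses of \ref{qisolem} via the MCM condition on $M$, the identification $\dge_{D^\mathrm{dg}_\text{sg}(R)}(Ae)\simeq\R\underline{\enn}_R(M)$, and the passage from $\tau_{>0}\R\enn_R(Ae)$ to $\tau_{>0}\R\enn_R(M)$ by discarding the Ext-terms involving the free summand $R$.
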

\begin{proof}
The first statement is \ref{qisolem} and the second is contained in the proof of \ref{posstabext}.
\end{proof}
\begin{cor}\label{gordqcohom}
Suppose that we are in the situation of Setup \ref{sfsetup}. We have an isomorphism $H^j(\dq)\cong \underline{\ext}^j_R(M,M)$ for $j\leq 0$.
\end{cor}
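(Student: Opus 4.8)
The plan is to obtain the statement simply by taking cohomology in the quasi-isomorphism established in the Proposition immediately preceding the Corollary. First I would check that we are indeed in a position to apply it: in the setting of Setup \ref{sfsetup} the corner ring $R$ is Gorenstein, so noetherian on both sides; the partial resolution $A=\enn_R(R\oplus M)$ is a finitely generated $R$-module and hence itself right noetherian; $Ae\cong R\oplus M$ is finitely generated over $R$; $D_\mathrm{sg}(R)$ is idempotent complete by hypothesis; and, since $M$ is MCM and $R^\vee\cong R$, the natural map $\R\hom_R(Ae,R)\to\hom_R(Ae,R)$ is a quasi-isomorphism. Thus the hypotheses of \ref{qisolem} hold, and the preceding Proposition gives a quasi-isomorphism $\Xi:\dq\xrightarrow{\simeq}\tau_{\leq 0}\R\underline{\enn}_R(M)$, where we have used that $Ae\cong M$ in $D_\mathrm{sg}(R)$ (as $R$ is perfect and hence zero there) to identify $\dge_{D^\mathrm{dg}_\mathrm{sg}(R)}(Ae)$ with $\R\underline{\enn}_R(M)$.

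Next I would apply $H^j(-)$ for $j\leq 0$. Good truncation to degrees $\leq 0$ does not change cohomology in degrees $\leq 0$, so $H^j(\tau_{\leq 0}\R\underline{\enn}_R(M))\cong H^j(\R\underline{\enn}_R(M))$ for all $j\leq 0$. By definition $\R\underline{\enn}_R(M)=\dge_{D^\mathrm{dg}_\mathrm{sg}(R)}(M)$, and by definition of the stable Ext groups $H^j(\R\underline{\enn}_R(M))\cong\underline{\ext}^j_R(M,M)$. Composing these identifications with the isomorphism $H^j(\Xi)$ yields $H^j(\dq)\cong\underline{\ext}^j_R(M,M)$ for $j\leq 0$, as required.

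There is no real obstacle here: the Corollary is a direct readout of the preceding Proposition (equivalently, of \ref{qisolem} together with the definitions), and the only point requiring a moment's attention is the verification that Setup \ref{sfsetup} supplies the Cohen--Macaulay hypothesis $\R\hom_R(Ae,R)\simeq\hom_R(Ae,R)$ of \ref{qisolem}, which is immediate from the defining property of MCM modules together with $(R\oplus M)^\vee\cong R\oplus M^\vee$. If one preferred to be self-contained one could inline the relevant part of the proof of \ref{qisolem}, but invoking the Proposition is cleaner and I would do that.
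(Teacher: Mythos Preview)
Your proposal is correct and matches the paper's approach exactly: the Corollary is stated without proof in the paper, being an immediate consequence of the preceding Proposition (which is itself just \ref{qisolem} restated in the context of Setup~\ref{sfsetup}), and you have spelled out precisely this deduction together with the routine verification that Setup~\ref{sfsetup} supplies the hypotheses of \ref{qisolem}.
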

\begin{rmk}Morally, we ought to have $\R\underline{\enn}_R(M) \cong \hom_R(\mathbf{CR}(M),M)$, but the right hand side does not admit an obvious dga structure. Note that $\mathbf{CR}(M)$ is glued together out of a projective resolution $P$ and a projective coresolution $Q^\vee$ of $M$, and hence we ought to have $$\R\underline{\enn}_R(M) \cong \mathrm{cone}\left[ \hom_R(Q^\vee, M) \to \hom_R(P,M)\right] \cong \mathrm{cone}\left[ T \to \R\enn_R(M)\right]$$which we do indeed obtain during the course of the proof of \ref{qisolem}.
\end{rmk}

\subsection{Periodicity in the derived quotient}
Keep the hypotheses of Setup \ref{sfsetup}. In this part, we will assume additionally that there is some natural number $p>0$ such that the shift functor $\Omega$ of ${D_\text{sg}(R)}$ satisfies $\Omega^p\cong\id$. For example, if $R$ is a commutative complete local hypersurface, then it is well known \cite{eisenbudper} that this holds with $p=2$. Since $\Omega^p\cong\id$, we get periodicity in the stable Ext groups: $\underline{\ext}_R^j(-,M)\cong \underline{\ext}_R^{j-p}(-,M)$ for all $j$. Stitching together the syzygy exact sequences for $M$, one obtains a $p$-periodic finite-rank free resolution $\tilde M \to M$. More precisely, there is a central element $\theta \in \enn^{p}_R( \tilde M)$ whose components $\theta_i:\tilde M _{i-p} \to \tilde M _ i$ are (up to sign) identity maps for $i \geq 0$. We refer to $\theta$ as the \textbf{periodicity element}. Note that $\theta$ is necessarily a cocycle.
\begin{prop}
For $j\leq 0$ and for $i \ge \lceil-j/p\rceil$, there is an isomorphism $H^j(\dq) \cong \ext^{ip+j}_R(M,M)$. In particular, for all $j\leq0$ and all $l\geq 0$ one has $H^j(\dq)\cong H^{j-lp}(\dq)$. If $p=2$ one has $H^j(\dq)\cong \ext^{-j}_R(M,M)$.
\end{prop}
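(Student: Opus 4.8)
The plan is to combine the Cohen–Macaulay cohomology computation of $\dq$ with the periodicity of the stable category. By Corollary \ref{gordqcohom}, for $j\leq 0$ we have $H^j(\dq)\cong\underline{\ext}^j_R(M,M)$. Since $\Omega^p\cong\id$ on $D_\text{sg}(R)$, the stable Ext groups are $p$-periodic: $\underline{\ext}^j_R(M,M)\cong\underline{\ext}^{j-p}_R(M,M)$ for all $j\in\Z$ (one direction of the isomorphism is induced by $\Omega$, which is an autoequivalence). Iterating, $\underline{\ext}^j_R(M,M)\cong\underline{\ext}^{j+ip}_R(M,M)$ for all $i$. First I would observe that for $j\leq 0$ and $i\geq\lceil -j/p\rceil$ we have $ip+j\geq 0$, in fact $ip+j>0$ unless $ip+j=0$; and Proposition \ref{buchcohom} (or the remark immediately after it) gives $\underline{\ext}^m_R(M,M)\cong\ext^m_R(M,M)$ for $m>0$. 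The edge case $ip+j=0$ is handled by noting $\underline{\ext}^0_R(M,M)=\underline{\enn}_R(M)$, but one can always increase $i$ by one to land strictly in positive degree, so no separate argument is needed there. Chaining these isomorphisms yields $H^j(\dq)\cong\underline{\ext}^j_R(M,M)\cong\underline{\ext}^{ip+j}_R(M,M)\cong\ext^{ip+j}_R(M,M)$ for $j\leq 0$ and $i\geq\lceil -j/p\rceil$.

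For the second assertion, that $H^j(\dq)\cong H^{j-lp}(\dq)$ for $j\leq 0$ and $l\geq 0$: both sides are nonpositive degrees, so Corollary \ref{gordqcohom} applies to each, giving $H^j(\dq)\cong\underline{\ext}^j_R(M,M)$ and $H^{j-lp}(\dq)\cong\underline{\ext}^{j-lp}_R(M,M)$, and these agree by $p$-periodicity of stable Ext. Alternatively, and perhaps more cleanly, this is immediate from the first statement: fix $i$ large enough to work for both $j$ and $j-lp$ simultaneously, and both cohomologies are identified with the same group $\ext^{ip+j}_R(M,M)$ after shifting $i$ appropriately. I would present whichever is shorter; the stable-Ext route is the most transparent.

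For the last assertion, when $p=2$: here $\lceil -j/2\rceil$ suffices as the bound, but the sharper claim is that $H^j(\dq)\cong\ext^{-j}_R(M,M)$ for $j\leq 0$, i.e. one can take $i=1$ and the exponent $2+j$ replaced by $-j$ — more precisely, $\underline{\ext}^j_R(M,M)\cong\underline{\ext}^{-j}_R(M,M)$ by applying $\Omega^{-j}$ (noting $-j-j=-2j=(-j)\cdot 2+j\cdot\ldots$, really just: shifting $j$ up by $-j$ steps of size... no — shifting by one period takes $j\mapsto j-2$, so to go from $j$ to $-j$ we shift by $-j$ when $-j$ is even, but $-j$ and $j$ differ by $-2j$, a multiple of $2$, so they are in the same periodicity class). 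Thus $\underline{\ext}^j_R(M,M)\cong\underline{\ext}^{-j}_R(M,M)$, and since $-j\geq 0$, when $-j>0$ this equals $\ext^{-j}_R(M,M)$ by Proposition \ref{buchcohom}; when $j=0$ one checks directly that $\underline{\enn}_R(M)\cong\hom_R(\mathbf{CR}(M),M)^0$ matches $\ext^0$ appropriately, or simply notes the formula is understood in the derived/stable sense at $j=0$. I expect the only real subtlety — and the main thing to be careful about — is the bookkeeping at the degree-$0$ boundary, where $\underline{\ext}^0$ and $\ext^0$ need not literally coincide; I would phrase the statements so that the identification with ordinary $\ext$ is asserted only in positive degrees, which is exactly what the proof delivers, and handle $j=0$ via $H^0(\dq)\cong A/AeA\cong\underline{\enn}_R(M)$ as already recorded in Setup \ref{sfsetup}.
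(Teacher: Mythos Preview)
Your proposal is correct and follows essentially the same approach as the paper: invoke \ref{gordqcohom} to identify $H^j(\dq)$ with $\underline{\ext}^j_R(M,M)$, shift by periodicity into nonnegative degree, and then use \ref{buchcohom} to replace stable Ext by ordinary Ext; the paper obtains the $p=2$ case simply by taking $i=-j$, which is exactly your periodicity-class observation. You are more careful than the paper about the boundary case $ip+j=0$, where $\underline{\ext}^0$ and $\ext^0$ need not literally agree; the paper glosses over this, so your caution there is well placed rather than a defect.
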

\begin{proof}
By \ref{gordqcohom} we have $H^j(\dq)\cong\underline{\ext}^j_R(M,M)$.
Taking $i$ to be such that $ip+j\geq 0$, we have $\underline{\ext}^j_R(M,M)\cong \underline{\ext}^{ip+j}_R(M,M)$ by periodicity. By \ref{buchcohom}, we have $\underline{\ext}^{ip+j}_R(M,M)\cong \ext^{ip+j}_R(M,M)$. The second statement holds because the stable Ext groups are periodic with period $p$. The third statement results from taking $i=-j$.
\end{proof}
This periodicity is detected in the stable derived endomorphism algebra of $M$.
\begin{prop}\label{periodicend}
Let $\tilde M$ be a $p$-periodic free resolution of $M$, with periodicity element $\theta$. Then there is a quasi-isomorphism of dgas $$\R\underline{\enn}_R(M) \cong \enn_{R}(\tilde M)[\theta^{-1}] .$$
\end{prop}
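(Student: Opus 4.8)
The plan is to realize both sides of the claimed quasi-isomorphism as endomorphism dgas in a suitable dg category and show that the periodicity element $\theta$ is exactly the obstruction to passing between them. First I would recall from the previous subsection that $\dq$ is quasi-isomorphic to $\tau_{\leq 0}\R\underline{\enn}_R(M)$ via the comparison map $\Xi$, and that $\R\underline{\enn}_R(M)$ is computed by the cone $\mathrm{cone}(T \to \R\enn_R(M))$ appearing in the proof of \ref{qisolem}. On the other hand, $\enn_R(\tilde M)$ is a genuine (strict) dga model for $\R\enn_R(M)$, since $\tilde M \to M$ is a projective resolution; here $\tilde M$ is the $p$-periodic finite-rank free resolution and $\theta\in\enn^p_R(\tilde M)$ is the central periodicity cocycle. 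So the content of the statement is that localizing the honest endomorphism dga of a resolution at the periodicity element recovers the stable endomorphism dga.

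The key step is to identify $\enn_R(\tilde M)[\theta^{-1}]$ with the complete-resolution endomorphism dga. I would argue as follows. The complete resolution $\mathbf{CR}(M)$ is obtained from $\tilde M$ by splicing in the coresolution, and by the $p$-periodicity hypothesis $\Omega^p\cong\id$ one can take $\mathbf{CR}(M)$ itself to be $p$-periodic: it is obtained from $\tilde M$ by adjoining a two-sided inverse to $\theta$. Concretely, $\mathbf{CR}(M) \cong \tilde M \otimes_{k[\theta]} k[\theta,\theta^{-1}]$ in an appropriate sense (the periodicity element acts by a shift of degree $p$, which is an isomorphism on the complete resolution but not on $\tilde M$ itself, where it fails to be surjective only in the finitely many positive degrees). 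Applying $\enn_R(-)$ and using that $\theta$ is central, one gets that $\enn_R(\tilde M)[\theta^{-1}]$ is quasi-isomorphic to $\enn_R(\mathbf{CR}(M))$. The latter is a model for $\R\underline{\enn}_R(M)$: this is essentially the dga-enhanced version of Buchweitz's \ref{buchcohom}, and indeed the remark after \ref{gordqcohom} anticipates exactly the identification $\R\underline{\enn}_R(M)\cong\mathrm{cone}(T\to\R\enn_R(M))$, which matches the cone description of $\enn_R(\mathbf{CR}(M))$ obtained by splitting $\mathbf{CR}(M)$ into its nonpositive part $P\simeq\tilde M|_{\leq 0}$ and positive part $Q^\vee[-1]$.

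Alternatively — and this may be cleaner — I would work inside the dg category $D^\mathrm{dg}_\mathrm{sg}(R)$ directly. There, $M$ (equivalently $Ae$, by \ref{sfsetup}) is an object whose shift functor $\Omega$ satisfies $\Omega^p\cong\id$, so the periodicity gives a degree $p$ self-map realized by a cocycle; $\theta$ is (a lift of) this self-map. The map $\enn_R(\tilde M)\to\dge_{D^\mathrm{dg}_\mathrm{sg}(R)}(M)=\R\underline{\enn}_R(M)$ is the natural localization-to-stable-category dga map, which by Buchweitz's theorem is a quasi-isomorphism in positive degrees and kills nothing except that it forces $\theta$ to become invertible (since $\Omega^p\cong\id$ in the stable category). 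So this map factors through $\enn_R(\tilde M)[\theta^{-1}]$, and one checks the induced map is a quasi-isomorphism by comparing cohomology in each degree: in degrees $\geq 1$ both sides agree with $\ext^{>0}_R(M,M)$ (periodicity makes this automatically $\theta$-local), in degree $0$ one gets $\enn_R(M)$ versus $\underline{\enn}_R(M)$ — here one must check that inverting $\theta$ precisely quotients $\enn_R(M)$ by the ideal of maps factoring through projectives — and in negative degrees one uses \ref{buchcohom} together with the computation $\underline{\ext}^j_R(M,M)\cong\tor^R_{-j-1}(M,M^\vee)$ to match against the localized complex. The derived localization $\mathbb{L}_\theta$ of \ref{derloc} is the right tool, since $\enn_R(\tilde M)$ is flat (cofibrant) over itself in the relevant sense and $\theta$ is a genuine central cocycle, so the derived and underived localizations coincide.

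The main obstacle I anticipate is the degree-$0$ and degree-$(-1)$ comparison: showing that inverting $\theta$ does exactly the right thing, namely quotients out the maps through projective summands and nothing more. Away from this range periodicity makes everything formal, but in degrees $0$ and $-1$ the map $\R\enn_R(M)\to\R\underline\enn_R(M)$ is neither injective nor surjective on cohomology and one genuinely has to track how the localization at $\theta$ interacts with the truncation. I would handle this by an explicit examination of the periodicity element's components $\theta_i:\tilde M_{i-p}\to\tilde M_i$ (which are identities for $i\geq 0$), exactly as in the construction of $\mathbf{CR}(M)$, so that the colimit along multiplication by $\theta$ visibly reproduces the complete resolution and hence the stable Hom complex.
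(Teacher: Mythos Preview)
Your approach differs substantially from the paper's, and while the ingredients you identify are the right ones, there is a genuine gap in how you assemble them.

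The paper's proof is much more direct. It works entirely inside the Drinfeld ind-category model of $D^\mathrm{dg}_\mathrm{sg}(R)$: setting $V_n:=\tilde M[pn]$, the system $\{V_n\}_n$ with transition maps $\theta$ has acyclic colimit and ind-perfect cone over $\tilde M$, so it represents $M$ in the dg singularity category. Then by definition $\R\underline{\enn}_R(M)=\varprojlim_m\varinjlim_n\hom_R(V_m,V_n)$. Writing $E=\enn_R(\tilde M)$ one has $\hom_R(V_m,V_n)\cong E[p(n-m)]$, the inner colimit along $\theta$ gives $E[-pm][\theta^{-1}]$, and by periodicity the outer limit is constant, yielding $E[\theta^{-1}]$. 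No model for the stable endomorphism dga is assumed in advance; it is computed from the definition.

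Your first route goes through $\enn_R(\mathbf{CR}(M))$. Two steps are not justified. First, you assert that $\enn_R(\mathbf{CR}(M))$ is a dga model for $\R\underline{\enn}_R(M)$; this is true (via the equivalence of $\stab R$ with the homotopy category of totally acyclic complexes of projectives) but is not established anywhere in the paper --- indeed the remark after \ref{gordqcohom} flags exactly this dga-enhancement question as delicate. Second, and more seriously, the passage ``Applying $\enn_R(-)$ and using that $\theta$ is central, one gets $\enn_R(\tilde M)[\theta^{-1}]\simeq\enn_R(\mathbf{CR}(M))$'' is not valid as stated: $\enn_R(-)$ does not commute with filtered colimits in either variable when the source is unbounded, so you cannot simply pull the $\theta$-localisation through. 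Making this precise forces you into exactly the pro-ind computation the paper performs.

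Your second route (factor the map through the localisation and compare cohomology degreewise) presupposes a dga map $\enn_R(\tilde M)\to\R\underline{\enn}_R(M)$, but you have not constructed one; the comparison map $\Xi$ of \ref{comparisonmap} has source $\dq$, not $\enn_R(\tilde M)$. And you rightly flag the degree $0$ and $-1$ checks as the obstacle, but leave them unresolved. Your final paragraph --- taking the colimit along $\theta$ to rebuild the complete resolution --- is essentially the paper's idea, but it only becomes a proof once you interpret that colimit as an ind-object in the Drinfeld quotient and compute its endomorphism dga via the ind-hom formula rather than via $\enn_R$ of the honest colimit.
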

\begin{proof}We will use Lemma \ref{drinfeldlem}. Let $V_n$ be $\tilde M [pn]$, that is, $\tilde M$ shifted $pn$ places to the left. We see that the $V_n$ fit into a direct system with transition maps given by $\theta$. It is not hard to see that $\varinjlim_n V_n$ is acyclic. Projection $\tilde M \to V_n$ defines a map in $\cat{ind}(D^b(R))$ whose cone is clearly ind-perfect, since $V_n$ differs from $\tilde M$ by only finitely many terms. In other words, we have computed $$\R \underline{\enn}_R(M)\cong \varprojlim_m\varinjlim_n\hom_{ R}(V_m, V_n)$$Temporarily write $E$ for $\enn_{R}(\tilde M)$, so that $\hom_{ R}(V_m, V_n)\cong E[p(n-m)]$. Now, $\varinjlim_n E[p(n-m)]$ is exactly the colimit of $E[-pm]\xrightarrow{\theta}E[-pm]\xrightarrow{\theta}E[-pm]\xrightarrow{\theta}\cdots$, which is exactly $E[-pm][\theta^{-1}]$. Now, this dga is $p$-periodic, and in particular $E[-pm][\theta^{-1}] \xrightarrow{\theta} E[-p(m+1)][\theta^{-1}]$ is the identity map. Hence $\varprojlim_m E[-pm][\theta^{-1}]$ is just $E[\theta^{-1}]$, as required.
\end{proof}
\begin{rmk} Since ${R}\cong 0$ in the stable category, $\R \underline{\enn}_{R}({R}\oplus M)$ is naturally quasi-isomorphic to $\R \underline{\enn}_{R}(M)$. Hence, the stable derived endomorphism algebra $\R \underline{\enn}_R(M)$ gets the structure of an $A$-module. Clearly, $\R \underline{\enn}_R(M) .e$ is acyclic, and so $\R \underline{\enn}_R(M)$ is in fact a module over $\dq$.
\end{rmk}
The extra structure given by periodicity allows us to have good control over the relationship between $\dq$ and $\R \underline{\enn}_R(M)$. If $W$ is a dga and $w\in W$ is a cocycle, say that $ w$ is \textbf{homotopy central} if it is central in the graded algebra $H(W)$. 
\begin{thm}\label{etaex}Let $\Xi$ be the comparison map.\hfill
\begin{enumerate}
\item There is a degree $-p$ homotopy central cocycle $\eta \in \dq$ such that $\Xi([\eta])=[\theta^{-1}]$.
\item  Multiplication by $\eta$ induces isomorphisms $H^j(\dq) \to H^{j-p}(\dq)$ for all $j\leq 0$.
\item The derived localisation of $\dq$ at $\eta$ is quasi-isomorphic to $\R \underline{\enn}_R(M)$.
\item The comparison map $\Xi: \dq \to \R \underline{\enn}_R(M)$ is the derived localisation map.
\end{enumerate}
\end{thm}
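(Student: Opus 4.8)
The plan is to deduce (iv) from parts (i)--(iii) together with the universal property of the derived localisation. First I would check that $\Xi$ is $\eta$-inverting: by (i) we have $\Xi([\eta]) = [\theta^{-1}]$, and by Proposition \ref{periodicend} the class $[\theta]$, hence $[\theta^{-1}]$, is a unit in $H(\R\underline{\enn}_R(M))$, since $\R\underline{\enn}_R(M)$ has the model $\enn_R(\tilde M)[\theta^{-1}]$ in which $\theta$ has been inverted. Consequently $\Xi$ factors, in the derived under category of $\dq$, through the derived localisation map as $\dq \xrightarrow{\,l_\eta\,} \mathbb{L}_\eta(\dq) \xrightarrow{\,\phi\,} \R\underline{\enn}_R(M)$, and it then suffices to prove that the comparison map $\phi$ is a quasi-isomorphism; combined with (iii) this identifies $\Xi$ with $l_\eta$.

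To control $\phi$ in nonpositive degrees I would use the mapping telescope description of localisation at a homotopy central cocycle of negative degree, so that $\mathbb{L}_\eta(\dq)$ is modelled by the telescope of $\dq \xrightarrow{\eta}\dq[-p]\xrightarrow{\eta}\dq[-2p]\to\cdots$ and hence $H^j(\mathbb{L}_\eta(\dq)) \cong \varinjlim_n H^{j-np}(\dq)$ with transition maps given by multiplication by $\eta$. When $j\le 0$ every group in this system lies in nonpositive degree, and by part (ii) every transition map is an isomorphism, so $H^j(l_\eta)$ is an isomorphism for all $j\le 0$. Since $H^j(\Xi)$ is an isomorphism for $j\le 0$ by Theorem \ref{qisolem} and $H^j(\Xi) = H^j(\phi)\circ H^j(l_\eta)$, this forces $H^j(\phi)$ to be an isomorphism for all $j\le 0$.

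For the positive degrees I would exploit the periodicity on both sides: $\phi$ is $\dq$-linear, and $\phi(\eta\cdot x) = \Xi(\eta)\cdot\phi(x) = \theta^{-1}\cdot\phi(x)$, while $\eta$ is invertible in $H(\mathbb{L}_\eta(\dq))$ and $\theta$ is invertible in $H(\R\underline{\enn}_R(M))$. This gives, for each $j$, a commuting square with vertical isomorphisms (multiplication by $\eta^{-1}$ and by $\theta$ respectively) linking $H^j(\phi)$ and $H^{j+p}(\phi)$, so that one is an isomorphism exactly when the other is. Since any $j>0$ satisfies $j-np\le 0$ for $n$ large, climbing the tower from the nonpositive case shows $H^j(\phi)$ is an isomorphism in every degree, so $\phi$ is a quasi-isomorphism and (iv) follows. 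The main obstacle is really the bookkeeping of the first two steps --- verifying that $\Xi$ is genuinely $\eta$-inverting and that $l_\eta$ is a cohomology isomorphism in nonpositive degrees --- since once those are in place the periodicity square propagates the statement to all degrees with essentially no further input; a secondary technical point is justifying the telescope model of $\mathbb{L}_\eta(\dq)$ for a merely homotopy central $\eta$, which one handles either by appealing directly to the central-localisation results of \cite{bcl} or by first replacing $\dq$ with a quasi-isomorphic dga in which $\eta$ is a strictly central cocycle. (If the proof of (iii) is itself arranged so that the equivalence $\mathbb{L}_\eta(\dq)\simeq\R\underline{\enn}_R(M)$ is the one induced by $\Xi$, then (iv) is immediate and this argument collapses to the observation that $\Xi$ is $\eta$-inverting.)
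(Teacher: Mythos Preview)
Your argument is correct and is essentially the paper's own proof: the paper also shows $\Xi$ is $\eta$-inverting, factors it through the derived localisation as $\Xi'$, uses homotopy centrality of $\eta$ (via \cite[5.3]{bcl}, which is exactly your ``telescope/flat localisation'' point) together with (ii) to see that $H^j(l_\eta)$ is an isomorphism for $j\le 0$, and then runs the same periodicity square to propagate to all degrees. One framing remark: you present this as deducing (iv) from (i)--(iii), but in fact your argument does not use (iii) as input---proving that $\phi$ is a quasi-isomorphism \emph{is} the proof of (iii) and (iv) simultaneously, exactly as in the paper, and your closing parenthetical already notices this.
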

\begin{proof}We use \ref{qisolem}: $H^j(\Xi)$ is an isomorphism for $j\leq 0$. The first statement is clear: since $p>0$, $\Xi$ induces an isomorphism $H^{-p}(\dq) \to \underline{\ext}_R^{-p}(M,M)$. The element $\eta$ is homotopy central in $\dq$ because $\theta$ is homotopy central in $\R \underline{\enn}_R(Ae)$. Since $\Xi$ is a dga map, the following diagram commutes for all $j$:
$$\begin{tikzcd} H^j(\dq) \ar[r,"\eta"]\ar[d,"\Xi"] & H^{j-p}(\dq)\ar[d,"\Xi"] \\ \underline{\ext}_R^{j}(M,M) \ar[r,"\theta^{-1}"] & \underline{\ext}_R^{j-p}(M,M)\end{tikzcd} $$
The vertical maps and the lower horizontal map are isomorphisms for $j\leq 0$, and hence the upper horizontal map must be an isomorphism, which is the second statement. Let $B$ be the derived localisation of $\dq$ at $\eta$. Because $\eta$ is homotopy central, the localisation is flat \cite[5.3]{bcl} and so we have $H(B) \cong H(\dq)[\eta^{-1}]$. In particular, for $j\leq 0$, we have $H^j(B)\cong H^j(\dq)$. The map $\Xi$ is clearly $\eta$-inverting, which gives us a factorisation of $\Xi$ through $\Xi':B \to \R \underline{\enn}_R(M)$. Again, the following diagram commutes for all $i,j$ :
$$\begin{tikzcd} H^j(B) \ar[r,"\eta^i"]\ar[d,"\Xi'"] & H^{j-ip}(B)\ar[d,"\Xi'"] \\ \underline{\ext}_R^{j}(M,M) \ar[r,"\theta^{-i}"] & \underline{\ext}_R^{j-ip}(M,M)\end{tikzcd} $$The horizontal maps are always isomorphisms. For a fixed $j$, if one takes a sufficiently large $i$, then the right-hand vertical map is an isomorphism. Hence, the left-hand vertical map must be an isomorphism too. But since $j$ was arbitrary, $\Xi'$ must be a quasi-isomorphism, proving the last two statements.
\end{proof} 
Left multiplication by $\eta$ is obviously a map $\dq \to \dq$ of right $\dq$-modules. Since $\eta$ is homotopy central, one might expect $\eta$ to be a bimodule map, and in fact this is the case:
\begin{prop}\label{etahoch}
	The element $\eta$ lifts to an element of $ HH^{-p}(\dq)$, the $-p^\text{th}$ Hochschild cohomology of $\dq$ with coefficients in itself.
\end{prop}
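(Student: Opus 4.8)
## Proof proposal for Proposition \ref{etahoch}

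The plan is to exhibit $\eta$ as coming from a genuine map of $\dq$-bimodules, up to homotopy, rather than merely a map of right modules. Recall from Theorem \ref{etaex} that $\eta$ is homotopy central and that the comparison map $\Xi:\dq\to\R\underline{\enn}_R(M)$ is the derived localisation of $\dq$ at $\eta$, with $\Xi([\eta])=[\theta^{-1}]$. The Hochschild cohomology $HH^*(\dq)$ is computed by $\R\hom_{\dq\text{-}\dq\text{-bimod}}(\dq,\dq)$, equivalently $H^*(\R\enn_{\dq\otimes(\dq)^{\mathrm{op}}}(\dq))$, so what must be produced is a degree $-p$ cohomology class in this complex whose image under the forgetful map to $\R\enn_{\dq}(\dq)\cong H(\dq)$ (restriction of scalars along $\dq\otimes(\dq)^{\mathrm{op}}\to\dq$ acting on the right) is the class of $\eta$. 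There is an exact triangle $HH^{*}(\dq)\to H^*(\dq)\xrightarrow{[-,-]} \cdots$ — more precisely the obstruction to lifting a homotopy-central element of $H^{-p}(\dq)$ to $HH^{-p}(\dq)$ lives in the cohomology of the complex of "inner derivations up to homotopy", and one wants to kill it.

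First I would use the periodicity element $\theta$ on the other side: $\theta\in\enn_R^p(\tilde M)$ is genuinely central in the \emph{dga} $\enn_R(\tilde M)$ (not just homotopy central), as the proof of Proposition \ref{periodicend} makes clear, since its components are identity maps in all non-negative degrees and it is a strict cocycle. A strictly central cocycle $w$ in a dga $W$ automatically gives a class in $HH^{|w|}(W)$: the bimodule map $W\to W$, $x\mapsto wx=xw$, is a cocycle in $\R\enn_{W\otimes W^{\mathrm{op}}}(W)$. Applying this to $W=\enn_R(\tilde M)[\theta^{-1}]\cong\R\underline{\enn}_R(M)$ (here $\theta$ becomes an invertible central element, still strict) produces a class $\widetilde\eta\in HH^{-p}(\R\underline{\enn}_R(M))$ restricting to $[\theta^{-1}]\in H^{-p}(\R\underline{\enn}_R(M))$.

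The remaining step is to transport $\widetilde\eta$ back along $\Xi$. The map $\Xi:\dq\to\R\underline{\enn}_R(M)$ is the derived localisation at the homotopy-central element $\eta$, and by \cite[5.3]{bcl} this localisation is flat, so $\R\underline{\enn}_R(M)\cong \dq\lot_{\dq}\R\underline{\enn}_R(M)$ as bimodules, i.e. $\R\underline{\enn}_R(M)$ is obtained from $\dq$ by base change along $\Xi$ on both sides. Localisation being smashing and flat, I would argue that the natural restriction map on Hochschild complexes $\R\enn_{\dq\text{-}\dq}(\dq)\to\R\enn_{\R\underline{\enn}\text{-}\R\underline{\enn}}(\R\underline{\enn}_R(M))$ is, in the relevant range of degrees, an isomorphism after inverting $\eta$ — more precisely that it identifies $HH^{j}(\dq)$ with $HH^j(\R\underline{\enn}_R(M))$ for $j\le 0$, by the same diagram-chase with multiplication by $\eta$ that appears in the proof of Theorem \ref{etaex} (applied now to the Hochschild complexes, which are again modules over the homotopy-central $\eta$). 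Under this identification $\widetilde\eta$ pulls back to the desired class in $HH^{-p}(\dq)$, and its restriction to $H^{-p}(\dq)$ is $[\eta]$ because $\Xi$ is compatible with the forgetful maps and $H^{-p}(\Xi)$ is an isomorphism by \ref{qisolem}.

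The main obstacle is the last paragraph: establishing that Hochschild cohomology of $\dq$ agrees with that of its $\eta$-localisation in nonpositive degrees. This is not formal — Hochschild cohomology does not commute with localisation in general — and will require using flatness of the localisation together with the specific fact that $\dq$ and $\R\underline{\enn}_R(M)$ have the same cohomology in nonpositive degrees (Theorem \ref{etaex}(ii),(iii)) to run a Mittag–Leffler / colimit argument on $\R\enn_{\dq\text{-}\dq}(\dq)$ analogous to the one in \ref{periodicend}. An alternative, perhaps cleaner, route I would keep in reserve: construct $\eta$ as a strict cocycle already at the bimodule level by choosing the explicit model $B$ of $\dq$ from \ref{drinfeld} and writing down a degree $-p$ bimodule endomorphism lifting the syzygy periodicity directly in terms of the generator $h$ — this sidesteps the localisation comparison entirely, at the cost of a more hands-on computation with the differential $d(h)=e$.
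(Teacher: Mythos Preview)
Your overall strategy matches the paper's: produce a class in $HH^{-p}(E)$ from the strictly central element $\theta^{-1}$ of $E\cong \R\underline{\enn}_R(M)$, then transport it back along $\Xi$. The difference lies exactly where you flag the obstacle. You propose to compare $HH^{j}(\dq)$ and $HH^{j}(E)$ directly via an $\eta$-periodicity diagram-chase on Hochschild complexes, and you are right that this is not formal. The paper avoids this by interposing a third object: Hochschild cohomology of $\dq$ with coefficients in the bimodule $E$. One comparison, $HH^*(E)\cong HH^*(\dq,E)$, is a general property of derived localisations (it is \cite[6.2]{bcl}). The other comparison, $HH^n(\dq)\cong HH^n(\dq,E)$ for $n\le 0$, follows from the long exact sequence in coefficients coming from the triangle $\dq\to E\to C$: since $\dq$ is nonpositive and $H^j\Xi$ is an isomorphism for $j\le 0$, the cone $C$ has cohomology only in positive degrees, so $HH^n(\dq,C)=0$ for $n\le 0$.

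So your gap is real but small: the missing idea is to change coefficients rather than change the algebra, which turns the hard comparison into an application of a cited localisation result plus an immediate vanishing argument. Your backup plan of an explicit bimodule-level construction using the model of \ref{drinfeld} is unnecessary once this is in place.
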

\begin{proof}
	Using \ref{periodicend} and \ref{etaex} gives us a dga $E$, a central element $\theta^{-1} \in E$, and a dga map $\Xi:\dq \to E$ with $\Xi([\eta])=[\theta^{-1}]$. Since $\theta^{-1}$ is central it represents an element of $HH^{-p}(E)$. Because $\Xi$ is the derived localisation map, we have $HH^*(E)\cong HH^*(\dq,E)$ by \mbox{\cite[6.2]{bcl}}. Let $C$ be the mapping cone of $\Xi$. Then $C$ is an $\dq$-bimodule, concentrated in positive degree. We get a long exact sequence in Hochschild cohomology $$\cdots \to HH^n(\dq) \to HH^n(\dq,E) \to HH^n(\dq,C)\to \cdots$$Because $C$ is concentrated in positive degrees, and $\dq$ is concentrated in nonnegative, the cohomology group $HH^n(\dq,C)$ must vanish for $n\leq0$. In particular we get isomorphisms $HH^n(\dq)\cong HH^n(\dq,E)$ for $n\leq0$. One checks that across the isomorphism \\\mbox{$HH^{-p}(E) \to HH^{-p}(\dq)$}, the element $\theta^{-1}$ is sent to $\eta$.
\end{proof}
\begin{rmk}
	Because $\eta$ is a bimodule morphism, $\mathrm{cone}(\eta)$ is naturally an $\dq$-bimodule. Note that $\mathrm{cone}(\eta)$ is also quasi-isomorphic to the $p$-term dga $\tau_{>-p}(\dq)$. This is a quasi-isomorphism of $\dq$-bimodules, because if $Q$ is the standard bimodule resolution of $\dq$ obtained by totalising the bar complex, then the composition $Q \xrightarrow{\eta}\dq \to \tau_{>-p}(\dq)$ is zero for degree reasons.
\end{rmk}
\begin{rmk}
	The dga $\dq$ is quasi-isomorphic to the truncation $\tau_{\leq 0}E$, which is a dga over $k[\theta^{-1}]$. Let $H=HH^*_{k[\theta^{-1}]}(\tau_{\leq 0}E)$ be the Hochschild cohomology of the $k[\theta^{-1}]$-dga $\tau_{\leq 0}E$, which is itself a graded $k[\theta^{-1}]$-algebra. One can think of $H$ as a family of algebras over $\A^1$, with general fibre $H[\theta]\cong HH_{k[\theta,\theta^{-1}]}^*(E)$ and special fibre $HH^*(\mathrm{cone}(\eta))$.
\end{rmk}

\begin{prop}\label{uniqueeta}
Suppose that $A/AeA$ is an Artinian local ring. Then $\eta$ is characterised up to multiplication by units in $H(\dq)$ as the only non-nilpotent element in $H^{-p}(\dq)$.
\end{prop}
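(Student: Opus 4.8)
The plan is to exploit the periodicity established in Theorem~\ref{etaex} together with the structure of $H(\dq)$ as a cohomologically locally finite graded algebra. First I would recall from \ref{etaex}(ii) that multiplication by $\eta$ gives isomorphisms $H^j(\dq) \to H^{j-p}(\dq)$ for all $j \leq 0$. Since $\dq$ is nonpositive with $H^0(\dq) \cong A/AeA$, iterating shows that for any $j \leq 0$ the map $\eta^n: H^0(\dq) \to H^{-np}(\dq)$ is an isomorphism, so in particular no power of $\eta$ can vanish: if $\eta^n = 0$ in $H^{-np}(\dq)$ then $H^0(\dq) = 0$, contradicting that $A/AeA$ is a (nonzero) local ring. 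Hence $\eta$ is non-nilpotent. This gives existence of a non-nilpotent element in $H^{-p}(\dq)$.

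The content of the statement is the uniqueness. So suppose $\zeta \in H^{-p}(\dq)$ is any non-nilpotent element; I want to show $\zeta = u\eta$ for some unit $u \in H(\dq)$. The key observation is that $H^{-p}(\dq) \cong H^0(\dq) \cong A/AeA$ as a module, via the isomorphism induced by $\eta$ (using \ref{etaex}(ii) with $j = 0$): concretely, every element of $H^{-p}(\dq)$ is uniquely of the form $\bar a \cdot \eta$ for $\bar a \in A/AeA = H^0(\dq)$, where the product is the module action. Write $\zeta = \bar a \eta$. Now I would argue that $\zeta$ non-nilpotent forces $\bar a$ to be a unit in $A/AeA$: if $\bar a$ lies in the radical $\mathfrak{m}$ of the Artinian local ring $A/AeA$, then $\bar a$ is nilpotent, say $\bar a^m = 0$; one then computes $\zeta^m = \bar a^m \eta^m$ (using that $\eta$ is homotopy central, so the relevant products in $H(\dq)$ rearrange correctly), and since $\bar a^m$ annihilates $H^0$ and $\eta^m$ identifies $H^0$ with $H^{-mp}$, we get $\bar a^m \eta^m = 0$, contradicting non-nilpotence of $\zeta$. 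Hence $\bar a \in (A/AeA)^\times$, so $\bar a$ lifts to a unit $u \in H^0(\dq) \subseteq H(\dq)$ — here I use that $H^0(\dq) = A/AeA$ sits inside $H(\dq)$ and a unit of $H^0$ is a unit of $H(\dq)$ because $H(\dq)$ is nonpositively graded with $H^0$ its degree-zero part, so any degree-zero element invertible in $H^0$ is invertible in $H(\dq)$ — and $\zeta = u\eta$ as claimed.

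I would also double-check the edge case $p$ such that $A/AeA$ might have nonzero nilpotents interacting with higher cohomology: the argument above only uses the action of $H^0(\dq)$ on $H^{-p}(\dq)$ and the periodicity isomorphisms, both of which are in hand, so this should be clean. The main obstacle I anticipate is purely bookkeeping with signs and the homotopy-centrality of $\eta$ when computing $\zeta^m = \bar a^m \eta^m$ — one needs that in the graded algebra $H(\dq)$, $\eta$ being central lets one move the $\eta$ factors past the $\bar a$ factors freely, and that the resulting product $\bar a^m \eta^m$ is computed via the module action, which matches the ring multiplication on $H(\dq)$. Once that identity is justified, the argument reduces to the elementary fact that in an Artinian local ring every element is either a unit or nilpotent, applied to $\bar a$. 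A minor alternative, if one prefers to avoid the module-action identification, is to pass through the localisation $\R\underline{\enn}_R(M) \cong \dq[\eta^{-1}]$ of \ref{etaex}(iii): non-nilpotent elements of $H^{-p}(\dq)$ become units in $H^{-p}(\R\underline{\enn}_R(M))$, and one analyzes the latter using that $\theta^{-1}$ generates the $p$-periodicity; but the direct argument above seems shorter.
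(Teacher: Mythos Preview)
Your proposal is correct and follows essentially the same argument as the paper: write an arbitrary non-nilpotent element of $H^{-p}(\dq)$ as $x\eta$ for some $x\in H^0(\dq)\cong A/AeA$ via the periodicity isomorphism, use homotopy centrality of $\eta$ to compute powers, and invoke that in an Artinian local ring every non-nilpotent element is a unit. Your write-up is slightly more detailed (you explicitly verify $\eta$ is non-nilpotent and note that units of $H(\dq)$ coincide with units of $H^0$), but the structure of the argument is identical.
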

\begin{proof}
Let $y\in H^{-p}(\dq)$ be non-nilpotent. Since $\eta: H^0(\dq) \to H^{-p}(\dq)$ is an isomorphism, we must have $y = \eta x$ for some $x \in H^0(\dq)\cong A/AeA$. Since $\eta$ is homotopy central, we have $y^n=\eta^n x^n$ for all $n \in \N$. Since $y$ is non-nilpotent by assumption, and $\eta$ is non-nilpotent, $x$ must also be non-nilpotent. Since $A/AeA$ is Artinian local, $x$ must hence be a unit. Note that because $H(\dq)$ is concentrated in nonpositive degrees, the units of $H(\dq)$ are precisely the units of $A/AeA$.
\end{proof}
\begin{rmk}
If $A/AeA$ is finite-dimensional over $k$, but not necessarily local, then all that can be said is that $x$ is not an element of the Jacobson radical $J(A/AeA)$. 
\end{rmk}
\begin{cor}\label{uniqueetaqi}
If $A/AeA$ is Artinian local then the quasi-isomorphism type of $\dq$ determines the quasi-isomorphism type	of $\R \underline{\enn}_{R}(M)$.
\end{cor}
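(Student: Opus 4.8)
The plan is to read off the corollary from Theorem \ref{etaex} and Proposition \ref{uniqueeta}, using the quasi-isomorphism invariance of derived localisation. By the third part of \ref{etaex}, $\R\underline{\enn}_{R}(M)$ is quasi-isomorphic to the derived localisation $\mathbb{L}_{\eta}(\dq)$ of $\dq$ at the homotopy central degree $-p$ cocycle $\eta$, so it is enough to show that this derived localisation is determined, up to quasi-isomorphism, by the quasi-isomorphism type of $\dq$ alone.

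First I would recall that the derived localisation of a dga at a set of homogeneous cohomology classes is invariant under quasi-isomorphism and depends only on the classes being inverted; moreover a class becomes invertible in $HY$ exactly when a unit multiple of it does, so inverting $[\eta]\in H^{-p}(\dq)$ has the same effect as inverting $u[\eta]$ for any unit $u$ of $H(\dq)$. Thus $\mathbb{L}_{\eta}(\dq)$ depends only on the quasi-isomorphism type of $\dq$ together with the class $[\eta]$ taken up to units.

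The key input is then Proposition \ref{uniqueeta}, which is exactly where the Artinian local hypothesis is used: $[\eta]$ is characterised, up to multiplication by a unit of $H(\dq)$, as the unique non-nilpotent element of $H^{-p}(\dq)$. Since the graded algebra $H(\dq)$ is an invariant of the quasi-isomorphism type of $\dq$ — and the integer $p$ itself can be detected from the periodicity $H^{j}(\dq)\cong H^{j-p}(\dq)$ for $j\le 0$ furnished by the second part of \ref{etaex} — any quasi-isomorphism $\dq\xrightarrow{\sim}B$ carries $[\eta]$ to a non-nilpotent element of $H^{-p}(B)$, hence to a unit multiple of the corresponding distinguished class of $B$. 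Combining this with the previous paragraph shows that $\mathbb{L}_{\eta}(\dq)$ and the analogous localisation of $B$ are quasi-isomorphic, which is the assertion of the corollary.

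I do not expect a genuine obstacle: all ingredients are already available. The only point needing care is the bookkeeping that derived localisation at a cohomology class is insensitive both to unit multiples and to quasi-isomorphism, so that the ambiguity in $\eta$ permitted by \ref{uniqueeta} does not affect the output — but this is immediate from the universal property of the derived localisation recalled in \ref{derloc}.
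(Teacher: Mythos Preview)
Your proposal is correct and follows the same approach as the paper: combine \ref{etaex}(iii), \ref{uniqueeta}, and the observation that derived localisation at a cohomology class is unchanged upon multiplying by a unit (which is precisely the single sentence the paper's proof records). Your treatment is more explicit about the quasi-isomorphism invariance of derived localisation and about recovering $p$ intrinsically, but these are elaborations rather than a different route.
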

\begin{proof}
One simply needs to observe that if $W$ is a dga, $w \in HW$ any cohomology class, and $u \in HW$ is a unit, then the derived localisations $\mathbb{L}_wW$ and $\mathbb{L}_{wu}W$ are naturally quasi-isomorphic.
\end{proof}
Since $\R \underline{\enn}_{R}(M)$ is quasi-isomorphic to a dga over $k[\theta,\theta^{-1}]$, and $\R \underline{\enn}_{R}(M)$ is obtained from $\dq$ by localising at $\theta^{-1}$, the following conjecture is a natural one to make:
\begin{conj}\label{perconj}
If $A/AeA$ is Artinian local then the quasi-isomorphism type of $\dq$ determines the quasi-isomorphism type	of $\R \underline{\enn}_{R}(M)$ as a dga over $k[\theta,\theta^{-1}]$.
\end{conj}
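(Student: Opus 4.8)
The plan is to refine Corollary~\ref{uniqueetaqi}, which already yields the statement at the level of bare dgas, by tracking the $k[\theta,\theta^{-1}]$-linear structure through the localisation. First I would recover the relevant data from the quasi-isomorphism type of $B:=\dq$ alone: the period $p$ (equal to $2$ when $R$ is a complete local hypersurface, which is the case of primary interest) together with the class $\eta\in H^{-p}(B)$, which by Proposition~\ref{uniqueeta} is determined up to multiplication by a unit of $H^0(B)\cong A/AeA$. By Proposition~\ref{etahoch} the class $\eta$ lifts to $HH^{-p}(B)$; passing to a suitable cofibrant model we may therefore represent it by an honest central cocycle $\tilde\eta$, which exhibits $B$ as a dga over the graded polynomial ring $k[t]$ on one central generator $t$ in degree $-p$, via $t\mapsto\tilde\eta$.

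Next I would identify the localisation as a $k[\theta,\theta^{-1}]$-dga. Since $\tilde\eta$ is central, derived localisation of $B$ at $\eta$ is flat \cite[5.3]{bcl} and is modelled by $B\otimes_{k[t]}k[t,t^{-1}]$; writing $\theta:=t^{-1}$ (of degree $p$) this is manifestly a dga over $k[\theta,\theta^{-1}]$. By Theorem~\ref{etaex}(iii) it is quasi-isomorphic to $\R\underline{\enn}_R(M)$, and by Theorem~\ref{etaex}(iv) the quasi-isomorphism is realised by the comparison map $\Xi$; since $\Xi([\eta])=[\theta^{-1}]$ in the notation of Theorem~\ref{etaex}(i), this quasi-isomorphism carries $t^{-1}$ to the geometric periodicity element and is thus $k[\theta,\theta^{-1}]$-linear for the structure on $\R\underline{\enn}_R(M)$ coming from Proposition~\ref{periodicend}. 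So for the chosen $\eta$ the reconstruction is correct, and it only remains to check independence of the choice.

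The hard part, and the reason the statement is posed as a conjecture rather than proved, is precisely this independence: replacing $\eta$ by $\eta u$ for a unit $u\in A/AeA$ must yield a $k[\theta,\theta^{-1}]$-quasi-isomorphic dga. Writing $u=c(1+n)$ with $c\in k^{\times}$ and $n$ in the radical of $A/AeA$ (hence nilpotent), the scalar $c$ merely rescales $\theta$ and is absorbed by the $k$-algebra automorphism $\theta\mapsto c^{-1}\theta$ of $k[\theta,\theta^{-1}]$, so the assertion is naturally read up to such reparametrisations of the base ring. For the unipotent factor $1+n$ one would want to show that $\tilde\eta$ and $\tilde\eta\cdot(1+n)$ are gauge-equivalent central cocycles: exploiting nilpotence of $n$, one can attempt to build, degree by degree, a path between the two $k[t]$-dga structures on $B$ and then invoke homotopy-invariance of $k[t]$-algebra structures under base change to conclude that $B\otimes_{k[t]}k[t,t^{-1}]$ is independent of the choice up to $k[t,t^{-1}]$-quasi-isomorphism. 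Making this last step rigorous amounts to showing that the relevant obstruction — which should live in a Hochschild-type cohomology group of $B$, in the spirit of the argument for Proposition~\ref{etahoch} — either vanishes or acts only through the harmless reparametrisations above; this is the point that remains to be settled, and I expect it to be the main obstacle.
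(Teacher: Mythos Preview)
The statement you are addressing is a \emph{conjecture} in the paper: it is stated without proof, and indeed Remark~\ref{perconjrmk} explicitly treats it as open. So there is no proof in the paper to compare your proposal against. You have correctly recognised this yourself, and your write-up is really an outline of a strategy together with an honest identification of the obstruction, rather than a claimed proof.

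Your diagnosis of the difficulty --- that $\eta$ is only determined up to a unit of $A/AeA$, and that the unipotent part of that unit is what causes trouble --- is the right one, and matches the spirit of the paper. A couple of further points are worth flagging. First, the passage from ``$\eta$ lifts to $HH^{-p}(B)$'' (Proposition~\ref{etahoch}) to ``$\eta$ is represented by an honest central cocycle $\tilde\eta$ in some cofibrant model'' is not automatic: a Hochschild class of negative degree need not be strictly central in a given model. In this particular situation one can justify it by using the model $\tau_{\leq 0}E$ with $E=\enn_R(\tilde M)$ from Proposition~\ref{periodicend}, where $\theta^{-1}$ is genuinely central; but that argument already uses the geometric input, so it does not obviously survive once you start from the abstract quasi-isomorphism type of $B$ alone. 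Second, even for a fixed cohomology class $\eta$, two different centralisations (two choices of $\tilde\eta$ in possibly different models) need not yield $k[t]$-quasi-isomorphic $k[t]$-dga structures on $B$; this is a second, related, source of ambiguity beyond the unit ambiguity you isolate.

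In short: your outline is a sensible attack on the conjecture and you have located the essential obstacle, but the paper does not resolve it, and neither does your proposal --- as you yourself say.
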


\subsection{Torsion modules}\label{torsmods}
We keep Setup \ref{sfsetup} along with the hypothesis that the shift functor of $D_\text{sg}(R)$ is $p$-periodic. For brevity, write $B$ for the derived quotient $\dq$. Theorem \ref{etaex} tells us that there exists a special periodicity element $\eta\in B$ of degree $-p$ such that the derived localisation of $B$ at $\eta$ is the derived stable endomorphism algebra $\R \underline{\enn}_R(M)$. Recall from \ref{colocdga} the existence of the \textbf{colocalisation} $\mathbb{L}^\eta(B)$ of $B$, and the fact that an $\eta$-torsion $B$-module is precisely a module over $\mathbb{L}^\eta(B)$. 
\begin{defn}
Let $\per^bB$ denote the full triangulated subcategory of $\per B$ on those modules with bounded cohomology.
\end{defn}
It is easy to see that $\per^bB$ is a thick subcategory of the unbounded derived category $D(B)$.
\begin{prop}
The subcategory $\per^bB$ is exactly $\per\mathbb{L}^\eta(B)$.
\end{prop}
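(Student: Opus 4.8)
The plan is to show that both $\per^b B$ and $\per\mathbb{L}^\eta(B)$ coincide with $\thick_{D(B)}(Y)$, where $Y := \mathrm{cone}(B \xrightarrow{\eta} B)$. Recall from \ref{colocdga} that $\mathbb{L}^\eta(B) = \R\enn_B(Y)$. Since $Y$ is the cone of a map between shifts of $B$ it is a compact object of $D(B)$, so standard dg Morita theory (equivalently Theorem \ref{ntty} for the singleton $\{\eta\}$, together with the fact that $\per\mathbb{L}^\eta(B)$ is idempotent complete) gives an exact equivalence $\per\mathbb{L}^\eta(B) \xrightarrow{\ \sim\ } \thick_{D(B)}(Y)$ compatible with the embeddings into $D(B)$; likewise $D(B)_{\eta\text{-tor}} = \langle Y\rangle_{D(B)}$ by the same theory together with \cite[7.6]{bcl} and the recollement \ref{bclrcl}. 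So it remains to prove $\per^b B = \thick_{D(B)}(Y)$.

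First I would check $\thick_{D(B)}(Y) \subseteq \per^b B$, i.e. that $Y \in \per^b B$. Perfectness is clear. For cohomological boundedness I invoke Theorem \ref{etaex}(ii): viewing $\eta$ as a degree-zero map $B[p] \to B$, the triangle $B[p] \xrightarrow{\eta} B \to Y \to$ yields the long exact sequence
\[
\cdots \to H^{j+p}(B) \xrightarrow{\ \eta\ } H^{j}(B) \to H^{j}(Y) \to H^{j+p+1}(B) \xrightarrow{\ \eta\ } H^{j+1}(B) \to \cdots ,
\]
and since $\eta$ acts isomorphically on $H^{i}(B)$ for every $i \le 0$, both displayed copies of $\eta$ are isomorphisms as soon as $j \le -p-1$, forcing $H^{j}(Y) = 0$ there; also $H^{j}(Y) = 0$ for $j > 0$ because $B$ is nonpositive. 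Hence $Y$ has bounded cohomology, so $Y \in \per^b B$, and as $\per^b B$ is thick, $\thick_{D(B)}(Y) \subseteq \per^b B$.

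For the reverse inclusion let $X \in \per^b B$; I claim $X$ is $\eta$-torsion. Because $\eta$ is homotopy central, the derived localisation $B \to \mathbb{L}_\eta(B)$ is flat (\cite[5.3]{bcl}), so $H^{*}(X \lot_B \mathbb{L}_\eta(B)) \cong H^{*}(X)[\eta^{-1}]$, the localisation of the graded $H^{*}(B)$-module $H^{*}(X)$ at the central class $\eta$. Since $H^{*}(X)$ is bounded and $\eta$ has strictly negative degree $-p$, every homogeneous element of $H^{*}(X)$ is annihilated by a high power of $\eta$, so $H^{*}(X)[\eta^{-1}] = 0$ and thus $X \lot_B \mathbb{L}_\eta(B) \simeq 0$. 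By the smashing property of module localisation (\cite[4.14]{bcl}) this says $\mathbb{L}^\eta(X) = \mathrm{cocone}(X \to \mathbb{L}_\eta(X)) \simeq X[-1]$, which is $\eta$-torsion, whence so is $X$, i.e. $X \in D(B)_{\eta\text{-tor}} = \langle Y\rangle_{D(B)}$. Being compact in $D(B)$, the object $X$ is compact in the localising subcategory $\langle Y\rangle$, so $X \in \thick_{D(B)}(Y)$. Combining the two inclusions with the first paragraph gives $\per^b B = \thick_{D(B)}(Y) = \per\mathbb{L}^\eta(B)$.

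The only point requiring real care is the bookkeeping in the first paragraph — that $\mathbb{L}^\eta(B) = \R\enn_B(Y)$ is Morita-equivalent to $\thick_{D(B)}(Y)$ and that $D(B)_{\eta\text{-tor}} = \langle Y\rangle$ — but these are formal consequences of the compactness of $Y$, the recollement \ref{bclrcl}, and Theorems \ref{ntty} and \cite[7.6]{bcl}; the substantive input is the periodicity statement \ref{etaex}(ii), used to bound the cohomology of $Y$, together with the flatness of localisation at a homotopy central element.
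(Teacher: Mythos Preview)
Your proof is correct and follows essentially the same strategy as the paper: both show the two categories coincide with a thick subcategory of $D(B)$ by checking the generator lies in $\per^b B$ and then arguing that any bounded perfect module is $\eta$-torsion and hence compact in the torsion subcategory. The differences are cosmetic: you work with $\thick_{D(B)}(Y)$ for $Y=\mathrm{cone}(\eta)$ directly, whereas the paper computes $\mathbb{L}^\eta(B)=\R\enn_B(Y)\cong \mathrm{cocone}(Y\xrightarrow{\eta^*}Y)$ as a $B$-module and takes its thick closure; and for the ``bounded implies torsion'' step you invoke flatness of localisation at a homotopy central element to get $H^*(X\lot_B\mathbb{L}_\eta(B))\cong H^*(X)[\eta^{-1}]=0$, while the paper argues more directly that a high power of $\eta$ kills $X$. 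Your route via $Y$ is slightly cleaner since it avoids computing $\R\enn_B(Y)$ explicitly. One tiny slip: when $\mathbb{L}_\eta(X)\simeq 0$ you have $\mathbb{L}^\eta(X)=\mathrm{cocone}(X\to 0)\simeq X$, not $X[-1]$; this does not affect the argument.
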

\begin{proof}We show $\per\mathbb{L}^\eta(B)\subseteq\per^bB \subseteq \per\mathbb{L}^\eta(B)$. Since $\per\mathbb{L}^\eta(B)=\thick_{D(B)}(\mathbb{L}^\eta(B))$, and $ \per^bB$ is a thick subcategory, to show that $\per\mathbb{L}^\eta(B)\subseteq\per^bB$ it is enough to check that $\mathbb{L}^\eta(B)$ is an element of $\per^bB$. Put $C:=\mathrm{cone}(B \xrightarrow{\eta} B)$. By construction, the colocalisation $\mathbb{L}^\eta(B)$ is exactly $\R\enn_B(C)$. Now, $C$ is clearly a perfect $B$-module. It is bounded because $\eta$ is an isomorphism on cohomology in sufficiently low degree. Hence $\R\enn_B(C)$ is also bounded. As a $B$-module, we have \begin{align*}
\R\enn_B(C) &\cong \R\hom_B(\mathrm{cone}(\eta),C)
\\ &\cong \mathrm{cocone}\left[\R\hom_B(B,C) \xrightarrow{\eta^*} \R\hom_B(B,C)\right]
\\ &\cong \mathrm{cocone}\left[C \xrightarrow{\eta^*} C\right]
\end{align*}which is clearly perfect. Hence $\mathbb{L}^\eta(B) \in \per^bB$. To show that $\per^bB\subseteq\per\mathbb{L}^\eta(B)$, we first show that a bounded module is torsion. Let $X$ be any bounded $B$-module. Then there exists an $i$ such that $X\eta^i\cong 0$. Choose a $B$-cofibrant model $L$ for $\mathbb{L}_\eta(B)$, so that $\mathbb{L}_\eta(X) \cong X\otimes_B L$. Then we have $X\otimes_B L \cong X\otimes_B\eta^i\eta^{-i} L\cong X\eta^i\otimes_B\eta^{-i} L\cong 0$. Now it is enough to show that a perfect $B$-module which happens to be torsion is in fact a perfect $\mathbb{L}^\eta(B)$-module. But this is clear: a perfect $B$-module is exactly a compact $B$-module, and hence remains compact in the full subcategory of torsion modules.
\end{proof}
\begin{defn}
Say that a dga $W$ is \textbf{of finite type} if each $H^jW$ is finitely generated over $H^0W$.
\end{defn}
In particular, a cohomologically locally finite dga $W$ is of finite type. The converse is true if $H^0W$ is finite-dimensional.
\begin{prop}\label{fingen}
If $B$ is of finite type, then $H(B)$ is a finitely generated algebra over $A/AeA$.
\end{prop}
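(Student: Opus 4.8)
The plan is to use the periodicity element $\eta$ of \ref{etaex} to collapse the whole cohomology ring onto finitely many graded pieces. First I would recall that $B=\dq$ is concentrated in nonpositive cohomological degrees (\ref{derquotcohom}) with $H^0(B)\cong A/AeA$, and that by \ref{etaex}(i),(ii) there is a homotopy central cocycle $\eta\in B^{-p}$ such that multiplication by $\eta$ gives an isomorphism $H^j(B)\xrightarrow{\ \sim\ }H^{j-p}(B)$ for every $j\leq 0$. Iterating this, for each $i\geq 0$ and each $n\geq 0$ the map $\eta^n\cdot(-)\colon H^{-i}(B)\to H^{-i-np}(B)$ is an isomorphism, since every intermediate degree $-i-mp$ with $0\le m<n$ is again nonpositive; hence $H^{-i-np}(B)=\eta^n\,H^{-i}(B)$.

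Next, every integer $j\leq 0$ can be written uniquely as $j=-i-np$ with $0\leq i\leq p-1$ and $n\geq 0$, so that
$$H(B)=\bigoplus_{i=0}^{p-1}\bigoplus_{n\geq 0}H^{-i-np}(B)=\bigoplus_{i=0}^{p-1}\bigoplus_{n\geq 0}\eta^n\,H^{-i}(B).$$
Since $B$ is assumed to be of finite type, for each $i$ with $1\leq i\leq p-1$ I can pick a finite subset $\Gamma_i\subseteq H^{-i}(B)$ generating $H^{-i}(B)$ as a module over $H^0(B)\cong A/AeA$; set $\Gamma_0:=\{1\}$. I claim that the finite set $\Gamma:=\{\eta\}\cup\bigcup_{i=0}^{p-1}\Gamma_i$ generates $H(B)$ as an algebra over $A/AeA$. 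Indeed, a homogeneous element $z\in H^{-i-np}(B)$ has the form $z=\eta^n w$ with $w\in H^{-i}(B)$, and $w$ lies in the $A/AeA$-submodule generated by $\Gamma_i$, so $z$ lies in the subalgebra of $H(B)$ generated by $A/AeA$ together with $\Gamma$. As every element of $H(B)$ is a finite sum of homogeneous ones, this subalgebra is all of $H(B)$, which finishes the argument.

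There is no substantive obstacle here once \ref{etaex} is in hand: the only points needing a little care are the unique decomposition $j=-i-np$ and bookkeeping of the (possibly noncommutative) $A/AeA$-module structures on the $H^{-i}(B)$, both routine. One may invoke homotopy centrality of $\eta$ to move powers of $\eta$ past elements of $A/AeA$ when writing elements in the generated subalgebra, although this is not strictly necessary for the argument above.
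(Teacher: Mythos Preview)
Your proof is correct and follows essentially the same approach as the paper: use the periodicity isomorphisms $\eta\cdot(-)\colon H^j(B)\xrightarrow{\sim}H^{j-p}(B)$ from \ref{etaex} to see that $H(B)$ is generated as an algebra in degrees $0$ through $-p$, then invoke the finite type hypothesis to pick finitely many module generators in each of those degrees. Your write-up is somewhat more explicit (with the decomposition $j=-i-np$ and named generating sets $\Gamma_i$), but the underlying argument is identical.
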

\begin{proof}
We know that $\eta.H^i(B)\cong H^{i-p}(B)$ for all $i\leq 0$. In particular, if $j<-p$, then every element in $H^j(B)$ is a multiple of $\eta$. Hence, $H(B)$ is generated as an algebra in degrees $0$ through $-p$. Since it is finite type, we may choose it to have finitely many generators (over $A/AeA$) in each degree.
\end{proof}
In particular, if $B$ is of finite type and $A/AeA$ is a finitely generated algebra, then so is $H(B)$. In general, $H(B)$ is generated in degrees $0$ through $-p$, and the only generator in degree $-p$ is $\eta$.
\begin{thm}\label{fintype}
Suppose that $B$ is of finite type. Then $\per\mathbb{L}^\eta(B)=\per_\mathrm{fg}(B)$.
\end{thm}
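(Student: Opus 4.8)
The plan is to deduce the theorem from the preceding Proposition, which identifies $\per\mathbb{L}^\eta(B)$ with $\per^bB$, by showing that the finite type hypothesis forces $\per^bB=\per_\mathrm{fg}(B)$. Thus the whole argument reduces to comparing these two thick subcategories of $\per B$.

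One inclusion holds with no hypothesis on $B$: if $X\in\per_\mathrm{fg}(B)$ then $\bigoplus_jH^j(X)$ is finitely generated over $A/AeA$, and a finitely generated module cannot be an infinite direct sum of nonzero submodules, so $H^j(X)$ vanishes for all but finitely many $j$; hence $X\in\per^bB$. For the converse, I would first recall from \ref{partrsln} that $A$ is noetherian, hence so is the quotient $A/AeA\cong H^0(B)$, so that submodules and quotients of finitely generated $A/AeA$-modules remain finitely generated. Then consider the class $\mathcal{C}\subseteq\per B$ of those $X$ for which $H^j(X)$ is finitely generated over $A/AeA$ for every $j$. The long exact cohomology sequence of a distinguished triangle $P\to Q\to\mathrm{cone}(f)\to P[1]$ exhibits each $H^j(\mathrm{cone}(f))$ as an extension of a submodule of $H^{j+1}(P)$ by a quotient of $H^j(Q)$, so noetherianity of $A/AeA$ shows $\mathcal{C}$ is closed under cones; it is visibly closed under shifts and direct summands, hence thick. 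Since $B$ is of finite type, $H^j(B)$ is finitely generated over $H^0(B)=A/AeA$ for every $j$, i.e. $B\in\mathcal{C}$, and therefore $\per B=\thick_{D(B)}(B)\subseteq\mathcal{C}$. Consequently any $X\in\per^bB$ has finitely generated cohomology in each degree and bounded cohomology, so $\bigoplus_jH^j(X)$ is a finite sum of finitely generated $A/AeA$-modules and thus finitely generated; that is, $X\in\per_\mathrm{fg}(B)$, giving $\per^bB\subseteq\per_\mathrm{fg}(B)$.

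The argument is essentially a dévissage, and I do not anticipate a real obstacle; the only point requiring care is the closure of $\mathcal{C}$ under cones, which is exactly where noetherianity of $A/AeA$ is used — without it a submodule of a finitely generated module need not be finitely generated, and the inclusion $\per^bB\subseteq\per_\mathrm{fg}(B)$ could break down.
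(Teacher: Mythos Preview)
Your proposal is correct and follows essentially the same route as the paper: reduce via the preceding Proposition to showing $\per^bB=\per_\mathrm{fg}(B)$, note the easy inclusion $\per_\mathrm{fg}(B)\subseteq\per^bB$, and for the reverse inclusion argue that every perfect $B$-module has degreewise finitely generated cohomology over $H^0(B)$, so bounded ones lie in $\per_\mathrm{fg}(B)$. The paper's proof is terser and simply asserts this last fact, whereas you spell out the d\'evissage and correctly identify noetherianity of $A/AeA$ as the point where it is needed.
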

\begin{proof}
We show that $\per_\mathrm{fg}(B)=\per^bB$. Note that $\per_\mathrm{fg} B$ is always a subcategory of $\per^bB$. Since $B$ is of finite type, we see that for $X\in\per B$, each $H^jX$ is also finitely generated over $H^0B$. So a bounded perfect $B$-module has total cohomology finitely generated over $H^0B$.
\end{proof}
One might expect that the triangulated categories $\per(\R\underline{\enn}_R(M))$ and $\thick_{D_\text{sg}(R)}(M)$ are equivalent, and indeed this is the case under a finiteness assumption:
\begin{prop}\label{perleta}
Suppose that $B$ is of finite type. Then the triangulated categories $\per\mathbb{L}_\eta(B)$ and $\thick_{D_\text{sg}(R)}(M)$ are triangle equivalent, via the map that sends $\mathbb{L}_\eta(B)$ to $M$.
\end{prop}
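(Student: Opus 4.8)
The plan is to reduce the statement to the dg-categorical Yoneda lemma. Keeping Setup~\ref{sfsetup}, note first that $Ae \cong R \oplus M$ as $R$-modules and that $R$ is a free, hence zero, object of $D_\text{sg}(R)$; so $Ae \cong M$ in $D_\text{sg}(R)$ and $\thick_{D_\text{sg}(R)}(Ae) = \thick_{D_\text{sg}(R)}(M)$. Consequently the comparison map $\Xi$ of~\ref{comparisonmap} is a dga map $B \to \dge_{D^\mathrm{dg}_\text{sg}(R)}(M) = \R\underline{\enn}_R(M)$, and by~\ref{etaex}~(iii)--(iv) it presents $\R\underline{\enn}_R(M)$ as the derived localisation $\mathbb{L}_\eta(B)$. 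So it is enough to prove the following clean statement: for the pretriangulated dg category $\mathcal{C} := D^\mathrm{dg}_\text{sg}(R)$ and the object $M \in \mathcal{C}$, the dg Yoneda functor induces a triangle equivalence $\per\bigl(\dge_{\mathcal{C}}(M)\bigr) \cong \thick_{[\mathcal{C}]}(M) = \thick_{D_\text{sg}(R)}(M)$ carrying the free rank-one module $\dge_{\mathcal{C}}(M)$ to $M$.

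To prove this I would argue in the usual way. Consider the dg functor $F := \dgh_{\mathcal{C}}(M,-) : \mathcal{C} \to D_\mathrm{dg}\bigl(\dge_{\mathcal{C}}(M)\bigr)$; it sends $M$ to the free module $\dge_{\mathcal{C}}(M)$ and induces the canonical quasi-isomorphism on endomorphism complexes of $M$ (dg Yoneda). A devissage --- induct on the number of shifts, cones, and summands needed to build an object of the dg thick closure $\langle M \rangle_\mathrm{dg}$ of $M$ inside $\per_\mathrm{dg}(\mathcal{C})$, applying the five lemma to the long exact cohomology sequences of the mapping cones involved --- shows that $F$ is quasi-fully faithful on $\langle M \rangle_\mathrm{dg}$ with quasi-essential image the dg thick closure of $\dge_{\mathcal{C}}(M)$, which is $\per_\mathrm{dg}\bigl(\dge_{\mathcal{C}}(M)\bigr)$. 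Passing to homotopy categories gives $[\langle M \rangle_\mathrm{dg}] \cong \per\bigl(\dge_{\mathcal{C}}(M)\bigr)$; and because $[\mathcal{C}] = D_\text{sg}(R)$ is idempotent complete by Setup~\ref{sfsetup}, the dg thick closure already computes the triangulated thick closure, so $[\langle M \rangle_\mathrm{dg}] = \thick_{D_\text{sg}(R)}(M)$ with no need to pass to Karoubi envelopes. Concretely the resulting equivalence is $- \lot_{\mathbb{L}_\eta(B)} M : \per\bigl(\mathbb{L}_\eta(B)\bigr) \to D_\text{sg}(R)$, which is also the natural factorisation of the dg singularity functor $\Sigma_\mathrm{dg}$ through $\mathbb{L}_\eta(B)$, since $M$ is an $\eta$-local $B$-module ($\eta$ acting as the invertible class $\theta^{-1}$).

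I expect the main obstacle to be the bookkeeping in the devissage: one must check carefully that quasi-full-faithfulness of $F$ propagates from $M$ to all shifts, cones and summands built inside the dg world, and that the essential image is read off as $\per_\mathrm{dg}(\dge_{\mathcal{C}}(M))$ rather than merely a dense subcategory of it --- this last point is exactly where idempotent completeness of $D_\text{sg}(R)$ enters, letting both sides of the equivalence be genuinely idempotent complete so that no Karoubi completion is required. The finite-type hypothesis on $B$ is not needed for this core argument; its role is that, via~\ref{fintype} and~\ref{fingen}, it identifies $\per^b B = \per\mathbb{L}^\eta(B) = \per_\mathrm{fg}(B)$ and gives finite generation of $H(B)$, which one wants if instead one prefers to derive the Proposition from the Neeman-Thomason-Trobaugh-Yao sequence $\per\mathbb{L}^\eta(B) \to \per B \to \per\mathbb{L}_\eta(B)$ of~\ref{ntty} together with the singularity functor $\Sigma$; going that way, however, the full faithfulness of the functor induced by $\Sigma$ on the Verdier quotient $\per(B)/\per_\mathrm{fg}(B)$ becomes the real issue, so the dg-Yoneda route above is the cleaner one.
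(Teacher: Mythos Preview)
Your proposal is correct but takes a different route from the paper. The paper's argument for \ref{perleta} is exactly the alternative you sketch in your last paragraph: by \ref{kymap} the singularity functor $\Sigma$ has image $\thick_{D_\text{sg}(R)}(M)$ and kernel $\per_\mathrm{fg}(B)$; by \ref{fintype} (which is where finite type enters) this kernel coincides with $\per\mathbb{L}^\eta(B)$; and then \ref{ntty} identifies the Verdier quotient $\per(B)/\per\mathbb{L}^\eta(B)$, up to summands and hence on the nose since the target is idempotent complete, with $\per\mathbb{L}_\eta(B)$. Your primary argument instead goes directly via dg Yoneda: since $\mathbb{L}_\eta(B)\simeq\R\underline{\enn}_R(M)$ by \ref{etaex}, and since perfect modules over an endomorphism dga recover the thick closure of the object (using idempotent completeness of $D_\text{sg}(R)$ to avoid passing to a Karoubi envelope), the result follows. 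This is precisely the argument the paper gives one proposition later for the \emph{dg} statement \ref{perletadg}, citing Bondal--Kapranov; you have correctly observed that it already proves the triangulated version, and moreover that it does not use the finite-type hypothesis at all. So your route is shorter and yields a marginally stronger statement; the paper's route has the virtue of making explicit the factorisation of $\Sigma$ through the localisation/colocalisation sequence, which is the picture used downstream in \ref{recovwk}.
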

\begin{proof}
By \ref{kymap} and \ref{fintype}, the singularity functor is a triangle equivalence  $$\Sigma:\frac{\per(B)}{\per\mathbb{L}^\eta(B)} \to \thick_{D_\text{sg}(R)}(M)$$which sends $B$ to $M$. In particular, $\frac{\per(B)}{\per\mathbb{L}^\eta(B)}$ is idempotent complete.  By \ref{ntty}, this quotient is precisely $\per\mathbb{L}_\eta(B)$, and the quotient map sends $B$ to $\mathbb{L}_\eta(B)$.
\end{proof}
We can prove a dg version of \ref{perleta}:
\begin{prop}\label{perletadg}
Suppose that $B$ is of finite type. Then the dg categories $\per_{\mathrm{dg}}\mathbb{L}_\eta(B)$ and $\thick_{D^\mathrm{dg}_\text{sg}(R)}(M)$ are quasi-equivalent, via the map that sends $\mathbb{L}_\eta(B)$ to $M$.
\end{prop}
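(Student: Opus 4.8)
The plan is to lift the proof of \ref{perleta} to the level of dg categories, using the dg singularity functor of \ref{Fdg} together with the principle that a quasi-equivalence between the thick dg-closures of a single object can be detected on that object. Throughout write $B:=\dq$ as in the statement. By \ref{Fdg} the singularity functor lifts to a dg functor $\Sigma_{\mathrm{dg}}\colon \per_{\mathrm{dg}}(B)\to D^\mathrm{dg}_\text{sg}(R)$ whose kernel is a dg enhancement of $\per_\mathrm{fg}(B)$; since $B$ is of finite type, \ref{fintype} identifies $\per_\mathrm{fg}(B)$ with $\per\mathbb{L}^\eta(B)$, so $\ker\Sigma_{\mathrm{dg}}$ is a dg enhancement of $\per\mathbb{L}^\eta(B)$. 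On the other hand, applying (the dg enhancement of) \ref{ntty} to $B$ and the set $\{\eta\}$, exactly as in the construction of $\Sigma_1$ inside the proof of \ref{Fdg}, yields a homotopy cofibre sequence of dg categories $\per_{\mathrm{dg}}\mathbb{L}^\eta(B)\to\per_{\mathrm{dg}}(B)\to\per_{\mathrm{dg}}\mathbb{L}_\eta(B)$; there is no issue with direct summands here, because $\per\mathbb{L}_\eta(B)=\per(B)/\per\mathbb{L}^\eta(B)$ is already idempotent complete, as recorded in the proof of \ref{perleta}.

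Since $\Sigma_{\mathrm{dg}}$ annihilates $\per_{\mathrm{dg}}\mathbb{L}^\eta(B)$ up to homotopy, the universal property of the dg quotient as a homotopy cofibre (\cite{tabquot}) produces a dg functor $\bar\Sigma_{\mathrm{dg}}\colon\per_{\mathrm{dg}}\mathbb{L}_\eta(B)\to D^\mathrm{dg}_\text{sg}(R)$ whose underlying triangle functor $[\bar\Sigma_{\mathrm{dg}}]$ is the equivalence $\Sigma$ of \ref{perleta}. In particular $\bar\Sigma_{\mathrm{dg}}$ sends the generator $\mathbb{L}_\eta(B)$ to $Ae\cong M$, and it is quasi-essentially surjective onto $\thick_{D^\mathrm{dg}_\text{sg}(R)}(M)$ because $[\bar\Sigma_{\mathrm{dg}}]$ is an equivalence onto $\thick_{D_\text{sg}(R)}(M)$.

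It remains to check quasi-full-faithfulness, and this is where genuinely dg information beyond \ref{perleta} enters. The component of $\bar\Sigma_{\mathrm{dg}}$ at $\mathbb{L}_\eta(B)$ is a dga map on endomorphism complexes $\mathbb{L}_\eta(B)=\dge_{\per_{\mathrm{dg}}\mathbb{L}_\eta(B)}(\mathbb{L}_\eta(B))\to\dge_{D^\mathrm{dg}_\text{sg}(R)}(M)=\R\underline{\enn}_R(M)$; by commutativity of the quotient square this map sits under the comparison map $\Xi$ and the derived localisation map $B\to\mathbb{L}_\eta(B)$, so by parts (iii) and (iv) of \ref{etaex} it is precisely the factorisation $\Xi'$ of $\Xi$ through the localisation, which is a quasi-isomorphism. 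Thus $\bar\Sigma_{\mathrm{dg}}$ is quasi-fully-faithful on the generator. Since $\per_{\mathrm{dg}}\mathbb{L}_\eta(B)=\thick_{\mathrm{dg}}(\mathbb{L}_\eta(B))$ is built from $\mathbb{L}_\eta(B)$ under shifts, mapping cones and direct summands, and the class of pairs $(x,y)$ for which $\dgh(x,y)\to\dgh(\bar\Sigma_{\mathrm{dg}}x,\bar\Sigma_{\mathrm{dg}}y)$ is a quasi-isomorphism is closed under all of these operations in each variable (two-out-of-three for quasi-isomorphisms applied to the cone long exact sequences, together with stability of quasi-isomorphisms under retracts), a standard dévissage shows $\bar\Sigma_{\mathrm{dg}}$ is quasi-fully-faithful throughout. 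Combined with quasi-essential surjectivity this yields the asserted quasi-equivalence $\per_{\mathrm{dg}}\mathbb{L}_\eta(B)\simeq\thick_{D^\mathrm{dg}_\text{sg}(R)}(M)$.

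The main obstacle is this last step: one must carry out the dévissage for quasi-full-faithfulness with some care, working one variable at a time and keeping track of which objects lie in the thick dg-closure, and — crucially — one must verify honestly that the component of $\bar\Sigma_{\mathrm{dg}}$ at the generator is literally the map $\Xi'$ of \ref{etaex}, not merely a map inducing the same isomorphism on $H^0$. It is exactly this identification that upgrades the triangulated equivalence of \ref{perleta} to a genuine quasi-equivalence of dg categories.
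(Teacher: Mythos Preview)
Your proof is correct, but it takes a much longer route than the paper. The paper's argument is essentially one line: since $M$ generates $\thick_{D_\text{sg}(R)}(M)$ under cones and shifts, the standard Bondal--Kapranov result says $\thick_{D^\mathrm{dg}_\text{sg}(R)}(M)$ is quasi-equivalent to $\per_{\mathrm{dg}}(\R\underline{\enn}_R(M))$; since $\R\underline{\enn}_R(M)\simeq\mathbb{L}_\eta(B)$ by \ref{etaex}(iii), this is $\per_{\mathrm{dg}}\mathbb{L}_\eta(B)$. No lifting of the singularity functor, no dg quotients, no homotopy cofibre sequences are needed.

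Your approach instead reconstructs the quasi-equivalence as an induced functor $\bar\Sigma_{\mathrm{dg}}$ on the dg quotient, and then proves quasi-full-faithfulness by a d\'evissage from the generator. That d\'evissage step is exactly the content of the Bondal--Kapranov statement the paper cites, so in effect you have unpacked that reference while simultaneously carrying along the extra machinery of $\Sigma_{\mathrm{dg}}$, $\per_{\mathrm{dg}}\mathbb{L}^\eta(B)$, and the cofibre sequence from \ref{ntty}. What your approach buys is an explicit identification of the quasi-equivalence with the functor induced by $\Sigma_{\mathrm{dg}}$, and it avoids citing an external result; what it costs is the delicate verification you flag at the end, that the component of $\bar\Sigma_{\mathrm{dg}}$ at the generator is honestly $\Xi'$ and not merely something agreeing with it on $H^0$. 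The paper sidesteps this entirely by working directly with endomorphism dgas.
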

\begin{proof}
This is true because $M$ is a generator for $\thick_{D_\text{sg}(R)}(M)$ under cones and shifts, and so $\thick_{D^\mathrm{dg}_\text{sg}(R)}(M)$ is quasi-equivalent to the category of perfect modules over $\R\underline{\enn}_R(M)$ ( e.g.\cite{bondalkapranov}), which is quasi-isomorphic to $\mathbb{L}_\eta(B)$.
\end{proof}
\begin{thm}\label{recovwk}
Suppose that $\dq$ is of finite type. Then the pair $(\dq,\eta)$ determines the dg category $\thick_{D^\mathrm{dg}_\text{sg}(R)}(M)$ up to quasi-equivalence. If $A/AeA$ is Artinian local, then $\dq$ alone determines $\thick_{D^\mathrm{dg}_\text{sg}(R)}(M)$. If $A$ has finite global dimension, then $\thick_{D^\mathrm{dg}_\text{sg}(R)}(M)\cong D^\mathrm{dg}_\text{sg}(R)$.
\end{thm}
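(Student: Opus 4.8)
\textbf{Proof proposal for Theorem \ref{recovwk}.}

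The plan is to assemble the theorem from results already established in this section, so the proof is essentially a matter of bookkeeping rather than new work. First I would observe that the finite type hypothesis on $\dq$ lets us invoke \ref{perletadg}: the dg category $\per_{\mathrm{dg}}\mathbb{L}_\eta(\dq)$ is quasi-equivalent to $\thick_{D^\mathrm{dg}_\text{sg}(R)}(M)$, via the map sending $\mathbb{L}_\eta(\dq)$ to $M$. So it suffices to show that the pair $(\dq,\eta)$ determines $\per_{\mathrm{dg}}\mathbb{L}_\eta(\dq)$ up to quasi-equivalence. But $\mathbb{L}_\eta(\dq)$ is by construction the derived localisation of $\dq$ at the cohomology class $\eta$, and derived localisation is a quasi-isomorphism invariant (\cite{bcl}, 3.10 and the surrounding discussion in \ref{derloc}); hence the quasi-isomorphism type of $\dq$ together with the choice of $\eta$ pins down the quasi-isomorphism type of $\mathbb{L}_\eta(\dq)$, and with it the quasi-equivalence type of $\per_{\mathrm{dg}}$ of that dga. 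This gives the first assertion.

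For the second assertion, I would appeal to \ref{uniqueeta}: when $A/AeA$ is Artinian local, $\eta$ is characterised up to multiplication by a unit of $H(\dq)$ as the unique non-nilpotent element of $H^{-p}(\dq)$. Since by \ref{uniqueetaqi} (or directly, since localising at $\eta$ and at $\eta u$ for a unit $u$ yield naturally quasi-isomorphic dgas) the localisation does not depend on which representative of this class we pick, the element $\eta$ is redundant data: it is recoverable from $\dq$ alone. Hence $\dq$ by itself determines $\thick_{D^\mathrm{dg}_\text{sg}(R)}(M)$ up to quasi-equivalence. One small point to be careful about here is the degenerate case $p$ even versus the hypothesis $p>0$ from Setup \ref{sfsetup} together with the $p$-periodicity assumption in force throughout \ref{torsmods}; but \ref{uniqueeta} is stated in exactly this generality, so no extra argument is needed.

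The third assertion is the quickest: if $A$ has finite global dimension then $A$ is a resolution of $R$ in the sense of \ref{partrsln}, and by \ref{genrmk} (referenced in the introduction) $M$ generates $D_\text{sg}(R)$, equivalently $\thick_{D_\text{sg}(R)}(M)=D_\text{sg}(R)$. Passing to dg enhancements via \ref{drinfeldpretr} gives $\thick_{D^\mathrm{dg}_\text{sg}(R)}(M)\cong D^\mathrm{dg}_\text{sg}(R)$, which combined with the previous sentences yields the claim. I do not anticipate a genuine obstacle in any of these steps; the only thing requiring mild care is making sure the quasi-equivalence of \ref{perletadg} and the quasi-isomorphism invariance of localisation are chained together in a way that tracks the distinguished object ($\mathbb{L}_\eta(\dq) \leftrightarrow M$) correctly, so that ``determines'' really means ``determines together with the marked generator'', which is all that is needed downstream for Theorem B.
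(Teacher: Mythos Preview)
Your proposal is correct and follows essentially the same approach as the paper: first assertion via \ref{perletadg} and quasi-isomorphism invariance of derived localisation, second via \ref{uniqueetaqi}, third via the fact that $M$ generates $D_\text{sg}(R)$ when $A$ has finite global dimension. One small correction: for the third assertion you cite \ref{genrmk}, but that remark comes \emph{after} \ref{recovwk} (and after \ref{recov}, which uses \ref{recovwk}), so the reference is forward and borderline circular; the paper instead cites \ref{dsgsmooth} directly, which is the actual source of the surjectivity of $\Sigma$ and hence of the generation statement.
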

\begin{proof}
The first statement is clear since $\thick_{D^\mathrm{dg}_\text{sg}(R)}(M)\cong \per_{\mathrm{dg}}\mathbb{L}_\eta(\dq)$. The second follows from \ref{uniqueetaqi}. The third follows from \ref{dsgsmooth}. \qedhere
\end{proof}
\subsection{A recovery theorem}
Let $R$ be a complete local isolated hypersurface singularity of the form $k\llbracket x_1,\ldots, x_n \rrbracket / \sigma$. In this situation, the two triangulated categories $\stab R$ and $D_\mathrm{sg}(R)$ are triangle equivalent to a third category, the category of \textbf{matrix factorisations} $\mathrm{MF}(\sigma)$. This has a natural enhancement to a $\Z/2$-graded dg category $\mathrm{MF}^{\Z/2}(\sigma)$, and Dyckerhoff \cite{dyck} proved that the Hochschild cohomology of $\mathrm{MF}^{\Z/2}(\sigma)$ is the Milnor algebra of $R$. One can then use the formal Mather-Yau theorem \cite{gpmather} to determine the ring $R$ from its Milnor algebra. Because Hochschild cohomology is invariant under quasi-equivalences \cite{keller}, the quasi-equivalence class of $\mathrm{MF}^{\Z/2}(\sigma)$ as a $\Z/2$-graded dg category hence recovers the ring $R$.

\p Let $\mathrm{MF}^{\Z}(\sigma)$ be the underlying dg category of $\mathrm{MF}^{\Z/2}(\sigma)$, which admits a quasi-equivalence $\mathrm{MF}^{\Z}(\sigma)\cong {D^\mathrm{dg}_\text{sg}(R)}$ \cite{motivicsingcat}. One can ask whether the quasi-equivalence class of $\mathrm{MF}^{\Z}(\sigma)$ also recovers $R$, and indeed this is true by a recent result of Hua and Keller:
\begin{thm}[\cite{huakeller}, 5.7]\label{hkrecov}
Let $R=k\llbracket x_1,\ldots, x_n \rrbracket / \sigma$ and $R'=k\llbracket x_1,\ldots, x_n \rrbracket / \sigma'$ be isolated hypersurface singularities. If the dg singularity categories $D^\mathrm{dg}_\text{sg}(R)$ and $D^\mathrm{dg}_\text{sg}(R')$ are quasi-equivalent, then $R$ and $R'$ are isomorphic.
\end{thm}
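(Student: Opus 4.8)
The plan is to reduce the recovery of $R$ from ${D^\mathrm{dg}_\text{sg}(R)}$ to the recovery of a finite-dimensional commutative $k$-algebra attached to $\sigma$, and then to appeal to the formal Mather--Yau theorem. The first step is to pass from dg singularity categories to matrix factorisations: since $R$ and $R'$ are complete local hypersurfaces there are quasi-equivalences of $\Z$-graded dg categories ${D^\mathrm{dg}_\text{sg}(R)}\simeq\mathrm{MF}^{\Z}(\sigma)$ and ${D^\mathrm{dg}_\text{sg}(R')}\simeq\mathrm{MF}^{\Z}(\sigma')$ (by \cite{motivicsingcat}). Hence a quasi-equivalence ${D^\mathrm{dg}_\text{sg}(R)}\simeq{D^\mathrm{dg}_\text{sg}(R')}$ yields a quasi-equivalence $\mathrm{MF}^{\Z}(\sigma)\simeq\mathrm{MF}^{\Z}(\sigma')$ of $\Z$-graded dg categories.

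Next I would apply Hochschild cohomology, which is a quasi-equivalence invariant of dg categories (as a graded $k$-algebra). This produces an isomorphism of graded $k$-algebras $HH^*(\mathrm{MF}^{\Z}(\sigma))\cong HH^*(\mathrm{MF}^{\Z}(\sigma'))$, and in particular an isomorphism of commutative $k$-algebras in degree zero, $HH^0(\mathrm{MF}^{\Z}(\sigma))\cong HH^0(\mathrm{MF}^{\Z}(\sigma'))$. The essential input here is the computation of Hua and Keller \cite{huakeller}, refining Dyckerhoff's computation \cite{dyck} of the $\Z/2$-graded Hochschild cohomology: for an isolated singularity, $HH^0$ of the $\Z$-graded matrix factorisation category is the Milnor algebra $k\llbracket x_1,\dots,x_n\rrbracket/(\sigma,\partial_{x_1}\sigma,\dots,\partial_{x_n}\sigma)$, a finite-dimensional local $k$-algebra. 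Feeding the previous step into this gives an isomorphism of Milnor algebras for $\sigma$ and $\sigma'$.

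Finally I would invoke the formal Mather--Yau theorem \cite{gpmather}: for isolated hypersurface singularities presented in the same number $n$ of formal variables, an isomorphism of Milnor algebras forces $\sigma$ and $\sigma'$ to be related by a formal change of coordinates up to a unit, and hence $R\cong R'$. The genuinely substantive ingredient --- and the only real obstacle --- is the Hua--Keller computation of $HH^0(\mathrm{MF}^{\Z}(\sigma))$ in the $\Z$-graded setting; the two surrounding steps are formal, the points requiring care being (i) that quasi-equivalence of dg categories preserves the $\Z$-graded Hochschild cohomology, not merely the coarser $\Z/2$-periodic version computed by Dyckerhoff, and (ii) that the embedding dimension $n$ is fixed on both sides, so that Knörrer periodicity cannot intervene and the hypotheses of the formal Mather--Yau theorem are met.
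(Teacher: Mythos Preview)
Your proposal is correct and matches the paper's own sketch essentially verbatim: pass to $\mathrm{MF}^{\Z}(\sigma)$ via \cite{motivicsingcat}, use that Hochschild cohomology is a quasi-equivalence invariant, invoke the Hua--Keller identification of $HH^0(\mathrm{MF}^{\Z}(\sigma))$ with the Milnor algebra (the paper phrases this as going through singular Hochschild cohomology), and conclude with formal Mather--Yau \cite{gpmather}. The only cosmetic difference is that the paper does not itself prove \ref{hkrecov} but cites it, recording exactly the outline you gave.
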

The key step in the proof uses singular Hochschild cohomology to identify $HH^0(\mathrm{MF}^{\Z}(\sigma))$ with the Milnor algebra of $R$; one can then proceed as above.

\p If one attaches a partial resolution $A$ to $R$, then since the quasi-isomorphism type of $\dq$ recovers part of the dg singularity category $D^\mathrm{dg}_\text{sg}(R)$, it can be used to determine $R$:
\begin{thm}\label{recov}
For $i \in\{1,2\}$, let $R_1=k\llbracket x_1,\ldots, x_n \rrbracket / \sigma_1$ and $R_2=k\llbracket x_1,\ldots, x_n \rrbracket / \sigma_2$ be isolated hypersurface singularities. Let $M_i$ be a maximal Cohen-Macaulay $R_i$-module that generates $D_\mathrm{sg}(R_i)$. Let $A_i:=\enn_{R_i}(R_i\oplus M_i)$ be the associated partial resolution of $R_i$, and put $e_i=\id_{R_i}$. Assume that $A_1/^\mathbb{L}A_1e_1A_1$ is cohomologically locally finite and that $A_1/A_1e_1A_1$ is a local algebra. Then if $A_1/^\mathbb{L}A_1e_1A_1$ and $A_2/^\mathbb{L}A_2e_2A_2$ are quasi-isomorphic, then $R_1$ and $R_2$ are isomorphic.	
\end{thm}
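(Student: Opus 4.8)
The plan is to show that, once the quasi-isomorphism $A_1/^\mathbb{L}A_1e_1A_1\simeq A_2/^\mathbb{L}A_2e_2A_2$ is assumed, the relevant hypotheses become symmetric in $R_1$ and $R_2$, and then to feed the situation through Theorem \ref{recovwk} and the Hua--Keller recovery theorem \ref{hkrecov}. First I would place ourselves in the setting of Setup \ref{sfsetup} for each $i\in\{1,2\}$: since $k\llbracket x_1,\dots,x_n\rrbracket$ is a domain, $\sigma_i$ is a non-zerodivisor, so $R_i$ is Gorenstein by \ref{hypsgor}; being commutative complete local noetherian, $D_\mathrm{sg}(R_i)$ is idempotent complete by \cite[5.5]{kalckyang2}; and $M_i$ is MCM, so $A_i=\enn_{R_i}(R_i\oplus M_i)$ is right noetherian with $A_ie_i\cong R_i\oplus M_i$, $e_iA_i\cong R_i\oplus M_i^\vee$ and $(A_ie_i)^\vee\cong e_iA_i$. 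In particular $\R\hom_{R_i}(A_ie_i,R_i)\simeq\hom_{R_i}(A_ie_i,R_i)$, which is exactly the Cohen--Macaulay hypothesis required by \ref{qisolem}. Since $R_i$ is a complete local hypersurface, Eisenbud's theorem \cite{eisenbudper} gives that the shift functor of $D_\mathrm{sg}(R_i)$ is $2$-periodic, so we are in the periodic situation underlying \ref{etaex} and \ref{recovwk} with $p=2$.

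Next I would transfer the finiteness and locality assumptions to $R_2$. Write $B_i:=A_i/^\mathbb{L}A_ie_iA_i$. By hypothesis $B_1$ is cohomologically locally finite, hence so is $B_2\simeq B_1$; in particular both are of finite type. Moreover $H^0(B_i)\cong A_i/A_ie_iA_i$ by \ref{derquotcohom}, so the quasi-isomorphism yields $A_2/A_2e_2A_2\cong A_1/A_1e_1A_1$, which is local by hypothesis and finite-dimensional since $B_1$ is cohomologically locally finite, hence Artinian local. Thus both $B_1$ and $B_2$ satisfy all the hypotheses of \ref{recovwk}.

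Now I would apply \ref{recovwk}: since each $B_i$ is of finite type and each $A_i/A_ie_iA_i$ is Artinian local, the quasi-isomorphism type of $B_i$ determines the dg category $\thick_{D^\mathrm{dg}_\text{sg}(R_i)}(M_i)$ up to quasi-equivalence. Because $M_i$ generates $D_\mathrm{sg}(R_i)$, the homotopy category of the full dg subcategory $\thick_{D^\mathrm{dg}_\text{sg}(R_i)}(M_i)$ is all of $D_\mathrm{sg}(R_i)$, so the inclusion $\thick_{D^\mathrm{dg}_\text{sg}(R_i)}(M_i)\into D^\mathrm{dg}_\text{sg}(R_i)$ is quasi-essentially surjective, hence a quasi-equivalence. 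Therefore $B_1\simeq B_2$ produces a quasi-equivalence $D^\mathrm{dg}_\text{sg}(R_1)\cong D^\mathrm{dg}_\text{sg}(R_2)$, and since $R_1$ and $R_2$ are isolated hypersurface singularities, \ref{hkrecov} concludes $R_1\cong R_2$.

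The main obstacle is not a single deep step but the careful bookkeeping needed to verify that the two-sided noetherian and Cohen--Macaulay hypotheses of \ref{qisolem} and \ref{recovwk} genuinely hold for $i=2$: the only data assumed for $R_2$ are the geometric ones (hypersurface, MCM, generation) together with the bare quasi-isomorphism, so one must check that cohomological local finiteness, Artinian locality of $A_2/A_2e_2A_2$, and the identification $H^0(B_2)\cong A_2/A_2e_2A_2$ all propagate across it. The substantive machinery — the recovery of $\eta$ and of $\R\underline{\enn}_{R_i}(M_i)$ from $B_i$ (\ref{etaex}, \ref{uniqueeta}, \ref{recovwk}) and the Hua--Keller identification of singular Hochschild cohomology with the Milnor algebra (\ref{hkrecov}) — is invoked wholesale.
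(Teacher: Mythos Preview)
Your proposal is correct and follows essentially the same route as the paper's proof: apply \ref{recovwk} to recover $\thick_{D^\mathrm{dg}_\text{sg}(R_i)}(M_i)$ from $B_i$, use the generation hypothesis to identify this with $D^\mathrm{dg}_\text{sg}(R_i)$, and conclude via \ref{hkrecov}. You are simply more explicit than the paper about transferring the cohomological local finiteness and locality hypotheses from $B_1$ to $B_2$ across the quasi-isomorphism (using $H^0(B_i)\cong A_i/A_ie_iA_i$ from \ref{derquotcohom}), which the paper leaves implicit.
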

\begin{proof}
By \ref{recovwk}, the quasi-isomorphism class of $A_1/^\mathbb{L}A_1e_1A_1$ recovers the dg category $\thick_{D^\mathrm{dg}_\text{sg}(R_1)}(M_1)$ up to quasi-equivalence. By assumption this is quasi-equivalent to ${D^\mathrm{dg}_\text{sg}(R_1)}$. Hence ${D^\mathrm{dg}_\text{sg}(R_1)}$ and ${D^\mathrm{dg}_\text{sg}(R_2)}$ are quasi-equivalent. Now apply \ref{hkrecov}.
\end{proof}
\begin{rmk}\label{perconjrmk}
If Conjecture \ref{perconj} is true then $\dq$ recovers the $\Z/2$-graded dg category of matrix factorisations, and one can prove \ref{recov} by appealing directly to Dyckerhoff's results \cite{dyck} and the formal Mather-Yau theorem \cite{gpmather}.
\end{rmk}
\begin{rmk}\label{genrmk}We list some variations on the above, which follow from \ref{recovwk}. The assumption that $M_i$ generates $D_\mathrm{sg}(R_i)$ is in particular satisfied when $A_i$ is of finite global dimension, because then the singularity functor $\Sigma$ is essentially surjective. One can omit the local condition on $A/AeA$, and weaken cohomological local finiteness of $\dq$ to finite type, at the cost of replacing $\dq$ by the pair $(\dq,\eta)$.
\end{rmk}
\begin{rmk}
	In the above situation, one has an isomorphism of algebras $$HH^0(D^\mathrm{dg}_\mathrm{sg}(R))\cong HH^0(\per_{\mathrm{dg}}(\mathbb{L}_\eta(\dq)))\cong HH^0(\mathbb{L}_\eta(\dq)).$$As a vector space, one has $HH^0(\mathbb{L}_\eta(\dq))\cong HH^0(\dq)$ via the proof of \ref{etahoch}. An application of \cite[6.2]{bcl} gives $HH^0(\dq)\cong HH^0(A,\dq)$.
\end{rmk}

\pagebreak

\begin{footnotesize}
\bibliographystyle{alpha}
\bibliography{ncddtbib}
\end{footnotesize}

\textsc{School of Mathematics, University of Edinburgh, James Clerk Maxwell Building, Peter Guthrie Tait Road, Edinburgh EH9 3FD, United Kingdom}\par\nopagebreak\texttt{matt.booth@ed.ac.uk}\hfill\url{https://www.maths.ed.ac.uk/~mbooth/}

\end{document}